\newcommand{\red}[1]{{\color{red}#1}}
\newtheorem{thm}{Theorem}[section]
\theoremstyle{plain}
\newtheorem*{thm*}{{\bf Theorem}}
\newtheorem{lem}[thm]{Lemma}
\newtheorem{prop}[thm]{Proposition}
\newtheorem{cor}[thm]{Corollary}
\theoremstyle{definition}
\newtheorem{dfn}[thm]{Definition}
\newtheorem{example}[thm]{Example}
\newtheorem{rmk}[thm]{Remark}
\theoremstyle{remark}
\newtheorem{rem}[thm]{Remark}
\definecolor{A}{rgb}{.75,1,.75}
\numberwithin{equation}{section}
\newcommand{\cB}{{\mathcal{B}}}
\newcommand{\BB}{{\mathbf{B}}}
\newcommand{\catO}{{\mathcal O}}
\newcommand{\cR}{{\mathcal{R}}}
\newcommand{\Q}{{\mathbb{Q}}}
\newcommand{\N}{{\mathbb{N}}}
\newcommand{\Z}{{\mathbb{Z}}}
\newcommand{\Qq}{{\Q(q)}}
\newcommand{\Qqp}{{\Qq^{\pi}}}
\newcommand{\Qqtp}{{\Q(q,\bt)^{\pi}}}
\newcommand{\ff}{\ensuremath{\mathbf{f}}}
\newcommand{\UU}{{\mathbf U}}
\newcommand{\hatU}{{\widehat \UU}}
\newcommand{\Up}{\UU^+}
\newcommand{\Um}{\UU^-}
\newcommand{\Uz}{\UU^0}
\newcommand{\osp}{{\mathfrak{osp}}}
\newcommand{\fsl}{{\mathfrak{sl}}}
\newcommand{\so}{{\mathfrak{so}}}
\newcommand{\ir}{{{}_i r}}
\newcommand{\tK}{{\tilde{K}}}
\newcommand{\tJ}{{\tilde{J}}}
\newcommand{\bt}{{\mathbf t}}
\newcommand{\tT}{{\tilde{T}}}
\newcommand{\var}{{\varepsilon}}
\newcommand{\bp}{{\mathbf p}}
\newcommand{\height}{{\mathrm{ht}}}
\newcommand{\Tw}{{\mathfrak X}}
\newcommand{\Hom}{{\rm Hom}}
\newcommand{\barmap}{{\bar{\phantom{x}}}}
\newcommand{\set}[1]{\left\{#1\right\}}
\newcommand{\parens}[1]{\left(#1\right)}
\newcommand{\ang}[1]{\left\langle#1\right\rangle}
\newcommand{\bra}[1]{\left[#1\right]}
\newcommand{\bbinom}[2]{{\begin{bmatrix}#1 \\ #2\end{bmatrix}}}
\newcommand{\abs}[1]{\left|#1\right|}
\newcommand{\norm}[1]{\abs{\abs{#1}}}
\renewcommand{\bar}[1]{\overline{#1}}
\DeclareMathOperator{\ev}{{\rm ev}}
\DeclareMathOperator{\qtr}{{\rm qtr}}
\DeclareMathOperator{\coev}{{\rm coev}}
\DeclareMathOperator{\coqtr}{{\rm coqtr}}
\newcommand{\frf}{\mathfrak{f}}
\newcommand{\frr}{\mathfrak{r}}
\newcommand{\frl}{\mathfrak{l}}
\newcommand{\frF}{\mathfrak{F}}
\newcommand{\frs}{\mathfrak{s}}
\newcommand{\hctikz}[1]{\raisebox{-.5\height}{\begin{tikzpicture}#1\end{tikzpicture}}}
\tikzset{->->-/.style={decoration={
  markings,
  mark=at position .25 with {\arrow[scale=1.5,black]{>}}, mark=at position .85 with {\arrow[scale=1.5,black]{>}}},postaction={decorate}}}
\tikzset{->-/.style={decoration={
  markings,
  mark=at position .55 with {\arrow[scale=1.5,black]{>}}},postaction={decorate}}}
\tikzset{->/.style={decoration={
  markings,
  mark=at position .99 with {\arrow[scale=1.5,black]{>}}},postaction={decorate}}}
\tikzset{<-/.style={decoration={
  markings,
  mark=at position .10 with {\arrow[scale=1.5,black]{<}}},postaction={decorate}}}
\tikzset{-<-/.style={decoration={
  markings,
  mark=at position .55 with {\arrow[scale=1.5,black]{<}}},postaction={decorate}}}
\tikzset{-<-<-/.style={decoration={
  markings,
  mark=at position .25 with {\arrow[scale=1.5,black]{<}}, mark=at position .85 with {\arrow[scale=1.5,black]{<}}},postaction={decorate}}}
\newcommand{\nesc}[4]{
\pgfmathsetmacro{\xscale}{{#3}}
\pgfmathsetmacro{\yscale}{{#4}}
\pgfmathsetmacro{\xcenter}{{#1}}
\pgfmathsetmacro{\ycenter}{{#2}}
	\pgfmathparse{-.5*\xscale+\xcenter}		\let\Xa\pgfmathresult
    \pgfmathparse{-1*\yscale+\ycenter}		\let\Ya\pgfmathresult
    \coordinate (A) at (\Xa,\Ya);
	\pgfmathparse{-.5*\xscale+\xcenter}		\let\Xb\pgfmathresult
    \pgfmathparse{0+\ycenter}		\let\Yb\pgfmathresult
    \coordinate (B) at (\Xb,\Yb);
	\pgfmathparse{-.25*\xscale+\xcenter}		\let\Xc\pgfmathresult
    \pgfmathparse{.5*\yscale+\ycenter}		\let\Yc\pgfmathresult
    \coordinate (C) at (\Xc,\Yc);
	\pgfmathparse{0+\xcenter}		\let\Xd\pgfmathresult
    \pgfmathparse{0+\ycenter}		\let\Yd\pgfmathresult
    \coordinate (D) at (\Xd,\Yd);
	\pgfmathparse{.25*\xscale+\xcenter}		\let\Xe\pgfmathresult
    \pgfmathparse{-.5*\yscale+\ycenter}		\let\Ye\pgfmathresult
    \coordinate (E) at (\Xe,\Ye);
	\pgfmathparse{.5*\xscale+\xcenter}		\let\Xf\pgfmathresult
    \pgfmathparse{0*\yscale+\ycenter}		\let\Yf\pgfmathresult
    \coordinate (F) at (\Xf,\Yf);
	\pgfmathparse{.5*\xscale+\xcenter}		\let\Xg\pgfmathresult
    \pgfmathparse{1*\yscale+\ycenter}		\let\Yg\pgfmathresult
    \coordinate (G) at (\Xg,\Yg);
\draw[line width=1pt] (A) .. controls (B) and (C) .. (D) .. controls (E) and (F)  .. (G);
}
\newcommand{\dcap}[4]{
\pgfmathsetmacro{\xscale}{{#3}}
\pgfmathsetmacro{\yscale}{{#4}}
\pgfmathsetmacro{\xleft}{{#1}}
\pgfmathsetmacro{\ybottom}{{#2}}
	\pgfmathparse{\xleft}		\let\Xa\pgfmathresult
    \pgfmathparse{\ybottom}		\let\Ya\pgfmathresult
    \coordinate (A) at (\Xa,\Ya);
	\pgfmathparse{\xleft}		\let\Xb\pgfmathresult
    \pgfmathparse{\ybottom+\yscale}		\let\Yb\pgfmathresult
    \coordinate (B) at (\Xb,\Yb);
	\pgfmathparse{\xleft+\xscale}		\let\Xc\pgfmathresult
    \pgfmathparse{\ybottom+\yscale}		\let\Yc\pgfmathresult
    \coordinate (C) at (\Xc,\Yc);
	\pgfmathparse{\xleft+\xscale}		\let\Xd\pgfmathresult
    \pgfmathparse{\ybottom}		\let\Yd\pgfmathresult
    \coordinate (D) at (\Xd,\Yd);
\draw[line width=1pt] (A) .. controls (B) and (C) .. (D);
}
\newcommand{\cwcap}[4]{
\pgfmathsetmacro{\xscale}{{#3}}
\pgfmathsetmacro{\yscale}{{#4}}
\pgfmathsetmacro{\xleft}{{#1}}
\pgfmathsetmacro{\ybottom}{{#2}}
	\pgfmathparse{\xleft}		\let\Xa\pgfmathresult
    \pgfmathparse{\ybottom}		\let\Ya\pgfmathresult
    \coordinate (A) at (\Xa,\Ya);
	\pgfmathparse{\xleft}		\let\Xb\pgfmathresult
    \pgfmathparse{\ybottom+\yscale}		\let\Yb\pgfmathresult
    \coordinate (B) at (\Xb,\Yb);
	\pgfmathparse{\xleft+\xscale}		\let\Xc\pgfmathresult
    \pgfmathparse{\ybottom+\yscale}		\let\Yc\pgfmathresult
    \coordinate (C) at (\Xc,\Yc);
	\pgfmathparse{\xleft+\xscale}		\let\Xd\pgfmathresult
    \pgfmathparse{\ybottom}		\let\Yd\pgfmathresult
    \coordinate (D) at (\Xd,\Yd);
\draw[line width=1pt, ->-] (A) .. controls (B) and (C) .. (D);
}
\newcommand{\ccwcap}[4]{
\pgfmathsetmacro{\xscale}{{#3}}
\pgfmathsetmacro{\yscale}{{#4}}
\pgfmathsetmacro{\xleft}{{#1}}
\pgfmathsetmacro{\ybottom}{{#2}}
	\pgfmathparse{\xleft}		\let\Xa\pgfmathresult
    \pgfmathparse{\ybottom}		\let\Ya\pgfmathresult
    \coordinate (A) at (\Xa,\Ya);
	\pgfmathparse{\xleft}		\let\Xb\pgfmathresult
    \pgfmathparse{\ybottom+\yscale}		\let\Yb\pgfmathresult
    \coordinate (B) at (\Xb,\Yb);
	\pgfmathparse{\xleft+\xscale}		\let\Xc\pgfmathresult
    \pgfmathparse{\ybottom+\yscale}		\let\Yc\pgfmathresult
    \coordinate (C) at (\Xc,\Yc);
	\pgfmathparse{\xleft+\xscale}		\let\Xd\pgfmathresult
    \pgfmathparse{\ybottom}		\let\Yd\pgfmathresult
    \coordinate (D) at (\Xd,\Yd);
\draw[line width=1pt, -<-] (A) .. controls (B) and (C) .. (D);
}
\newcommand{\dcup}[4]{
\pgfmathsetmacro{\xscale}{{#3}}
\pgfmathsetmacro{\yscale}{{#4}}
\pgfmathsetmacro{\xleft}{{#1}}
\pgfmathsetmacro{\ybottom}{{#2}}
	\pgfmathparse{\xleft}		\let\Xa\pgfmathresult
    \pgfmathparse{\ybottom+\yscale}		\let\Ya\pgfmathresult
    \coordinate (A) at (\Xa,\Ya);
	\pgfmathparse{\xleft}		\let\Xb\pgfmathresult
    \pgfmathparse{\ybottom}		\let\Yb\pgfmathresult
    \coordinate (B) at (\Xb,\Yb);
	\pgfmathparse{\xleft+\xscale}		\let\Xc\pgfmathresult
    \pgfmathparse{\ybottom}		\let\Yc\pgfmathresult
    \coordinate (C) at (\Xc,\Yc);
	\pgfmathparse{\xleft+\xscale}		\let\Xd\pgfmathresult
    \pgfmathparse{\ybottom+\yscale}		\let\Yd\pgfmathresult
    \coordinate (D) at (\Xd,\Yd);
\draw[line width=1pt] (A) .. controls (B) and (C) .. (D);
}
\newcommand{\cwcup}[4]{
\pgfmathsetmacro{\xscale}{{#3}}
\pgfmathsetmacro{\yscale}{{#4}}
\pgfmathsetmacro{\xleft}{{#1}}
\pgfmathsetmacro{\ybottom}{{#2}}
	\pgfmathparse{\xleft}		\let\Xa\pgfmathresult
    \pgfmathparse{\ybottom+\yscale}		\let\Ya\pgfmathresult
    \coordinate (A) at (\Xa,\Ya);
	\pgfmathparse{\xleft}		\let\Xb\pgfmathresult
    \pgfmathparse{\ybottom}		\let\Yb\pgfmathresult
    \coordinate (B) at (\Xb,\Yb);
	\pgfmathparse{\xleft+\xscale}		\let\Xc\pgfmathresult
    \pgfmathparse{\ybottom}		\let\Yc\pgfmathresult
    \coordinate (C) at (\Xc,\Yc);
	\pgfmathparse{\xleft+\xscale}		\let\Xd\pgfmathresult
    \pgfmathparse{\ybottom+\yscale}		\let\Yd\pgfmathresult
    \coordinate (D) at (\Xd,\Yd);
\draw[line width=1pt, -<-] (A) .. controls (B) and (C) .. (D);
}
\newcommand{\ccwcup}[4]{
\pgfmathsetmacro{\xscale}{{#3}}
\pgfmathsetmacro{\yscale}{{#4}}
\pgfmathsetmacro{\xleft}{{#1}}
\pgfmathsetmacro{\ybottom}{{#2}}
	\pgfmathparse{\xleft}		\let\Xa\pgfmathresult
    \pgfmathparse{\ybottom+\yscale}		\let\Ya\pgfmathresult
    \coordinate (A) at (\Xa,\Ya);
	\pgfmathparse{\xleft}		\let\Xb\pgfmathresult
    \pgfmathparse{\ybottom}		\let\Yb\pgfmathresult
    \coordinate (B) at (\Xb,\Yb);
	\pgfmathparse{\xleft+\xscale}		\let\Xc\pgfmathresult
    \pgfmathparse{\ybottom}		\let\Yc\pgfmathresult
    \coordinate (C) at (\Xc,\Yc);
	\pgfmathparse{\xleft+\xscale}		\let\Xd\pgfmathresult
    \pgfmathparse{\ybottom+\yscale}		\let\Yd\pgfmathresult
    \coordinate (D) at (\Xd,\Yd);
\draw[line width=1pt, ->-] (A) .. controls (B) and (C) .. (D);
}
\newcommand{\ids}[5]{
\pgfmathsetmacro{\xl}{{#1}}
\pgfmathsetmacro{\yb}{{#2}}
\pgfmathsetmacro{\num}{{#5}}
\pgfmathsetmacro{\yheight}{{#4}}
\pgfmathsetmacro{\yt}{\yb+\yheight};
\pgfmathsetmacro{\xscale}{{#3}};
	\foreach \x in {1,...,\num}{
		\pgfmathparse{\xl+(\x-1)*\xscale}\let\xc\pgfmathresult;
		\draw[line width=1pt] (\xc,\yb) -- (\xc, \yt);
	}
}
\newcommand{\idsup}[5]{
\pgfmathsetmacro{\xl}{{#1}}
\pgfmathsetmacro{\yb}{{#2}}
\pgfmathsetmacro{\num}{{#5}}
\pgfmathsetmacro{\yheight}{{#4}}
\pgfmathsetmacro{\yt}{\yb+\yheight};
\pgfmathsetmacro{\xscale}{{#3}};
	\foreach \x in {1,...,\num}{
		\pgfmathparse{\xl+(\x-1)*\xscale}\let\xc\pgfmathresult
		\draw[line width=1pt,->-] (\xc,\yb) -- (\xc, \yt);
	}
}
\newcommand{\idsdown}[5]{
\pgfmathsetmacro{\xl}{{#1}}
\pgfmathsetmacro{\yb}{{#2}}
\pgfmathsetmacro{\num}{{#5}}
\pgfmathsetmacro{\yheight}{{#4}}
\pgfmathsetmacro{\yt}{\yb+\yheight};
\pgfmathsetmacro{\xscale}{{#3}};
	\foreach \x in {1,...,\num}{
		\pgfmathparse{\xl+(\x-1)*\xscale}\let\xc\pgfmathresult
		\draw[line width=1pt,-<-] (\xc,\yb) -- (\xc, \yt);
	}
}
\newcommand{\rcross}[4]{
\tikzstyle{cross line}=[preaction={draw=white, -,line width=10pt}];
\pgfmathsetmacro{\xl}{{#1}}
\pgfmathsetmacro{\yb}{{#2}}
\pgfmathsetmacro{\yheight}{{#3}}
\pgfmathsetmacro{\xscale}{{#4}}
\pgfmathsetmacro{\yt}{\yb+\yheight};
\pgfmathsetmacro{\xr}{\xscale+\xl};
\pgfmathsetmacro{\yc}{\yb+\yheight/2};
\pgfmathsetmacro{\xc}{\xscale/2+\xl};

	\pgfmathparse{\xl}		\let\Xa\pgfmathresult
    \pgfmathparse{\yt}		\let\Ya\pgfmathresult
    \coordinate (A) at (\Xa,\Ya);
	\pgfmathparse{\xl+(0.3*\xscale)}		\let\Xaa\pgfmathresult
    \pgfmathparse{\yb+(0.9*\yheight)}		\let\Yaa\pgfmathresult
    \coordinate (A') at (\Xaa,\Yaa);
	\pgfmathparse{\xl}		\let\Xb\pgfmathresult
    \pgfmathparse{\yb}		\let\Yb\pgfmathresult
    \coordinate (B) at (\Xb,\Yb);
	\pgfmathparse{\xl+(0.3*\xscale)}		\let\Xbb\pgfmathresult
    \pgfmathparse{\yb+(0.1*\yheight)}		\let\Ybb\pgfmathresult
    \coordinate (B') at (\Xbb,\Ybb);
	\pgfmathparse{\xr}		\let\Xc\pgfmathresult
    \pgfmathparse{\yb}		\let\Yc\pgfmathresult
    \coordinate (C) at (\Xc,\Yc);
	\pgfmathparse{\xl+(0.7*\xscale)}		\let\Xcc\pgfmathresult
    \pgfmathparse{\yb+(0.1*\yheight)}		\let\Ycc\pgfmathresult
    \coordinate (C') at (\Xcc,\Ycc);
	\pgfmathparse{\xr}		\let\Xd\pgfmathresult
    \pgfmathparse{\yt}		\let\Yd\pgfmathresult
    \coordinate (D) at (\Xd,\Yd);
	\pgfmathparse{\xl+(0.7*\xscale)}		\let\Xdd\pgfmathresult
    \pgfmathparse{\yb+(0.9*\yheight)}		\let\Ydd\pgfmathresult
    \coordinate (D') at (\Xdd,\Ydd);
	\pgfmathparse{\xc}		\let\Xe\pgfmathresult
    \pgfmathparse{\yc}		\let\Ye\pgfmathresult
    \coordinate (E) at (\Xe,\Ye);

\draw[line width=1pt] (A) .. controls (B') and (D') .. (C);
\draw (\xc, \yc) node[shape=circle, fill=white] {};
\draw[line width=1pt] (B) .. controls (A') and (C') .. (D);
}
\newcommand{\rcrossup}[4]{
\tikzstyle{cross line}=[preaction={draw=white, -,line width=10pt}];
\pgfmathsetmacro{\xl}{{#1}}
\pgfmathsetmacro{\yb}{{#2}}
\pgfmathsetmacro{\yheight}{{#3}}
\pgfmathsetmacro{\xscale}{{#4}}
\pgfmathsetmacro{\yt}{\yb+\yheight};
\pgfmathsetmacro{\xr}{\xscale+\xl};
\pgfmathsetmacro{\yc}{\yb+\yheight/2};
\pgfmathsetmacro{\xc}{\xscale/2+\xl};

	\pgfmathparse{\xl}		\let\Xa\pgfmathresult
    \pgfmathparse{\yt}		\let\Ya\pgfmathresult
    \coordinate (A) at (\Xa,\Ya);
	\pgfmathparse{\xl+(0.3*\xscale)}		\let\Xaa\pgfmathresult
    \pgfmathparse{\yb+(0.9*\yheight)}		\let\Yaa\pgfmathresult
    \coordinate (A') at (\Xaa,\Yaa);
	\pgfmathparse{\xl}		\let\Xb\pgfmathresult
    \pgfmathparse{\yb}		\let\Yb\pgfmathresult
    \coordinate (B) at (\Xb,\Yb);
	\pgfmathparse{\xl+(0.3*\xscale)}		\let\Xbb\pgfmathresult
    \pgfmathparse{\yb+(0.1*\yheight)}		\let\Ybb\pgfmathresult
    \coordinate (B') at (\Xbb,\Ybb);
	\pgfmathparse{\xr}		\let\Xc\pgfmathresult
    \pgfmathparse{\yb}		\let\Yc\pgfmathresult
    \coordinate (C) at (\Xc,\Yc);
	\pgfmathparse{\xl+(0.7*\xscale)}		\let\Xcc\pgfmathresult
    \pgfmathparse{\yb+(0.1*\yheight)}		\let\Ycc\pgfmathresult
    \coordinate (C') at (\Xcc,\Ycc);
	\pgfmathparse{\xr}		\let\Xd\pgfmathresult
    \pgfmathparse{\yt}		\let\Yd\pgfmathresult
    \coordinate (D) at (\Xd,\Yd);
	\pgfmathparse{\xl+(0.7*\xscale)}		\let\Xdd\pgfmathresult
    \pgfmathparse{\yb+(0.9*\yheight)}		\let\Ydd\pgfmathresult
    \coordinate (D') at (\Xdd,\Ydd);
	\pgfmathparse{\xc}		\let\Xe\pgfmathresult
    \pgfmathparse{\yc}		\let\Ye\pgfmathresult
    \coordinate (E) at (\Xe,\Ye);

\draw[line width=1pt, <-] (A) .. controls (B') and (D') .. (C);
\draw (\xc, \yc) node[shape=circle, fill=white] {};
\draw[line width=1pt, ->] (B) .. controls (A') and (C') .. (D);
}
\newcommand{\NESE}[4]{
\tikzstyle{cross line}=[preaction={draw=white, -,line width=10pt}];
\pgfmathsetmacro{\xl}{{#1}}
\pgfmathsetmacro{\yb}{{#2}}
\pgfmathsetmacro{\yheight}{{#3}}
\pgfmathsetmacro{\xscale}{{#4}}
\pgfmathsetmacro{\yt}{\yb+\yheight};
\pgfmathsetmacro{\xr}{\xscale+\xl};
\pgfmathsetmacro{\yc}{\yb+\yheight/2};
\pgfmathsetmacro{\xc}{\xscale/2+\xl};

	\pgfmathparse{\xl}		\let\Xa\pgfmathresult
    \pgfmathparse{\yt}		\let\Ya\pgfmathresult
    \coordinate (A) at (\Xa,\Ya);
	\pgfmathparse{\xl+(0.3*\xscale)}		\let\Xaa\pgfmathresult
    \pgfmathparse{\yb+(0.9*\yheight)}		\let\Yaa\pgfmathresult
    \coordinate (A') at (\Xaa,\Yaa);
	\pgfmathparse{\xl}		\let\Xb\pgfmathresult
    \pgfmathparse{\yb}		\let\Yb\pgfmathresult
    \coordinate (B) at (\Xb,\Yb);
	\pgfmathparse{\xl+(0.3*\xscale)}		\let\Xbb\pgfmathresult
    \pgfmathparse{\yb+(0.1*\yheight)}		\let\Ybb\pgfmathresult
    \coordinate (B') at (\Xbb,\Ybb);
	\pgfmathparse{\xr}		\let\Xc\pgfmathresult
    \pgfmathparse{\yb}		\let\Yc\pgfmathresult
    \coordinate (C) at (\Xc,\Yc);
	\pgfmathparse{\xl+(0.7*\xscale)}		\let\Xcc\pgfmathresult
    \pgfmathparse{\yb+(0.1*\yheight)}		\let\Ycc\pgfmathresult
    \coordinate (C') at (\Xcc,\Ycc);
	\pgfmathparse{\xr}		\let\Xd\pgfmathresult
    \pgfmathparse{\yt}		\let\Yd\pgfmathresult
    \coordinate (D) at (\Xd,\Yd);
	\pgfmathparse{\xl+(0.7*\xscale)}		\let\Xdd\pgfmathresult
    \pgfmathparse{\yb+(0.9*\yheight)}		\let\Ydd\pgfmathresult
    \coordinate (D') at (\Xdd,\Ydd);
	\pgfmathparse{\xc}		\let\Xe\pgfmathresult
    \pgfmathparse{\yc}		\let\Ye\pgfmathresult
    \coordinate (E) at (\Xe,\Ye);

\draw[line width=1pt, ->] (A) .. controls (B') and (D') .. (C);
\draw (\xc, \yc) node[shape=circle, fill=white] {};
\draw[line width=1pt, ->] (B) .. controls (A') and (C') .. (D);
}
\newcommand{\SWNW}[4]{
\tikzstyle{cross line}=[preaction={draw=white, -,line width=10pt}];
\pgfmathsetmacro{\xl}{{#1}}
\pgfmathsetmacro{\yb}{{#2}}
\pgfmathsetmacro{\yheight}{{#3}}
\pgfmathsetmacro{\xscale}{{#4}}
\pgfmathsetmacro{\yt}{\yb+\yheight};
\pgfmathsetmacro{\xr}{\xscale+\xl};
\pgfmathsetmacro{\yc}{\yb+\yheight/2};
\pgfmathsetmacro{\xc}{\xscale/2+\xl};

	\pgfmathparse{\xl}		\let\Xa\pgfmathresult
    \pgfmathparse{\yt}		\let\Ya\pgfmathresult
    \coordinate (A) at (\Xa,\Ya);
	\pgfmathparse{\xl+(0.3*\xscale)}		\let\Xaa\pgfmathresult
    \pgfmathparse{\yb+(0.9*\yheight)}		\let\Yaa\pgfmathresult
    \coordinate (A') at (\Xaa,\Yaa);
	\pgfmathparse{\xl}		\let\Xb\pgfmathresult
    \pgfmathparse{\yb}		\let\Yb\pgfmathresult
    \coordinate (B) at (\Xb,\Yb);
	\pgfmathparse{\xl+(0.3*\xscale)}		\let\Xbb\pgfmathresult
    \pgfmathparse{\yb+(0.1*\yheight)}		\let\Ybb\pgfmathresult
    \coordinate (B') at (\Xbb,\Ybb);
	\pgfmathparse{\xr}		\let\Xc\pgfmathresult
    \pgfmathparse{\yb}		\let\Yc\pgfmathresult
    \coordinate (C) at (\Xc,\Yc);
	\pgfmathparse{\xl+(0.7*\xscale)}		\let\Xcc\pgfmathresult
    \pgfmathparse{\yb+(0.1*\yheight)}		\let\Ycc\pgfmathresult
    \coordinate (C') at (\Xcc,\Ycc);
	\pgfmathparse{\xr}		\let\Xd\pgfmathresult
    \pgfmathparse{\yt}		\let\Yd\pgfmathresult
    \coordinate (D) at (\Xd,\Yd);
	\pgfmathparse{\xl+(0.7*\xscale)}		\let\Xdd\pgfmathresult
    \pgfmathparse{\yb+(0.9*\yheight)}		\let\Ydd\pgfmathresult
    \coordinate (D') at (\Xdd,\Ydd);
	\pgfmathparse{\xc}		\let\Xe\pgfmathresult
    \pgfmathparse{\yc}		\let\Ye\pgfmathresult
    \coordinate (E) at (\Xe,\Ye);

\draw[line width=1pt, <-] (A) .. controls (B') and (D') .. (C);
\draw (\xc, \yc) node[shape=circle, fill=white] {};
\draw[line width=1pt, <-] (B) .. controls (A') and (C') .. (D);
}
\newcommand{\SENE}[4]{
\tikzstyle{cross line}=[preaction={draw=white, -,line width=10pt}];
\pgfmathsetmacro{\xl}{{#1}}
\pgfmathsetmacro{\yb}{{#2}}
\pgfmathsetmacro{\yheight}{{#3}}
\pgfmathsetmacro{\xscale}{{#4}}
\pgfmathsetmacro{\yt}{\yb+\yheight};
\pgfmathsetmacro{\xr}{\xscale+\xl};
\pgfmathsetmacro{\yc}{\yb+\yheight/2};
\pgfmathsetmacro{\xc}{\xscale/2+\xl};

	\pgfmathparse{\xl}		\let\Xa\pgfmathresult
    \pgfmathparse{\yt}		\let\Ya\pgfmathresult
    \coordinate (A) at (\Xa,\Ya);
	\pgfmathparse{\xl+(0.3*\xscale)}		\let\Xaa\pgfmathresult
    \pgfmathparse{\yb+(0.9*\yheight)}		\let\Yaa\pgfmathresult
    \coordinate (A') at (\Xaa,\Yaa);
	\pgfmathparse{\xl}		\let\Xb\pgfmathresult
    \pgfmathparse{\yb}		\let\Yb\pgfmathresult
    \coordinate (B) at (\Xb,\Yb);
	\pgfmathparse{\xl+(0.3*\xscale)}		\let\Xbb\pgfmathresult
    \pgfmathparse{\yb+(0.1*\yheight)}		\let\Ybb\pgfmathresult
    \coordinate (B') at (\Xbb,\Ybb);
	\pgfmathparse{\xr}		\let\Xc\pgfmathresult
    \pgfmathparse{\yb}		\let\Yc\pgfmathresult
    \coordinate (C) at (\Xc,\Yc);
	\pgfmathparse{\xl+(0.7*\xscale)}		\let\Xcc\pgfmathresult
    \pgfmathparse{\yb+(0.1*\yheight)}		\let\Ycc\pgfmathresult
    \coordinate (C') at (\Xcc,\Ycc);
	\pgfmathparse{\xr}		\let\Xd\pgfmathresult
    \pgfmathparse{\yt}		\let\Yd\pgfmathresult
    \coordinate (D) at (\Xd,\Yd);
	\pgfmathparse{\xl+(0.7*\xscale)}		\let\Xdd\pgfmathresult
    \pgfmathparse{\yb+(0.9*\yheight)}		\let\Ydd\pgfmathresult
    \coordinate (D') at (\Xdd,\Ydd);
	\pgfmathparse{\xc}		\let\Xe\pgfmathresult
    \pgfmathparse{\yc}		\let\Ye\pgfmathresult
    \coordinate (E) at (\Xe,\Ye);

\draw[line width=1pt, ->] (B) .. controls (A') and (C') .. (D);
\draw (\xc, \yc) node[shape=circle, fill=white] {};
\draw[line width=1pt, ->] (A) .. controls (B') and (D') .. (C);
}
\newcommand{\NWSW}[4]{
\tikzstyle{cross line}=[preaction={draw=white, -,line width=10pt}];
\pgfmathsetmacro{\xl}{{#1}}
\pgfmathsetmacro{\yb}{{#2}}
\pgfmathsetmacro{\yheight}{{#3}}
\pgfmathsetmacro{\xscale}{{#4}}
\pgfmathsetmacro{\yt}{\yb+\yheight};
\pgfmathsetmacro{\xr}{\xscale+\xl};
\pgfmathsetmacro{\yc}{\yb+\yheight/2};
\pgfmathsetmacro{\xc}{\xscale/2+\xl};

	\pgfmathparse{\xl}		\let\Xa\pgfmathresult
    \pgfmathparse{\yt}		\let\Ya\pgfmathresult
    \coordinate (A) at (\Xa,\Ya);
	\pgfmathparse{\xl+(0.3*\xscale)}		\let\Xaa\pgfmathresult
    \pgfmathparse{\yb+(0.9*\yheight)}		\let\Yaa\pgfmathresult
    \coordinate (A') at (\Xaa,\Yaa);
	\pgfmathparse{\xl}		\let\Xb\pgfmathresult
    \pgfmathparse{\yb}		\let\Yb\pgfmathresult
    \coordinate (B) at (\Xb,\Yb);
	\pgfmathparse{\xl+(0.3*\xscale)}		\let\Xbb\pgfmathresult
    \pgfmathparse{\yb+(0.1*\yheight)}		\let\Ybb\pgfmathresult
    \coordinate (B') at (\Xbb,\Ybb);
	\pgfmathparse{\xr}		\let\Xc\pgfmathresult
    \pgfmathparse{\yb}		\let\Yc\pgfmathresult
    \coordinate (C) at (\Xc,\Yc);
	\pgfmathparse{\xl+(0.7*\xscale)}		\let\Xcc\pgfmathresult
    \pgfmathparse{\yb+(0.1*\yheight)}		\let\Ycc\pgfmathresult
    \coordinate (C') at (\Xcc,\Ycc);
	\pgfmathparse{\xr}		\let\Xd\pgfmathresult
    \pgfmathparse{\yt}		\let\Yd\pgfmathresult
    \coordinate (D) at (\Xd,\Yd);
	\pgfmathparse{\xl+(0.7*\xscale)}		\let\Xdd\pgfmathresult
    \pgfmathparse{\yb+(0.9*\yheight)}		\let\Ydd\pgfmathresult
    \coordinate (D') at (\Xdd,\Ydd);
	\pgfmathparse{\xc}		\let\Xe\pgfmathresult
    \pgfmathparse{\yc}		\let\Ye\pgfmathresult
    \coordinate (E) at (\Xe,\Ye);

\draw[line width=1pt, <-] (B) .. controls (A') and (C') .. (D);
\draw (\xc, \yc) node[shape=circle, fill=white] {};
\draw[line width=1pt, <-] (A) .. controls (B') and (D') .. (C);
}
\newcommand{\rcrossdown}[4]{
\tikzstyle{cross line}=[preaction={draw=white, -,line width=10pt}];
\pgfmathsetmacro{\xl}{{#1}}
\pgfmathsetmacro{\yb}{{#2}}
\pgfmathsetmacro{\yheight}{{#3}}
\pgfmathsetmacro{\xscale}{{#4}}
\pgfmathsetmacro{\yt}{\yb+\yheight};
\pgfmathsetmacro{\xr}{\xscale+\xl};
\pgfmathsetmacro{\yc}{\yb+\yheight/2};
\pgfmathsetmacro{\xc}{\xscale/2+\xl};

	\pgfmathparse{\xl}		\let\Xa\pgfmathresult
    \pgfmathparse{\yt}		\let\Ya\pgfmathresult
    \coordinate (A) at (\Xa,\Ya);
	\pgfmathparse{\xl+(0.3*\xscale)}		\let\Xaa\pgfmathresult
    \pgfmathparse{\yb+(0.9*\yheight)}		\let\Yaa\pgfmathresult
    \coordinate (A') at (\Xaa,\Yaa);
	\pgfmathparse{\xl}		\let\Xb\pgfmathresult
    \pgfmathparse{\yb}		\let\Yb\pgfmathresult
    \coordinate (B) at (\Xb,\Yb);
	\pgfmathparse{\xl+(0.3*\xscale)}		\let\Xbb\pgfmathresult
    \pgfmathparse{\yb+(0.1*\yheight)}		\let\Ybb\pgfmathresult
    \coordinate (B') at (\Xbb,\Ybb);
	\pgfmathparse{\xr}		\let\Xc\pgfmathresult
    \pgfmathparse{\yb}		\let\Yc\pgfmathresult
    \coordinate (C) at (\Xc,\Yc);
	\pgfmathparse{\xl+(0.7*\xscale)}		\let\Xcc\pgfmathresult
    \pgfmathparse{\yb+(0.1*\yheight)}		\let\Ycc\pgfmathresult
    \coordinate (C') at (\Xcc,\Ycc);
	\pgfmathparse{\xr}		\let\Xd\pgfmathresult
    \pgfmathparse{\yt}		\let\Yd\pgfmathresult
    \coordinate (D) at (\Xd,\Yd);
	\pgfmathparse{\xl+(0.7*\xscale)}		\let\Xdd\pgfmathresult
    \pgfmathparse{\yb+(0.9*\yheight)}		\let\Ydd\pgfmathresult
    \coordinate (D') at (\Xdd,\Ydd);
	\pgfmathparse{\xc}		\let\Xe\pgfmathresult
    \pgfmathparse{\yc}		\let\Ye\pgfmathresult
    \coordinate (E) at (\Xe,\Ye);

\draw[line width=1pt, ->] (A) .. controls (B') and (D') .. (C);
\draw (\xc, \yc) node[shape=circle, fill=white] {};
\draw[line width=1pt, <-] (B) .. controls (A') and (C') .. (D);
}
\newcommand{\lcross}[4]{
\tikzstyle{cross line}=[preaction={draw=white, -,line width=10pt}];
\pgfmathsetmacro{\xl}{{#1}}
\pgfmathsetmacro{\yb}{{#2}}
\pgfmathsetmacro{\yheight}{{#3}}
\pgfmathsetmacro{\xscale}{{#4}}
\pgfmathsetmacro{\yt}{\yb+\yheight};
\pgfmathsetmacro{\xr}{\xscale+\xl};
\pgfmathsetmacro{\yc}{\yb+\yheight/2};
\pgfmathsetmacro{\xc}{\xscale/2+\xl};

	\pgfmathparse{\xl}		\let\Xa\pgfmathresult
    \pgfmathparse{\yt}		\let\Ya\pgfmathresult
    \coordinate (A) at (\Xa,\Ya);
	\pgfmathparse{\xl+(0.3*\xscale)}		\let\Xaa\pgfmathresult
    \pgfmathparse{\yb+(0.9*\yheight)}		\let\Yaa\pgfmathresult
    \coordinate (A') at (\Xaa,\Yaa);
	\pgfmathparse{\xl}		\let\Xb\pgfmathresult
    \pgfmathparse{\yb}		\let\Yb\pgfmathresult
    \coordinate (B) at (\Xb,\Yb);
	\pgfmathparse{\xl+(0.3*\xscale)}		\let\Xbb\pgfmathresult
    \pgfmathparse{\yb+(0.1*\yheight)}		\let\Ybb\pgfmathresult
    \coordinate (B') at (\Xbb,\Ybb);
	\pgfmathparse{\xr}		\let\Xc\pgfmathresult
    \pgfmathparse{\yb}		\let\Yc\pgfmathresult
    \coordinate (C) at (\Xc,\Yc);
	\pgfmathparse{\xl+(0.7*\xscale)}		\let\Xcc\pgfmathresult
    \pgfmathparse{\yb+(0.1*\yheight)}		\let\Ycc\pgfmathresult
    \coordinate (C') at (\Xcc,\Ycc);
	\pgfmathparse{\xr}		\let\Xd\pgfmathresult
    \pgfmathparse{\yt}		\let\Yd\pgfmathresult
    \coordinate (D) at (\Xd,\Yd);
	\pgfmathparse{\xl+(0.7*\xscale)}		\let\Xdd\pgfmathresult
    \pgfmathparse{\yb+(0.9*\yheight)}		\let\Ydd\pgfmathresult
    \coordinate (D') at (\Xdd,\Ydd);
	\pgfmathparse{\xc}		\let\Xe\pgfmathresult
    \pgfmathparse{\yc}		\let\Ye\pgfmathresult
    \coordinate (E) at (\Xe,\Ye);
    
\draw[line width=1pt] (B) .. controls (A') and (C') .. (D);
\draw (\xc, \yc) node[shape=circle, fill=white] {};
\draw[line width=1pt] (A) .. controls (B') and (D') .. (C);
}
\newcommand{\lcrossup}[4]{
\tikzstyle{cross line}=[preaction={draw=white, -,line width=10pt}];
\pgfmathsetmacro{\xl}{{#1}}
\pgfmathsetmacro{\yb}{{#2}}
\pgfmathsetmacro{\yheight}{{#3}}
\pgfmathsetmacro{\xscale}{{#4}}
\pgfmathsetmacro{\yt}{\yb+\yheight};
\pgfmathsetmacro{\xr}{\xscale+\xl};
\pgfmathsetmacro{\yc}{\yb+\yheight/2};
\pgfmathsetmacro{\xc}{\xscale/2+\xl};

	\pgfmathparse{\xl}		\let\Xa\pgfmathresult
    \pgfmathparse{\yt}		\let\Ya\pgfmathresult
    \coordinate (A) at (\Xa,\Ya);
	\pgfmathparse{\xl+(0.3*\xscale)}		\let\Xaa\pgfmathresult
    \pgfmathparse{\yb+(0.9*\yheight)}		\let\Yaa\pgfmathresult
    \coordinate (A') at (\Xaa,\Yaa);
	\pgfmathparse{\xl}		\let\Xb\pgfmathresult
    \pgfmathparse{\yb}		\let\Yb\pgfmathresult
    \coordinate (B) at (\Xb,\Yb);
	\pgfmathparse{\xl+(0.3*\xscale)}		\let\Xbb\pgfmathresult
    \pgfmathparse{\yb+(0.1*\yheight)}		\let\Ybb\pgfmathresult
    \coordinate (B') at (\Xbb,\Ybb);
	\pgfmathparse{\xr}		\let\Xc\pgfmathresult
    \pgfmathparse{\yb}		\let\Yc\pgfmathresult
    \coordinate (C) at (\Xc,\Yc);
	\pgfmathparse{\xl+(0.7*\xscale)}		\let\Xcc\pgfmathresult
    \pgfmathparse{\yb+(0.1*\yheight)}		\let\Ycc\pgfmathresult
    \coordinate (C') at (\Xcc,\Ycc);
	\pgfmathparse{\xr}		\let\Xd\pgfmathresult
    \pgfmathparse{\yt}		\let\Yd\pgfmathresult
    \coordinate (D) at (\Xd,\Yd);
	\pgfmathparse{\xl+(0.7*\xscale)}		\let\Xdd\pgfmathresult
    \pgfmathparse{\yb+(0.9*\yheight)}		\let\Ydd\pgfmathresult
    \coordinate (D') at (\Xdd,\Ydd);
	\pgfmathparse{\xc}		\let\Xe\pgfmathresult
    \pgfmathparse{\yc}		\let\Ye\pgfmathresult
    \coordinate (E) at (\Xe,\Ye);

\draw[line width=1pt, ->] (B) .. controls (A') and (C') .. (D);
\draw (\xc, \yc) node[shape=circle, fill=white] {};
\draw[line width=1pt, <-] (A) .. controls (B') and (D') .. (C);
}
\newcommand{\lcrossdown}[4]{
\tikzstyle{cross line}=[preaction={draw=white, -,line width=10pt}];
\pgfmathsetmacro{\xl}{{#1}}
\pgfmathsetmacro{\yb}{{#2}}
\pgfmathsetmacro{\yheight}{{#3}}
\pgfmathsetmacro{\xscale}{{#4}}
\pgfmathsetmacro{\yt}{\yb+\yheight};
\pgfmathsetmacro{\xr}{\xscale+\xl};
\pgfmathsetmacro{\yc}{\yb+\yheight/2};
\pgfmathsetmacro{\xc}{\xscale/2+\xl};

	\pgfmathparse{\xl}		\let\Xa\pgfmathresult
    \pgfmathparse{\yt}		\let\Ya\pgfmathresult
    \coordinate (A) at (\Xa,\Ya);
	\pgfmathparse{\xl+(0.3*\xscale)}		\let\Xaa\pgfmathresult
    \pgfmathparse{\yb+(0.9*\yheight)}		\let\Yaa\pgfmathresult
    \coordinate (A') at (\Xaa,\Yaa);
	\pgfmathparse{\xl}		\let\Xb\pgfmathresult
    \pgfmathparse{\yb}		\let\Yb\pgfmathresult
    \coordinate (B) at (\Xb,\Yb);
	\pgfmathparse{\xl+(0.3*\xscale)}		\let\Xbb\pgfmathresult
    \pgfmathparse{\yb+(0.1*\yheight)}		\let\Ybb\pgfmathresult
    \coordinate (B') at (\Xbb,\Ybb);
	\pgfmathparse{\xr}		\let\Xc\pgfmathresult
    \pgfmathparse{\yb}		\let\Yc\pgfmathresult
    \coordinate (C) at (\Xc,\Yc);
	\pgfmathparse{\xl+(0.7*\xscale)}		\let\Xcc\pgfmathresult
    \pgfmathparse{\yb+(0.1*\yheight)}		\let\Ycc\pgfmathresult
    \coordinate (C') at (\Xcc,\Ycc);
	\pgfmathparse{\xr}		\let\Xd\pgfmathresult
    \pgfmathparse{\yt}		\let\Yd\pgfmathresult
    \coordinate (D) at (\Xd,\Yd);
	\pgfmathparse{\xl+(0.7*\xscale)}		\let\Xdd\pgfmathresult
    \pgfmathparse{\yb+(0.9*\yheight)}		\let\Ydd\pgfmathresult
    \coordinate (D') at (\Xdd,\Ydd);
	\pgfmathparse{\xc}		\let\Xe\pgfmathresult
    \pgfmathparse{\yc}		\let\Ye\pgfmathresult
    \coordinate (E) at (\Xe,\Ye);

\draw[line width=1pt, <-] (B) .. controls (A') and (C') .. (D);
\draw (\xc, \yc) node[shape=circle, fill=white] {};
\draw[line width=1pt, ->] (A) .. controls (B') and (D') .. (C);
}
\newcommand{\standin}[5]{
\tikzstyle{cross line}=[preaction={draw=white, -,line width=10pt}];
\pgfmathsetmacro{\xl}{{#1}}
\pgfmathsetmacro{\yb}{{#2}}
\pgfmathsetmacro{\yheight}{{#3}}
\pgfmathsetmacro{\xscale}{{#4}}
\pgfmathsetmacro{\yt}{\yb+\yheight};
\pgfmathsetmacro{\xr}{\xscale+\xl};
\pgfmathsetmacro{\yc}{\yb+\yheight/2};
\pgfmathsetmacro{\xc}{\xscale/2+\xl};
\draw[line width=1pt] (\xl,\yb) rectangle (\xr, \yt);
\draw (\xc, \yc) node {\textbf{#5}};
}
\newcommand{\rf}[1]{{\overline{#1}}}%Notation for elements of I
\newcommand{\bigrelt}[1]{{\hat{#1}}}%Notation for Z[I] as bigraded degree
\newcommand{\bigrset}{{\bigrelt{X}}}%Notation for set of bigraded degrees
\newcommand{\bigrdeg}[1]{{\norm{#1}}}%Notation for bigraded degree of an element
\newcommand{\sig}{{\rm sig}}
\newcommand{\Qqt}{{\Q(q,\bt)}}
\newcommand{\Qqtt}{{\Qqt^\tau}}
\newcommand{\hotimes}{{\widehat{\otimes}}}
\newcommand{\hV}{{\widehat{V}}}
\newcommand{\Twnat}{{\Tw^\natural}}
\renewcommand{\wr}{{\rm wr}}
\begin{document}

\title
[]{Quantum $\osp(1|2n)$ knot invariants are the same as \\quantum $\so(2n+1)$ knot invariants}
\author{Sean Clark}
\address{Department of Mathematics\\ Northeastern University\\ Boston, MA 02115\\USA}
\curraddr{Max Planck Institute for Mathematics\\ 53111 Bonn\\ Germany}
\email{se.clark@neu.edu\\se.clark@mpim-bonn.mpg.de}
\begin{abstract}
We show that the quantum covering group associated to
$\osp(1|2n)$ has an associated colored quantum knot invariant 
\`a la Reshetikhin-Turaev, which specializes to a quantum
knot invariant for $\osp(1|2n)$, and to the usual quantum knot
invariant for $\so(1+2n)$.
We then show that these knot invariants are the same, up
to a change of variables and a constant factor depending on the knot
and weight.
\end{abstract}
\maketitle
\section{Introduction}

\subsection{}
Quantum enveloping algebras associated to Kac-Moody Lie algebras are central
objects in mathematics, which have many remarkable connections to geometry, combinatorics,
mathematical physics, and other areas. One such connection was produced by
Reshetikhin and Turaev \cite{T, RT} by relating the representation theory of these quantum enveloping
algebras to Laurent polynomial knot invariants, such as the 
(colored) Jones polynomial and the HOMFLYPT polynomial. 
Many other connections have arisen from
the categorification of quantum enveloping algebras and their representations \cite{KL, R}. 
It was recently shown by Webster \cite{Web} that in fact, one can categorify all Reshetikhin-Turaev
invariants using the machinery of categorified quantum enveloping algebras. This procedure
generalizes Khovanov's homological categorification of the 
Jones polynomial \cite{Kh}. We can summarize some of these connections in the
picture in Figure 1, where ``Decat.'' refers to the appropriate decategorification, ``RT'' stands for the Reshetikhin-Turaev procedure
for constructing the Jones polynomial from the standard quantum $\fsl(2)$
representation, and ``Web'' stands for Webster's categorification of RT
which produces Khovanov homology.

\begin{figure}
\begin{minipage}{.45\textwidth}\label{fig:non-odd}
\centering
\begin{tikzpicture}[scale=.9]
\draw (-1,1) node[left]{KH};
\draw (-1,-1) node[left]{Jones};
\draw (1,1) node[right]{$\dot{\mathcal U_q}(\fsl(2))$};
\draw (1,-1) node[right]{$U_q(\fsl(2))$};
\draw[snake, ->] (1.6,.6)--(1.6,-.6) node[midway,right]{Decat.};
\draw[snake, ->] (-1.6,.6)--(-1.6,-.6) node[midway,right]{Decat.};
\draw[<-] (-1,1)--(1,1) node[midway,above]{Web};
\draw[<-] (-1,-1)--(1,-1) node[midway,above]{RT};
\end{tikzpicture}
\caption{}
\end{minipage}
\begin{minipage}{.45\textwidth}\label{fig:odd}
\centering
\begin{tikzpicture}[scale=.9]
\draw (-1,1) node[left]{oKH};
\draw (-1,-1) node[left]{Jones};
\draw (1,1) node[right]{$\dot{\mathcal U}_{q,\pi}(\osp(1|2))$};
\draw (1,-1) node[right]{$U_{q,\pi}(\osp(1|2))$};
\draw[snake, ->] (1.6,.6)--(1.6,-.6) node[midway,right]{Decat.};
\draw[snake, ->] (-1.6,.6)--(-1.6,-.6) node[midway,right]{Decat.};
\draw[dotted, <-] (-1,1)--(1,1) node[midway,above]{?};
\draw[dotted, <-] (-1,-1)--(1,-1) node[midway,above]{RT?};
\end{tikzpicture}
\caption{}
\end{minipage}
\end{figure}

This beautiful picture recently developed a twist with the discovery of 
``odd Khovanov homology'' \cite{ORS},
an alternate homological categorification of the Jones polynomial. 
This discovery has spurred
a program of ``oddification'': providing analogues of (categorified) 
quantum groups for this odd Khovanov homology
by developing ``odd'' analogues of standard constructions \cite{EKL, EL, MW}.
In particular, one would like an ``odd (categorified) $U_q(\fsl(2))$'' 
which could produce odd Khovanov homology in a similar way to that described
in Figure 1. In particular, the decategorified ``odd'' quantum group 
should produce the Jones polynomial through some analogue of the 
Reshetikhin-Turaev procedure.
It has been proposed \cite{HW,EL} that such categorifications might naturally
arise through categorifying the quantum covering group $U_{q,\pi}(\osp(1|2))$;
in other words, producing a diagram such as in Figure 2. 

This proposal has some heuristic evidence
from the work of Mikhaylov and Witten \cite{MW}, who have produced
candidates for ``odd link homologies'' categorifying $\so(1+2n)$-invariants
via topological quantum field theories using the orthosymplectic supergroups.
This suggests that the conjecture represented by Figure 2 
should be generalized to include colored link
invariants associated to $\osp(1|2n)$ for any $n\geq 1$. 
Moreover, it has been shown by Blumen \cite{Bl} that
the $\osp(1|2n)$ and $\so(2n+1)$ invariants which are colored by the standard
$(2n+1)$-dimensional representations are relattabed up to a 
variable substitution. However, it
has not been known that the Jones polynomial can be constructed from
the Reshetikhin-Turaev procedure on $U_{q,\pi}(\osp(1|2))$, much less
any relation between super and non-super colored knot invariants in 
higher rank.

\subsection{}
A quantum covering group is an algebra $\UU=U_{q,\pi}(\mathfrak{g})$ 
that marries the quantum enveloping superalgebra
of an anisotropic Kac-Moody Lie superalgebra (e.g. $\mathfrak g=\osp(1|2n)$) with the quantum enveloping algebra of 
its associated Kac-Moody Lie algebra, which is obtained by forgetting the parity
in the root datum (e.g. $\so(1+2n)$). This is done by introducing a new ``half-parameter'' $\pi$ satisfying $\pi^2=1$, and substituting $\pi$ everywhere a sign associated to the superalgebra braiding
should appear; such algebras were defined and studied in detail
in the series of papers \cite{CW, CHW1, CHW2, CFLW, C, CH}.

These quantum covering groups retain the many nice properties of usual quantum groups, such
as a Hopf structure; a quasi-$\cR$-matrix \`a la Lusztig \cite[Chapter 4]{L93};
a category $\catO$; and even canonical bases.
A key feature of a quantum covering group is that by specializing $\pi=1$ (respectively, $\pi=-1$),
we obtain the quantum enveloping (super)algebra associated to the Kac-Moody Lie (super)algebra.
Moreover, as discovered in \cite{CFLW}, the quantum algebra and quantum superalgebra can be identified 
by a twistor map; that is, an automorphism of (an extension of) the covering quantum group which sends $\pi\mapsto -\pi$ and $q\mapsto \bt^{-1}q$,
where $\bt^2=-1$. 

In this paper, we use the machinery of covering quantum groups to construct 
``quantum covering knot invariants'':  knot invariants which arise from 
the representation theory of the finite type quantum covering groups  
\`{a} la Turaev \cite{T}. (For our purpose, we do not need the additional
ribbon structure of \cite{RT}.)
To wit, consider the quantum covering group associated to the Lie superalgebra
$\osp(1|2n)$.
We first associate a $\UU$-module homomorphism to each elementary
tangle (cups, caps, crossings) such that a straight strand is just the identity map, 
along with an interpretation
of combining tangles (with joining top-to-bottom 
being composition of the associated maps, 
and placing along-side
being tensor products of the maps). An arbitrary tangle can then be framed and 
associated with a $\UU$-module homomorphism
by ``slicing'' the diagram (that is, cutting it into vertical chunks containing at most one elementary diagram alongside any number of straight strands). Each slice corresponds to a $\UU$-module homomorphism, and
the tangle is sent to the composition of these maps. Note that a priori,
this assignment is not unique, as many distinct slice diagrams and framings
exist for an arbitrary tangle.

We then derive some identities with these maps that
are versions of Turaev moves on the associated diagrams. 
These identities show that the map
isn't dependent on the choice of slice diagram, but factors of $\pi$
keep it from being an invariant of oriented framed tangles.
In order to eliminate these factors, we need
to expand our base ring to $\Qqtt$, where $\tau^2=\pi$,
and renormalize the maps corresponding to certain elementary diagrams.
Finally, a normalization factor (depending on the writhe of the tangle)
yields a oriented tangle invariant (see Theorem \ref{thm:knot invariant}).

In the rank 1 uncolored case, this invariant is simply the (unnormalized)
Jones polynomial in the variable $\tau^{-1}q$ (see Example \ref{ex:rk1knot}).
This suggests that the $\pi=-1$ (i.e. $\tau=\bt$) specialization 
of the knot invariant, viewed as a function of $q$, should be related to the $\pi=1$ (i.e. $\tau=1$)
specialization, viewed as a function of $\bt^{-1}q$.
To make this connection precise, we further develop the theory of twistors
(cf. \cite{CFLW, C}) to define a general operator on tensor powers of $\UU$ and compatible operators on its representations.
In particular, we show that the twistors $\Tw$ on representations 
$\bt$-commute with the maps $S$ representing slices of tangles; that is,
$\Tw\circ S=\bt^{x}S\circ \Tw$ for some $x\in\Z$. 

Once this is done, we obtain the following theorem 
(combining Theorems \ref{thm:knot invariant} and \ref{thm:twistor vs knot invariant}).
\begin{thm*}
Let $K$ be any oriented knot and $\lambda\in X^+$ a dominant weight.
There is a functor from the category $\mathcal{OTAN}$ of oriented tangles 
modulo isotopy to the category $\catO$ of $\UU$-module representations
which sends $K$ to a constant $J_K^\lambda(q,\tau)\in \Qqtt$,
which we call the {\em covering knot invariant} of $K$.
Moreover, let 
${}_{\so}J_K^\lambda(q)=J_K^\lambda(q,1)$ and 
${}_{\osp}J_K^\lambda(q)=J_K^\lambda(q,\bt)$ denote the specializations 
of the covering knot invariant to $\tau=1$ and $\tau=\bt$.
Then 
\[{}_{\osp}J_K^\lambda(q)=\bt^{\star(K,\lambda)}{}_{\so}J_K^\lambda(\bt^{-1} q),\]
for some
$\star(K,\lambda)\in\Z$.
\end{thm*}

In particular, this shows that, after extending scalars, there is indeed
a map RT as in Figure 2, and in fact such a map exists for all colored
link invariants of any rank. It remains to develop an analogue of the
construction in \cite{Web} to complete the picture, 
though difficulties abound. For example, it is not necessarily clear how
to extend the categorification to $\Qqtt$.
Moreover, the categorification of covering algebra representations is not yet
developed enough to produce the analogous machinery to \cite{Web}.
We hope that these results will help cast light on these
remaining questions.

\subsection{} The paper is organized as follows. In Section 2, we recall the definition of quantum covering $\osp(1|2n)$, denoted by $\UU$, and
set our conventions. We also develop some additional facts about representations of $\UU$,
specifically about dual modules and (co)evaluation morphisms, 
and produce a universal-$\cR$-matrix, which we will simply denote by $\cR$, from the quasi-$\cR$-matrix defined in \cite{CHW1}.
In Section 3, we use $\cR$ and the (co)evaluation morphisms to define an associated knot invariant
by interpreting the maps in terms of the usual graphical calculus; that is, maps are represented
by a finite number of labeled, non-intersecting oriented strands such that 
the $\cR$-matrix is a positive crossing, the (co)evaluation morphisms are various cups and caps, 
and orientation is determined by whether the associated module in the domain/range is the dual module or not. We show that this graphical calculus 
is almost an framed oriented tangle invariant, and is indeed
an oriented tangle invariant
after renormalizing these elementary diagrams by an integer power of $\tau$
and a factor depending on the writhe.
Finally, in Section 4 we use the twistor maps
introduced in \cite{CFLW,C} to relate the morphisms in the $\pi=\pm 1$ cases.
In particular, we develop some further details about the Hopf structure and representation theory of the 
enhanced quantum group $\hatU$, and construct twistors on tensor products of simple modules
and their duals. We then show that these twistors almost commute (up to an integer power of $\bt$)
with the cups, caps, and crosssings, allowing us to relate the $\so$ and $\osp$ knot invariants.
\vspace{1em}

\noindent\textbf{Acknowledgements.} I would like to thank
David Hill for first suggesting this project of constructing
the quantum covering knot invariants,
and Matt Hogancamp for helping me learn
about quantum knot invariants at this projects inception.
I would also like to thank 
Aaron Lauda, Weiqiang Wang, and Ben Webster for 
stimulating conversations about this project.

\section{Quantum covering $\osp(1|2n)$}

We begin by recalling the definition of quantum covering
algebra associated to $\osp(1|2n)$ and setting our notations. 
We then elaborate on the representation theory of this algebra.

\subsection{Root data}\label{subsec:rootdata}
Let $I=I_0\coprod I_1$ with $I_0=\set{\rf 1,\ldots, \rf{n-1}}$ and $I_1=\set{\rf n}$, and
define the parity $p(i)$ of $i\in I$ by $i\in I_{p(i)}$.
For $1\leq r,s\leq n$, we define \[{\rf r}\cdot {\rf s}=\begin{cases} 2 &\text{ if } r=s=n\\
4 &\text{ if } r=s\neq n\\
-2&\text{ if }r=s\pm 1\\
0&\text{otherwise}
\end{cases},\qquad d_{\rf r}=\frac{{\rf r}\cdot{\rf r}}{2},\]
and note that $p({\rf r})\equiv d_{\rf r}\ ({\rm mod}\ 2)$. Then $(I,\cdot)$ is a bar-consistent anisotropic super Cartan datum (see \cite{CHW1}).
We extend $\cdot$ to a symmetric bilinear pairing on $\Z[I]$ and $p$ to a parity function $p:\Z[I]\rightarrow \Z/2\Z$.
Moreover, for $\nu=i_1+\ldots+i_t\in\N[I]$, we set
\begin{equation}\label{eq:bp and bullet}
\height\ \nu=t,\quad \bp(\nu)=\sum_{1\leq r<s\leq t} p(i_r)p(i_s),\qquad \bullet(\nu)=\sum_{1\leq r<s\leq t} i_r\cdot i_s.
\end{equation}
Let $\Phi^+\subset\N[I]$ denote the set of positive roots, and set 
\begin{equation}\label{eq:rho}
\rho=\sum_{\alpha\in \Phi^+} \alpha=\sum_{i\in I} \rho_i i\in\N[I].
\end{equation} 
Note that we have $i\cdot\rho =i\cdot i$ for all $i\in I$.

Let $Y=\Z[I]$ be the root lattice and $X=\Hom(\Z[I],\Z)$ be the weight lattice, and 
let $\ang{\cdot,\cdot}:Y\times X\rightarrow \Z$ be the natural pairing.
We also identify $\Z[I]$ as a subspace of $X$ so that $\ang{\rf r,\rf s}=2\frac{{\bf r}\cdot{\rf s}}{{\bf r}\cdot{\rf r}}$.
If $\nu=\sum_{i\in I} \nu_i i\in \Z[I]$, we set
\begin{equation}\label{eq:tilderoot}
\tilde\nu=\sum_{i\in I} d_i\nu_i i\in \Z[I]
\end{equation}
and note $\ang{\tilde\nu,\mu}=\nu\cdot\mu$ for any $\nu,\mu\in \Z[I]$;
in particular, observe that for any $i\in I$,
\begin{equation}\label{eq:tilderho}
\ang{\tilde\rho,i}=i\cdot i.
\end{equation}

Then $((I,\cdot), X, Y, \ang{\cdot,\cdot})$ is the root datum associated to $\osp(1|2n)$,
and forgetting the parity on the root datum yields the root datum associated to $\so(1+2n)$.
As usual, we define the dominant weights to be $X^+=\set{\lambda\in X\mid \ang{i,\lambda}\geq 0\text{ for all } i\in I}$.

\begin{example}\label{ex:rank1lattices}
	Throughout the paper, we will discuss some examples in the simplest case:
	$n=1$. In this case, we identify $X=\Z$ where 
	$\ang{\rf 1, k}=k$ for $k\in \Z$. 
	Then $Y=\Z\rf 1$ can be identified with subset $2\Z\subset X$.
	We will freely use these identifications in later examples.
\end{example}

Note that the weight lattice $X$ doesn't naturally have a parity grading compatible
with that on $\Z[I]$. However, a parity grading on $X$ can be defined as follows.
First observe that $X$ carries an action of the Weyl group $W$ of type $B_n$,
and that in particular $\lambda-w\lambda\in\Z[I]$ for any $\lambda\in X$.
Let $w_0$ denote the longest element of $B_n$.
If $\lambda\in X$, then $w_0\lambda=-\lambda$ hence  
$2\lambda=\lambda-w_0\lambda\in \Z[I]$.
We write $2\lambda=\sum_{i\in I} (2\lambda)_i i$ and define 
\begin{equation}\label{eq:weight parity}
P(\lambda)=p(2\lambda)\equiv (2\lambda)_{\rf n}\text{ (mod 2)}.
\end{equation} 
This defines a parity grading on $X$, though it is obviously not compatible with the grading on $\Z[I]$ (indeed, for any $i\in I$ we have $P(i)=p(2i)=2p(i)\equiv 0$ modulo 2).
In particular, $P$ is constant on cosets $X/\Z[I]$. 
This parity can be expressed explicitly in terms of the rank and weight as follows.
\begin{lem} \label{lem:weight parity equiv} Let notations be as above.
Then $P(\lambda)\equiv n\ang{\rf n,\lambda}$ mod 2.
\end{lem}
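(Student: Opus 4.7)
The plan is to exploit $\Z$-linearity: both sides of the claimed congruence are $\Z$-linear functions $X \to \Z/2\Z$, so it is enough to verify the identity on a $\Z$-basis of $X$. For the left-hand side, observe that $\lambda \mapsto 2\lambda$ is a $\Z$-linear embedding $X \hookrightarrow \Z[I]$ (since $2\lambda = \lambda - w_0\lambda$, as noted just before the statement), and that extracting the $\bar{n}$-coefficient modulo $2$ is $\Z$-linear on $\Z[I]$; hence $P$ is $\Z$-linear. The right-hand side $\lambda \mapsto n\ang{\bar{n},\lambda} \bmod 2$ is manifestly linear.

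A convenient $\Z$-basis of $X$ is the set of fundamental weights $\omega_1,\ldots,\omega_n$, characterized by $\ang{\bar{j},\omega_k} = \delta_{jk}$, so it suffices to check the identity for $\lambda = \omega_k$. On these weights the right-hand side equals $n\delta_{kn} \bmod 2$. To compute the left-hand side I would invoke the standard expansion of the fundamental weights of $B_n$ in terms of simple roots:
\[
\omega_k = \bar{1} + 2\bar{2} + \cdots + (k-1)\bar{k-1} + k\bigl(\bar{k} + \bar{k+1} + \cdots + \bar{n}\bigr) \quad (k < n),
\]
\[
\omega_n = \tfrac12\bigl(\bar{1} + 2\bar{2} + \cdots + n\bar{n}\bigr).
\]
These formulas are verified directly by pairing each expression with every simple root and using the type $B_n$ Cartan matrix recorded from the pairings in Section~\ref{subsec:rootdata}. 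Consequently $(2\omega_k)_{\bar{n}} = 2k \equiv 0 \pmod 2$ when $k < n$, while $(2\omega_n)_{\bar{n}} = n$; in both cases this equals $n\delta_{kn}$ modulo $2$.

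Combining these observations, the identity $P(\lambda) \equiv n\ang{\bar{n},\lambda} \pmod 2$ holds on each fundamental weight, and therefore on every $\lambda \in X$ by linearity. The only nontrivial input is the $B_n$-specific expression for the fundamental weights, so I do not anticipate any serious obstacle; the whole argument is really a two-line check after noting linearity and choosing a good basis.
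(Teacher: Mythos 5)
Your proof is correct, but it follows a genuinely different route from the paper's. The paper argues directly with the coefficients of $2\lambda$ in the simple roots: from $\ang{\rf s,\lambda}=(2\lambda)_{\rf s}-\tfrac12((2\lambda)_{\rf{s+1}}+(2\lambda)_{\rf{s-1}})$ it extracts the integrality constraint $(2\lambda)_{\rf{s-1}}\equiv(2\lambda)_{\rf{s+1}} \pmod 2$, chains these congruences from $(2\lambda)_{\rf 0}=0$, and splits into the cases $n$ even and $n$ odd. You instead observe that both sides are $\Z$-linear in $\lambda$ (which is true: $\lambda\mapsto 2\lambda$ lands in $\Z[I]$ and extracting the $\rf n$-coefficient mod $2$ is linear, and $\set{\omega_k}$ is indeed the dual basis to $I$ in $X=\Hom(\Z[I],\Z)$), reduce to the fundamental weights, and read off $(2\omega_k)_{\rf n}$ from the explicit type $B_n$ expansions. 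I checked your formulas against the Cartan pairings of \S\ref{subsec:rootdata} ($\ang{\rf n,\rf{n-1}}=-2$, $\ang{\rf{n-1},\rf n}=-1$, with $\rf n$ short), and they are correct, giving $(2\omega_k)_{\rf n}=2k$ for $k<n$ and $(2\omega_n)_{\rf n}=n$, matching $n\delta_{kn}$ mod $2$. The trade-off is that your argument is shorter and more conceptual but imports (or must re-derive) the closed-form fundamental-weight expansions, whereas the paper's argument is entirely self-contained, using only the integrality of the pairings; both are complete proofs.
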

\begin{proof}
	Let $1\leq s\leq n-1$ and for convenience set the notation $(2\lambda)_{\rf 0}=0$.
	We have 
	\[\ang{{\rf s},\lambda}=\frac12\ang{{\rf s},2\lambda}=\frac{1}{2}\sum_{i\in I} (2\lambda)_i\ang{{\rf s},i}
	=(2\lambda)_{\rf s}-\frac12((2\lambda)_{\rf{s+1}}+(2\lambda)_{\rf{s-1}}),\]
	\[\ang{\rf n,\lambda}=(2\lambda)_{\rf n}-(2\lambda)_{\rf{n-1}}.\]
	In particular, we see that
	$\frac12((2\lambda)_{\rf{s+1}}+(2\lambda)_{\rf{s-1}})=(2\lambda)_{\rf s}-\ang{{\rf s},\lambda}\in \N$, thus
	$(2\lambda)_{\rf{s-1}}\equiv (2\lambda)_{\rf{s+1}}$ modulo 2 for all $1\leq s\leq n-1$.
	Therefore, $(2\lambda)_{\rf r}\equiv (2\lambda)_{\rf s}$ modulo 2 
	whenever $r\equiv s$ modulo 2.
	
	In particular, since $(2\lambda)_{\rf 0}=0$, we see that $(2\lambda)_{\rf s}\equiv 0$ modulo $2$
	for each $s\equiv 0$ modulo $2$. If $n\equiv 0$ modulo 2, then $P(\lambda)\equiv (2\lambda)_{\rf n} \equiv 0$ modulo $2$.
	If $n\equiv 1$ modulo 2, then $(2\lambda)_{\rf n}=\ang{\rf n,\lambda}-(2\lambda)_{\rf{n-1}} \equiv \ang{\rf n,\lambda}$ modulo 2.
\end{proof}

\begin{example}\label{ex:rk1weightparity}
	When $n=1$, recall from Example \ref{ex:rank1lattices} that
	we identify $X=\Z$. Then for any $k\in \Z$, $P(k)\equiv(1)\ang{\mathbf 1, k}\equiv k$ modulo 2, hence our $P$-grading is just the natural parity grading on $\Z$.
\end{example}

Throughout, we will consider objects graded by $\bigrset=X\times(\Z/2\Z)$.
If $M$ is $\bigrset$-graded and $m\in M$ is homogeneous,
we let $\bigrdeg{m}$ (resp. $|m|$; $p(m)$)
denote its $\bigrset$-degree (resp. $X$-degree; $\Z/2\Z$-degree or parity).
Further, for $\zeta=(\lambda, \epsilon)\in\bigrset$,
we will set $|\zeta|=\lambda$, $p(\zeta)=\epsilon$, and $P(\zeta)=P(\lambda)$.
(Note that $P(\zeta)$ is not the same as $p(\zeta)$ in general! They are independent
quantities.)

For $\lambda\in X$, let $\bigrelt\lambda=(\lambda,0)\in \bigrset$.
We will freely identify $\Z[I]$ with $\set{(\nu,p(\nu))\mid \nu\in \Z[I]}\subset \bigrset$.
In particular, if $\zeta=(\lambda,\epsilon)\in\bigrset$ and $\nu\in\Z[I]$, then
\begin{equation}\label{eq:ZI in X hat}
\zeta+\nu=(\lambda+\nu,\epsilon+p(\nu))\in\bigrset.
\end{equation}
With that in mind, the action of $W$ on $X$ generalizes naturally to $\bigrset$ by setting
\begin{equation}
s_i(\lambda,\epsilon)=(\lambda,\epsilon)-\ang{i,\lambda} i=(\lambda-\ang{i,\lambda} i,\epsilon-\ang{i,\lambda} p(i))
\end{equation}
where $i\in I$ and $s_i$ is the corresponding simple reflection.

Lastly, we have the parity swap function $\Pi:\bigrset\rightarrow \bigrset$
defined by 
\begin{equation}\Pi((\lambda,\epsilon))=(\lambda,1-\epsilon).\end{equation}

\subsection{Parameters}\label{sec:param}

Let $\bt\in\mathbb C$ such that $\bt^2=-1$.
Let $q$ be a formal parameter and let $\tau$ be an indeterminate
such that 
$$
\tau^4=1.
$$ 
For convenience, we will also define 
\[\pi=\tau^2.\]
If $R$ is a commutative ring with 1, define the notations
\begin{equation}
R^\tau=R[\tau]/(\tau^4=1),\quad
R^\pi=R[\pi]/(\pi^2=1).
\end{equation}
Throughout, our base ring will be $\Qqtt$, though
occasionally we will also refer to the subring generated by $\Qq$ and $\pi$,
which we identify with $\Qqp$.

We denote by $\bar{\cdot}:\Qqtt\rightarrow\Qqtt$ the $\Q(\bt)^\tau$-algebra
automorphism satisfying $\bar q=\pi q^{-1}$. We also define the $\Q(\bt)$-algebra
automorphism $\Tw$ given by $\Tw(q)=\bt^{-1} q$ and $\Tw(\tau)=\bt\tau$.
We caution the reader that $\bar{\cdot}$ and $\Tw$ will be used later to denote
extensions of these algebra automorphisms which are defined on 
$\Qqtt$-algebras and $\Qqtt$-modules. 

Given an $\Qqtt$-module (or algebra) $M$ and $x\in\set{\pm 1,\pm\bt}$,
the $\Qqt$-module
(or algebra) $M|_{\tau=x} =\Qqt_{x}\otimes_{\Qqtt} M$, 
where $\Qqt_x=\Qqt$ is viewed
as a $\Qqtt$-module on which $\tau$ acts as multiplication by $x$.
We call this the {\em specialization of $M$ at $\tau=x$ }. Moreover, $\Qqtt$ has orthogonal idempotents
\begin{equation}\label{eq:tau idempotent}
\var_{\bt^k}=\frac{1+ \bt^k\tau+(\bt^k\tau)^2+(\bt^k\tau)^3}{4},\quad 0\leq k\leq 3
\end{equation} 
such that $\Qqtt=\Qqt\var_1\oplus\Qqt\var_\bt\oplus\Qqt\var_{-1}\oplus\Qqt\var_{-\bt}$.
In particular, since $\tau\var_{x}=x \var_{x}$,
we see that for any $\Qqtt$-module $M$, 
\[M|_{\tau=x}\cong \var_{x} M.\]

For $k \in \Z_{\ge 0}$ and $n\in \Z$,
the $(q,\pi)$-quantum integers, along with quantum factorial and quantum binomial coefficients,
are defined as follows (cf. \cite{CHW1}):

\begin{equation}
\label{eq:nvpi}
\begin{split}
\bra{n}_{q,\pi} & 
=\frac{(\pi q)^n-q^{-n}}{\pi q-q^{-1}}, 
\\
\bra{n}_{q,\pi}^!  &= \prod_{l=1}^n \bra{l}_{q,\pi}, 
\\
\bbinom{n}{k}_{q,\pi}
&=\frac{\prod_{l=n-k+1}^n  \big( (\pi q)^{l} -q^{-l} \big)}{\prod_{m=1}^k \big( (\pi q)^{m}- q^{-m} \big)}.
\end{split}
\end{equation}

If $\nu=\sum_{i\in I} \nu_i i\in \Z[I]$, we write
\[q_\nu=\prod_{i\in I}q^{\nu_i d_i},\qquad 
\tau_\nu=\prod_{i\in I}\tau^{\nu_i d_i},\qquad 
\pi_\nu=\prod_{i\in I}\pi^{\nu_i d_i}=\pi^{p(\nu)}, \qquad 
\bt_\nu=\prod_{i\in I}\bt^{\nu_i d_i.}\]
In particular, note that $q_i=q^{d_i}$ and $\pi_i=\pi^{d_i}=\pi^{p(i)}$ and set
\[\bra n_i=\bra n_{q_i,\pi_i},\qquad
\bra{n}_{i}^!=
\bra{n}_{q_i,\pi_i}^!, \qquad 
\bbinom{n}{k}_{i}=\bbinom{n}{k}_{q_i,\pi_i}.\]

\subsection{The covering quantum group}

The covering quantum group associated to $\osp(1|2n)$ (as well as some variants) 
was introduced and studied in the series of papers starting with
\cite{CHW1}. We will recall the necessary definitions and elementary facts
now.

\begin{rmk}\label{rem: coefficients}
	Note that contrary to \cite{CHW1} and further papers in that series, 
	we will take coefficients in the larger ring $\Qqtt\supset\Qqp$.
	Nevertheless, all of the results until \S\ref{subsec:renorm} are essentially
	statements over $\Qqp$ which remain true after extending scalars to $\Qqtt$, 
	so the reader may effectively ignore $\tau$ and $\bt$ for the present.
\end{rmk}

\begin{dfn}\cite{CHW1}\label{def:hcqg}
	The half-quantum covering group $\ff$ associated to the anisotropic datum 
	$(I,\cdot)$ is the $\N[I]$-graded $\Qqtt$-algebra
	on the generators $\theta_i$ for $i\in I$ with $|\theta_i|=i$, 
	satisfying the relations
	\begin{equation}\label{eq:thetaserrerel}
	\sum_{k=0}^{b_{ij}} (-1)^k\pi^{\binom{k}{2}p(i)+kp(i)p(j)}
	\bbinom{b_{ij}}{k}_{i}  \theta_i^{b_{ij}-k}\theta_j\theta_i^k=0 
	\;\; (i\neq j),
	\end{equation}
	where $b_{ij}=1-\ang{i,j}$.
\end{dfn}

The algebra $\ff$ carries a non-degenerate bilinear form $(\cdot,\cdot)$
which satisfies 
\begin{equation}\label{eq:ff bilinear form}
(1,1)=1;\quad (\theta_i,\theta_i)=\frac{1}{1-\pi_iq_i^{-2}};
\quad (\theta_ix,y)=(\theta_i,\theta_i)(x,\ir(y));
\end{equation}
where $\ir:\ff\rightarrow \ff$ is the $\Qqtt$-linear map
satisfying $\ir(1)=0$, $\ir(\theta_j)=\delta_{ij}$, and $\ir(xy)=\ir(x)y+\pi^{p(i)p(x)}q^{i\cdot |x|}x\ \ir(y)$.
(Here, and henceforth, $\delta_{x,y}$ is set to be $\delta_{x,y}=1$ if $x=y$ and $0$ otherwise.)
We define the  $\Q(\bt)^\tau$-linear bar involution $\barmap$ on $\ff$ by
\[\bar \theta_i=\theta_i, \quad \bar q=\pi q^{-1}.\]
We also define the $\Qqtt$-linear anti-involution $\sigma$ on $\ff$ by
\[\sigma(\theta_i)=\theta_i,\qquad \sigma(xy)=\sigma(y)\sigma(x),\] 
and the divided powers
\[\theta_i^{(n)}=\theta_i^{n}/\bra{n}_i^!.\]

\begin{dfn} \cite{CHW1}
	\label{definition:cqg}
	The quantum covering group 
	$\UU$ associated to $((I,\cdot),\ Y,\ X,\ \ang{\cdot,\cdot})$ is the $\Qqtt$-algebra with generators
	$E_i, F_i$, $K_\mu$, and $J_\mu$, for  $i\in I$ and $\mu\in Y$, subject to the 
	relations:
	\begin{equation}\label{eq:JKrels}
	J_\mu J_\nu=J_{\mu+\nu},\quad K_\mu K_\nu=K_{\mu+\nu},\quad K_0=J_0=J_\nu^2=1,\quad
	J_\mu K_\nu=K_\nu J_\mu,
	\end{equation} 
	\begin{equation}\label{eq:Jweightrels}
	J_\mu E_i=\pi^{\ang{\mu,i}} E_i J_\mu,\quad J_\mu F_i=\pi^{-\ang{\mu,i}} F_i J_\mu,
	\end{equation} 
	\begin{equation}\label{eq:Kweightrels}
	K_\mu E_i=q^{\ang{\mu,i}} E_i K_\mu,\quad K_\mu F_i=q^{-\ang{\mu,i}} F_i K_\mu,
	\end{equation} 
	\begin{equation}\label{eq:commutatorrelation}
	E_iF_j-\pi^{p(i)p(j)}F_jE_i=\delta_{ij}\frac{J_{d_i i}K_{d_i i}-K_{-d_i i}}{\pi_i q_i- q_i^{-1}},
	\end{equation}  
	\begin{equation}\label{eq:Eserrerel}
	\sum_{k=0}^{b_{ij}} (-1)^k\pi^{\binom{k}{2}p(i)+kp(i)p(j)}\bbinom{b_{ij}}{k}_{q_i,\pi_i} 
	E_i^{b_{ij}-k}E_jE_i^k=0 \;\; (i\neq j),
	\end{equation} 
	\begin{equation}\label{eq:Fserrerel}
	\sum_{k=0}^{b_{ij}} (-1)^k\pi^{\binom{k}{2}p(i)+kp(i)p(j)}\bbinom{b_{ij}}{k}_{q_i,\pi_i} 
	F_i^{b_{ij}-k}F_jF_i^k=0 \;\; (i\neq j),
	\end{equation} 
	for $i,j\in I$ and $\mu,\nu\in Y$. 
\end{dfn}
We note that since in this case $Y=\Z[I]$, $\UU$ is actually generated
by $E_i, F_i, K_i,J_i$ for $i\in I$.
For notational convenience, we set $\tJ_\nu=J_{\tilde\nu}$ and $\tK_\nu=K_{\tilde \nu}$ 
so that \eqref{eq:commutatorrelation} becomes
\[E_iF_j-\pi^{p(i)p(j)}F_jE_i=\delta_{ij}\frac{\tJ_{i}\tK_{i}-\tK_{i}^{-1}}{\pi_i q_i- q_i^{-1}}.\]
We also equip $\UU$ with a bar involution $\bar{\cdot}:\UU\rightarrow \UU$ extending
that on $\Qqtt$ by setting $\bar{E_i}=E_i$, $\bar{F_i}=F_i$, $\bar{K_\mu}=J_\mu K_{-\mu}$,
$\bar J_\mu=J_\mu$.

The algebras $\UU$ and $\ff$ are related in the following way. 
Let $\Um$ be the subalgebra generated by $F_i$ with $i\in I$,
$\Up$ be the subalgebra generated by $E_i$ with $i\in I$,
and $\Uz$ be the subalgebra generated by $K_\nu$ and $J_\nu$ for
$\nu\in Y$. There is an isomorphisms $\ff\rightarrow\Um$ 
(resp. $\ff\rightarrow \Up$) defined by $\theta_i\mapsto \theta_i^-=F_i$
(resp. $\theta_i\mapsto \theta_i^+=E_i$).
We let $E_i^{(n)}=(\theta_i^{(n)})^+$ and $F_i^{(n)}=(\theta_i)^{(n)})^-$.
As shown in \cite{CHW1}, there is a triangular decomposition
\[\UU\cong \Um\otimes\Uz\otimes\Up\cong \Up\otimes \Uz\otimes \Um.\]
There is also a root space decomposition
\[\UU=\bigoplus_{\nu\in\Z[I]} \UU_\nu,\qquad \UU_\nu=\set{x\in\UU\mid J_\mu K_{\xi}m=\pi^{\ang{\mu,\nu}} q^{\ang{\xi,\nu}} m}.\]
The root space decomposition induces a parity grading via $p(u)=p(|u|)$,
hence in particular $\UU$ is $\bigrset$-graded.

We say an algebra is a ``Hopf covering algebra''
if it is a $\Z/2\Z$-graded algebra over $R^\pi$, for some commutative ring with identity $R$,
with a coproduct, antipode, and counit satisfying
the usual axioms of a Hopf superalgebra, but with the
braiding replaced by 
$x\otimes y\mapsto \pi^{p(x)p(y)}y\otimes x$.
Then the algebra $\UU$ is a Hopf covering algebra under the
coproduct $\Delta:\UU\rightarrow \UU\otimes \UU$ satisfying
\[\Delta(E_i)=E_i\otimes 1+K_i\otimes E_i,\qquad \Delta(F_i)=F_i\otimes \tK_i^{-1}+1\otimes F_i,\qquad
\Delta(K_\nu)=K_\nu\otimes K_\nu,\qquad \Delta(J_\nu)=J_\nu\otimes J_\nu;\]
the antipode $S:\UU\rightarrow \UU$ satisfying $S(xy)=\pi^{p(x)p(y)}S(y)S(x)$ for $x,y\in \UU$ and
\[S(E_i)=-\tJ_i^{-1}\tK_i^{-1}E_i,\quad S(F_i)=-F_i\tK_i,\quad
S(K_\nu)=K_\nu^{-1},\quad S(J_\nu)=J_\nu^{-1};\]
and the counit $\epsilon:\UU\rightarrow\Qqtt$ satisfying
\[\epsilon(E_i)=\epsilon(F_i)=0,\qquad
\epsilon(K_\nu)=\epsilon(J_\nu)=1.\]
Moreover, for $x\in \ff$, we have that
\begin{equation}\label{eq:antipode formula}
\begin{array}{c}
S^{\pm 1}(x^+)=(-1)^{\height\nu}\pi^{\bp(\nu)} q^{\frac{\nu\cdot \nu}{2}}q_{\mp\nu}\tJ_{-\nu}\tK_{-\nu}\sigma(x)^+\\
S^{\pm 1}(x^-)=(-1)^{\height\nu}\pi^{\bp(\nu)} q^{\frac{-\nu\cdot \nu}{2}}q_{\pm \nu}\sigma(x)^-\tK_{\nu}
\end{array}
\end{equation}

\subsection{$\UU$-modules}\label{subsec:modules}
In this paper, a weight $\UU$-module is a $\UU$-module $M$ with 
a $\bigrset$-grading compatible with the grading on $\UU$, 
such that 
\[M=\bigoplus_{\lambda\in X} M_{\lambda,0}\oplus M_{\lambda,1},\quad M_{\lambda,s}=\set{m\in M\mid p(m)=s,\ J_\mu K_\nu m=\pi^{\ang{\mu,\lambda}}q^{\ang{\nu,\lambda}}m}\]
and each $M_{\lambda,s}$ is a 
free $\Qqtt$-module of finite rank.
For $\lambda\in X$, denote $M_\lambda=M_{\lambda,0}\oplus M_{\lambda,1}$.
We also define the parity-swapped module $\Pi M$ to be $M$ as a vector
space with the same action of $\UU$, but with $\Pi M_{\lambda,s}= M_{\lambda, 1-s}$.
We let $\catO_{\rm fin}$ be the category of weight $\UU$-modules of finite rank over $\Qqtt$.
Henceforth, we shall {\em always} assume our $\UU$-modules
are in $\catO_{\rm fin}$.

We define the (restricted) linear dual of a $\UU$-module $M$
\[M^*=\bigoplus_{\lambda\in X} (M_{\lambda,0})^*\oplus (M_{\lambda,1})^*,\quad
(M_{\lambda,s})^*={\rm Hom}_{\Qqtt}(M_{\lambda,s},\Qqtt).\]
This is again a free $\Qqtt$-module, which has a $\Z/2\Z$ grading induced by that of 
$V$: namely, $p(f)=0$ if $f(v)=0$ for $p(v)=1$, and vice-versa. 
Moreover, the Hopf superalgebra structure of $\UU$
induces an action of $\UU$: for $f\in V^*$ and $x\in \UU$, we define
$xf\in V^*$ by $xf(v)=\pi^{p(f)p(x)}f(S(x)v)$. 
In particular, note that $V^*$ is a $\UU$-module
with $(V^*)_{\lambda,s}=(V_{-\lambda,s})^*$. While $V^*_\lambda$ is therefore ambiguous,
we will always take it to denote $(V^*)_\lambda$. (In other words, our convention is that
taking duals has precedence over taking weight spaces.)

For any $\UU$-modules $V$ and $W$, we can construct the $\UU$-module
$V\otimes W=V\otimes_{\Qqtt} W$ via the coproduct. In particular, we have $\UU$-modules
$V^*\otimes V$ and $V\otimes V^*$,
both of which contain a copy of
the trivial module $V(0)=\Qqtt$ as a direct summand. As the following lemma shows,
there are natural projection and inclusion maps to a copy of the trivial module. We borrow notation from \cite{Ti}.

\begin{lem}\label{lemma:evsandcoevs} Fix a $\UU$-module $V$ and recall
the definition of $\rho$ from \eqref{eq:rho}.
	\begin{enumerate}
		\item Let $\ev_V:V^*\otimes V\rightarrow \Qqtt$ be the $\Qqtt$-linear map
		defined by $v^*\otimes w\rightarrow v^*(w)$. Then $\ev_V$ is a $\UU$-module
		epimorphism.
		\item Let $\qtr_V:V\otimes V^*\rightarrow \Qqtt$ 
		be the $\Qqtt$-linear map defined by 
		$v\otimes w^*\rightarrow \pi^{p(v)p(w)}q^{-\ang{\tilde \rho,|v|}}w^*(v)$. 
		Then $\qtr_V$ is a $\UU$-module epimorphism.
		\item Let $\coev_V:\Qqtt\rightarrow V^*\otimes V$ be the 
		$\Qqtt$-linear map defined by 
		$1\rightarrow \sum_{b\in B} \pi^{p(b)}q^{\ang{\tilde\rho,|b|}}b^*\otimes b$ 
		for some homogeneous $\Qqtt$-basis $B$ of $V$. Then $\coev_V$ is a $\UU$-module
		monomorphism.
		\item Let $\coqtr_V:\Qqtt\rightarrow V\otimes V^*$ be the $\Qqtt$-linear map
		defined by $1\rightarrow \sum_{b\in B} b\otimes b^*$ for some homogeneous $\Qqtt$-basis 
		$B$ of $V$. Then $\coqtr_V$ is a $\UU$-modulemonomorphism.
	\end{enumerate}
\end{lem}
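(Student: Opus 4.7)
The plan is to verify $\UU$-equivariance by testing against the Chevalley generators $E_i, F_i$ and the Cartan group-likes $K_\nu, J_\nu$; surjectivity and injectivity then follow from the observation that the maps are nonzero (pick a nonzero $v \in V$ together with a dual $v^*$ satisfying $v^*(v) = 1$).

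For (1), the verification on $K_\nu, J_\nu$ is immediate: they act diagonally on weight vectors, and for $v^*(w)$ to be nonzero one must have $|v^*| = -|w|$, so the two scalar factors cancel. For $E_i$, expand the coproduct action
\[
\Delta(E_i)(v^* \otimes w) \;=\; E_i v^* \otimes w \;+\; \pi^{p(i) p(v^*)}\, K_i v^* \otimes E_i w,
\]
apply $\ev_V$, and use the definition of the dual action $(E_i v^*)(w) = \pi^{p(v^*) p(i)} v^*(S(E_i) w)$ together with the antipode formula \eqref{eq:antipode formula}. The two resulting terms cancel, reflecting the Hopf axiom $m \circ (S \otimes 1) \circ \Delta(E_i) = \eta \circ \epsilon(E_i) = 0$. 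The argument for $F_i$ is symmetric. Part (4) is verified identically, using $m \circ (1 \otimes S) \circ \Delta$ in place of the other Hopf axiom, and noting that the basis-independence of $\coqtr_V(1) = \sum_b b \otimes b^*$ is an elementary linear algebra check.

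Parts (2) and (3) are the more substantive claims: the factors $\pi^{p(b)}\, q^{\langle \tilde\rho, |b|\rangle}$ are precisely what is needed to correct for the fact that $S^2 \neq \mathrm{id}$, i.e.\ that $V$ and $V^{**}$ are not canonically isomorphic as $\UU$-modules. The crucial input is $\langle \tilde\rho, i\rangle = i \cdot i$ (equation \eqref{eq:tilderho}), which together with the commutation relations \eqref{eq:Jweightrels}--\eqref{eq:Kweightrels} implies that conjugation by $\tJ_\rho \tK_\rho$ agrees with $S^2$ on the generators $E_i, F_i$. Using this, the calculation for $\qtr_V$ proceeds in parallel to (1), the extra factor $q^{-\langle \tilde\rho, |v|\rangle}$ canceling exactly the scalar by which $S^2$ would act, and the factor $\pi^{p(v) p(w)}$ absorbing the sign coming from moving $S$ past the $\pi$-braided tensor product. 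Part (3) is obtained by dualizing this computation on the image of $1$, with the sum over a homogeneous basis $B$ making the result independent of $B$.

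The main obstacle I anticipate is not conceptual but bookkeeping: in the covering setting, each passage through the tensor product picks up a braiding factor $\pi^{p(x) p(y)}$ governed by the parities of the tensor factors, and these must be tracked carefully and shown to cancel against the parity twist $\pi^{p(b)}$ built into $\coev_V$ and the factor $\pi^{p(v) p(w)}$ built into $\qtr_V$. The identity that makes this work is $p(i) \equiv d_i \pmod 2$, which ensures that the parity shift by $\pi^{p(\cdot)}$ and the quantum shift by $q^{\langle \tilde\rho, \cdot\rangle}$ are compatible in the same way; this is the covering-algebra analogue of the classical ribbon element $K_{2\rho}$ controlling the quantum trace.
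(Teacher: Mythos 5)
Your proposal follows essentially the same route as the paper: reduce equivariance to the generators, expand the coproduct, and cancel the two resulting terms using the dual action and the antipode formula \eqref{eq:antipode formula}, with the factors $\pi^{p(\cdot)}q^{\ang{\tilde\rho,\cdot}}$ doing exactly the bookkeeping you describe (the paper simply carries out the $\qtr$ and $\coev$ computations explicitly rather than invoking the $S^2$/ribbon-element heuristic, but the calculation is the same). One small caveat: since $\Qqtt$ is not an integral domain, ``nonzero'' does not by itself yield injectivity of $\coev_V$ and $\coqtr_V$; instead observe that the image of $1$ is a combination of free-basis vectors of the target with unit coefficients.
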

\begin{proof} In the proof, we shall surpress the $V$ subscript on the maps.
	First note that the maps $\coev$ and $\coqtr$ are independent of the choice of basis.
	It is clear that all these maps are $\Qqtt$-linear maps, and it is elementary
	to verify the claims about surjectivity and injectivity.
	Moreover, all the maps are clearly homogeneous since $|v^*|=-|v|$ and $p(v^*)=p(v)$; in particular, 
	the maps $\qtr$ and $\ev$ are homogeneous since $v^*(w)=0$ whenever $|v|\neq |w|$ or 
	$p(v)\neq p(w)$, which occurs exactly when $|v^*\otimes w|\neq 0$ or 
	$p(v^*\otimes w)=1$. 
	
	Then it remains to show these maps preserve 
	the action of $E_i$ and $F_i$ for all $i\in I$, which is equivalent to showing
	\[\ev(\Delta(E_i)v^*\otimes w)=\ev(\Delta(F_i)v^*\otimes w)=0\text{ for all }v,w\in V,\]
	\[\qtr(\Delta(E_i)v\otimes w^*)=\qtr(\Delta(F_i)v\otimes w^*)=0\text{ for all }v,w\in V,\tag{$\star$}\]
	\[\Delta(E_i)\sum_{b\in B} \pi^{p(b)}q^{\ang{\tilde\rho,|b|}}b^*\otimes b=\Delta(F_i)\sum_{b\in B}\pi^{p(b)}q^{\ang{\tilde\rho,|b|}} b^*\otimes b=0\text{ for all }b\in B,\text{ and}\tag{$\star\star$}\]
	\[\Delta(E_i)\sum_{b\in B} b\otimes b^*=\Delta(F_i)\sum_{b\in B} b\otimes b^*=0\text{ for all }b\in B. \]
	We will prove ($\star$) and ($\star\star$) for the action of $E_i$; the remaining cases follow from similar arguments.
	
	First, we show $\qtr(\Delta(E_i)v^*\otimes w)=0$. 
	From weight considerations we have that $\ev(\Delta(E_i)v^*\otimes w)=0$ unless
	$|v|+i=|w|$. In this case, 
	\begin{align*}
	\qtr(\Delta(E_i)&v\otimes w^*)
	=\qtr(E_iv\otimes w^*+\pi_i^{p(v)}(\pi_iq_i)^{\ang{i,|v|}} v\otimes E_iw^*)\\
	&=\pi^{p(E_iv)p(w)}q^{-\ang{\tilde\rho,|E_iv|}}w^*(E_iv)+\pi_i^{p(v)}(\pi_iq_i)^{\ang{i,|v|}} \pi^{p(v)p(E_iw)}q^{-\ang{\tilde\rho,|v|}}(E_iw^*)(v)\\
	&=\pi^{p(v)p(w)+p(i)p(w)}q^{-\ang{\tilde\rho,|v|+i'}}
	\parens{w^*(E_iv)-q_i^{2}(\pi_iq_i)^{\ang{i,|v|}}w^*(\tJ_i^{-1}\tK_i^{-1} E_i v)}\\
	&=\pi^{p(v)p(w)+p(i)p(w)}q^{-\ang{\tilde\rho,|v|+i'}}
	\parens{w^*(E_iv)-w^*(E_i v)}=0.
	\end{align*}
	
	Next, we show that $\Delta(E_i)\sum_{b\in B} \pi^{p(b)}q^{\ang{\tilde\rho,|b|}}b^*\otimes b=0$. 
	Set $B_\lambda=B\cap V_\lambda$, so $B=\coprod_\lambda B_\lambda$.
	First observe that $x=\sum v^*\otimes w=0$ if and only if $x(v'):=\sum v^*(v')w=0$ for all $v'\in V$.
	Then setting $x=\Delta(E_i)\sum_{b\in B} b^*\otimes b$, if
	\[0\neq x=\sum_{b\in B} \pi^{p(b)}q^{\ang{\tilde\rho,|b|}}(E_ib^*\otimes b+\pi_i^{p(b)}(\pi_iq_i)^{-\ang{i,|b|}}b^*\otimes E_ib),\]
	then there must be some $v\in V$ such that \[x(v)=\sum_{b\in B}\pi^{p(b)} q^{\ang{\tilde\rho,|b|}}((E_ib^*)(v)b+\pi_i^{p(b)}(\pi_iq_i)^{-\ang{i,|b|}}b^*(v)E_ib )\neq 0.\]
	However, if $b'\in B$, 
	\begin{align*}
	x(b')&=(\pi_iq_i)^{-\ang{i,|b'|}}E_ib'+\sum_{b\in B_{|b'|+i}} b^*(-\tJ_i^{-1}\tK_i^{-1}E_ib')b\\
	&= q^{\ang{\tilde\rho,|b'|}}(\pi_iq_i)^{-\ang{i,|b'|}}E_ib'-\sum_{b\in B_{|b'|+i}}q^{\ang{\tilde\rho,|b|}}(\pi_iq_i)^{-\ang{i,|b'|+i}}b^*(E_ib')b\\
	&= q^{\ang{\tilde\rho,|b'|}}(\pi_iq_i)^{-\ang{i,|b'|}}\parens{E_ib'-\sum_{b\in B_{|b'|+i}}b^*(E_ib')b}=0.
	\end{align*}
	\if
	Next, we show that $\Delta(E_i)\sum_{b\in B} b\otimes b^*=0$. We may assume that $B$ consists of homogeneous elements.
	First observe that $x=\sum v\otimes w^*=0$ if and only if $x(v'):=\sum w^*(v')v=0$ for all $v'\in V$.
	Then setting $x=\Delta(E_i)\sum_{b\in B} b\otimes b^*$
	\[0\neq x=\sum_{b\in B} (E_ib\otimes b^*+\pi_i^{p(b)}(\pi_iq_i)^{\ang{i,|b|}}b\otimes E_ib^*),\]
	there must be some $v\in V$ such that \[x(v)=\sum_{b\in B} (b^*(v)E_ib+\pi_i^{p(b)}(\pi_iq_i)^{\ang{i,|b|}}(E_ib^*)(v)b )\neq 0.\]
	However, if $b'\in B$, 
	\begin{align*}
	x(b')&=E_ib'+\sum_{b\in B_{|b'|+i}}(\pi_iq_i)^{\ang{i,|b|}}b^*(-\tJ_i^{-1}\tK_i^{-1}E_ib')b\\
	&= E_ib'-\sum_{b\in B_{|b'|+i}}\pi_i^{p(b)}b^*(E_ib')b
	\end{align*}
	\fi
\end{proof}

\subsection{Simple modules and their duals}
Let $\lambda\in X^+$ and recall from \cite{CHW1} that
$V(\lambda)$ is the simple $\UU$-module of highest weight $\lambda$ such that the highest weight
space has even parity. Then $V(\lambda)$ has finite rank and has the same character as the $\so(2n+1)$ module of
highest weight $\lambda$. In particular, using the Weyl 
character formula for $V(\lambda)$, the lowest weight vector has weight $w_0\lambda=-\lambda$,
hence  the parity of the lowest weight vector of $V(\lambda)$ is 
$P(\lambda)$.
Using standard arguments (for example, analogues of \cite[\S 5.3 and \S 5.16]{Jan}),
and considering the above analysis, we obtain the following lemma.

\begin{lem}\label{lem:dual isos}
	For each $\lambda\in X^+$,
	there is an isomorphism $V(\lambda)^{*}\cong \Pi^{P(\lambda)} V(\lambda)$ 
	and a natural isomorphism $V(\lambda)^{**}\rightarrow V(\lambda)$.
\end{lem}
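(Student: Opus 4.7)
The plan is to first determine the highest weight and the parity of a highest weight vector of $V(\lambda)^{*}$, then invoke the classification of finite-dimensional simple $\UU$-modules to identify $V(\lambda)^{*}$ with a parity shift of $V(\lambda)$; the double-dual statement then follows, with an explicit intertwiner constructed separately to upgrade the abstract isomorphism to a natural one.

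First I would show $V(\lambda)^{*}$ is a simple $\UU$-module by the usual annihilator argument: a nonzero submodule $W\subset V(\lambda)^{*}$ has annihilator $W^{\perp}=\{v\in V(\lambda):\phi(v)=0\text{ for all }\phi\in W\}$, and the dual-action formula $(x\phi)(v)=\pi^{p(\phi)p(x)}\phi(S(x)v)$ from \S\ref{subsec:modules}, together with surjectivity of $S$, shows that $W^{\perp}$ is a $\UU$-submodule of $V(\lambda)$; since $V(\lambda)$ is simple one gets $W^{\perp}=0$, hence $W=V(\lambda)^{*}$ by a rank count. Next, since $(V(\lambda)^{*})_{\mu,s}=(V(\lambda)_{-\mu,s})^{*}$, the weights of $V(\lambda)^{*}$ are the negatives of those of $V(\lambda)$, and as $w_{0}\lambda=-\lambda$ the highest weight of $V(\lambda)^{*}$ is $\lambda$. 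Its highest weight space is the linear dual of the lowest weight space of $V(\lambda)$, which is observed above to have parity $P(\lambda)$. Invoking the classification of finite-dimensional simple weight $\UU$-modules from \cite{CHW1} (a simple is determined up to isomorphism by its highest weight and the parity of a highest weight vector), together with the convention that $V(\lambda)$ has an \emph{even} highest weight vector, yields $V(\lambda)^{*}\cong\Pi^{P(\lambda)}V(\lambda)$.

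For the natural isomorphism $V(\lambda)^{**}\to V(\lambda)$, applying the first statement twice already gives $V(\lambda)^{**}\cong\Pi^{2P(\lambda)}V(\lambda)=V(\lambda)$ abstractly; to make this natural (in particular, defined without choosing a highest weight vector in each $V(\lambda)$) I would follow the classical recipe of \cite[\S5.3, \S5.16]{Jan}: use the antipode formulas \eqref{eq:antipode formula} together with \eqref{eq:tilderho} to verify that $S^{2}$ is implemented by conjugation by $\tK_{\rho}$ (possibly adjusted by a power of $\tJ_{\rho}$ to absorb $\pi$-signs), and then define $v\mapsto\widehat{v}$ by $\widehat{v}(f)=f(\tK_{\rho}^{-1}v)$, checking $\UU$-equivariance on the generators $E_{i}, F_{i}, K_{\mu}, J_{\mu}$.

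The main obstacle I expect is precisely this last verification: tracking the $\pi$-signs from the covering-Hopf structure when checking that the candidate map intertwines the four types of generators. Each case produces its own bookkeeping of signs that must ultimately cancel, and the correct twisting element (whether $\tK_{\rho}$ alone, or $\tK_{\rho}$ times an appropriate power of $\tJ_{\rho}$) can only be pinned down once this calculation is carried out. The paper's phrasing ``standard arguments'' signals that this is tedious rather than conceptually hard; still, it is the place where one genuinely needs the covering Hopf structure to behave correctly, and is the step where the covering framework does actual work beyond what a classical $\so(2n{+}1)$-argument would provide.
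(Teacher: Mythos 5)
Your proposal is correct and follows essentially the same route as the paper, which itself only sketches the argument: identify $V(\lambda)^*$ as a simple module of highest weight $-w_0\lambda=\lambda$ whose highest weight vector has the parity $P(\lambda)$ of the lowest weight vector of $V(\lambda)$, then invoke the classification from \cite{CHW1}, and obtain the double-dual isomorphism by the covering analogue of \cite[\S 5.3, \S 5.16]{Jan} using that $S^2$ is conjugation by a suitable $\tK$-element (here the $\pi$-adjustment is in fact trivial since $\pi^{i\cdot i}=1$). The only remaining work is the sign bookkeeping you already flag, which the paper likewise leaves to the reader.
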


\begin{example}\label{ex:rk1mods}
	In the case $n=1$, 
	the module $V=V(m)$ for $m\in\Z_{\geq 0}$  has basis $v_{m-2k}=F^{(k)}v_m$ 
	with $0\leq k\leq m$, where $v_m$ is a choice of 
	highest weight vector. Note that by convention $p(v_m)=0$, 
	so $p(v_{m-2k})\equiv k$ (mod 2).  The dual module $V(m)^*$ 
	has a dual basis $v_{m-2k}^*$, $0\leq k\leq m$,
	and the actions of $E=E_{\rf 1}$ and $F=F_{\rf 1}$ are given by 
	\begin{align*}
	Ev_{m-2k}^*&=-\pi^k(\pi q)^{m-2k}\bra{n+1-k}v_{m-2(k+1)}^*\\
	Fv_{m-2k}^*&=-\pi^k(\pi q)^{m-2k+2}\bra{k}v_{m-2(k-1)}^*
	\end{align*}
	In particular, this is a simple module generated by the highest weight vector $v_{-m}^*$, 
	where $|v_{-m}^*|=-|v_{-m}|=m$ and
	$p(v_{-m}^*)=p(v_{-m})\equiv m$ (mod 2), hence we have an isomorphism
	$V(m)^*\cong \Pi^m V(m)$.
\end{example}

For convenience, we will use the notation 
\begin{equation}\label{eq:dual notation}
V(-\lambda)=V(\lambda)^*,\quad \lambda\in X^+.
\end{equation}
We denote the maps in Lemma \ref{lemma:evsandcoevs} in the case $V=V(\lambda)$
with the subscript $\lambda$ instead of $V(\lambda)$; for instance, $\ev_\lambda=\ev_{V(\lambda)}$.
Note that \[\ev_\lambda\ \circ\ \coev_\lambda=\sum_{\nu \in \N[I]}{\rm rank}_{\Qqtt}(V_{\lambda-\nu}) \pi^{p(\nu)}q^{\ang{\tilde{\rho},\lambda-\nu}}=\pi^{P(\lambda)}\qtr_\lambda\ \circ\ \coqtr_\lambda\]
\begin{example}\label{ex:rk1evcoev}
	For $n=1$, we have $\rho=\tilde\rho=\mathbf 1$ hence for $\lambda=m$, 
	$\ang{\tilde\rho,\lambda}=m$. Then 
	\[\ev_m\circ\coev_m=q^m+\pi q^{m-2}+\ldots+\pi^{m} q^{-m}=\pi^m[m+1]
	=\pi^m\qtr_m\circ\coqtr_m.\]
\end{example}

\subsection{Further properties of the quasi-$\cR$-matrix}

Let us recall the quasi-$\cR$-matrix from \cite[\S 4]{CHW1}

\begin{prop}\label{prop:quasiR}\cite{CHW1}
Let $\BB$ be any $\Qqtt$-basis of $\ff$ such that $\BB_\nu=\BB\cap \ff_\nu$ is a basis of $\ff_\nu$ for any $\nu\in \N[I]$, with $\BB_0=\set 1$. 
Let $\BB^*=\set{b^*\mid b\in \BB}$ be the basis of $\ff$ dual to $\BB$ under $(\cdot,\cdot)$. Define
\[
\Theta_\nu=(-1)^{\height\,\nu}\pi^{\bp(\nu)}\pi_\nu q_\nu\sum_{b\in \BB_\nu} b^-\otimes (b^*)^+\in\UU_{-\nu}^-\otimes \UU_\nu^+.
\]
Then if $M,M'$ are integrable modules of $\UU$, then $\Theta=\sum_{\nu}\Theta_\nu$ is a well defined
operator on $M\otimes M'$ which satisfies $\Delta(u)\Theta=\Theta\bar\Delta(u)$ as endomorphisms of $M\otimes M'$, where $\bar\Delta(u)=\bar{\Delta(\bar u)}$. Moreover, $\Theta$ is independent of the choice of basis $\BB$, and is invertible with inverse $\bar\Theta$.
\end{prop}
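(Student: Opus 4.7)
The plan is to follow Lusztig's approach (\cite{L93}, Chapter 4) adapted to the covering setting, closely paralleling the argument in \cite{CHW1} but making the main steps visible. The proof divides naturally into four assertions: (i) $\Theta_\nu$ is independent of the choice of basis $\BB$; (ii) $\Theta$ is a well-defined operator on $M\otimes M'$; (iii) the intertwining relation $\Delta(u)\Theta=\Theta\bar\Delta(u)$ holds; (iv) $\Theta$ is invertible with inverse $\bar\Theta$.

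For (i), I would observe that $\sum_{b\in\BB_\nu} b\otimes b^*$ is the canonical element of $\ff_\nu\otimes\ff_\nu$ determined by the nondegenerate bilinear form $(\cdot,\cdot)$, and is therefore independent of $\BB$; the prefactor $(-1)^{\height\nu}\pi^{\bp(\nu)}\pi_\nu q_\nu$ depends only on $\nu$, so $\Theta_\nu$ is intrinsic. For (ii), I would fix $m\otimes m'\in M\otimes M'$ and note that integrability forces $(b^*)^+\cdot m'=0$ for all but finitely many weights $\nu$, since each $E_i$ acts locally nilpotently and the $b^*$ sit in $\ff_\nu^+$, so the sum $\sum_\nu \Theta_\nu(m\otimes m')$ has finite support.

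The main obstacle is (iii), and in particular the $E_i$ and $F_i$ cases. The relations for $u=K_\mu$ and $u=J_\mu$ reduce to a weight computation using that $\Theta_\nu\in\UU_{-\nu}^-\otimes\UU_\nu^+$ and that $\bar\Delta$ differs from $\Delta$ only by an adjustment of $K$-terms by $\bar{K_\mu}=J_\mu K_{-\mu}$. For $u=E_i$ (the $F_i$ case is symmetric under $\sigma$), I would expand both sides of
\[
\Delta(E_i)\Theta_\nu\;=\;(E_i\otimes 1 + K_i\otimes E_i)\,\Theta_\nu,
\qquad
\Theta_{\nu+i}\bar\Delta(E_i)\;=\;\Theta_{\nu+i}(E_i\otimes 1 + \bar K_i\otimes E_i),
\]
using the coproduct, and compare coefficients in the direct sum decomposition $\UU^-\otimes \Uz\otimes \UU^+$. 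The key technical input is the relation
\[
E_i\,b^- - \pi^{p(i)p(b)}b^- E_i
\;=\;\frac{\tJ_i\tK_i\,{}_ir(b)^- - \bar{\tK_i}\,r_i(b)^-}{\pi_i q_i - q_i^{-1}},
\]
which expresses commuting $E_i$ past $b^-$ in terms of the skew-derivations $\ir, \ri$. Using the adjointness relation \eqref{eq:ff bilinear form} between multiplication by $\theta_i$ and the derivation $\ir$, together with the definition of the dual basis $\BB^*$, the two sides of the desired identity collapse into the same sum over $\BB_{\nu+i}^*$, after tracking the explicit prefactors $(-1)^{\height\nu}\pi^{\bp(\nu)}\pi_\nu q_\nu$; the sign and $\pi$-twist introduced by passing from $\nu$ to $\nu+i$ account precisely for the factor $(\pi_i q_i - q_i^{-1})^{-1}$ and for the $\bp$ shift $\bp(\nu+i)-\bp(\nu)=p(i)\height\nu\pmod 2$. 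This bookkeeping is where signs and $\pi$-factors can easily go wrong, and careful use of $\bp$, of $\pi_\nu$, and of the formula $(\theta_i x, y)=(\theta_i,\theta_i)(x,\ir(y))$ is essential.

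For (iv), once (iii) is established, I would apply it to show that $\Theta\bar\Theta$ (and $\bar\Theta\Theta$) also intertwines $\Delta$ with itself and so commutes with the action of $\UU$; since the $\nu=0$ term of $\Theta\bar\Theta$ on each weight space is the identity and all higher components lie in a strictly nilpotent piece $\bigoplus_{\nu>0}\UU^-_{-\nu}\otimes\UU^+_\nu$, a standard triangularity/uniqueness argument (comparing term by term in the $\N[I]$-grading) forces $\Theta\bar\Theta=1=\bar\Theta\Theta$. This completes the proposition, and since each of these steps is a covering-analogue of a Lusztig computation, all that is genuinely new is verifying that the $\pi$-twisted signs in $\bp$, $\pi_\nu$, and $(-1)^{\height\nu}$ conspire correctly — precisely as carried out in \cite{CHW1}.
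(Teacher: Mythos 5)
Your proposal is correct and follows essentially the same route as the proof the paper relies on (it cites \cite{CHW1}, whose argument is the Lusztig Chapter~4 computation adapted to the covering setting): basis-independence via the canonical element of the nondegenerate form on each $\ff_\nu$, finiteness of the sum from integrability, generator-by-generator verification of the intertwining identity using the commutator of $E_i$ with $b^-$ together with the adjointness $(\theta_i x,y)=(\theta_i,\theta_i)(x,\ir(y))$, and invertibility by the triangularity/uniqueness argument applied to $\Theta\bar\Theta$. The one slip is in your displayed commutator, where $\bar{\tK_i}=\tJ_i\tK_i^{-1}$ should be $\tK_i^{-1}$ (test it at $b=\theta_i$ against \eqref{eq:commutatorrelation}); this is a transcription error in the auxiliary identity and does not affect the structure of the argument.
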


In particular, note that all modules considered in this paper
are of finite rank over $\Qqtt$, hence are integrable.

\begin{example}\label{ex:rank1rmat}
When $n=1$, the quasi-$\mathcal R$-matrix
$\Theta$ can be explicitly given by the formula
\[\Theta=\sum_{n\geq 0} (-1)^n (\pi q)^{-\binom{n}{2}}[n]^!(\pi q-q^{-1})^n F^{(n)}\otimes E^{(n)}=1-(\pi q-q^{-1})F\otimes E+\ldots.\]
(NB. there is a typo in the power of $\pi q$ in \cite[Example 3.1.2]{CHW1}.)
\end{example}

While $\bar\Theta$ can be evaluated easily, it will be more convenient
to have the following alternate description of $\bar\Theta$
using the properties of the bilinear form on $\ff$ 
(cf. \cite[\S 1.4]{CHW1}).

\begin{lem}\label{lem:quasiRinv} With the same notations as in Proposition \ref{prop:quasiR},
$\bar\Theta=\sum_\nu \bar\Theta_\nu$ is given by
\[
\bar \Theta_\nu=\pi_\nu q^{\frac{\nu\cdot\nu}2}\sum_{b\in \BB_\nu} b^-\otimes \sigma(b^*)^+\in\UU_{-\nu}^-\otimes \UU_\nu^+.
\]
\end{lem}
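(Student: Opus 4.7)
The plan is to compute $\bar\Theta_\nu$ directly from the formula in Proposition~\ref{prop:quasiR}, exploiting the identity $\bar\Theta = \Theta^{-1}$. The latter holds because applying $\barmap$ to the intertwining relation $\Delta(u)\Theta = \Theta\bar\Delta(u)$ yields $\bar\Delta(\bar u)\bar\Theta = \bar\Theta\Delta(\bar u)$, so $\bar\Theta$ satisfies the characterizing property of $\Theta^{-1}$, and the uniqueness clause of Proposition~\ref{prop:quasiR} forces the two to coincide.

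First I would verify that $T_\nu := \sum_{b\in\BB_\nu} b^- \otimes \sigma(b^*)^+$ is independent of the choice of basis $\BB$. Changing to $\BB' = \sigma(\BB)$, whose dual is $(\BB')^* = \sigma(\BB^*)$ by the $\sigma$-invariance $(\sigma x, \sigma y) = (x,y)$ of the bilinear form on $\ff$, a direct change-of-basis computation shows $T_\nu = \sum_b \sigma(b)^- \otimes (b^*)^+$ as well; equivalently, $T_\nu$ is obtained by applying $(1\otimes \sigma)$ (or $(\sigma\otimes 1)$) to the basis-independent element $\sum_b b^-\otimes (b^*)^+$.

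Next, apply bar componentwise to $\Theta_\nu = c_\nu\sum_b b^-\otimes (b^*)^+$. The scalar transforms via $\bar q_\nu = \pi_\nu q_\nu^{-1}$ (which holds because $\sum_i d_i \nu_i \equiv p(\nu) \pmod 2$), giving $\bar c_\nu = (-1)^{\height\nu}\pi^{\bp(\nu)}q_\nu^{-1}$. Choose $\BB$ to be a bar-invariant basis (e.g., the canonical basis from \cite{CHW1}), so $\bar b = b$ in $\ff$ and hence $\bar{b^-} = b^-$ in $\UU^-$. The main technical step is to establish the basis-independent identity
\[
\sum_{b\in\BB_\nu} b^-\otimes \overline{(b^*)^+} \,=\, (-1)^{\height\nu}\pi^{\bp(\nu)}\pi_\nu q_\nu q^{\nu\cdot\nu/2}\,T_\nu.
\]
For canonical $b$, since $\bar b = b$ one may write $b^* = b/(b,b)$, whence $\bar{b^*} = (b,b)/\overline{(b,b)}\cdot b^*$; the ratio $\overline{(b,b)}/(b,b)$ is computed explicitly using the formulas in \cite[\S1.4]{CHW1} (noting in particular that $\overline{1-\pi_iq_i^{-2}} = 1-\pi_iq_i^{2} = -\pi_iq_i^{2}(1-\pi_iq_i^{-2})$), and combining this scalar with $\sigma(b^*)/b^*$ produces the stated factor. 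Substituting back yields $\bar\Theta_\nu = \bar c_\nu \cdot (-1)^{\height\nu}\pi^{\bp(\nu)}\pi_\nu q_\nu q^{\nu\cdot\nu/2}\,T_\nu = \pi_\nu q^{\nu\cdot\nu/2}\,T_\nu$.

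The main obstacle is the explicit scalar identification in the dual basis identity above, which requires carefully tracking how the bar involution interacts with the dual basis through the bilinear form. One can proceed by induction on $\height\nu$ using the recursive definition of $(\cdot,\cdot)$ via the derivations $\ir$ in \eqref{eq:ff bilinear form}, together with the compatibility of $\sigma$ with these derivations. An alternative route, avoiding bar computations on the dual basis, is to verify the proposed formula directly by checking $\Theta \cdot \sum_\nu \pi_\nu q^{\nu\cdot\nu/2}T_\nu = 1\otimes 1$ in the completion $\widehat{\UU\otimes\UU}$, using the Hopf covering multiplication $(a\otimes b)(c\otimes d) = \pi^{p(b)p(c)} ac\otimes bd$ and standard quantum binomial identities.
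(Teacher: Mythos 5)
Your overall strategy (apply $\barmap$ termwise to $\Theta_\nu$, track the scalar, and reduce to an identity expressing $\overline{b^*}$ in terms of $\sigma(b^*)$) is close to the paper's, and your target identity
$\sum_{b} b^-\otimes \overline{(b^*)}^+=(-1)^{\height\nu}\pi^{\bp(\nu)}\pi_\nu q_\nu q^{\nu\cdot\nu/2}\,T_\nu$
is the correct one: combined with $\bar c_\nu=(-1)^{\height\nu}\pi^{\bp(\nu)}q_\nu^{-1}$ it gives exactly $\pi_\nu q^{\nu\cdot\nu/2}$. The problem is the justification of that identity. You write $b^*=b/(b,b)$ for $b$ in the canonical basis, but the canonical basis is only \emph{almost} orthonormal with respect to $(\cdot,\cdot)$: for $\dim \ff_\nu>1$ the off-diagonal values $(b,b')$ need not vanish, so $b^*$ is a genuine linear combination of several basis elements and is not $b/(b,b)$. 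The subsequent step is also problematic even granting orthogonality: you treat ``$\sigma(b^*)/b^*$'' as a scalar, but $\sigma$ permutes rather than rescales the canonical basis, so $\sigma(b^*)$ is not a multiple of $b^*$ in general. As written, the argument only goes through in rank one (where each $\ff_\nu$ is one-dimensional). Your two fallback suggestions (induction on height via $\ir$, or verifying $\Theta\cdot\sum_\nu\pi_\nu q^{\nu\cdot\nu/2}T_\nu=1\otimes 1$) are plausible but are not carried out, and the second is essentially as hard as the lemma itself.

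The paper's proof avoids both difficulties, and in particular never needs a bar-invariant basis. Since $\Theta_\nu$ is basis-independent, one may write it using the basis $\bar\BB$ and its dual $\bar\BB^*$ \emph{before} applying $\barmap$; then the first tensor factor of $\bar\Theta_\nu$ is automatically $b^-$, and the second factor is $\overline{\overline{b}^{\,*}}^{\,+}$. The element $\overline{\overline{b}^{\,*}}$ is then identified as the stated multiple of $\sigma(b)^*=\sigma(b^*)$ by pairing it against $\sigma(b')$ and invoking the already-established relation between the form $\set{x,y}=\overline{(\bar x,\bar y)}$ and $(x,\sigma(y))$ from \cite[Lemma 1.4.3(b)]{CHW1}. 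If you want to salvage your route, replace the ``$b^*=b/(b,b)$'' step with this pairing argument (or prove the needed identity $\overline{\overline{b}^{\,*}}=(-1)^{\height\nu}\pi^{\bp(\nu)}\pi_\nu q_\nu q^{\nu\cdot\nu/2}\sigma(b^*)$ by the height induction you sketch); the rest of your scalar bookkeeping is correct.
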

\begin{proof}
Let $\bar\BB=\set{\bar b\mid b\in\BB}$, with dual basis $\bar\BB^*$.
Then since $\Theta$ is independent of the choice of basis, we see that for $\nu\in\N[I]$,
$\Theta_\nu=(-1)^{\height\,\nu}\pi^{\bp(\nu)}\pi_\nu q_\nu\sum_{b\in \BB_\nu} \bar{b}^-\otimes (\bar{b}^*)^+$.
We have $\bar\Theta_\nu=(-1)^{\height \nu} \pi^{\bp(\nu)}q_{-\nu}\sum_{b\in \BB_\nu} \bar{(\bar{b}^-)}\otimes \bar{(\bar{b^*}^+)}$, and note that $(\bar{x})^{\pm}=\bar{(x^{\pm})}$,
so $\bar{(\bar{b}^-)}=b^-$.

On the other hand, recall from \cite[\S 1.4]{CHW1} the variant bilinear form $\set{-,-}$
defined by $\set{x,y}=\bar{(\bar x,\bar y)}$. 
Note that by construction, $(\bar{b}^*,\bar{b'})=\delta_{b,b'}$.
Then for any $b,b'\in\BB$, we apply Lemma 1.4.3 (b) of {\em loc. cit.} to deduce that
\[\delta_{b,b'}=\bar{(\bar{b}^*, \bar{b'})}=\set{\bar{\bar{b}^*},b'}
=(-1)^{\height \nu}\pi^{\bp(\nu)}\pi_\nu q^{-\frac{\nu\cdot\nu}{2}} q_{-\nu}(\bar{\bar{b}^*},\sigma(b')).\]
(We note that while the power of $\pi$ appears different from that in {\em loc. cit.},
it is equivalent.)
Therefore, we have
\[\bar{\bar{b}^*}=(-1)^{\height \nu}\pi^{\bp(\nu)}q^{\frac{\nu\cdot\nu}{2}}\pi_\nu q_{\nu} \sigma(b)^*.\]
Then the lemma follows from the observation 
that since $(\sigma(x),\sigma(y))=(x,y)$, $\sigma(b)^*=\sigma(b^*)$.
\end{proof}

Now we will proceed to use $\Theta$ to define a universal map $\cR:M\otimes N\rightarrow N\otimes M$
for any modules $M$ and $N$. These constructions will be modified versions of the standard arguments in the non-super case; cf.
\cite[\SS 7.3-7.6]{Jan} or \cite[\S 4.2 and Chapter 32]{L93}.

For $1\leq s<t\leq 3$, let $\Theta_\nu^{st}\in \UU\otimes \UU\otimes \UU$ be defined by 
$\Theta_\nu^{st}=(-1)^{\height\,\nu}\pi^{\bp(\nu)}\pi_\nu q_\nu\sum_{b\in \BB_\nu} b_1\otimes b_2\otimes b_3$
where $b_s=b^-$,$b_t=(b^*)^+$, and $b_m=1$ for $m\neq s,t$.

\begin{prop}
We have the following identities.
\[(\Delta\otimes 1)(\Theta_\nu)=\sum_{\nu'+\nu''=\nu}\Theta_{\nu'}^{23}(1\otimes \tK_{-\nu''}\otimes 1)\Theta^{13}_{\nu''}.\]
\[(\bar\Delta\otimes 1)(\Theta_\nu)=\sum_{\nu'+\nu''=\nu}\Theta_{\nu'}^{13}(1\otimes \tJ_{\nu'}\tK_{\nu'}\otimes 1)\Theta^{23}_{\nu''}.\]
\[(1\otimes \Delta )(\Theta_\nu)=\sum_{\nu'+\nu''=\nu}\Theta_{\nu'}^{12}(1\otimes \tJ_{\nu''}\tK_{\nu''}\otimes 1)\Theta^{13}_{\nu''}.\]
\[(1\otimes \bar\Delta )(\Theta_\nu)=\sum_{\nu'+\nu''=\nu}\Theta_{\nu'}^{13}(1\otimes \tK_{-\nu''}\otimes 1)\Theta^{12}_{\nu''}.\]
\end{prop}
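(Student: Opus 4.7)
The plan is to adapt Lusztig's proof of the analogous coproduct identities for the quasi-$\cR$-matrix (cf.\ \cite[\S 4.2]{L93}) to the covering setting. Each of the four identities will follow from a direct expansion of both sides, matched via the duality between multiplication on $\ff$ and a natural coproduct $r\colon\ff\to\ff\otimes\ff$ adjoint to it, defined by $(r(x),y\otimes z)=(x,yz)$ with the $\pi$-twisted tensor form on $\ff\otimes\ff$. Since the identities are closely parallel, I would carry out the first one in detail and deduce the others by analogous computation or by applying the bar involution.

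The key algebraic input is a formula for the coproduct on $\Um$ and $\Up$. By induction on $\height|x|$ starting from the generator formulas and using the fact that $\Delta$ is a $\pi$-twisted algebra homomorphism, one obtains expansions
\[
\Delta(x^-)=\sum_{\nu_1+\nu_2=|x|}\pi^{\epsilon^-(\nu_1,\nu_2)}(x_{(\nu_1)})^-\otimes \tK_{-\nu_1}(x_{(\nu_2)})^-,
\]
\[
\Delta(x^+)=\sum_{\nu_1+\nu_2=|x|}\pi^{\epsilon^+(\nu_1,\nu_2)}(x_{(\nu_1)})^+K_{\nu_2}\otimes (x_{(\nu_2)})^+,
\]
where $r(x)=\sum x_{(\nu_1)}\otimes x_{(\nu_2)}$ is the decomposition by $\N[I]$-degree and the signs $\epsilon^{\pm}$ are determined by the parities via the Hopf axioms.

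To establish the first identity, I would substitute the formula above into $(\Delta\otimes 1)(\Theta_\nu)=C_\nu\sum_{b\in\BB_\nu}\Delta(b^-)\otimes (b^*)^+$ (with $C_\nu=(-1)^{\height\nu}\pi^{\bp(\nu)}\pi_\nu q_\nu$) and interchange the order of summation, collapsing the sum over $b$ via the duality identity
\[
\sum_{b\in\BB_\nu}r_{\nu',\nu''}(b)\otimes b^*=\sum_{b_1\in\BB_{\nu'},\,b_2\in\BB_{\nu''}}b_1\otimes b_2\otimes b_1^*b_2^*,
\]
which is immediate from $(r(b),b_1^*\otimes b_2^*)=(b,b_1^*b_2^*)$ and the non-degeneracy of the bilinear form. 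On the other side, expanding $\Theta^{23}_{\nu'}(1\otimes\tK_{-\nu''}\otimes 1)\Theta^{13}_{\nu''}$ via the $\pi$-braided multiplication on $\UU\otimes\UU\otimes\UU$, and commuting the central $\tK_{-\nu''}$ past the appropriate factors, produces a sum indexed by the same pairs $(b_1,b_2)\in\BB_{\nu'}\times\BB_{\nu''}$. After reducing both sides to a canonical ordering of tensor factors, the identity becomes a prefactor identity whose verification rests on
\[
\bp(\nu'+\nu'')=\bp(\nu')+\bp(\nu'')+p(\nu')p(\nu''),\quad q_\nu=q_{\nu'}q_{\nu''},\quad (-1)^{\height\nu}=(-1)^{\height\nu'+\height\nu''},
\]
with the cross-term $\pi^{p(\nu')p(\nu'')}$ absorbing the accumulated $\pi$-signs from the braiding, the coproduct, and the duality.

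The remaining three identities follow by analogous arguments. The $(1\otimes\Delta)$ identity uses the parallel expansion for $\Delta(x^+)$, and the two $\bar\Delta$ identities are obtained by applying the bar involution to the first and third, using Lemma \ref{lem:quasiRinv} to re-express $\bar\Theta_\nu$ (with $\sigma(b^*)$ in place of $b^*$ and an extra $q^{\nu\cdot\nu/2}$ factor) together with the intertwining property of $\Theta$ from Proposition \ref{prop:quasiR}. The hardest part will be the $\pi$-sign bookkeeping: every rearrangement of homogeneous factors generates a $\pi$-power depending on parities, and the contributions from the $\pi$-twisted coproduct, the $\pi$-twisted form on $\ff\otimes\ff$, and the $\pi$-braided multiplication on the triple tensor must combine coherently. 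A systematic way to manage this is to always reduce every expression to a canonical ordering before comparing coefficients.
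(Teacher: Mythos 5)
Your proposal follows essentially the same route as the paper's proof: both adapt Lusztig \S 4.2 by expanding $\Delta(b^-)$ via the coproduct-type map on $\ff$, collapsing the sum over $b\in\BB_\nu$ through the adjunction $(x,yz)=(r(x),y\otimes z)$ and non-degeneracy, and matching the resulting prefactors (the paper does this via the identity $\pi^{p(b_1)p(b_2)}q^{|b_1|\cdot|b_2|}f'(b,b_1,b_2)=f(b,b_2,b_1)=(b,b_2^*b_1^*)$ from the CHW1 lemmas). One small caution: the coefficients in $\Delta(x^-)$ are governed by $\bar r(x)$ rather than $r(x)$, so relative to $r$ they carry $q$-powers (e.g.\ $q^{|b_1|\cdot|b_2|}$) and a transposition of factors, not merely the parity signs $\pi^{\epsilon^\pm}$ your schematic formulas suggest — your planned induction would surface this, but it must be tracked to make the prefactors match.
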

\begin{proof}
These identities are proved exactly as in \cite[\S 4.2]{L93}. We will prove
the first identity here.
For $x\in \ff$ and $b_1,b_2\in \BB$, define $f(x,b_1,b_2),f'(x,b_1,b_2)\in \Qqtt$ via 
\[r(x)=\sum_{b_1,b_2\in B} f(x,b_1,b_2)\,b_1\otimes b_2,\]
\[\bar r(x)=\sum_{b_1,b_2\in B} f'(x,b_1,b_2)\,b_1\otimes b_2.\]
Then it suffices to show that 
\[\sum_{b,b_1,b_2;|b_1|+|b_2|=|b|=\nu} f'(b,b_1,b_2) b_1^-\otimes \tK_{-|b_1|}b_2^-\otimes (b^*)^+=\sum_{b_1,b_2;|b_1|+|b_2|=\nu}
\pi^{p(b_1)p(b_2)}b_1^-\otimes b_2^-\tK_{-|b_1|}\otimes (b_2^*b_1^*)^+.\]
In particular, it is enough to show that for all $b_1,b_2\in B$ such that $|b_1|+|b_2|=\nu$, we have
\[\sum_{b;|b|=\nu} f'(b,b_1,b_2)b^*=\pi^{p(b_1)p(b_2)}q^{-|b_1|\cdot |b_2|}b_2^*b_1^*.\]
This follows from the equalities 
\[\pi^{p(b_1)p(b_2)}q^{|b_1|\cdot |b_2|}f'(b,b_1,b_2)=f(b,b_2,b_1)=(r(b),b_2^*\otimes b_1^*)=(b,b_2^*b_1^*),\]
which in turn follow from elementary properties of $\ff$;
cf. \cite[Lemmas 1.4.1, 1.4.3]{CHW1}
\end{proof}

To construct a universal $\UU$-module homomorphism from $\Theta$,
we will need some additional maps. The first is the swap map;
that is, the algebra $\UU\otimes \UU$ is equipped with an 
involution $\frs$
defined by $\frs(x\otimes y)=\pi^{p(x)p(y)} y\otimes x$.
This induces involutions on $\UU^{\otimes m}$ by applying $\frs$ to sequential
pairs of tensor factors; specifically, these involutions are the maps
$\frs_{t,t+1}=1^{\otimes t-1}\otimes \frs\otimes 1^{m-t-1}$,
and it is not hard to see they satisfy the braid relations
$\frs_{t-1,t}\frs_{t,t+1}\frs_{t-1,t}=\frs_{t,t+1}\frs_{t-1,t}\frs_{t,t+1}$.
In particular, we see that to each element $\gamma$ of the permutation group
$\mathfrak S_m$, there is an
automorphism $\frs_\gamma$ of $\UU^{m}$; for example $\frs_{(23)}=\frs_{2,3}$ and $\frs_{(123)}=\frs_{1,2}\frs_{2,3}$.
Similarly, to any tensor product of modules $N=\bigotimes_{i=1}^m M_t$ and $\gamma\in \mathfrak S_m$,
we can define $N_\gamma=\bigotimes_{i=1}^m M_{\gamma(t)}$ and
a map $\frs_\gamma: N\rightarrow N_\gamma$ given by 
\[\frs(v)=\pi^{p(\gamma,v)}v_\gamma,\]
where $v=v_1\otimes\ldots\otimes v_m$, $v_\gamma=v_{\gamma(1)}\otimes\ldots\otimes v_{\gamma(m)}$,
and \[p(\gamma,v)=\displaystyle\sum_{\shortstack{$1\leq s<t\leq n$ \\ $\gamma(s)>\gamma(t)$}}p(v_t)p(v_s).\]
These maps are compatible in the sense that for 
$v=\bigotimes_{t=1}^n v_t\in N$ and $u\in \UU$, \[\frs_\gamma(\Delta^{m-1}(u)v)=\frs_\gamma(\Delta^{m-1}(u)) \frs_\gamma(v).\]
When $m=2$, we will just write $\frs=\frs_{1,2}$.

The other ingredient is a weight-renormalization operator. This operator
is induced by the weight function defined in the following lemma.
\begin{lem}\label{lem:f function}
There exists a function $\mathfrak f:X\times X\rightarrow (\Qqtt)^\times$  satisfying 
\[\mathfrak f(\zeta+\mu',\zeta'+\nu')\mathfrak f(\zeta,\zeta')^{-1}
= (\pi q)^{-\ang{\tilde\mu,\zeta'}}q^{-\ang{\tilde\nu,\zeta}-\mu\cdot \nu}\]
for $\zeta,\zeta'\in X$ and $\mu,\nu\in \Z[I]$.
Moreover,
\begin{enumerate}
\item The function 
$\mathfrak r(\zeta,\zeta')=\mathfrak f(\zeta,\zeta')\mathfrak f(\zeta,-\zeta')$ satisfies $\mathfrak r(\zeta+\mu,\zeta'+\nu)=\mathfrak r(\zeta,\zeta')$ for any $\mu,\nu\in\Z[I]$.
\item The function 
$\mathfrak l(\zeta,\zeta')=\mathfrak f(\zeta,\zeta')\mathfrak f(-\zeta,\zeta')$ satisfies $\mathfrak l(\zeta+\mu,\zeta'+\nu)=\mathfrak l(\zeta,\zeta')$ for any $\mu,\nu\in\Z[I]$.
\item We have $\frf(\zeta,\zeta')\frf(-\zeta,-\zeta')^{-1}=\pi^{P(\zeta)P(\zeta')}$;
in particular, $\mathfrak l(\zeta,\zeta')=\pi^{P(\zeta)P(\zeta')}\mathfrak r(\zeta,\zeta')$.
\end{enumerate}
\end{lem}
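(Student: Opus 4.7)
The plan is to construct $\frf$ on $X\times X$ by first fixing a set-theoretic section $s\colon X/\Z[I]\to X$ of the projection, declaring $\frf$ to be $1$ on section pairs, and then extending via the defining relation itself: for $\zeta = s(\bar\zeta)+\mu_0$ and $\zeta'=s(\bar\zeta')+\nu_0$ with $\mu_0,\nu_0\in\Z[I]$, set
\[
\frf(\zeta,\zeta') = (\pi q)^{-\ang{\tilde\mu_0,s(\bar\zeta')}}\,q^{-\ang{\tilde\nu_0,s(\bar\zeta)}-\mu_0\cdot\nu_0}.
\]
This evidently lies in $(\Qqtt)^\times$. The non-trivial task is to verify that this function genuinely satisfies the defining relation for every pair $(\mu,\nu)\in\Z[I]^2$ and every $(\zeta,\zeta')$, not merely those implicitly built in by the section.

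The main obstacle is exactly this consistency check, which amounts to a cocycle condition. Computing $\frf(\zeta+\mu,\zeta'+\nu)$ two ways (directly from the definition, versus as $\frf(\zeta,\zeta')$ times the prescribed factor) and expanding $\ang{\tilde\mu,\zeta'}=\ang{\tilde\mu,s(\bar\zeta')}+\mu\cdot\nu_0$ and $\ang{\tilde\nu,\zeta}=\ang{\tilde\nu,s(\bar\zeta)}+\mu_0\cdot\nu$ reveals that the two expressions agree exactly up to a multiplicative discrepancy of $\pi^{\mu\cdot\nu_0}$. The crucial observation is that every entry of the bilinear form on $\Z[I]$ tabulated in \S\ref{subsec:rootdata} is an even integer, so $\mu\cdot\nu_0\in 2\Z$ for $\mu,\nu_0\in\Z[I]$ and the discrepancy is trivial. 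This even-valuedness is the only place the structural specifics of $\osp(1|2n)$ enter the argument; without it the construction would force us to adjoin a square root of $\pi$.

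Properties (1) and (2) then follow by direct calculation. In each case I apply the defining relation to both factors appearing in $\frr$ (resp.\ $\frl$); the exponents of $(\pi q)$ and $q$ generated by the two factors carry opposite signs in $\zeta'$ (resp.\ $\zeta$) because $\widetilde{-\nu}=-\tilde\nu$ and $\frf$ is evaluated once at $\zeta'$ and once at $-\zeta'$, so all contributions cancel and only $\frr(\zeta,\zeta')$ (resp.\ $\frl(\zeta,\zeta')$) remains.

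For property (3), I apply the defining relation with $(\mu,\nu)=(-2\zeta,-2\zeta')$, which lies in $\Z[I]^2$ since $2\zeta\in\Z[I]$ for all $\zeta\in X$ by the Weyl-group argument in \S\ref{subsec:rootdata}. The $q$-exponent of the resulting ratio $\frf(-\zeta,-\zeta')/\frf(\zeta,\zeta')$ vanishes because $\ang{\widetilde{2\zeta},\zeta'}=\ang{\widetilde{2\zeta'},\zeta}$ (obtained from symmetry of $\cdot$ after doubling both $\zeta$ and $\zeta'$ into $\Z[I]$), leaving $\pi^{\ang{\widetilde{2\zeta},\zeta'}}$. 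Expanding $\widetilde{2\zeta}=\sum_i (2\zeta)_i d_i\, i$ and reducing modulo $2$, only the $i=\rf n$ summand (where $d_{\rf n}=1$) survives, producing $\pi^{(2\zeta)_{\rf n}\ang{\rf n,\zeta'}}=\pi^{P(\zeta)\ang{\rf n,\zeta'}}$. Lemma \ref{lem:weight parity equiv} then identifies this with $\pi^{P(\zeta)P(\zeta')}$ in either parity of $n$ (for odd $n$ directly; for even $n$ both $P(\zeta)$ and $P(\zeta')$ are already forced to zero). The final assertion $\frl=\pi^{P(\zeta)P(\zeta')}\frr$ drops out by applying the just-proved identity to the pair $(-\zeta,\zeta')$ and using $P(-\zeta)=P(\zeta)$.
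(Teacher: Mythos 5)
Your proof is correct and follows essentially the same route as the paper: define $\frf$ via coset representatives of $\Z[I]$ in $X$, verify (1)--(2) by direct cancellation, and obtain (3) by applying the defining relation with $\mu=2\zeta$, $\nu=2\zeta'$ together with Lemma \ref{lem:weight parity equiv}. The one point you make explicit that the paper leaves as ``easy to verify'' is the consistency (cocycle) check for the existence of $\frf$, and your identification of the even-valuedness of $\cdot$ on $\Z[I]$ as the reason the $\pi^{\mu\cdot\nu_0}$ discrepancy vanishes is exactly right.
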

\begin{proof}
It is easy to verify that such a function $\frf$ 
exists by choosing a set of coset representatives $R$ for $\Z[I]$ in $X$.
It is similar to verify (1) and (2), so let us show (1).
Let $\xi=\zeta+\nu$ and $\xi'=\zeta'+\mu$
for some $\mu,\nu\in \N[I]$ and $\zeta,\zeta'\in X$. Then
\begin{align*}
\mathfrak f(\xi,\xi')\mathfrak f(\xi,-\xi')&=\mathfrak f(\zeta,\zeta')\mathfrak f(\zeta,-\zeta')(\pi q)^{\ang{\tilde\mu,\zeta'}+\ang{\tilde\mu,-\zeta'}}q^{-\ang{\tilde\nu,\zeta}-\nu\cdot\mu-\ang{-\tilde\nu,\zeta}-(-\nu\cdot\mu)}\\
&=\mathfrak f(\zeta,\zeta')\mathfrak f(\zeta,-\zeta').
\end{align*} 

Finally, let $\zeta,\zeta'\in X$.
Then $-\zeta=\zeta-2\zeta$, $-\zeta'=\zeta'-2{\zeta'}$
so
\[\frf(\zeta,\zeta')\frf(-\zeta,-\zeta')^{-1}=\mathfrak f(-\zeta+(2\zeta),-\zeta'+(2{\zeta'}))\mathfrak f(-\zeta,-\zeta')^{-1}
=\pi^{-\ang{\widetilde{(2\zeta)},-\zeta'}}
q^{-\ang{\widetilde{(2\zeta)},-\zeta'}-\ang{\widetilde{(2\zeta')},-\zeta}
-2{\zeta}\cdot2{\zeta'}}\]
Now note that for any $\eta,\eta'\in X$, we have
$-\ang{\widetilde{(2\eta)},-\eta'}
=\frac{1}{2}(2\eta)\cdot (2\eta')$. Moreover, by \eqref{eq:weight parity}
and Lemma \ref{lem:weight parity equiv}, we see that
$\ang{\widetilde{(2\eta)},\eta'}\equiv (2\eta)_{\rf n}\ang{\rf n,\eta'}\equiv n\ang{\rf n,\eta}\ang{\rf n,\eta'}\equiv  p(\eta)p(\eta')\mod 2$ .
Therefore, we see that
\[\mathfrak f(-\zeta+2\zeta,-\zeta'+2{\zeta'})\mathfrak f(-\zeta,-\zeta')^{-1}
=\pi^{P(\eta)P(\eta')}.\]
This finishes the proof.
\end{proof}

\begin{example}\label{ex:rank1f}
Let us consider the case $n=1$. 
Then the function $\mathfrak f$ is determined by the values 
$\mathfrak f(0,0)$, $\mathfrak f(0,1)$, $\mathfrak f(1,0)$, 
and $\mathfrak f(1,1)$. 
Then for any $\epsilon_1,\epsilon_2\in\set{0,1}$,
\[\mathfrak f(\epsilon_1+2s,\epsilon_2+2t)=\mathfrak f(\epsilon_1,\epsilon_2)\pi^{s\epsilon_2} q^{-t\epsilon_1-s\epsilon_2-2st}.\]
By direct computation, one finds the corresponding coset functions to be
\[\mathfrak r(\epsilon_1+2s,\epsilon_2+2t)=\mathfrak f(\epsilon_1,\epsilon_2)^2q^{\epsilon_1\epsilon_2}.\]
\[\mathfrak l(\epsilon_1+2s,\epsilon_2+2t)=\mathfrak f(\epsilon_1,\epsilon_2)^2\pi^{\epsilon_1\epsilon_2}q^{\epsilon_1\epsilon_2}.\]
\if 0
For $n>1$, we have
\[\mathfrak r(\epsilon_1\omega_{\rf n}+\nu,\epsilon_2\omega_{\rf n}+\mu)=\frf(\epsilon_1\omega_{\rf n},\epsilon_2\omega_{\rf n})^2q^{n\epsilon_1\epsilon_2}.\]
\[\mathfrak l(\epsilon_1\omega_{\rf n}+\nu,\epsilon_2\omega_{\rf n}+\mu)=\frf(\epsilon_1\omega_{\rf n},\epsilon_2\omega_{\rf n})^2\pi^{n\epsilon_1,\epsilon_2}q^{n\epsilon_1\epsilon_2}.\]
\fi
\end{example}

Given $\UU$-modules $M,M'$, define the $\Qqtt$-linear bijection $\mathfrak F:M\otimes M'\rightarrow M\otimes M'$ 
by $\mathfrak F(m\otimes m')=\mathfrak f(|m|,|m'|) m\otimes m'$. 
For $1\leq s<t\leq 3$, we define $\mathfrak F^{st}$ on $M_1\otimes M_2\otimes M_3$
via $\mathfrak F^{st}(m_1\otimes m_2\otimes m_3)=\mathfrak f(|m_s|,|m_t|)m_1\otimes m_2
\otimes m_3$. Let ${}^\mathfrak{F}\Theta^{st}=\Theta^{st}\circ \mathfrak F^{st}$.

\begin{prop}[Yang-Baxter equation]
As operators on $M_1\otimes M_2\otimes M_3$,
\[{}^\mathfrak{F}\Theta^{12}\circ{}^\mathfrak{F}
\Theta^{13}\circ{}^\mathfrak{F}\Theta^{23}=
{}^\mathfrak{F}\Theta^{23}\circ{}^\mathfrak{F}\Theta^{13}\circ
{}^\mathfrak{F}\Theta^{12}\]
\end{prop}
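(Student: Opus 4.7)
The strategy is to adapt Lusztig's classical Yang-Baxter argument (cf. \cite[\S32.1]{L93}) to the covering setting, using the four coproduct identities for $\Theta$ established in the preceding proposition together with the intertwining property $\Delta(u)\Theta=\Theta\bar\Delta(u)$. The function $\mathfrak{f}$ and operator $\mathfrak{F}$ have been designed precisely so that the cocycle relation in Lemma \ref{lem:f function} absorbs the weight operators $1\otimes\tK_{-\nu''}\otimes 1$ and $1\otimes\tJ_{\nu''}\tK_{\nu''}\otimes 1$ that appear on the right-hand sides of those identities; this converts ${}^\mathfrak{F}\Theta$ into a genuine universal R-matrix, after which the Yang-Baxter equation follows by the standard hexagon calculation.

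The main intermediate step I would carry out is to establish the two hexagon identities
\[(\Delta\otimes 1)\bigl({}^\mathfrak{F}\Theta\bigr)={}^\mathfrak{F}\Theta^{13}\circ{}^\mathfrak{F}\Theta^{23},\qquad (1\otimes\Delta)\bigl({}^\mathfrak{F}\Theta\bigr)={}^\mathfrak{F}\Theta^{13}\circ{}^\mathfrak{F}\Theta^{12},\]
viewed as operators on a triple tensor product $M_1\otimes M_2\otimes M_3$. Each follows by applying the corresponding $\Theta$-identity to a homogeneous element $m_1\otimes m_2\otimes m_3$: the middle weight factor acts on $m_2$ as a scalar in $q$ and $\pi$, and the cocycle relation $\mathfrak{f}(\zeta+\mu,\zeta'+\nu)\mathfrak{f}(\zeta,\zeta')^{-1}=(\pi q)^{-\ang{\tilde\mu,\zeta'}}q^{-\ang{\tilde\nu,\zeta}-\mu\cdot\nu}$ of Lemma \ref{lem:f function} matches this scalar against the difference $(\Delta\otimes 1)\mathfrak{F} - \mathfrak{F}^{13}\mathfrak{F}^{23}$, so everything balances.

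Once the hexagons are in hand, the Yang-Baxter equation is a formal consequence. I would rewrite ${}^\mathfrak{F}\Theta^{13}\circ{}^\mathfrak{F}\Theta^{23}=(\Delta\otimes 1)\bigl({}^\mathfrak{F}\Theta\bigr)$, then use the intertwining property to commute ${}^\mathfrak{F}\Theta^{12}$ past, converting $\Delta$ into $\bar\Delta$ on the $M_1\otimes M_2$ factors; the second coproduct identity of the preceding proposition then provides the reverse hexagon that rebuilds ${}^\mathfrak{F}\Theta^{23}\circ{}^\mathfrak{F}\Theta^{13}$ on the other side of ${}^\mathfrak{F}\Theta^{12}$. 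The main obstacle will be the careful bookkeeping of parity signs throughout: the covering coproduct carries a $\pi^{p(x)p(y)}$-braiding, the normalization $\Theta_\nu$ carries the factor $\pi^{\bp(\nu)}\pi_\nu$, and the $\tJ_\nu$-actions on weight spaces contribute further $\pi^{\ang{\tilde\nu,\lambda}}$. These must conspire with the identity $\mathfrak{f}(\zeta,\zeta')\mathfrak{f}(-\zeta,-\zeta')^{-1}=\pi^{P(\zeta)P(\zeta')}$ of Lemma \ref{lem:f function}(3) to cancel cleanly; but the structure is rigid enough that, once the appropriate homogeneous components are isolated, the verification is systematic.
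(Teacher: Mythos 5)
Your proposal follows essentially the same route as the paper: commute the $\mathfrak F^{st}$ past the $\Theta^{st}$ using the cocycle property of $\mathfrak f$ from Lemma \ref{lem:f function}, recognize the resulting products of $\Theta$'s and middle weight operators via the coproduct identities of the preceding proposition, and finish with the intertwining property $\Delta(u)\Theta=\Theta\bar\Delta(u)$ applied in the last two tensor factors. One bookkeeping caution for when you write out your ``hexagon'' identities: the coproduct formulas produce the factors in the order $\Theta^{23}(1\otimes \tK_{-\nu''}\otimes 1)\Theta^{13}$ (resp.\ $\Theta^{12}(\cdots)\Theta^{13}$), and an individual $\mathfrak F^{st}$ commutes with $\Theta^{s't'}$ only up to a middle weight operator (only certain products such as $\mathfrak F^{12}\mathfrak F^{13}$ commute with $\Theta^{23}$ on the nose), so the order of composition in the asserted identity $(\Delta\otimes 1)({}^{\mathfrak F}\Theta)={}^{\mathfrak F}\Theta^{13}\circ{}^{\mathfrak F}\Theta^{23}$ must be rechecked.
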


\begin{proof}
First note that the maps $\mathfrak F^{st}$ are bijections which commute with one another.
One verifies directly that, as operators on $M_1\otimes M_2\otimes M_3$,
\[\mathfrak F^{12}\Theta_\nu^{13}=\Theta_\nu^{13}(1\otimes \tJ_\nu\tK_{\nu}\otimes 1)\mathfrak F^{12}\text{, } \quad
\mathfrak F^{12}\mathfrak{F}^{13}\Theta_\nu^{23}=
\Theta_\nu^{23}\mathfrak F^{12}\mathfrak{F}^{13},\]
\[\mathfrak F^{23}\Theta_\nu^{13}=\Theta_\nu^{13}(1\otimes \tK_{-\nu} \otimes 1)\mathfrak F^{23}\text{, } \quad
\mathfrak F^{23}\mathfrak{F}^{13}\Theta_\nu^{12}=
\Theta_\nu^{12}\mathfrak F^{23}\mathfrak{F}^{13}.\]
In particular, it suffices to show that
\[\Theta^{12}\parens{\sum_{\nu}\Theta_\nu^{13}(1\otimes \tJ_\nu\tK_{\nu}\otimes 1)}
\Theta^{23}=\Theta^{23}\parens{\sum_{\nu}\Theta_\nu^{13}(1\otimes \tK_{-\nu}\otimes 1)}
\Theta^{12}.\]
Writing $\Theta^{12}=\sum_{\mu}\Theta_\mu^{12}$, we have
\[\Theta^{12}\parens{\sum_{\nu}\Theta_\nu^{13}(1\otimes \tJ_\nu\tK_{\nu}\otimes 1)}
=\sum_{\mu,\nu}\Theta_\mu^{12}(1\otimes \tJ_\nu\tK_{\nu}\otimes 1)\Theta_\nu^{13}
=\sum_{\nu}(1\otimes \Delta)(\Theta_{\nu}),\]
and similarly \[\parens{\sum_{\nu}\Theta_\nu^{13}(1\otimes \tK_{-\nu}\otimes 1)}
\Theta^{12}=\sum_{\nu}(1\otimes \bar \Delta)(\Theta_{\nu}).\]
Then we are reduced to showing the equality
\[\sum_{\nu}(1\otimes \Delta)(\Theta_{\nu})\Theta^{23}=\Theta^{23}\sum_{\nu}(1\otimes \bar \Delta)(\Theta_{\nu}),\]
which follows from the defining property of $\Theta$.
\end{proof}

\begin{prop}\label{prop:Rmat}
Define $\mathcal R: M\otimes M'\rightarrow M'\otimes M$ by $\mathcal R=\Theta\circ\mathfrak F\circ\frs$.
Then $\mathcal R$ is a $\UU$-module isomorphism.
\end{prop}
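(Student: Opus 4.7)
The plan is to verify two things: that $\cR$ is a $\Qqtt$-linear bijection, and that $\cR$ intertwines the $\UU$-action. Bijectivity is immediate from the construction: the swap $\frs$ is tautologically a bijection, the operator $\frF$ is diagonal with respect to a weight basis of $M'\otimes M$ with invertible entries $\frf(|m'|,|m|)\in(\Qqtt)^\times$, and $\Theta$ is invertible with inverse $\bar\Theta$ by Proposition~\ref{prop:quasiR}. Hence $\cR$ is a bijection.

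To show $\cR$ is $\UU$-linear, the plan is to factor $\cR = \Theta\circ\frF\circ\frs$ and decompose the desired identity $\cR\circ\Delta(u) = \Delta(u)\circ\cR$ into three intertwining relations, one per factor. First, $\frs\circ\Delta(u) = \Delta^{\mathrm{op}}(u)\circ\frs$, where $\Delta^{\mathrm{op}}(u)=\frs(\Delta(u))$ is the super-opposite coproduct; this follows by directly expanding the action on $m\otimes m'$ and comparing the $\pi^{p(\cdot)p(\cdot)}$ braiding signs (both sides produce the exponent $p(u_{(1)})p(u_{(2)}) + p(u_{(1)})p(m') + p(m)p(m')$ modulo~$2$). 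Third, $\Theta\circ\bar\Delta(u) = \Delta(u)\circ\Theta$ on $M'\otimes M$ is the defining property of the quasi-$\cR$-matrix in Proposition~\ref{prop:quasiR}. Chaining these then yields
\[\cR\Delta(u) = \Theta\frF\frs\Delta(u) = \Theta\frF\Delta^{\mathrm{op}}(u)\frs = \Theta\bar\Delta(u)\frF\frs = \Delta(u)\Theta\frF\frs = \Delta(u)\cR,\]
provided one has the middle identity $\frF\circ\Delta^{\mathrm{op}}(u) = \bar\Delta(u)\circ\frF$ on $M'\otimes M$.

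The main obstacle is this middle identity, which is precisely why $\frF$ is built into the definition of $\cR$: it is engineered so that conjugation by $\frF$ carries $\Delta^{\mathrm{op}}$ to $\bar\Delta$. It suffices to check on the generators. The diagonal cases $K_\mu,J_\mu$ are immediate, since $\Delta^{\mathrm{op}}(K_\mu)=\bar\Delta(K_\mu)=K_\mu\otimes K_\mu$ commutes with the weight-scaling $\frF$, and likewise for $J_\mu$. For $u=E_i$ (the $F_i$ case being symmetric), the plan is to evaluate both sides on a bihomogeneous vector $m'\otimes m$ with $|m'|=\xi$, $|m|=\zeta$; each side produces a two-term expression supported on $m'\otimes E_i m$ and $E_i m'\otimes m$, and matching coefficients reduces the identity to the two specializations of the functional equation for $\frf$ in Lemma~\ref{lem:f function} at $(\mu,\nu)=(0,i)$ and $(\mu,\nu)=(i,0)$. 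These specializations give exactly the ratios of weight factors needed to reconcile the $\pi$- and $q$-exponents appearing in $\Delta^{\mathrm{op}}(E_i)$ and $\bar\Delta(E_i)$, completing the verification.
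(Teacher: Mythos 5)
Your proposal is correct and follows essentially the same route as the paper: bijectivity from the invertibility of each factor, reduction via the defining property $\Delta(u)\Theta=\Theta\bar\Delta(u)$ of the quasi-$\cR$-matrix, and a generator-by-generator check that $\frF\circ\frs$ carries $\Delta(u)$ to $\bar\Delta(u)$ using the functional equation of $\frf$ from Lemma~\ref{lem:f function}. The only cosmetic difference is that you isolate the super-opposite coproduct $\Delta^{\mathrm{op}}$ as an intermediate step, whereas the paper computes $\frF\circ\frs\circ\Delta(E_i)$ in one pass.
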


\begin{proof}
That $\mathcal R$ is bijective and homogeneous in parity is clear from construction. 
Note that \[\Delta(u)\mathcal R(m\otimes m')=\Theta(\bar\Delta(u)\mathfrak F\circ \frs(m\otimes m'))=\Theta(\mathfrak f(|m'|,|m|)\pi^{p(m)p(m')}\bar\Delta(u)(m'\otimes m)),\] 
so it suffices to show 
\[\mathfrak F\circ \frs(\Delta(u)m\otimes m'))=\mathfrak f(|m'|,|m|)\pi^{p(m)p(m')}\bar\Delta(u)(m'\otimes m)\]
for all $u\in \UU$, hence it is enough to show this equality holds when $u$ is a generator. For $u=J_\nu, K_\nu$, this
is straightforward. The cases $u=E_i$ and $u=F_i$ are similar, so we shall prove the first case:
\begin{align*}
\mathfrak F\circ \frs(\Delta(E_i)&m\otimes m')=\mathfrak f(|m'|,i+|m|)\pi^{p(i)p(m')+p(m)p(m')} m'\otimes E_im\\
&\hspace{5em}+\mathfrak f(i+|m'|,|m|)\pi^{p(m)p(m')}(\pi q)^{d_i\ang{i,|m|}} E_im'\otimes m\\
&=f(|m'|,|m|)\pi^{p(m)p(m')}(E_im'\otimes m+\pi^{p(i)p(m')} q^{-d_i\ang{i,|m'|}} m'\otimes E_im)\\
&=f(|m'|,|m|)\pi^{p(m)p(m')}\bar\Delta(E_i)(m'\otimes m).
\end{align*}
\end{proof}

We thus obtain the following crucial property of $\cR$.

\begin{prop}\label{prop:braidrel}
For any modules $M_1$, $M_2$, and $M_3$, let $\cR_{st}={}^{\mathfrak F}\Theta^{st}\circ\frs_{(st)}$.
Then
\[\cR_{12}\cR_{23}\cR_{12}=\cR_{23}\cR_{12}\cR_{23}:M_1\otimes M_2\otimes M_3\rightarrow M_3\otimes M_2\otimes M_1.\]
\end{prop}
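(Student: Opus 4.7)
The strategy is to reduce both sides of the desired equation to the Yang--Baxter equation just established for $\,{}^{\mathfrak F}\Theta$ by isolating the permutation part. Write each crossing as $\cR_{st}={}^{\mathfrak F}\Theta^{st}\circ\frs_{(st)}$ and plan to push all of the swap factors $\frs_{(st)}$ to the right of all $\,{}^{\mathfrak F}\Theta$ factors. The key ``conjugation rule'' I need is, for each $\gamma\in\mathfrak S_m$,
\[
\frs_\gamma\circ{}^{\mathfrak F}\Theta^{st}={}^{\mathfrak F}\Theta^{\gamma(s)\gamma(t)}\circ\frs_\gamma,
\]
viewed as operators between the appropriate permuted tensor products. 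This identity is the tensor-categorical naturality of $\,{}^{\mathfrak F}\Theta$ with respect to the braiding $\frs$, and it follows by tracking the action of a summand $b^-\otimes (b^*)^+$ placed at positions $(s,t)$: moving it past $\frs_\gamma$ relocates the factors to positions $(\gamma(s),\gamma(t))$, and the super-signs introduced by $\frs_\gamma$ match those in the definition of $\,{}^{\mathfrak F}\Theta^{\gamma(s)\gamma(t)}$ precisely because the coefficients $\pi^{\bp(\nu)}$, together with the compatibility $\frs_\gamma(\Delta^{m-1}(u)v)=\frs_\gamma(\Delta^{m-1}(u))\frs_\gamma(v)$, are built from the same super-braiding. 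Analogous naturality holds for the weight operators $\mathfrak F^{st}$, and is immediate from the definition of $\mathfrak F$ as a weight-diagonal map.

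Granting this rule, first I would compute
\[
\cR_{12}\cR_{23}\cR_{12}
={}^{\mathfrak F}\Theta^{12}\,\frs_{(12)}\,{}^{\mathfrak F}\Theta^{23}\,\frs_{(23)}\,{}^{\mathfrak F}\Theta^{12}\,\frs_{(12)}.
\]
Applying the conjugation rule first to move $\frs_{(12)}$ past $\,{}^{\mathfrak F}\Theta^{23}$ gives $\,{}^{\mathfrak F}\Theta^{13}$, and then moving $\frs_{(12)}\frs_{(23)}$ past $\,{}^{\mathfrak F}\Theta^{12}$ replaces the indices $(1,2)$ by $((1\,2\,3)(1),(1\,2\,3)(2))=(2,3)$, producing $\,{}^{\mathfrak F}\Theta^{23}$. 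The result is
\[
\cR_{12}\cR_{23}\cR_{12}={}^{\mathfrak F}\Theta^{12}\,{}^{\mathfrak F}\Theta^{13}\,{}^{\mathfrak F}\Theta^{23}\,\frs_{(12)}\frs_{(23)}\frs_{(12)}.
\]
The identical maneuver applied to $\cR_{23}\cR_{12}\cR_{23}$ yields
\[
\cR_{23}\cR_{12}\cR_{23}={}^{\mathfrak F}\Theta^{23}\,{}^{\mathfrak F}\Theta^{13}\,{}^{\mathfrak F}\Theta^{12}\,\frs_{(23)}\frs_{(12)}\frs_{(23)}.
\]

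To conclude, I would invoke two inputs. First, the braid relation $\frs_{(12)}\frs_{(23)}\frs_{(12)}=\frs_{(23)}\frs_{(12)}\frs_{(23)}$, which was noted in the paper as following from $\frs_{t-1,t}\frs_{t,t+1}\frs_{t-1,t}=\frs_{t,t+1}\frs_{t-1,t}\frs_{t,t+1}$; both expressions equal $\frs_{(13)}\colon M_1\otimes M_2\otimes M_3\to M_3\otimes M_2\otimes M_1$. Second, the Yang--Baxter equation just proved,
\[
{}^{\mathfrak F}\Theta^{12}\,{}^{\mathfrak F}\Theta^{13}\,{}^{\mathfrak F}\Theta^{23}={}^{\mathfrak F}\Theta^{23}\,{}^{\mathfrak F}\Theta^{13}\,{}^{\mathfrak F}\Theta^{12},
\]
identifies the two prefixes. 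Combining these gives $\cR_{12}\cR_{23}\cR_{12}=\cR_{23}\cR_{12}\cR_{23}$.

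The main obstacle is the verification of the conjugation rule, since the super-braiding in $\frs$ introduces factors of $\pi^{p(x)p(y)}$ that must cancel exactly with the $\pi^{\bp(\nu)}\pi_\nu$ in $\Theta_\nu$ and with the $\pi$-factors recorded in the definition of $\mathfrak F$; this is the only place where the ``covering'' nature of $\UU$ plays a nontrivial role beyond what is already encoded in the Yang--Baxter equation. Once the conjugation rule is in hand the remainder of the argument is a formal manipulation of braids and weights.
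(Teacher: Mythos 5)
Your proposal is correct and follows essentially the same route as the paper's proof: both push the swap factors to the right via the conjugation rule $\frs_\gamma\,{}^{\mathfrak F}\Theta^{st}={}^{\mathfrak F}\Theta^{\gamma(s)\gamma(t)}\,\frs_\gamma$, reduce both sides to $\,{}^{\mathfrak F}\Theta^{12}{}^{\mathfrak F}\Theta^{13}{}^{\mathfrak F}\Theta^{23}\frs_{(13)}$ and $\,{}^{\mathfrak F}\Theta^{23}{}^{\mathfrak F}\Theta^{13}{}^{\mathfrak F}\Theta^{12}\frs_{(13)}$, and conclude by the Yang--Baxter equation. The only difference is that you sketch a justification of the conjugation rule, which the paper simply asserts.
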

\begin{proof}
First note that if $\sigma(s)<\sigma(t)$, 
$\frs_{\sigma}{}^{\mathfrak F}\Theta^{st}
={}^{\mathfrak F}\Theta^{\sigma(s)\sigma(t)}\frs_{\sigma}$.
Therefore we have $\frs_{(12)}{}^{\mathfrak F}\Theta^{23}={}^{\mathfrak F}\Theta^{13}\frs_{(12)}$,
and $\frs_{(123)}{}^{\mathfrak F}\Theta^{12}={}^{\mathfrak F}\Theta^{23}\frs_{(123)}$,
hence in particular
\[R_{12}R_{23}R_{12}
={}^{\mathfrak F}\Theta^{12}
\circ{}^{\mathfrak F}\Theta^{13}
\circ{}^{\mathfrak F}\Theta^{23}
\circ\frs_{(13)}.\]
Similar manipulations of the right-hand side yield the equality
\[R_{23}R_{12}R_{23}
={}^{\mathfrak F}\Theta^{23}
\circ{}^{\mathfrak F}\Theta^{13}
\circ{}^{\mathfrak F}\Theta^{12}
\circ\frs_{13}.\]
Since $s_{13}$ is a bijection, the proposition follows from the Yang-Baxter equation.
\end{proof}

\begin{rmk}
In \cite[\S 32]{L93}, it is shown that for $\mathfrak g=\mathfrak{sl}(2)$,
which we can view as the $\pi=1$ (i.e. $\tau=\pm 1$) specialization 
of Example \ref{ex:rank1rmat},
we can extend our field $\Qq$ to $\Q(\sqrt q)$ and normalize so that $\mathfrak f$ is
bi-multiplicative; that is $\mathfrak f(m+a,n+b)=\mathfrak f(m,n)\mathfrak f(m,b)
\mathfrak f(a,n)\mathfrak f(a,b)$. 
This is necessary for the maps $\mathcal R$
to satisfy the Hexagon Identities and thus 
define a braiding on the category of finite
dimensional modules.

Note that Example \ref{ex:rank1rmat} shows such a 
renormalization is impossible in general in the $\pi=-1$ case,
so in particular the maps $\mathcal R$ can not be normalized to 
define a braiding on the category of finite dimensional weight modules.
It is possible to overcome this difficulty by restricting the class of modules to those of "even" highest weight,
or by expanding the definition of $\frf$ to a function on $\bigrset\times\bigrset$, but we shall not
need this at present.
\end{rmk}

\section{Diagrammatic Calculus and Knot invariants}
\label{sec:diagcalc}

We will now interpret the $\UU$-module homomorphisms 
in terms of planar diagrams. At first, these diagrams 
should be interpreted as slice diagrams; that is, diagrams
together with vertical slices at various heights 
such that between consecutive slices
is an elementary diagram corresponding to a $\UU$-module 
homomorphism. However, we will
ultimately see that diagrams which can be identified by planar 
isotopies yield the same morphisms.

\subsection{Cups, caps, and crossings}

Recall that $\coqtr_\lambda$, $\coev_\lambda$, $\qtr_\lambda$, 
and $\ev_\lambda$ are the maps defined
in Lemma \ref{lemma:evsandcoevs} where $V=V(\lambda)$. 
Likewise, let $\cR_{\pm \lambda, \pm \mu}:V(\pm \lambda)\otimes V(\pm \mu)\rightarrow V(\pm \mu)\otimes V(\pm \lambda)$ be the map defined in
Proposition \ref{prop:Rmat}. Furthermore, we will use the notation $1_{\pm \lambda}=1_{V(\pm\lambda)}$.

We will now begin to represent our maps via a graphical calculus 
in anticipation of constructing tangle invariants.
Specifically, we follow \cite{T,ADO} and interpret maps between tensor products of the
modules $V(\pm \lambda)$ for various $\lambda\in X^+$ as
sliced oriented tangle diagrams with $X^+$-labeled strands; 
a concise exposition of this approach is lain out in \cite[Chapter 3]{Oht}.
The elementary oriented tangle diagrams are interpreted as follows. (Note that while sideways-oriented crossings aren't
considered elementary, we include them here for convenience
in later arguments.)
\begin{center}
\begin{tabular}{ccccccc}
&&
\begin{tikzpicture}
\draw (-2, 2) node {$1_{\lambda}=$};
\idsup{-1}{1.5}{1}{1}{1}
\draw (-.8,1.5) node {$\lambda$};
\end{tikzpicture}
&&
\begin{tikzpicture}
\draw (0, 2) node {$1_{-\lambda}=$};
\idsdown{1}{1.5}{1}{1}{1}
\draw (1.2,1.5) node {$\lambda$};
\end{tikzpicture}\\\\
$\coqtr_{\lambda}=$ \hctikz{
\cwcup{1}{-.5}{1}{.75}
\draw (1.8,-.5) node {$\lambda$};
}
&&$\coev_{\lambda}=$ 
\hctikz{
\ccwcup{1}{-.5}{1}{.75}
\draw (1.7,-.5) node {$\lambda$};}
&&
$\qtr_{\lambda}=$ 
\hctikz{
\cwcap{1}{-.5}{1}{.75}
\draw (1.8,.3) node {$\lambda$};
}
&&
$\ev_{\lambda}=$ 
\hctikz{
\ccwcap{1}{-.5}{1}{.75}
\draw (1.8,.3) node {$\lambda$};
}
\\\\
$\cR_{\lambda,\mu}= $
\hctikz{
\rcrossup{0}{0}{1}{1}{1}
\draw (-.2,0) node {$\lambda$};
\draw (1.2,0) node {$\mu$};
}
&&

$\cR_{-\lambda,-\mu}= $
\hctikz{
\rcrossdown{0}{0}{1}{1}{1}
\draw (-.2,0) node {$\lambda$};
\draw (1.2,0) node {$\mu$};
}
&&
$\cR_{\lambda,-\mu}= $
\hctikz{
\NESE{0}{0}{1}{1}{1}
\draw (-.2,0) node {$\lambda$};
\draw (1.2,0) node {$\mu$};
}
&&
$\cR_{-\lambda,\mu}= $
\hctikz{
\SWNW{0}{0}{1}{1}{1}
\draw (-.2,0) node {$\lambda$};
\draw (1.2,0) node {$\mu$};
}

\\\\
$\cR_{\lambda,\mu}^{-1}= $
\hctikz{
\lcrossup{0}{0}{1}{1}{1}
\draw (-.2,0) node {$\lambda$};
\draw (1.2,0) node {$\mu$};
}
&&

$\cR_{-\lambda,-\mu}^{-1}= $
\hctikz{
\lcrossdown{0}{0}{1}{1}{1}
\draw (-.2,0) node {$\lambda$};
\draw (1.2,0) node {$\mu$};
}
&&
$\cR_{\lambda,-\mu}^{-1}= $
\hctikz{
\SENE{0}{0}{1}{1}{1}
\draw (-.2,0) node {$\lambda$};
\draw (1.2,0) node {$\mu$};
}
&&
$\cR_{-\lambda,\mu}^{-1}= $
\hctikz{
\NWSW{0}{0}{1}{1}{1}
\draw (-.2,0) node {$\lambda$};
\draw (1.2,0) node {$\mu$};
}
\end{tabular}
\end{center}
We construct more general diagrams from these elementary ones by
the following constructions. If $\hctikz{
\ids{.2}{.9}{.2}{.1}{3}
\standin{0}{.1}{.8}{.8}{T}
\ids{.1}{0}{.2}{.1}{4}
}$ is some diagram denoting the morphism $\phi$ and $\hctikz{
\ids{.1}{.9}{.2}{.1}{4}
\standin{0}{.1}{.8}{.8}{S}
\ids{.4}{0}{.2}{.1}{1}
}$ is some diagram denoting the morphism $\psi$, then we can combine them
as
\begin{itemize}
\item the {\em horizontal} composition 
$\hctikz{
\ids{.2}{.9}{.2}{.1}{3}
\standin{0}{.1}{.8}{.8}{T}
\ids{.1}{0}{.2}{.1}{4}
\ids{1.1}{.9}{.2}{.1}{4}
\standin{1}{.1}{.8}{.8}{S}
\ids{1.4}{0}{.2}{.1}{1}
}$ which denotes the tensor product $\phi\otimes \psi$
\item the {\em vertical} composition 
$\hctikz{
\ids{.2}{.9}{.2}{.1}{3}
\standin{0}{.1}{.8}{.8}{T}
\ids{.1}{0}{.2}{.1}{4}
\ids{.1}{-.1}{.2}{.1}{4}
\standin{0}{-.9}{.8}{.8}{S}
\ids{.4}{-1}{.2}{.1}{1}}$ which denotes the composition $\phi\circ \psi$,
or zero if this composition is undefined (which is to say, when the
strands on top of S don't match the number and labelling of the strands on the bottom of T).
\end{itemize}

We will say two diagrams are equal if the corresponding morphisms
agree. Note that, by construction and by Lemma \ref{prop:braidrel}, 
the following diagrams are equal for any choice
of orientation and labeling of strands:

\begin{equation}\label{eq:comm and ids}
\hctikz{
\ids{.2}{.9}{.2}{.1}{3}
\standin{0}{.1}{.8}{.8}{T}
\ids{.1}{0}{.2}{.1}{4}
}\quad =\quad
\hctikz{
\ids{.2}{.9}{.2}{.5}{3}
\standin{0}{.1}{.8}{.8}{T}
\ids{.1}{0}{.2}{.1}{4}
}\quad =\quad
\hctikz{
\ids{.2}{.9}{.2}{.1}{3}
\standin{0}{.1}{.8}{.8}{T}
\ids{.1}{-.4}{.2}{.5}{4}
},\qquad
\hctikz{
\ids{.2}{.9}{.2}{1.1}{3}
\standin{0}{.1}{.8}{.8}{T}
\ids{.1}{0}{.2}{.1}{4}
\ids{1.3}{1.9}{.2}{.1}{2}
\standin{1}{1.1}{.8}{.8}{S}
\ids{1.4}{0}{.2}{1.1}{1}
}\quad =\quad
\hctikz{
\ids{.2}{1.9}{.2}{.1}{3}
\standin{0}{1.1}{.8}{.8}{T}
\ids{.1}{0}{.2}{1.1}{4}
\ids{1.3}{.9}{.2}{1.1}{2}
\standin{1}{.1}{.8}{.8}{S}
\ids{1.4}{0}{.2}{.1}{1}
}
\end{equation}

\begin{equation}\label{eq:diagbraidrels}
\hctikz{\lcross{0}{0}{.75}{.5}
\rcross{0}{.75}{.75}{.5}
}
= \quad
\hctikz{
\ids{0}{0}{1}{1.5}{1}
\ids{.5}{0}{1}{1.5}{1}
}
= \quad
\hctikz{\rcross{0}{0}{.75}{.5}
\lcross{0}{.75}{.75}{.5}
}\ ,\qquad
\hctikz{\rcross{0}{0}{.5}{.5}\ids{1}{0}{.5}{.5}{1} \rcross{.5}{.5}{.5}{.5}\ids{0}{.5}{.5}{.5}{1}\rcross{0}{1}{.5}{.5}\ids{1}{1}{.5}{.5}{1}}
=
\hctikz{\rcross{.5}{0}{.5}{.5}\ids{0}{0}{.5}{.5}{1} \rcross{0}{.5}{.5}{.5}\ids{1}{.5}{.5}{.5}{1}\rcross{.5}{1}{.5}{.5}\ids{0}{1}{.5}{.5}{1}}
\end{equation}

\vspace{1em}

In \eqref{eq:comm and ids}, the symbols $\hctikz{
\ids{.2}{.9}{.2}{.1}{3}
\standin{0}{.1}{.8}{.8}{T}
\ids{.1}{0}{.2}{.1}{4}
}$ and $\hctikz{
\ids{.3}{.9}{.2}{.1}{2}
\standin{0}{.1}{.8}{.8}{S}
\ids{.4}{0}{.2}{.1}{1}
}$ stand for arbitrary sub-diagrams  with an arbitrary number of strands protruding from the top and bottom and with an arbitrary labeling of strands.
\subsection{Graphical identities}\label{sec:graphid}

Now we shall prove some more substantial diagrammatic identities.

\begin{lem}\label{lem:straightening}
We have an equality of diagrams
\[\hctikz{
\ids{0}{.75}{.5}{.75}{1}\dcap{.5}{.75}{.5}{.5}
\dcup{0}{.25}{.5}{.5}\ids{1}{0}{.5}{.75}{1}
}\quad =\quad  \hctikz{\ids{0}{0}{1}{1.5}{1}} \quad=\quad
\hctikz{
\ids{1}{.75}{.5}{.75}{1}\dcap{0}{.75}{.5}{.5}
\dcup{.5}{.25}{.5}{.5}\ids{0}{0}{.5}{.75}{1}
}\]
For any choice of orientation or labeling of the strand.
\end{lem}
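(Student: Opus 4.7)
The plan is to decompose each diagram into an explicit composition of the morphisms from Lemma \ref{lemma:evsandcoevs} and verify the resulting zig-zag identity by direct calculation. Although there are four cases in principle (two diagrams times two orientations of the strand), they are essentially the same computation up to swapping which tensor factor is acted on, and up to interchanging the pair $(\coqtr_\lambda, \ev_\lambda)$ with $(\coev_\lambda, \qtr_\lambda)$.

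Fixing an orientation and reading bottom-to-top, the first diagram represents one of
\begin{align*}
V(\lambda) &\xrightarrow{\coqtr_\lambda \otimes 1} V(\lambda) \otimes V(-\lambda) \otimes V(\lambda) \xrightarrow{1 \otimes \ev_\lambda} V(\lambda), \\
V(-\lambda) &\xrightarrow{\coev_\lambda \otimes 1} V(-\lambda) \otimes V(\lambda) \otimes V(-\lambda) \xrightarrow{1 \otimes \qtr_\lambda} V(-\lambda),
\end{align*}
according as the strand is oriented upward or downward at the top-left, and the second diagram represents the analogous compositions with the identity tensored on the other side. Plugging in the formulas from Lemma \ref{lemma:evsandcoevs}, the first composition applied to $v \in V(\lambda)$ is immediate: $(1 \otimes \ev_\lambda)(\coqtr_\lambda \otimes 1)(v) = \sum_{b} b \cdot b^*(v) = v$. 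For the second, applied to $v^* \in V(-\lambda)$, one obtains
\[(1 \otimes \qtr_\lambda)(\coev_\lambda \otimes 1)(v^*) = \sum_{b} \pi^{p(b)(1+p(v^*))}\, v^*(b)\, b^*,\]
since the factor $q^{\ang{\tilde\rho,|b|}}$ coming from $\coev_\lambda$ cancels the $q^{-\ang{\tilde\rho,|b|}}$ coming from $\qtr_\lambda$. The non-vanishing terms satisfy $p(b) = p(v^*)$, so the $\pi$-exponent is $p(v^*)(1+p(v^*)) \equiv 0 \pmod 2$, and the sum collapses to $\sum_b v^*(b) b^* = v^*$.

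The two remaining cases from the second diagram are verified by essentially identical computations with the tensor factors reordered. The main obstacle is just careful bookkeeping of the $\pi$- and $q$-weights, which is routine: the normalizations chosen for $\coev_\lambda$ and $\qtr_\lambda$ in Lemma \ref{lemma:evsandcoevs} are precisely those needed for the snake identities to hold on the nose.
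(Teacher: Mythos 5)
Your proposal is correct and follows essentially the same route as the paper: decompose each snake into the relevant pair of maps from Lemma \ref{lemma:evsandcoevs}, expand over a homogeneous basis, and observe that the $q^{\pm\ang{\tilde\rho,|b|}}$ factors cancel while the $\pi$-exponent vanishes because the surviving terms force $p(b)=p(v^*)$. The paper writes out only the $(1_{-\lambda}\otimes\qtr_\lambda)\circ(\coev_\lambda\otimes 1_{-\lambda})$ case and declares the rest similar, so your treatment is, if anything, slightly more complete.
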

\begin{proof}
This follows by choosing a homogeneous basis for the module and applying the
definitions; we will prove the equality
\[\hctikz{
\idsdown{0}{.75}{.5}{.75}{1}\dcap{.5}{.75}{.5}{.5}
\dcup{0}{.25}{.5}{.5}\idsdown{1}{0}{.5}{.75}{1}
\draw (1.3,0) node {$\lambda$};
}\quad =\quad  \hctikz{\idsdown{0}{0}{1}{1.5}{1}\draw (0.3,0) node {$\lambda$};}
\] 
in detail, as the other cases are similar.
In terms of morphisms, we wish to show 
$(1_{-\lambda}\otimes \qtr_\lambda)\circ (\coev_\lambda\otimes 1_{-\lambda})=1_{-\lambda}$. Let $B$ be a homogeneous basis of $V(\lambda)$ 
and $B^*$ the dual basis of $V(\lambda)^*$.
Then for any $b_0\in B$,
\begin{align*}
(1_{-\lambda}\otimes \qtr_\lambda)(\coev_\lambda\otimes 1_{-\lambda})(b_0^*)
&=\sum_{b\in B} \pi^{p(b)}q^{\ang{\tilde\rho,|b|}}(1_{-\lambda}\otimes \qtr_\lambda)(b^*\otimes b\otimes b_0^*)\\
&=\sum_{b\in B} b_0^*(b)b^*=b_0^*.
\end{align*}
\end{proof}

\begin{lem}\label{lem:reide2} For $\lambda\in X^+$, we have an equality of diagrams
\[\tag{a}\hctikz{
\ids{0}{.75}{.5}{.75}{1}\dcap{.5}{.75}{.5}{.5}
\rcrossup{0}{0}{.75}{.5}\idsdown{1}{0}{.5}{.75}{1}
\ids{0}{-.75}{.5}{.75}{1}\dcup{.5}{-.5}{.5}{.5}
\draw (-.3,0) node {$\lambda$};
}\quad =\mathfrak f(\lambda,\lambda)q^{-\ang{\tilde\rho,\lambda}}  \hctikz{\idsup{0}{0}{1}{1.5}{1}
\draw (-.3,0) node {$\lambda$};} \quad= \pi^{P(\lambda)}\hctikz{
\ids{1}{.75}{.5}{.75}{1}\dcap{0}{.75}{.5}{.5}
\rcrossup{.5}{0}{.75}{.5}\idsdown{0}{0}{.5}{.75}{1}
\ids{1}{-.75}{.5}{.75}{1}\dcup{0}{-.5}{.5}{.5}
\draw (-.3,0) node {$\lambda$};
}
\]
\[\tag{b}\hctikz{
\ids{1}{.75}{.5}{.75}{1}\dcap{0}{.75}{.5}{.5}
\lcrossup{.5}{0}{.75}{.5}\idsdown{0}{0}{.5}{.75}{1}
\ids{1}{-.75}{.5}{.75}{1}\dcup{0}{-.5}{.5}{.5}
\draw (-.3,0) node {$\lambda$};
}\quad =\mathfrak f(\lambda,\lambda)^{-1}q^{\ang{\tilde\rho,\lambda}}  \hctikz{\idsup{0}{0}{1}{1.5}{1}
\draw (-.3,0) node {$\lambda$};}  \quad= \pi^{P(\lambda)}\hctikz{
\ids{0}{.75}{.5}{.75}{1}\dcap{.5}{.75}{.5}{.5}
\lcrossup{0}{0}{.75}{.5}\idsdown{1}{0}{.5}{.75}{1}
\ids{0}{-.75}{.5}{.75}{1}\dcup{.5}{-.5}{.5}{.5}
\draw (-.3,0) node {$\lambda$};
}
\]
\[\tag{c}\hctikz{
\ids{0}{.75}{.5}{.75}{1}\dcap{.5}{.75}{.5}{.5}
\rcrossdown{0}{0}{.75}{.5}\idsup{1}{0}{.5}{.75}{1}
\ids{0}{-.75}{.5}{.75}{1}\dcup{.5}{-.5}{.5}{.5}
\draw (-.3,0) node {$\lambda$};
}\quad =\mathfrak f(\lambda,\lambda)q^{-\ang{\tilde\rho,\lambda}}  \hctikz{\idsdown{0}{0}{1}{1.5}{1}
\draw (-.3,0) node {$\lambda$};} \quad= \pi^{P(\lambda)}\hctikz{
\ids{1}{.75}{.5}{.75}{1}\dcap{0}{.75}{.5}{.5}
\rcrossdown{.5}{0}{.75}{.5}\idsup{0}{0}{.5}{.75}{1}
\ids{1}{-.75}{.5}{.75}{1}\dcup{0}{-.5}{.5}{.5}
\draw (-.3,0) node {$\lambda$};
}
\]
\[\tag{d}\hctikz{
\ids{1}{.75}{.5}{.75}{1}\dcap{0}{.75}{.5}{.5}
\lcrossdown{.5}{0}{.75}{.5}\idsup{0}{0}{.5}{.75}{1}
\ids{1}{-.75}{.5}{.75}{1}\dcup{0}{-.5}{.5}{.5}
\draw (-.3,0) node {$\lambda$};
}\quad =\mathfrak f(\lambda,\lambda)^{-1}q^{\ang{\tilde\rho,\lambda}}  \hctikz{\idsdown{0}{0}{1}{1.5}{1}
\draw (-.3,0) node {$\lambda$};}  \quad= \pi^{P(\lambda)}\hctikz{
\ids{0}{.75}{.5}{.75}{1}\dcap{.5}{.75}{.5}{.5}
\lcrossdown{0}{0}{.75}{.5}\idsup{1}{0}{.5}{.75}{1}
\ids{0}{-.75}{.5}{.75}{1}\dcup{.5}{-.5}{.5}{.5}
\draw (-.3,0) node {$\lambda$};
}
\]
\end{lem}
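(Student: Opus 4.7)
The plan is to prove each of the four identities by reducing to a scalar comparison via Schur's lemma. Each diagram in the statement is a composition of $\UU$-linear maps and so defines an endomorphism of the simple highest weight module $V(\pm\lambda)$; such an endomorphism must act as a scalar. It therefore suffices to evaluate the scalar on a single well-chosen vector and compare with the middle expression in each equation.

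The key computational input is the structure $\cR = \Theta\circ\mathfrak F\circ\frs$ with $\Theta = \sum_\nu\Theta_\nu$, where $\Theta_0 = 1\otimes 1$ and each $\Theta_\nu$ for $\nu > 0$ lies in $\UU^-_{-\nu}\otimes \UU^+_\nu$. To prove the first equality in (a), I apply the LHS to the highest weight vector $v_\lambda$: after the cup $\coqtr_\lambda$ inserts $\sum_b b\otimes b^*$ on the right, the crossing $\cR_{\lambda,\lambda}$ acts on $v_\lambda\otimes b$, and the $\Theta$-sum collapses to $\Theta_0$ since $(b'^*)^+ v_\lambda = 0$ for $|b'|>0$. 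This yields $\mathfrak f(|b|,\lambda)\,b\otimes v_\lambda\otimes b^*$, and then the cap $\qtr_\lambda$ selects $b = v_\lambda$ to produce the scalar $\mathfrak f(\lambda,\lambda)q^{-\ang{\tilde\rho,\lambda}}$. For the second equality in (a) --- the reflected curl --- I evaluate instead on the lowest weight vector $v_{-\lambda}$, so that $\Theta$ collapses through $b'^- v_{-\lambda} = 0$ for $|b'|>0$. The parity $p(v_{-\lambda}) = P(\lambda)$ entering through $\frs$ and the identity $\mathfrak f(-\lambda,-\lambda) = \pi^{P(\lambda)}\mathfrak f(\lambda,\lambda)$ from Lemma~\ref{lem:f function}(3) combine (with a cross-term $\pi^{P(\lambda)(1+P(\lambda))} = 1$ cancelling automatically) to produce exactly one net factor of $\pi^{P(\lambda)}$.

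Identity (b) is obtained by the same argument with $\cR$ replaced by $\cR^{-1}$ (equivalently $\Theta$ by $\bar\Theta$; cf.\ Lemma~\ref{lem:quasiRinv}), which inverts the scalar as expected. Identities (c) and (d) concern curls on downward-oriented strands --- endomorphisms of $V(-\lambda)$ rather than $V(\lambda)$ --- and follow from the same $\Theta$-collapse argument applied to a chosen highest or lowest weight vector of $V(-\lambda)$, together with the identification $V(-\lambda)\cong\Pi^{P(\lambda)}V(\lambda)$ of Lemma~\ref{lem:dual isos}. The principal obstacle throughout is the careful bookkeeping of the parity, $q$-, and $\mathfrak f$-factors entering through $\frs$, $\mathfrak F$, the choice of cup ($\coev_\lambda$ vs.\ $\coqtr_\lambda$) and cap ($\ev_\lambda$ vs.\ $\qtr_\lambda$), and the parity of weight vectors of $V(\pm\lambda)$; one must verify in each of the four cases that exactly one factor of $\pi^{P(\lambda)}$ appears when the curl is reflected, in precisely the predicted place.
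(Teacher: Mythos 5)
Your proposal is correct and follows essentially the same route as the paper: both reduce each curl to a scalar endomorphism of the simple module, evaluate one side on the highest weight vector and the reflected side on the lowest weight vector so that $\Theta$ collapses to $\Theta_0=1\otimes 1$, and then compare $\mathfrak f(\lambda,\lambda)$ with $\mathfrak f(-\lambda,-\lambda)$ via Lemma \ref{lem:f function}(3), with the parity cross-terms cancelling exactly as you indicate. The paper likewise writes out only case (a) in detail and treats (b)--(d) as analogous.
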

\begin{proof}
The proofs of (a)-(d) are all similar, so
we will only prove (a).
First, let us denote
\[\phi=\hctikz{
	\ids{0}{.75}{.5}{.75}{1}\dcap{.5}{.75}{.5}{.5}
	\rcrossup{0}{0}{.75}{.5}\idsdown{1}{0}{.5}{.75}{1}
	\ids{0}{-.75}{.5}{.75}{1}\dcup{.5}{-.5}{.5}{.5}
	\draw (-.3,0) node {$\lambda$};
}=(1_\lambda\otimes \qtr_{\lambda})\circ (\cR_{\lambda,\lambda}\otimes 1_\lambda)\circ (1_\lambda\otimes \coqtr_{\lambda}),\]
\[\psi=\hctikz{
	\ids{1}{.75}{.5}{.75}{1}\dcap{0}{.75}{.5}{.5}
	\rcrossup{.5}{0}{.75}{.5}\idsdown{0}{0}{.5}{.75}{1}
	\ids{1}{-.75}{.5}{.75}{1}\dcup{0}{-.5}{.5}{.5}
	\draw (-.3,0) node {$\lambda$};
}=(\ev_{\lambda}\otimes 1_\lambda)\circ (1_\lambda\otimes \cR_{\lambda,\lambda})\circ (\coev_\lambda\otimes 1_\lambda).\] 

Since $\phi$ and $\psi$ are $\UU$-module homomorphisms from $V(\lambda)$ to $V(\lambda)$.,
$\phi$ and $\psi$ must each be a multiple
of the identity which is completely determined by
the image of an extremal weight vector, so let $v_\lambda\in V(\lambda)_\lambda$ and $v_{-\lambda}\in V(\lambda)_{-\lambda}$
be nonzero highest- and lowest-weight vectors.
Then if $B(\lambda)$ is a homogeneous basis of $V(\lambda)$, then 
\begin{align*}
\phi(v_{\lambda})&=(1_\lambda\otimes \qtr_{\lambda})\circ (\cR_{\lambda,\lambda}\otimes 1_\lambda)
\parens{\sum_{v\in B(\lambda)} v_\lambda\otimes v\otimes v^*}\\
&=(1_\lambda\otimes \qtr_{\lambda})\parens{\sum_{v\in B(\lambda)}\mathfrak f(|v|,\lambda)
v\otimes v_\lambda\otimes v^*}=\mathfrak f(\lambda,\lambda)q^{-\ang{\tilde\rho,\lambda}}v_\lambda,
\end{align*}
and thus $\phi=\frf(\lambda,\lambda)q^{-\ang{\tilde\rho,\lambda}} 1_\lambda$.
Likewise,we compute
\begin{align*}
\psi(v_{-\lambda})&=(\ev_{\lambda}\otimes 1_\lambda)\circ (1_\lambda\otimes \cR_{\lambda,\lambda})
\parens{\sum_{v\in B(\lambda)} \pi^{p(v)}q^{\ang{\tilde\rho,|v|}}v^*\otimes v\otimes v_{-\lambda}}\\
&=(\ev_{\lambda}\otimes 1_\lambda)\parens{\sum_{v\in B(\lambda)}\pi^{p(v)}q^{\ang{\tilde\rho,|v|}}\mathfrak f(|v|,-\lambda) \pi^{p(v)p(\lambda)}  v^*\otimes v_{-\lambda}\otimes v}=q^{\ang{\tilde\rho,-\lambda}}\mathfrak f(-\lambda,-\lambda)  v_{-\lambda},
\end{align*}
and thus $\psi=\frf(-\lambda,-\lambda)q^{-\ang{\tilde\rho,\lambda}} 1_\lambda$.
Now the result follows from Lemma \ref{lem:f function} (3).
\end{proof}

\begin{lem}\label{lem:rotating crossings} We have an equality of diagrams
\begin{equation*}\tag{a}
\hctikz{
\NESE{0}{0}{1}{1}
\draw (-.3,0) node {$\lambda$};
\draw (-.3,1) node {$\mu$};
}\quad = \mathfrak r(\mu,\lambda)
\hctikz{
\idsdown{1}{.75}{.5}{.75}{1}\idsup{1.5}{0}{.5}{1.5}{1}\dcap{0}{.75}{.5}{.5}
\lcrossdown{.5}{0}{.75}{.5}\idsup{0}{-.75}{.5}{1.5}{1}\ids{.5}{-.75}{.5}{.75}{1}
\dcup{1}{-.5}{.5}{.5}
\draw (-.3,-.8) node {$\lambda$};
\draw (.7,-.8) node {$\mu$};
}=\pi^{P(\mu)P(\lambda)}\frr(\mu,\lambda)\quad
\hctikz{
\idsdown{0}{0}{.5}{1.5}{1}\ids{.5}{.75}{.5}{.75}{1}\dcap{1}{.75}{.5}{.5}
\lcrossup{.5}{0}{.75}{.5}\idsdown{1.5}{-.75}{.5}{1.5}{1}\ids{1}{-.75}{.5}{.75}{1}
\dcup{0}{-.5}{.5}{.5}
\draw (.7,-.8) node {$\lambda$};
\draw (1.3,-.8) node {$\mu$};
}
\end{equation*}
\begin{equation*}\tag{b}
\hctikz{
\NWSW{0}{0}{1}{1}
\draw (-.3,0) node {$\mu$};
\draw (-.3,1) node {$\lambda$};
}\quad = \pi^{P(\mu)P(\lambda)}\frr(\mu,\lambda)^{-1}
\hctikz{
\ids{1}{.75}{.5}{.75}{1}\idsdown{1.5}{0}{.5}{1.5}{1}\dcap{0}{.75}{.5}{.5}
\rcrossup{.5}{0}{.75}{.5}\idsdown{0}{-.75}{.5}{1.5}{1}\ids{.5}{-.75}{.5}{.75}{1}
\dcup{1}{-.5}{.5}{.5}
\draw (-.3,-.8) node {$\mu$};
\draw (.7,-.8) node {$\lambda$};
}=\mathfrak r(\mu,\lambda)^{-1}\quad
\hctikz{
\idsup{0}{0}{.5}{1.5}{1}\ids{.5}{.75}{.5}{.75}{1}\dcap{1}{.75}{.5}{.5}
\rcrossdown{.5}{0}{.75}{.5}\idsup{1.5}{-.75}{.5}{1.5}{1}\ids{1}{-.75}{.5}{.75}{1}
\dcup{0}{-.5}{.5}{.5}
\draw (.7,-.8) node {$\mu$};
\draw (1.3,-.8) node {$\lambda$};
}
\end{equation*}
for any $\lambda,\mu\in X^+$.
\end{lem}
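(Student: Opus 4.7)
Plan: Both sides of (a) are $\UU$-module homomorphisms $V(\lambda)\otimes V(-\mu)\to V(-\mu)\otimes V(\lambda)$, and the strategy is to verify the first equality by direct computation on weight vectors $v\otimes w^*\in V(\lambda)\otimes V(-\mu)$, then deduce the second equality and part (b) by symmetric arguments.

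First, I would expand the left-hand side using the definition $\cR_{\lambda,-\mu}=\Theta\circ \mathfrak F\circ\frs$, obtaining
\[
\cR_{\lambda,-\mu}(v\otimes w^*) \;=\; \sum_\nu \Theta_\nu\bigl(\mathfrak f(-|w|,|v|)\pi^{p(v)p(w)}\, w^*\otimes v\bigr),
\]
where the action of the $b^-$-factor of $\Theta_\nu$ on the dual-module vector $w^*$ is computed via the antipode formula \eqref{eq:antipode formula}. Next I would read the middle diagram of (a) bottom-to-top as the composition
\[
(1_{-\mu}\otimes 1_\lambda\otimes\qtr_\mu)\circ(1_{-\mu}\otimes \cR_{\lambda,\mu}^{-1}\otimes 1_\mu)\circ(\coev_\mu\otimes 1_\lambda\otimes 1_{-\mu}),
\]
apply this to $v\otimes w^*$ using the explicit formulas from Lemma \ref{lemma:evsandcoevs}, and expand $\cR_{\lambda,\mu}^{-1}$ via $\bar\Theta$ using Lemma \ref{lem:quasiRinv}.

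Second, I would match the two expansions term by term. The matching relies on the duality between bases $\BB$ and $\BB^*$ of $\ff$ under $(\cdot,\cdot)$, combined with the observation that the antipode formula \eqref{eq:antipode formula} is exactly what converts the $\Theta_\nu$ acting on $V(-\mu)\otimes V(\lambda)$ into $\bar\Theta_\nu$ acting on $V(\mu)\otimes V(\lambda)$: the $(-1)^{\height\nu}\pi^{\bp(\nu)}$ factors appearing in the definition of $\Theta_\nu$ cancel against those introduced by the antipode, while the $\tJ_{-\nu}\tK_{-\nu}$ weight-twist in \eqref{eq:antipode formula} gets absorbed into $\sigma$ on the dual basis and into the $\mathfrak f$-normalization, via the identity $\sigma(b)^*=\sigma(b^*)$ used in Lemma \ref{lem:quasiRinv}. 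What remains is a purely scalar comparison: the $\mathfrak f(-|w|,|v|)$ from the left side, the $\mathfrak f(|v|,|w|)$ from $\cR_{\lambda,\mu}^{-1}$, and the $q^{\pm\ang{\tilde\rho,\cdot}}$-factors from $\coev_\mu$ and $\qtr_\mu$ combine via Lemma \ref{lem:f function} to give exactly $\mathfrak f(\mu,\lambda)\mathfrak f(\mu,-\lambda)=\mathfrak r(\mu,\lambda)$.

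The second equality in (a), with its $\pi^{P(\mu)P(\lambda)}$ factor, follows from running the same argument on the left-right mirror diagram: the analogous computation produces the scalar $\mathfrak l(\mu,\lambda)$ in place of $\mathfrak r(\mu,\lambda)$, and Lemma \ref{lem:f function}(3) gives $\mathfrak l(\mu,\lambda)=\pi^{P(\mu)P(\lambda)}\mathfrak r(\mu,\lambda)$. Part (b) follows from part (a) by inversion: composing the equalities in (a) with their inverses and using the Reidemeister-II-type identities of Lemma \ref{lem:reide2} together with the straightening identity of Lemma \ref{lem:straightening} yields the NWSW analogues, with $\mathfrak r(\mu,\lambda)^{\pm 1}$ swapped as claimed. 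The main obstacle will be the precise bookkeeping of parity factors arising from (i) the antipode in \eqref{eq:antipode formula}, (ii) the swap $\frs$ with its $\pi^{p(v)p(w)}$-factor, (iii) the basis-duality under $(\cdot,\cdot)$, and (iv) the dual module identification $V(\lambda)^*\cong\Pi^{P(\lambda)}V(\lambda)$ from Lemma \ref{lem:dual isos}: these $\pi$-contributions are independent of the $q$-factors but must combine to leave no residual sign except for the advertised $\pi^{P(\mu)P(\lambda)}$ in the second equality.
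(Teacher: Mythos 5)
Your strategy for the first equality in (a) --- evaluate both sides on a simple tensor $v\otimes w^*$, expand the sideways crossing via $\Theta\circ\mathfrak F\circ\frs$, expand the inverse crossing in the rotated diagram via $\bar\Theta$ (Lemma \ref{lem:quasiRinv}) together with the antipode formula, and reduce to a scalar comparison handled by Lemma \ref{lem:f function} --- is exactly the paper's. Two points need repair, one cosmetic and one substantive. The cosmetic one: the composition you wrote down, $(1\otimes 1\otimes\qtr_\mu)\circ(1\otimes\cR_{\mu,\lambda}^{-1}\otimes 1)\circ(\coev_\mu\otimes 1\otimes 1)$, is the \emph{third} diagram of (a) (cup and cap colored by $\mu$, placed on the outside), not the middle one; the middle diagram is $(\qtr_\lambda\otimes 1\otimes 1)\circ(1\otimes\cR_{-\mu,-\lambda}^{-1}\otimes 1)\circ(1\otimes 1\otimes\coev_\lambda)$. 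The honest computation of the composition you chose yields $\mathfrak l(\mu,\lambda)^{-1}\cR_{\lambda,-\mu}$ rather than $\mathfrak r(\mu,\lambda)^{-1}\cR_{\lambda,-\mu}$, so the roles of $\mathfrak r$ and $\mathfrak l$ --- hence the factor $\pi^{P(\mu)P(\lambda)}$ --- are attached to the two diagrams in the opposite way from what you assert. Carrying out the calculation will expose this, so the result is recoverable, but as written the constants land on the wrong equalities.

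The substantive gap is your treatment of (b): it does not follow from (a) ``by inversion.'' The right-hand diagrams of (a) contain the negative crossings $\cR_{-\mu,-\lambda}^{-1}$ and $\cR_{\mu,\lambda}^{-1}$ (expanded via $\bar\Theta$), whereas those of (b) contain the positive crossings (expanded via $\Theta$); the two statements are parallel, not mutually inverse at the level of the compositions. Inverting (a) only expresses the inverse sideways crossing in terms of $\phi^{-1}$, and identifying $\phi^{-1}$ with the diagrams appearing in (b) is precisely the content of Corollary \ref{cor:rotating crossings inverse}, which the paper deduces from (a) \emph{and} (b) together --- so your derivation is circular. To establish that identification independently you would need to cancel a positive against a negative crossing separated by a cap--cup pair, i.e., slide a cap past a crossing, which is exactly the rotation identity being proved; Lemma \ref{lem:straightening} (straightening a single strand) and Lemma \ref{lem:reide2} (the curl on a single strand) do not accomplish this. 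The intended route, as in the paper, is to prove (b) by a second direct computation parallel to (a), this time pairing $\Theta$ on the dual modules against $\Theta$ via the antipode.
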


\begin{proof}
The proof of (a) and (b) being similar, we shall only prove (a) here.
First, unpacking the graphical representation, we see that (a) is equivalent to
\[\cR_{\lambda,-\mu}=\frr(\mu,\lambda) \phi=\pi^{p(\mu)p(\lambda)}\frr(\mu,\lambda)\psi\]
where $\phi$ and $\psi$ are the compositions
\[\phi=(\qtr_\lambda\otimes 1_{-\mu}\otimes 1_\lambda)\circ (1_\lambda\otimes \cR_{-\mu,-\lambda}^{-1}\otimes 1_\lambda)\circ(1_\lambda\otimes 1_{-\mu}\otimes \coev_\lambda),\]
\[\psi=(1_{-\mu}\otimes 1_\lambda\otimes \qtr_\mu)\circ (1_{-\mu}\otimes \cR_{\mu,\lambda}^{-1}\otimes 1_{-\mu})\circ( \coev_\mu\otimes 1_\lambda\otimes 1_{-\mu}).\]
Let $B(\lambda)$ be a homogeneous basis for $V(\lambda)$.
Let $v_0\in B(\lambda)_{\kappa}$ and $w_0\in V(\mu)_{\xi}$ for some $\kappa,\xi\in X$. 
We shall compare the images of our three maps on $v_0\otimes w_0^*$.

First, note that 
\begin{equation}\label{eq:Rlammu*}
\cR_{\lambda,-\mu}(v_0\otimes w_0^*)=\pi^{p(w_0)p(v_0)}\mathfrak f(-\xi,\kappa)\sum_{\nu}(-1)^{\height\,\nu}\pi^{\bp(\nu)}\pi_\nu q_\nu\sum_{b\in \BB_\nu} \pi^{p(\nu)p(w_0)} b^-w_0^*\otimes (b^*)^+v_0.
\end{equation}

For $\phi$, first let us note the effect of each  map
in the composition separately. The graphical representation
tells us which tensor factors are impacted at each step, so
we restrict our view to these tensor factors when computing these maps. First,
we have the coevaluation which adds two tensor factors on the right:
\[\coev_\lambda(1)=\sum_{v\in B(\lambda)} 
\pi^{p(v)}q^{\ang{\tilde\rho,|v|}} v^*\otimes v.\]
Next, we apply $\cR_{-\mu,-\lambda}^{-1}=\frs \circ \frF^{-1}\circ \bar\Theta$ to the middle
tensor factors, so
\[\cR_{-\mu,-\lambda}^{-1}(w_0^*\otimes v^*)=
\sum_\nu\mathfrak f(-\xi-\nu,-|v|+\nu)^{-1}\pi^{p(w_0)p(v)+p(\nu)p(v)} q^{\frac{\nu\cdot\nu}2}\sum_{b\in \BB_\nu} \sigma(b^*)^+v^*\otimes b^-w_0^*;
\]
Finally, we apply the quantum trace to the two tensor factors on the left, 
hence we need to compute $\qtr(v_0\otimes\sigma(b^*)^+v^*)$. Since $x^*(y)=0$ unless $\bigrdeg{x}=\bigrdeg{y}$ (that is, unless $x$ and $y$ have the same weight and parity),
we can assume $|v|=\kappa+\nu$ and
$p(v)=p(v_0)+p(\nu)$. Then we have
\begin{align*}\qtr_\lambda&(v_0\otimes\sigma(b^*)^+v^*)=
\pi^{p(v_0)}q^{-\ang{\tilde\rho,|v_0|}} (\sigma(b^*)^+v^*)(v_0)\\
&=(-1)^{\height \nu} \pi^{\bp(\nu)+p(v_0)+p(\nu)p(v_0)+p(\nu)}q^{-\frac{\nu\cdot\nu}{2}-\ang{\tilde\rho,\kappa}}(\pi q)^{-\ang{\tilde\nu,\kappa}}q_{-\nu}v^*((b^*)^+v_0).
\end{align*}
Putting these computations together, we see that 
\begin{align*}
\phi(v_0\otimes w_0^*)&=\sum_{v\in B(\lambda)} \sum_\nu\sum_{b\in \BB_\nu}
\pi^{p(v_0)+p(\nu)}q^{\ang{\tilde\rho,\kappa+\nu}}\\
&\hspace{2em}\times \mathfrak f(-\xi-\nu,-\kappa)^{-1}\pi^{p(w_0)p(v_0)+p(w_0)p(\nu)+p(\nu)p(v_0)+p(\nu)}q^{\frac{\nu\cdot\nu}2}\\
&\hspace{2em}\times (-1)^{\height \nu} \pi^{\bp(\nu)+p(v_0)+p(\nu)p(v_0)+p(\nu)}q^{-\frac{\nu\cdot\nu}{2}-\ang{\tilde\rho,\kappa}}(\pi q)^{-\ang{\tilde\nu,\kappa}}q_{-\nu} v^*((b^*)^+v_0) b^-w_0^*\otimes v\\
&=\sum_\nu(-1)^{\height \nu}\mathfrak f(-\xi-\nu,-\kappa)^{-1}(\pi q)^{-\ang{\tilde\nu,\kappa}}
\pi_\nu q^{\ang{\tilde\rho,\nu}}q_{-\nu}\pi^{p(v_0)p(w_0)+\bp(\nu)}\\
&\hspace{2em}\times  \sum_{b\in \BB_\nu}
 \pi^{p(w_0)p(\nu)}b^-w_0^*\otimes\parens{\sum_{v\in B(\lambda)}v^*((b^*)^+v_0)v}.
 \end{align*}
But note that 
$\frf(-\xi-\nu,-\kappa)(\pi q)^{\ang{\tilde\nu,\kappa}}=\frf(-\xi,-\kappa)$, $q^{\ang{\tilde\rho,\nu}}=q_\nu^2$, and $\sum_{v\in B(\lambda)}v^*((b^*)^+v_0)v=(b^*)^+v_0$.
Therefore, we have
 \begin{align*}
 \phi(v_0\otimes w_0^*)&=\mathfrak f(-\xi,-\kappa)^{-1}\pi^{p(v_0)p(w_0)}\sum_\nu(-1)^{\height \nu}\pi^{\bp(\nu)}\sum_{b\in \BB_\nu}\pi_\nu q_{\nu}
 \pi^{p(w_0)p(\nu)}b^-w_0^*\otimes(b^*)^+v_0\\
 &=\mathfrak r(-\xi,\kappa)^{-1}R_{\lambda,-\mu}(v_0\otimes w_0^*).
 \end{align*}
Finally, since $-\xi\in\mu+\Z[I]$ and $\kappa\in \lambda+\Z[I]$,
we can apply Lemma \ref{lem:f function}(1) to conclude that
$\phi=\frr (\mu,\lambda)^{-1}R_{\lambda,-\mu}$.
\if 0
 Likewise, for $\psi$, we compute
\[\coev_\mu(1)=\sum_{w\in B(\mu)} 
\pi^{p(w)}q^{\ang{\tilde\rho,|w|}} w^*\otimes w,\]
\[R_{\mu,\lambda}^{-1}(w\otimes v_0)=
\sum_\nu\mathfrak f(|w|-\nu,|v_0|+\nu)^{-1}\pi^{p(w)p(v_0)+p(\nu)p(v_0)}q^{\frac{\nu\cdot\nu}2}\sum_{b\in \BB_\nu} \sigma(b^*)^+v_0\otimes b^-w
\]
\begin{align*}\qtr_\mu&(b^-w\otimes w_0^*)=
\delta_{|w|,\xi+\nu}\pi^{p(w_0)}q^{-\ang{\tilde\rho,\xi}} w_0^*(b^-w)\\
&=\delta_{|w|,\xi+\nu}(-1)^{\height \nu}\pi^{\bp(\nu)+p(w_0)+p(\nu)p(w_0)
}q^{-\frac{\nu\cdot\nu}2-\ang{\tilde\rho,-\xi}
+\ang{\tilde\nu,-\xi}}q_{-\nu} (\sigma(b)^-w_0^*)(w).
\end{align*}
Therefore,
\begin{align*}
\psi(v_0\otimes w_0^*)&=\sum_{w\in B(\mu)} \sum_\nu\sum_{b\in \BB_\nu}
\pi^{p(w)}q^{\ang{\tilde\rho,|w|}}f(|w|-\nu,|v_0|+\nu)^{-1}\pi^{p(w)p(v_0)+p(\nu)p(v_0)}q^{\frac{\nu\cdot\nu}2}\\
&\hspace{2em}\times\delta_{|w|,\xi+\nu}(-1)^{\height \nu}\pi^{\bp(\nu)+p(w_0)+p(\nu)p(w_0)}q^{-\frac{\nu\cdot\nu}2-\ang{\tilde\rho,\xi}+\ang{\tilde\nu,-\xi}}q_{-\nu} (\sigma(b)^-w_0^*)(w) w\otimes \sigma(b^*)^+v_0\\
&= \pi^{p(w_0)p(v_0)}\sum_\nu(-1)^{\height \nu}\pi^{\bp(\nu)}\pi_\nu q_{-\nu}q^{\ang{\tilde\rho,\nu}}
\sum_{b\in \BB_\nu}f(\xi,|v_0|+\nu)^{-1}q^{\ang{\tilde\nu,-\xi}}\\
&\hspace{2em}\times \pi^{p(\nu)p(w_0)} \parens{\sum_{w\in B(\mu)_{\xi+\nu}}(\sigma(b)^-w_0^*)(w) w}\otimes \sigma(b^*)^+v_0\\
&= \pi^{p(w_0)p(v_0)}f(\xi,\kappa)^{-1}\sum_\nu(-1)^{\height \nu}\pi^{\bp(\nu)}\pi_\nu q_{\nu}
\sum_{b\in \BB_\nu}\pi^{p(\nu)p(w_0)}\sigma(b)^-w_0^*\otimes \sigma(b^*)^+v_0\\
 &=\mathfrak l(\mu,\lambda)^{-1}R_{\lambda,-\mu}(v_0\otimes w_0^*).
\end{align*}
where in the last equality we use the fact that $\sigma(b)$ is another
choice of basis for $\ff$.
\fi
A similar computation shows that $\psi=\frl(\mu,\lambda)^{-1}\cR_{\lambda,-\mu}$, and the 
result then follows from Lemma \ref{lem:f function}.
\end{proof}
\if 0
\begin{lem} We have an equality of diagrams

\[
\hctikz{
\NWSW{0}{0}{1}{1}
\draw (-.3,0) node {$\mu$};
\draw (-.3,1) node {$\lambda$};
}\quad = \mathfrak l(\mu,\lambda)^{-1}
\hctikz{
\ids{1}{.75}{.5}{.75}{1}\idsdown{1.5}{0}{.5}{1.5}{1}\dcap{0}{.75}{.5}{.5}
\rcrossup{.5}{0}{.75}{.5}\idsdown{0}{-.75}{.5}{1.5}{1}\ids{.5}{-.75}{.5}{.75}{1}
\dcup{1}{-.5}{.5}{.5}
\draw (-.3,-.8) node {$\mu$};
\draw (.7,-.8) node {$\lambda$};
}=\mathfrak r(\mu,\lambda)^{-1}\quad
\hctikz{
\idsup{0}{0}{.5}{1.5}{1}\ids{.5}{.75}{.5}{.75}{1}\dcap{1}{.75}{.5}{.5}
\rcrossdown{.5}{0}{.75}{.5}\idsup{1.5}{-.75}{.5}{1.5}{1}\ids{1}{-.75}{.5}{.75}{1}
\dcup{0}{-.5}{.5}{.5}
\draw (.7,-.8) node {$\mu$};
\draw (1.3,-.8) node {$\lambda$};
}\]
for any $\lambda,\mu\in X^+$.
\end{lem}

\begin{proof}
Let $B(\lambda)$ be a homogeneous basis for $V(\lambda)$.
Let $v_0\in B(\lambda)_{\kappa}$ and $w_0\in V(\mu)_{\xi}$ for some $\kappa,\xi\in X$.
Then \begin{align*}
R_{-\mu,\lambda}^{-1}(w_0^*\otimes v_0)&=\sum_{\nu}\mathfrak f(-\nu-\xi,\nu+\kappa)^{-1}\pi^{p(w_0)p(v_0)+p(\nu)p(v_0)}q^{\frac{\nu\cdot\nu}{2}}\sum_{b\in \BB_\nu}  \sigma(b^*)^+v_0\otimes b^-w_0^*\\
&=\mathfrak f(-\xi,\kappa)^{-1}\sum_{\nu}(\pi q)^{-\ang{\tilde\nu,\xi}} q^{-\ang{\tilde\nu,\kappa}-\frac{\nu\cdot \nu}{2}}\pi^{p(w_0)p(v_0)+p(\nu)p(v_0)}\sum_{b\in \BB_\nu}  \sigma(b^*)^+v_0\otimes b^-w_0^*
\end{align*}
We wish to relate this morphism to the maps 
\[\phi=(\ev_\mu\otimes 1_{\lambda}\otimes 1_{-\mu})\circ (1_{-\mu}\otimes R_{\mu,\lambda}\otimes 1_{-\mu})\circ(1_{-\mu}\otimes 1_\lambda\otimes \coqtr_\mu).\]
\[\psi=(1_{\lambda}\otimes 1_{-\mu}\otimes \ev_\lambda)\circ (1_{\lambda}\otimes R_{-\lambda,-\mu}\otimes 1_{\lambda})\circ( \coqtr_\lambda\otimes 1_{-\mu}\otimes 1_{\lambda}).\]

Well, for $\phi$, we compute
\[\coev_\lambda(1)=\sum_{v\in B(\lambda)} 
\pi^{p(v)}q^{\ang{\tilde\rho,|v|}} v^*\otimes v,\]
\[R_{-\mu,-\lambda}^{-1}(w_0^*\otimes v^*)=
\sum_\nu\mathfrak f(-|w_0|-\nu,-|v|+\nu)^{-1}\pi^{p(w_0)p(v)+p(\nu)p(v)}q^{\frac{\nu\cdot\nu}2}\sum_{b\in \BB_\nu} \sigma(b^*)^+v^*\otimes b^-w_0^*
\]
\begin{align*}\qtr_\lambda&(v_0\otimes\sigma(b^*)^+v^*)=
\pi^{p(v_0)}q^{-\ang{\tilde\rho,|v_0|}} (\sigma(b^*)^+v^*)(v_0)\\
&=\delta_{|v|,\kappa+\nu}(-1)^{\height \nu} \pi^{\bp(\nu)+p(v_0)+p(\nu)p(v)}q^{-\frac{\nu\cdot\nu}{2}-\ang{\tilde\rho,|v_0|}}(\pi q)^{-\ang{\tilde\nu,|v_0|}}q_{-\nu}v^*((b^*)^+v_0),
\end{align*}
and so
\begin{align*}
\phi(v_0\otimes w_0^*)&=\sum_{v\in B(\lambda)} \sum_\nu\sum_{b\in \BB_\nu}
\pi^{p(v)}q^{\ang{\tilde\rho,|v|}}\mathfrak f(-|w_0|-\nu,-|v|+\nu)^{-1}\pi^{p(w_0)p(v)+p(\nu)p(v)}q^{\frac{\nu\cdot\nu}2}\\
&\hspace{2em}\times\delta_{|v|,|v_0|+\nu}  (-1)^{\height \nu} \pi^{\bp(\nu)+p(v_0)+p(\nu)p(v)}q^{-\frac{\nu\cdot\nu}{2}-\ang{\tilde\rho,|v_0|}}(\pi q)^{-\ang{\tilde\nu,|v_0|}}q_{-\nu}\\
&\hspace{2em}\times  v^*((b^*)^+v_0) b^-w_0^*\otimes v\\
&=\sum_\nu(-1)^{\height \nu}\mathfrak f(-\xi-\nu,-\kappa)^{-1}
(\pi q)^{-\ang{\tilde\nu,\kappa}}\pi^{p(v_0)p(w_0)+\bp(\nu)}\\
&\hspace{2em}\times  \sum_{b\in \BB_\nu}\pi_\nu q^{\ang{\tilde\rho,\nu}}q_{-\nu}
 \pi^{p(w_0)p(\nu)}b^-w_0^*\otimes\parens{\sum_{v\in B(\lambda)_{\kappa+\nu'}}v^*((b^*)^+v_0)v}\\
 &=\mathfrak f(-\xi,-\kappa)^{-1}\pi^{p(v_0)p(w_0)}\sum_\nu(-1)^{\height \nu}\pi^{\bp(\nu)}\sum_{b\in \BB_\nu}\pi_\nu q_{\nu}
 \pi^{p(w_0)p(\nu)}b^-w_0^*\otimes(b^*)^+v_0\\
 &=\mathfrak r(\mu,\lambda)^{-1}R_{\lambda,-\mu}(v_0\otimes w_0^*).
 \end{align*}

For $\psi$, we compute
\[\coqtr_\lambda(1)=\sum_{v\in B(\lambda)} 
 v\otimes v^*,\]
 \[R_{-\lambda,-\mu}(v^*\otimes w_0^*)=\pi^{p(w_0)p(v)}\mathfrak f(-\xi,-|v|)\sum_{\nu}(-1)^{\height\,\nu}\pi^{\bp(\nu)}\pi_\nu q_\nu\sum_{b\in \BB_\nu} \pi^{p(\nu)p(w_0)} b^-w_0^*\otimes (b^*)^+v^*.\]
\begin{align*}\ev_\lambda&((b^*)^+v^*\otimes v_0)=
((b^*)^+v^*)(v_0)\\
&=\delta_{|v|,\kappa+\nu}(-1)^{\height \nu}\pi^{\bp(\nu)+p(\nu)p(v_0)+p(\nu)}q^{-\frac{\nu\cdot\nu}2}(\pi q)^{-\ang{\tilde\nu,\kappa}}q_{-\nu} v^*(\sigma(b)^+v_0).
\end{align*}
Therefore,
\begin{align*}
\psi(w_0^*\otimes v_0)&=\sum_{v\in B(\lambda)} \pi^{p(w_0)p(v)}\mathfrak f(-\xi,-|v|)\sum_{\nu}(-1)^{\height\,\nu}\pi^{\bp(\nu)}\pi_\nu q_\nu\sum_{b\in \BB_\nu} \pi^{p(\nu)p(w_0)} ((b^*)^+v^*)(v_0)v\otimes b^-w_0^*.\\
&=\sum_{\nu} \sum_{b\in \BB_\nu}\sum_{v\in B(\lambda)_{\kappa+\nu}} \pi^{p(w_0)p(v_0)+p(w_0)p(\nu)}\mathfrak f(-\xi,-\kappa-\nu)(-1)^{\height\,\nu}\pi^{\bp(\nu)}\pi_\nu q_\nu \pi^{p(\nu)p(w_0)} \\
&\hspace{2em}\times (-1)^{\height \nu}\pi^{\bp(\nu)+p(\nu)p(v_0)+p(\nu)}q^{-\frac{\nu\cdot\nu}2}(\pi q)^{-\ang{\tilde\nu,\kappa}}q_{-\nu}(v^*)(\sigma(b^*)^+v_0)v\otimes b^-w_0^*\\
&=\sum_{\nu} \sum_{b\in \BB_\nu} \pi^{p(w_0)p(v_0)+p(\nu)p(v_0)}\mathfrak f(-\xi,-\kappa-\nu)q^{-\frac{\nu\cdot\nu}2}(\pi q)^{-\ang{\tilde\nu,\kappa}}\\
&\hspace{2em}\times 
\parens{\sum_{v\in B(\lambda)_{\kappa+\nu}}v^*(\sigma(b^*)^+v_0)v}\otimes b^-w_0^*\\
&=\mathfrak f(-\xi,-\kappa)\sum_{\nu} \sum_{b\in \BB_\nu} \pi^{p(w_0)p(v_0)+p(\nu)p(v_0)}
 q^{\frac{\nu\cdot\nu}2}(\pi q)^{-\ang{\tilde\nu,\kappa}}q^{-\ang{\tilde\nu,\xi}}
\sigma(b^*)^+v_0\otimes b^-w_0^*\\
 &=\mathfrak r(\mu,\lambda)R_{\lambda,-\mu}(\sigma(b^*)^+v_0\otimes w_0^*).
\end{align*}

\end{proof}
\fi

Note that by identifying inverse maps in Lemma \ref{lem:rotating crossings} (a) and (b), we obtain the following corollary.

\begin{cor}\label{cor:rotating crossings inverse}
We have an equality of diagrams
\[\hctikz{
\idsdown{1}{2.25}{.5}{.75}{1}\idsup{1.5}{1.5}{.5}{1.5}{1}\dcap{0}{2.25}{.5}{.5}
\lcrossdown{.5}{1.5}{.75}{.5}
\dcup{1}{1}{.5}{.5}
\idsup{0}{0}{.5}{2.25}{1}\ids{.5}{.75}{.5}{.75}{1}\dcap{1}{.75}{.5}{.25}
\rcrossdown{.5}{0}{.75}{.5}\idsup{1.5}{-.75}{.5}{1.5}{1}\ids{1}{-.75}{.5}{.75}{1}
\dcup{0}{-.5}{.5}{.5}
\draw (.7,-.8) node {$\mu$};
\draw (1.3,-.8) node {$\lambda$};
}=\hctikz{
\idsdown{0}{0}{.5}{2}{1}
\idsup{.5}{0}{.5}{2}{1}
\draw (-.1,-.3) node {$\mu$};
\draw (.6,-.3) node {$\lambda$};
}=\hctikz{
\idsdown{.5}{2.25}{.5}{.75}{1}\idsup{0}{1.5}{.5}{1.5}{1}\dcap{1}{2.25}{.5}{.5}
\lcrossdown{.5}{1.5}{.75}{.5}
\dcup{0}{1}{.5}{.5}
\idsup{1.5}{0}{.5}{2.25}{1}\ids{1}{.75}{.5}{.75}{1}\dcap{0}{.75}{.5}{.25}
\rcrossdown{.5}{0}{.75}{.5}\idsup{0}{-.75}{.5}{1.5}{1}\ids{.5}{-.75}{.5}{.75}{1}
\dcup{1}{-.5}{.5}{.5}
\draw (.2,-.8) node {$\mu$};
\draw (.8,-.8) node {$\lambda$};
}
\]
\end{cor}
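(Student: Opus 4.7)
The plan is to recognize each of the two composite diagrams of the corollary as the vertical stacking of two ``rotated-crossing'' configurations, so that the corollary becomes an immediate consequence of Lemma \ref{lem:rotating crossings}(a) and (b).

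I would begin with the left-hand diagram. Its lower half---the portion lying below $y=\tfrac{3}{2}$, say---is, up to straightening of the outer vertical strands, precisely the right-hand side of the second equality in Lemma \ref{lem:rotating crossings}(b): both feature a positive downward crossing framed by a cup on the lower-left and a cap on the upper-right, with upward straight strands on the outside. By that equality, this lower half represents the morphism $\mathfrak{r}(\mu,\lambda)\,\cR^{-1}_{-\mu,\lambda}$. Similarly, the upper half matches the right-hand side of the first equality of Lemma \ref{lem:rotating crossings}(a)---a negative downward crossing with cap on the upper-left and cup on the lower-right---and so represents $\mathfrak{r}(\mu,\lambda)^{-1}\,\cR_{\lambda,-\mu}$. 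Composing these vertically, the scalars $\mathfrak{r}(\mu,\lambda)$ and $\mathfrak{r}(\mu,\lambda)^{-1}$ cancel, leaving $\cR_{\lambda,-\mu}\circ\cR^{-1}_{-\mu,\lambda}$. By the graphical conventions of Section \ref{sec:diagcalc}, the symbol $\cR^{-1}_{-\mu,\lambda}$ denotes the two-sided inverse of $\cR_{\lambda,-\mu}$, so this composition is the identity.

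The right-hand diagram is then handled by the symmetric argument, using the ``other'' two equalities. I would identify its top half with the second equality of Lemma \ref{lem:rotating crossings}(a) and its bottom half with the first equality of Lemma \ref{lem:rotating crossings}(b); these contribute $\pi^{P(\mu)P(\lambda)}\mathfrak{r}(\mu,\lambda)^{-1}\cR_{\lambda,-\mu}$ and $\pi^{P(\mu)P(\lambda)}\mathfrak{r}(\mu,\lambda)\,\cR^{-1}_{-\mu,\lambda}$ respectively. The $\mathfrak{r}(\mu,\lambda)^{\pm 1}$ factors cancel, and the two copies of $\pi^{P(\mu)P(\lambda)}$ multiply to $\pi^{2P(\mu)P(\lambda)}=1$ because $\pi^2=1$. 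The same $\cR \circ \cR^{-1}=\mathrm{id}$ argument again yields the identity.

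The only substantive work is the visual bookkeeping of matching each of the four sub-configurations with the correct right-hand side among the four equalities of Lemma \ref{lem:rotating crossings}, together with a careful check that the orientations of the outer straight strands (the \texttt{idsup}/\texttt{idsdown} pieces on the far left and far right of each half) agree. Once these matches are in place, the proof collapses to scalar cancellation and the tautology $\cR_{\lambda,-\mu}\circ \cR^{-1}_{-\mu,\lambda}=\mathrm{id}$, so no new computation is required.
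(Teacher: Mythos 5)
Your proposal is correct and is essentially the paper's own derivation: the paper disposes of this corollary with the single remark that it follows "by identifying inverse maps in Lemma \ref{lem:rotating crossings} (a) and (b)," which is precisely your decomposition of each composite into a bent diagram from (a) stacked on one from (b), followed by cancellation of the $\frr(\mu,\lambda)^{\pm1}$ and $\pi^{P(\mu)P(\lambda)}$ factors and the observation that $\cR_{\lambda,-\mu}\circ\cR_{-\mu,\lambda}^{-1}=1_{-\mu}\otimes 1_{\lambda}$ (noting, as you correctly do, that in this paper's indexing $\cR_{-\mu,\lambda}^{-1}$ is the inverse of $\cR_{\lambda,-\mu}$, its subscripts recording the domain). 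The only caveat is cosmetic: the exact assignment of the two halves of the second diagram to the two right-hand sides of (a) and (b) depends on arrow orientations that the paper's figure renders somewhat loosely, but either admissible pairing makes the scalars cancel, so nothing in your argument is at risk.
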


Finally, we show a somewhat more involved identity, which will lead us to our the final
result.

\begin{lem}\label{lem:crossthruturn}
 We have an equality of diagrams
\[
\hctikz{
\dcap{0}{.75}{1.5}{1}\dcap{.5}{.75}{.5}{.5}
\rcross{1}{0}{.75}{.5}
\ids{0}{0}{.5}{.75}{2}
\draw (1,-.2) node {$\lambda$};
\draw (1.6,-.2) node {$\mu$};
}\quad = \pi^{P(\mu)P(\lambda)}
\hctikz{
\dcap{0}{.75}{1.5}{1}\dcap{.5}{.75}{.5}{.5}
\rcross{0}{0}{.75}{.5}
\ids{1}{0}{.5}{.75}{2}
\draw (1,-.2) node {$\lambda$};
\draw (1.6,-.2) node {$\mu$};
}
\]
for any choice of orientation.
\end{lem}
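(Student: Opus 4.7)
The strategy is direct computation on a basis, echoing the proof of Lemma \ref{lem:rotating crossings}. Both diagrams are $\UU$-module morphisms $V(-\lambda)\otimes V(-\mu)\otimes V(\lambda)\otimes V(\mu)\to \Qqtt$. Fixing the orientation in which both caps are evaluation maps (the other three orientation choices proceed identically, with $\qtr$ replacing $\ev$ in the obvious places), the two sides are
\begin{equation*}
\ev_\lambda \circ (1_{-\lambda}\otimes \ev_\mu\otimes 1_\lambda)\circ(1_{-\lambda}\otimes 1_{-\mu}\otimes \cR_{\lambda,\mu})
\end{equation*}
and
\begin{equation*}
\ev_\mu \circ (1_{-\mu}\otimes \ev_\lambda\otimes 1_\mu)\circ(\cR_{-\lambda,-\mu}\otimes 1_\lambda\otimes 1_\mu).
\end{equation*}

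Fix homogeneous $v,x\in V(\lambda)$ and $w,y\in V(\mu)$ and evaluate both morphisms on $v^*\otimes w^*\otimes x\otimes y$. Using $\cR=\Theta\circ\mathfrak F\circ\frs$ with $\Theta=\sum_\nu\Theta_\nu$ from Proposition \ref{prop:quasiR}, and choosing a homogeneous basis $\BB$ of $\ff$, each side becomes a double sum over $\nu\in\N[I]$ and $b\in\BB_\nu$. In the LHS, $\cR_{\lambda,\mu}$ acts on the two rightmost factors and the result contains terms of the form $w^*(b^- y)\cdot v^*((b^*)^+ x)$; in the RHS, $\cR_{-\lambda,-\mu}$ acts on the two leftmost factors, producing terms $(b^- w^*)(y)\cdot((b^*)^+ v^*)(x)$. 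Using the formula $(uf)(z)=\pi^{p(f)p(u)}f(S(u)z)$ together with the antipode identity \eqref{eq:antipode formula}, these dual-action terms can be rewritten as $w^*(\sigma(b)^- y)$ and $v^*(\sigma(b^*)^+ x)$ times explicit $\pi$- and $q$-prefactors.

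After re-indexing the RHS sum via the bijection $b\mapsto\sigma(b)$ of $\BB_\nu$ (using $\sigma(b)^*=\sigma(b^*)$ as in the proof of Lemma \ref{lem:quasiRinv}), the two expressions agree term-by-term up to scalar prefactors, which after collecting $\pi$- and $q$-contributions reduce to $\frf(|y|,|x|)$ on the LHS and $\frf(-|w|,-|v|)$ on the RHS. Since the evaluation maps force $|v|=|x|$ and $|w|=|y|$ modulo the offsets introduced by $\Theta_\nu$, and since $|v|,|x|\in\lambda+\Z[I]$ while $|w|,|y|\in\mu+\Z[I]$, Lemma \ref{lem:f function}(1) yields $\frf(|y|,|x|)\frf(-|w|,-|v|)^{-1}=\frf(\mu,\lambda)\frf(-\mu,-\lambda)^{-1}$, and Lemma \ref{lem:f function}(3) identifies this ratio with $\pi^{P(\mu)P(\lambda)}$.

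The main obstacle is the sign bookkeeping. Powers of $\pi$ arise from many sources --- the super-swap in $\cR$, the Koszul rule for $\UU\otimes\UU$-actions on tensor products, the twist $\pi^{p(f)p(u)}$ in the dual action, the factor $(-1)^{\height\nu}\pi^{\bp(\nu)}$ appearing both in $\Theta_\nu$ and in the antipode formula, and the action of $\ev$ on tensor factors via the Koszul rule. Most of these signs cancel in pairs after the $\sigma$ reindexing, and the residual sign is exactly the weight-parity factor from Lemma \ref{lem:f function}(3); conceptually, this reflects that, under the cap pairing, $\cR_{-\lambda,-\mu}$ realizes the ``transpose'' of $\cR_{\lambda,\mu}$ up to the expected super-sign.
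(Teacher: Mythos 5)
Your proposal is correct and follows essentially the same route as the paper's proof: expand both composites on basis elements, move the $\Theta_\nu$ factors across the cap pairings using the antipode formula \eqref{eq:antipode formula} and the dual action, reindex by $b\mapsto\sigma(b)$, and identify the leftover factor $\frf(|y|,|x|)\frf(-|w|,-|v|)^{-1}$ with $\pi^{P(\mu)P(\lambda)}$ via Lemma \ref{lem:f function}(3). The only difference is presentational: the paper introduces a single pairing notation on $V(\zeta)\oplus V(-\zeta)$ so that all four orientations are treated in one computation, whereas you fix one orientation and assert the others are analogous, and the paper carries out explicitly the sign and $q$-power bookkeeping that you summarize.
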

\begin{proof}
In order to prove the identity without referring 
to a particular orientation,
it will be convenient to introduce the following notation.
Suppose $m\in V(\zeta)$ and $n\in V(-\zeta)$ for some $\zeta\in X^+$. Let us denote
by $(n,m)$ (respectively $(m,n)$) the evaluation $\ev_\zeta(n\otimes m)$ (respectively, the quantum
trace $\qtr_\zeta(m\otimes n)$).  In particular, one may think of
$(-,-)$ as a pairing on $V(\zeta)\oplus V(-\zeta)$ 
satisfying, for $v,w\in V(\zeta)$, 
\begin{equation}\label{eq:pairing v+dual}
\begin{array}{c}
(v,w)=(v^*,w^*)=0,\quad (v,w^*)=\pi^{p(v)p(w)}q^{-\ang{\tilde\rho,|v|}} (w^*,v),\\
(uv,w^*)=\pi^{p(u)p(v)}(v,S(u)w^*),\quad (uw^*,v)=\pi^{p(u)p(w)}(w^*,S(u)v).
\end{array}
\end{equation}
Indeed, all the statements of \eqref{eq:pairing v+dual} are obvious except
$(uv,w^*)=\pi^{p(u)p(v)}(v,S(u)w^*)$, which follows from a simple calculation
on the generators: for example,
\[(E_iv,w^*)=
\pi^{p(v)p(w)}q^{-\ang{\tilde\rho,|v|}}q_i^{-2}(-E_i\tJ_i^{-1}\tK_i^{-1}w^*)(v)
=\pi^{p(v)p(i)}(v,S(E_i)w^*)\]

In this proof 
we will use the notation $(-,-)$ as shorthand for 
$\ev_\zeta$ and $\qtr_\zeta$ for both $\zeta=\lambda,\mu$
with the intended map (and highest weight) being clear from context.
Using this notation, the diagram equality is equivalent to showing that the maps
\[\psi=(-,-)\circ\parens{1_{s\mu}\otimes (-,-)\otimes 1_{-s\mu}}\circ \parens{\cR_{s \lambda,t \mu}\otimes 1_{-s\lambda}\otimes 1_{-t\mu}}\]
\[\phi=(-,-)\circ\parens{1_{t\lambda}\otimes (-,-)\otimes 1_{-t\lambda}}\circ \parens{1_{s\lambda}\otimes 1_{t\mu}\otimes \cR_{-s \lambda,-t \mu}}\]
are $\pi^{P(\mu)P(\lambda)}$ multiples of each other 
for any choice of $s,t\in\set{1,-1}$. 

Let $w\in V(s\lambda)$, $x\in V(t\mu)$, $y\in V(-s\lambda)$, and $z\in V(-t\mu)$, where $V(-\xi)=V(\xi)^*$ for $\xi\in X^+$.
Then on one hand,
\[\psi(w\otimes x\otimes y \otimes z)=\sum_{\nu}\sum_{b\in \mathbf B_\nu}\pi^{p(x)p(w)} \mathfrak f(|x|,|w|) (-1)^{\height \nu} \pi^{\mathbf p(\nu)}
\pi_\nu q_\nu \pi^{p(\nu)p(x)} (b^-x,z)((b^*)^+w,y).\]
On the other hand, using the representation of $\Theta$ in the basis $\sigma(\mathbf B)$,
\[\phi(w\otimes x\otimes y \otimes z)=\sum_{\nu}\sum_{b\in \mathbf B_\nu}\pi^{p(y)p(z)} \mathfrak f(|z|,|y|) (-1)^{\height \nu} \pi^{\mathbf p(\nu)}
\pi_\nu q_\nu \pi^{p(\nu)p(z)} (x,\sigma(b)^-z)(w,\sigma(b^*)^+y).\]
Thus to see that 
$\psi(w\otimes x\otimes y\otimes z)=\pi^{P(\mu)P(\lambda)}\phi(w\otimes x\otimes y\otimes z)$, and hence that $\psi=\pi^{p(\mu)p(\lambda)}\phi$
since $w,x,y,z$ are arbitrary, 
it is enough to show that $l=\pi^{P(\mu)P(\lambda)}r$, where
\[l=\pi^{p(y)p(z)+p(\nu)p(z)} \mathfrak f(|z|,|y|)(x,\sigma(b)^-z)(w,\sigma(b^*)^+y)\]
\[r=\pi^{p(w)p(x)+p(\nu)p(x)}\mathfrak f(|x|,|w|) (b^-x,z)((b^*)^+w,y)\]
Using the properties of $(-,-)$ (see \eqref{eq:pairing v+dual})
and $S$ (see \eqref{eq:antipode formula}), we see that
\[(x,\sigma(b)^-z)(w,\sigma(b^*)^+y)=
\pi^{p(x)p(\nu)+p(w)p(\nu)}q^{-\nu\cdot\nu+\ang{\tilde\nu,|x|}}(\pi q)^{-\ang{\tilde\nu,|w|}}(b^-x,z)((b^*)^+w,y)\]
Note that $l$,$r$ are both zero unless 
$-\bigrdeg{x}=\bigrdeg{z}-\nu$ and $-\bigrdeg{w}=\bigrdeg{y}+\nu$.
In particular, $l$ and $r$ are both zero unless
$p(y)=p(w)+p(\nu)$, $p(z)=p(x)+p(\nu)$, in which case
\[p(y)p(z)+p(\nu)p(z)+p(x)p(\nu)+p(w)p(\nu)\equiv p(w)p(x)+p(w)p(\nu)\text{ (mod 2)}.\]
Likewise, $l$,$r$ are both zero unless 
$-|y|=|w|+\nu$, $-|z|=|x|-\nu$, in which case
\[\mathfrak f(|z|,|y|)q^{-\nu\cdot\nu+\ang{\tilde\nu,|x|-|w|}}=\mathfrak f(-|x|,-|w|).\]
Finally, note that $\mathfrak f(-|x|,-|w|)=\pi^{P(-|x|)P(-|w|)}\mathfrak f(|x|,|w|)$. Putting these observations together,
\[l=\pi^{p(w)p(x)+p(\nu)p(x)+P(-|x|)P(-|w|)}\mathfrak f(|x|,|w|) (b^-x,z)((b^*)^+w,y)=\pi^{P(-|x|)P(-|w|)}r\]
Since parity in $X$ only depends on the $X/\Z[I]$ cosets and we have $-|x|\in \mu+\Z[I]$ and $-|w|\in\lambda+\Z[I]$, the result follows. 
\end{proof}
Lastly, note that Lemmas \ref{lem:crossthruturn} and \ref{lem:straightening} 
immediately imply the following corollary.
\begin{cor}\label{cor:180rotcross}We have an equality of diagrams

\[\pi^{P(\mu)P(\lambda)}
\hctikz{
\rcrossup{0}{0}{1}{1}
\draw (-.3,0) node {$\lambda$};
\draw (-.3,1) node {$\mu$};
}\quad = 
\hctikz{
\dcap{0}{.75}{1.5}{1}\dcap{.5}{.75}{.5}{.5}\idsup{2}{0}{.5}{1.5}{2}
\rcrossdown{1}{0}{.75}{.5}
\idsup{0}{-.75}{.5}{1.5}{2}\dcup{1.5}{-.5}{.5}{.5}\dcup{1}{-1}{1.5}{1}
\draw (-.3,-.8) node {$\lambda$};
\draw (.7,-.8) node {$\mu$};
}\quad = 
\hctikz{
\dcap{1}{.75}{1.5}{1}\dcap{1.5}{.75}{.5}{.5}\idsup{0}{0}{.5}{1.5}{2}
\rcrossdown{1}{0}{.75}{.5}
\idsup{2}{-.75}{.5}{1.5}{2}\dcup{.5}{-.5}{.5}{.5}\dcup{0}{-1}{1.5}{1}
\draw (1.7,-.8) node {$\lambda$};
\draw (2.7,-.8) node {$\mu$};
}\]
for any $\lambda,\mu\in X^+$.
\end{cor}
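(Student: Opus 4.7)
The proof follows by two diagrammatic moves that reduce the middle (respectively right) diagram to a single crossing with a scalar factor of $\pi^{P(\mu)P(\lambda)}$. Consider the middle diagram. Its top portion consists of two nested caps, and immediately below these caps, on their right side, sits the crossing; on the far right of the diagram a pair of identity strands rises from the top of the crossing up to the top of the diagram, where they meet the outer ends of the caps, and on the bottom right a pair of nested cups connects the bottom of the crossing to identity strands running down the right side. The upper subdiagram consisting of the two nested caps together with the crossing immediately beneath them matches exactly the left-hand side of Lemma \ref{lem:crossthruturn}.

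The plan is to first apply Lemma \ref{lem:crossthruturn} to transport the crossing from the right side of the caps to their left side, picking up the factor $\pi^{P(\mu)P(\lambda)}$. After this move, the right-hand portion of the diagram consists of a pair of S-shaped zig-zags: each rightmost strand goes up into a cap, down a short vertical segment, and then out via a cup, which is precisely the configuration appearing in Lemma \ref{lem:straightening}. Applying Lemma \ref{lem:straightening} once to each such zig-zag collapses both to straight strands, leaving exactly a single $\lambda,\mu$-labelled crossing. This proves the first equality.

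The second equality, concerning the rightmost diagram of the statement, is the left--right mirror image of the middle one and is handled by the same pair of moves applied on the opposite side: Lemma \ref{lem:crossthruturn} slides the crossing across the cap pair, again introducing the factor $\pi^{P(\mu)P(\lambda)}$, and Lemma \ref{lem:straightening} then straightens the two resulting zig-zags. Since both cited lemmas hold for arbitrary orientations and weight labelings, no case-by-case orientation verification is required; one only needs to note that the nesting structure of the caps and cups ensures that each zig-zag produced by Step 1 is oriented exactly as in an instance of Lemma \ref{lem:straightening}.

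There is no genuine obstacle here, as the corollary reduces to a two-move calculation. The only point requiring care is the correct pairing of the crossing strands with the inner versus outer cap and cup—this is why the diagrams in the statement are drawn with nested rather than crossed caps and cups, ensuring that the single $\pi^{P(\mu)P(\lambda)}$ arising from Lemma \ref{lem:crossthruturn} is the only parity factor that appears.
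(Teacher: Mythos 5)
Your proof is correct and is exactly the argument the paper intends: the paper asserts that Lemmas \ref{lem:crossthruturn} and \ref{lem:straightening} immediately imply the corollary, and your two-step reduction (slide the crossing across the nested caps via Lemma \ref{lem:crossthruturn} at the cost of $\pi^{P(\mu)P(\lambda)}$, then collapse the two resulting nested snakes via Lemma \ref{lem:straightening}) is precisely that argument spelled out. One minor descriptive quibble that does not affect the mathematics: in the middle diagram the far-right pair of upward identity strands are the free output strands and do not meet the outer ends of the caps (the caps close off the four leftmost strands only).
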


\subsection{Renormalization}
\label{subsec:renorm}

In the previous section, we deduced a number of identities between various
slice diagrams. These identities are almost the Turaev moves for 
(framed) oriented tangles, except for factors of $\pi$. Now we shall correct
these factors.

As noted in Remark \ref{rem: coefficients}, all of the previous statements about $\UU$ and it's modules hold verbatim
over the subring $\Qqp$ of $\Qqtt$. Now we will use the fact that $\pi=\tau^2$ to renormalize our maps.
These renormalized $\UU$-module homomorphisms will always be represented by a diagrammatic calculus
with red strands and labels to differentiate them.

\begin{center}
\begin{tabular}{ccccccc}

\red{\hctikz{
\idsup{-1}{1.5}{1}{1}{1}
\draw (-.8,1.5) node {$\lambda$};}}
$=$\quad
\hctikz{
\idsup{-1}{1.5}{1}{1}{1}
\draw (-.8,1.5) node {$\lambda$};}
&&
\red{\hctikz{
\idsdown{-1}{1.5}{1}{1}{1}
\draw (-.8,1.5) node {$\lambda$};}}
$=$\quad
\hctikz{
\idsdown{-1}{1.5}{1}{1}{1}
\draw (-.8,1.5) node {$\lambda$};}

\\\\
\red{\hctikz{
\cwcup{1}{-.5}{1}{.75}
\draw (1.8,-.5) node {$\lambda$};
}}\quad $=$\quad \hctikz{
\cwcup{1}{-.5}{1}{.75}
\draw (1.8,-.5) node {$\lambda$};
}
&&
\red{\hctikz{
\ccwcup{1}{-.5}{1}{.75}
\draw (1.7,-.5) node {$\lambda$};}}
\quad$=\tau^{3P(\lambda)}$\quad
\hctikz{
\ccwcup{1}{-.5}{1}{.75}
\draw (1.7,-.5) node {$\lambda$};}
\\\\
\red{\hctikz{
\cwcap{1}{-.5}{1}{.75}
\draw (1.8,.3) node {$\lambda$};
}}
\quad$=\tau^{P(\lambda)}$
\hctikz{
\cwcap{1}{-.5}{1}{.75}
\draw (1.8,.3) node {$\lambda$};
}
&&
\red{\hctikz{
\ccwcap{1}{-.5}{1}{.75}
\draw (1.8,.3) node {$\lambda$};
}}
\quad $=$\quad
\hctikz{
\ccwcap{1}{-.5}{1}{.75}
\draw (1.8,.3) node {$\lambda$};
}
\\\\
\red{\hctikz{
\rcrossup{0}{0}{1}{1}{1}
\draw (-.2,0) node {$\lambda$};
\draw (1.2,0) node {$\mu$};
}}
\quad$=\tau^{P(\lambda)P(\mu)}$
\hctikz{
\rcrossup{0}{0}{1}{1}{1}
\draw (-.2,0) node {$\lambda$};
\draw (1.2,0) node {$\mu$};
}
&&

\red{\hctikz{
\rcrossdown{0}{0}{1}{1}{1}
\draw (-.2,0) node {$\lambda$};
\draw (1.2,0) node {$\mu$};
}}
$=\tau^{3P(\lambda)P(\mu)}$
\hctikz{
\rcrossdown{0}{0}{1}{1}{1}
\draw (-.2,0) node {$\lambda$};
\draw (1.2,0) node {$\mu$};
}
\\\\
\red{\hctikz{
\lcrossup{0}{0}{1}{1}{1}
\draw (-.2,0) node {$\lambda$};
\draw (1.2,0) node {$\mu$};
}}
\quad$=\tau^{3P(\lambda)P(\mu)}$
\hctikz{
\lcrossup{0}{0}{1}{1}{1}
\draw (-.2,0) node {$\lambda$};
\draw (1.2,0) node {$\mu$};
}
&&

\red{\hctikz{
\lcrossdown{0}{0}{1}{1}{1}
\draw (-.2,0) node {$\lambda$};
\draw (1.2,0) node {$\mu$};
}}
$=\tau^{P(\lambda)P(\mu)}$
\hctikz{
\lcrossdown{0}{0}{1}{1}{1}
\draw (-.2,0) node {$\lambda$};
\draw (1.2,0) node {$\mu$};
}
\end{tabular}
\end{center}
\begin{rmk} \label{rmk:red diagrams}
We make two remarks about the red diagrammatic calculus.
\begin{enumerate}
\item We observe that whenever $P(\lambda)=0$, the maps represented by the red and black
diagrams are the same. By Lemma \ref{lem:weight parity equiv}, this holds whenever
$\lambda$ is an even weight ($\ang{{\rf n}, \lambda}\in 2\N$) or $n$ is even,
thus in these cases we can work over $\Qqp$.
\item Note that we don't define sideways-oriented crossings in the red strands.
This can be done using these renormalizations and 
Lemma \ref{lem:rotating crossings}, but we shall not need these diagrams
here.
\end{enumerate}
\end{rmk}
Recall that the {\em writhe} $\wr(T)$ of an oriented tangle $T$ is defined by forgetting the orientation and
setting 
\[\wr\parens{\hctikz{\rcross{0}{0}{.5}{.35}}}=1,\qquad\wr\parens{\hctikz{\lcross{0}{0}{.5}{.35}}}=-1,
\qquad \wr(T)=\sum \wr(X),\]
where the sum is over all crossings $X$ in $T$.

\begin{thm}\label{thm:knot invariant}
Let $T$ be an oriented tangle, and $\lambda\in X^+$ be a dominant weight.
For any slice diagram $S(T)$ of $T$, let $S(T)_\lambda$ be the associated map 
defined by the red diagrammatic calculus with strands colored by $\lambda$. 
Then $S(T)_\lambda$ is is independent of the choice of slice diagram, and 
$T_\lambda=S(T)_\lambda$ is an isotopy invariant of oriented framed tangles. Moreover, if 
$J_T^\lambda=(\pi^{p(\lambda)}\mathfrak f(\lambda,\lambda)^{-1}q^{\ang{\tilde \rho,\lambda}})^{{\rm wr}(T)}T_\lambda$,
then $J_T^\lambda$ is independent of the framing, hence is an invariant of $T$.
\end{thm}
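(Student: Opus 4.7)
The plan is to reduce the theorem to a systematic verification that the ``red'' elementary diagrams, combined into slice diagrams, satisfy the generating moves for framed oriented tangle diagrams (cf.\ Turaev), and then to absorb the one remaining framing-dependent scalar by the writhe correction. Throughout, the $\tau$-exponents in the renormalizations of \S\ref{subsec:renorm} are designed precisely to cancel the leftover $\pi$-factors appearing in the identities of \S\ref{sec:graphid}, so the red calculus will satisfy these moves exactly.

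First, I would establish well-definedness of $S(T)_\lambda$. Two slice diagrams for the same framed oriented tangle are related by a finite sequence of local moves: the planar commutation identities \eqref{eq:comm and ids}, the straightening identity (Lemma \ref{lem:straightening}), the Reidemeister II identities (Lemma \ref{lem:reide2}), the braid/Reidemeister III relation (Proposition \ref{prop:braidrel}), and the cap/cup-crossing slides (Lemma \ref{lem:crossthruturn} and Corollaries \ref{cor:rotating crossings inverse}, \ref{cor:180rotcross}). In each such identity, the black-diagram equality carries an explicit $\pi$-correction (of the form $\pi^{P(\lambda)}$, $\pi^{P(\mu)P(\lambda)}$, or $\mathfrak r$/$\mathfrak l$ factors). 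The key bookkeeping step is to show that the $\tau$-renormalizations on the corresponding red elementary pieces contribute exactly the inverse scalar. For example, in a red Reidemeister II of type \ref{lem:reide2}(b), the renormalizations on the two caps, two cups, and two oppositely oriented crossings contribute a total $\tau$-power which equals $P(\lambda)$ modulo $4$, giving $\pi^{P(\lambda)}$ and exactly cancelling the factor appearing between the two sides of (b). Analogous counts, which I would assemble in a table indexed by orientation and the four renormalized elementary diagrams, handle the remaining moves.

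For the sideways-oriented crossings (not explicitly renormalized, per Remark \ref{rmk:red diagrams}(2)), one takes them as \emph{defined} by either side of Lemma \ref{lem:rotating crossings}; consistency of this definition is itself part of the cancellation check, since the two right-hand sides differ by exactly $\pi^{P(\mu)P(\lambda)}$ in the black calculus, and this factor is cancelled by the difference in total $\tau$-renormalization between the two red configurations. Having completed this verification, $T_\lambda := S(T)_\lambda$ is an invariant of framed oriented tangles.

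Finally, I would address the framing dependence. The only move that distinguishes framed from unframed tangles is the positive (or negative) Reidemeister I kink, and by Lemma \ref{lem:reide2}(a) the corresponding black morphism is $\mathfrak f(\lambda,\lambda)q^{-\ang{\tilde\rho,\lambda}}\cdot 1_\lambda$. In the red calculus this acquires the additional renormalization $\tau^{P(\lambda)}\cdot \tau^{P(\lambda)}=\pi^{P(\lambda)}$ from the $\cwcap$ and the $\rcrossup$ appearing in the diagram, yielding $c(\lambda)^{-1} 1_\lambda$ where $c(\lambda)=\pi^{p(\lambda)}\mathfrak f(\lambda,\lambda)^{-1}q^{\ang{\tilde\rho,\lambda}}$. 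Since adding a positive kink raises $\wr(T)$ by one, the prefactor $c(\lambda)^{\wr(T)}$ in the definition of $J_T^\lambda$ absorbs this scalar exactly; the negative kink is handled by inverting. Thus $J_T^\lambda$ is independent of the framing and hence an invariant of the oriented tangle $T$.

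The main obstacle, and essentially the only one once the preceding machinery is in place, is the careful orientation-by-orientation bookkeeping in the second paragraph: verifying that for each of the eight oriented crossings and four oriented cups/caps, and each of the roughly eight variants of the Reidemeister/slide moves implied by Lemmas \ref{lem:reide2}--\ref{lem:crossthruturn}, the sum of $\tau$-exponents on the two sides differs by exactly the power of $\pi$ appearing in the lemma. A secondary subtlety is that the sideways-oriented crossings must be assigned a well-defined red meaning before the moves can even be stated; this is an internal consistency check, but it follows from the same $\tau$-exponent arithmetic.
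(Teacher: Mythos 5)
Your proposal is correct and follows essentially the same route as the paper: reduce to the Turaev moves, check that the $\tau$-renormalizations of the red elementary diagrams exactly cancel the $\pi^{P(\lambda)}$ and $\pi^{P(\mu)P(\lambda)}$ factors in the identities of \S\ref{sec:graphid} (so that, e.g., the left and right curls of Lemma \ref{lem:reide2} become equal to the common scalar $\pi^{P(\lambda)}\mathfrak f(\lambda,\lambda)q^{-\ang{\tilde\rho,\lambda}}$), and then absorb the curl scalar by the writhe prefactor to remove the framing dependence. The only cosmetic difference is that you define red sideways crossings as part of the check, whereas the paper sidesteps them by restricting \eqref{eq:diagbraidrels} to uniformly oriented strands.
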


\begin{proof}
To prove the theorem, it suffices to show that the maps $S(T)_\lambda$ (resp. $J_T^\lambda$)
are invariant under the Turaev moves (cf. \cite[Theorem 3.2]{T}, \cite[Theorem 3.3, Equations (3.9)-(3.16)]{Oht})
for framed (resp. unframed) oriented tangles.
First, observe that the identities \eqref{eq:comm and ids} and
Lemma \ref{lem:straightening} hold for red strands as well.
We also see that \eqref{eq:diagbraidrels} holds for red strands 
which all have the same orientation. (In fact, if we define
sideways-oriented crossings of red strands 
as described in Remark \ref{rmk:red diagrams} (2), then \eqref{eq:diagbraidrels}
would hold for red strands with any orientation.)

Furthermore, applying the normalizations and rearranging the $\tau$ factors in
Lemma \ref{lem:reide2} shows that, for either orientation, we have
\[\red{\hctikz{
\ids{0}{.75}{.5}{.75}{1}\dcap{.5}{.75}{.5}{.5}
\rcross{0}{0}{.75}{.5}\ids{1}{0}{.5}{.75}{1}
\ids{0}{-.75}{.5}{.75}{1}\dcup{.5}{-.5}{.5}{.5}
\draw (-.3,0) node {$\lambda$};
}}\quad =\pi^{P(\lambda)}\mathfrak f(\lambda,\lambda)q^{-\ang{\tilde\rho,\lambda}}  \red{\hctikz{\ids{0}{0}{1}{1.5}{1};
\draw (-.3,0) node {$\lambda$};}}
\quad=\quad\red{\hctikz{
\ids{1}{.75}{.5}{.75}{1}\dcap{0}{.75}{.5}{.5}
\rcross{.5}{0}{.75}{.5}\ids{0}{0}{.5}{.75}{1}
\ids{1}{-.75}{.5}{.75}{1}\dcup{0}{-.5}{.5}{.5}
\draw (-.3,0) node {$\lambda$};
}}.
\]
\[\red{\hctikz{
\ids{0}{.75}{.5}{.75}{1}\dcap{.5}{.75}{.5}{.5}
\lcross{0}{0}{.75}{.5}\ids{1}{0}{.5}{.75}{1}
\ids{0}{-.75}{.5}{.75}{1}\dcup{.5}{-.5}{.5}{.5}
\draw (-.3,0) node {$\lambda$};
}}\quad =\pi^{P(\lambda)}\mathfrak f(\lambda,\lambda)^{-1}q^{\ang{\tilde\rho,\lambda}}  \red{\hctikz{\ids{0}{0}{1}{1.5}{1};
\draw (-.3,0) node {$\lambda$};}}
\quad=\quad\red{\hctikz{
\ids{1}{.75}{.5}{.75}{1}\dcap{0}{.75}{.5}{.5}
\lcross{.5}{0}{.75}{.5}\ids{0}{0}{.5}{.75}{1}
\ids{1}{-.75}{.5}{.75}{1}\dcup{0}{-.5}{.5}{.5}
\draw (-.3,0) node {$\lambda$};
}}.
\]

Similarly, we see that Corollaries \ref{cor:rotating crossings inverse} and \ref{cor:180rotcross} gives us the identities
\[
\red{\hctikz{
\dcap{0}{.75}{1.5}{1}\dcap{.5}{.75}{.5}{.5}\idsup{2}{0}{.5}{1.5}{2}
\rcrossdown{1}{0}{.75}{.5}
\idsup{0}{-.75}{.5}{1.5}{2}\dcup{1.5}{-.5}{.5}{.5}\dcup{1}{-1}{1.5}{1}
}}\quad =\ 
\red{\hctikz{
\rcrossup{0}{0}{1}{1}
}}\quad = \ 
\red{\hctikz{
\dcap{1}{.75}{1.5}{1}\dcap{1.5}{.75}{.5}{.5}\idsup{0}{0}{.5}{1.5}{2}
\rcrossdown{1}{0}{.75}{.5}
\idsup{2}{-.75}{.5}{1.5}{2}\dcup{.5}{-.5}{.5}{.5}\dcup{0}{-1}{1.5}{1}
}}\]

\[\red{\hctikz{
\idsdown{1}{2.25}{.5}{.75}{1}\idsup{1.5}{1.5}{.5}{1.5}{1}\dcap{0}{2.25}{.5}{.5}
\lcrossdown{.5}{1.5}{.75}{.5}
\dcup{1}{1}{.5}{.5}
\idsup{0}{0}{.5}{2.25}{1}\ids{.5}{.75}{.5}{.75}{1}\dcap{1}{.75}{.5}{.25}
\rcrossdown{.5}{0}{.75}{.5}\idsup{1.5}{-.75}{.5}{1.5}{1}\ids{1}{-.75}{.5}{.75}{1}
\dcup{0}{-.5}{.5}{.5}
}}\quad =\quad\red{\hctikz{
\idsdown{0}{0}{.5}{2}{1}
\idsup{.5}{0}{.5}{2}{1}
}}\quad=\quad\red{\hctikz{
\idsdown{.5}{2.25}{.5}{.75}{1}\idsup{0}{1.5}{.5}{1.5}{1}\dcap{1}{2.25}{.5}{.5}
\lcrossdown{.5}{1.5}{.75}{.5}
\dcup{0}{1}{.5}{.5}
\idsup{1.5}{0}{.5}{2.25}{1}\ids{1}{.75}{.5}{.75}{1}\dcap{0}{.75}{.5}{.25}
\rcrossdown{.5}{0}{.75}{.5}\idsup{0}{-.75}{.5}{1.5}{1}\ids{.5}{-.75}{.5}{.75}{1}
\dcup{1}{-.5}{.5}{.5}
}}
\]
for any choice of labeling of the strands. In particular, we see
that the Turaev moves for oriented framed tangles are satisfied,
which proves that $T_\lambda$ is indeed an isotopy invariant of oriented
framed tangles. Moreover, note that $J_T^\lambda$
then satisfies the Turaev moves for oriented {\em unframed} tangles,
since the only Turaev move that changes the writhe is Reidemeister 2 (which
is to say the move straightening the crossings in Lemma \ref{lem:reide2}).
\end{proof}

We note that the proof of Theorem \ref{thm:knot invariant} actually implies a more
general result, though we first need to recall some notions.
The category of $X^+$-colored oriented tangles is the strict monoidal category
whose objects are finite sequences of pairs $(\lambda,s)$ where $\lambda\in X^+$
and $s\in \set{\pm 1}$, and whose morphisms from $(\lambda_a,s_a)_{1\leq a\leq b}$ to 
$(\mu_c,s_c)_{1\leq c\leq d}$  are tangle diagrams where
the labeling and orientation of the $r^{\rm th} $ strand from the left 
at the lower (respectively, upper) boundary corresponds to $(\lambda_r, s_r)$
(respectively, $(\mu_c, s_c)$); c.f. \cite{T,ADO} 
for more details. In particular, morphisms in this category 
(and thus colored tangles) are generated
from the elementary morphisms 
\newcommand{\undercurvearrowleft}{\raisebox{.5em}{\rotatebox{180}{$\curvearrowright$}}}
\newcommand{\undercurvearrowright}{\raisebox{.5em}{\rotatebox{180}{$\curvearrowleft$}}}
 \[\curvearrowright_\lambda,\quad \undercurvearrowleft_{\lambda},\quad \undercurvearrowright_{\lambda},\quad \undercurvearrowleft_{\lambda},\quad
(\searrow\hspace{-1em}\swarrow)^{\pm}_{\lambda,\mu},\quad
(\nearrow\hspace{-1em}\nwarrow)^{\pm}_{\lambda,\mu}.\]
subject to relations which are simply colored versions
of the Turaev moves.

We can extend Theorem \ref{thm:knot invariant} to framed multicolored tangles
with the same proof. To obtain the unframed invariant, the normalization constant
is replaced by
$\prod_{\lambda\in X^+}(\pi^{p(\lambda)}\mathfrak f(\lambda,\lambda)^{-1}q^{\ang{\tilde \rho,\lambda}})^{\wr_\lambda(T)}$, where $\wr_\lambda$ is defined to be the writhe where
we exclude from the sum any crossings where there is a strand not labeled by $\lambda$.
Therefore, we obtain the following corollary.
\begin{cor}
There exists
a covariant functor $J$ from the category of $X^+$-colored oriented tangles modulo
isotopy to $\catO_{\rm fin}$ which sends the object $((\lambda_1,s_1),\ldots, (\lambda_r,s_r))$ to the module 
$V(s_1\lambda_1)\otimes\ldots\otimes V(s_r\lambda_r)$ and is given
on morphisms by
\[\curvearrowright_\lambda\mapsto\tau^{P(\lambda)} \ev_\lambda,\qquad
\curvearrowleft_\lambda\mapsto\qtr_\lambda,\qquad
\undercurvearrowright_{\lambda}\mapsto\coqtr_\lambda,\qquad
\undercurvearrowleft_{\lambda}\mapsto\tau^{-P(\lambda)}\coev_\lambda,
\]
\[(\nearrow\hspace{-1em}\nwarrow)^{\pm}_{\lambda,\mu}\mapsto 
(\pi^{p(\lambda)}\mathfrak f(\lambda,\lambda)^{-1}q^{\ang{\tilde \rho,\lambda}})^{\pm \delta_{\lambda,\mu}}
\tau^{\pm P(\lambda)P(\mu)}\cR_{\lambda,\mu}^{\pm1},\]
\[(\searrow\hspace{-1em}\swarrow)^{\pm}_{\lambda,\mu}\mapsto 
(\pi^{p(\lambda)}\mathfrak f(\lambda,\lambda)^{-1}q^{\ang{\tilde \rho,\lambda}})^{\pm \delta_{\lambda,\mu}}
\tau^{\mp P(\lambda)P(\mu)}\cR_{-\lambda,-\mu}^{\pm1}.\]
In particular, if $L$ is an oriented colored link, then
$J(L)\in\Qqtt$ is the associated quantum covering $\osp(1|2n)$
colored link invariant.
\end{cor}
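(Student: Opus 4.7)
The plan is to bootstrap directly off of Theorem \ref{thm:knot invariant} and its proof, since the monochromatic case already contains essentially all the work. First I would observe that the category of $X^+$-colored oriented tangles modulo isotopy admits a presentation by generators (the elementary cups, caps, and crossings attached to pairs of labels) and relations which are the colored Turaev moves. So to define $J$ as a functor it suffices to define it on the generators as indicated and check that all the colored Turaev moves are preserved.

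The assignment on objects and the tensor structure is immediate from the monoidal structure of $\catO_{\rm fin}$. For the relations, I would verify them in two stages. The framed oriented Turaev moves---straightening, naturality (i.e. the identities in \eqref{eq:comm and ids}), the braid relations \eqref{eq:diagbraidrels}, and the various rotation/slide identities---are exactly the content of Lemma \ref{lem:straightening}, Proposition \ref{prop:braidrel}, Lemmas \ref{lem:rotating crossings}, \ref{lem:crossthruturn}, and Corollaries \ref{cor:rotating crossings inverse}, \ref{cor:180rotcross}. I would note that these were stated and proved for arbitrary $\lambda,\mu\in X^+$ on the interacting strands, so nothing new needs to be done: the very same $\tau$-renormalizations from \S\ref{subsec:renorm} that were used to balance the $\pi^{P(\lambda)P(\mu)}$ and $\pi^{P(\lambda)}$ discrepancies in Theorem \ref{thm:knot invariant} do the job for distinctly labeled strands as well, since these renormalizations are color-local (i.e. each cup/cap/crossing carries a $\tau$-factor depending only on the labels on its own strands).

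Next, for the unframed (oriented) Turaev moves, the only move not already covered is the first Reidemeister move, which shifts the writhe by $\pm 1$. In the monochromatic case of Theorem \ref{thm:knot invariant}, the factor $(\pi^{p(\lambda)}\mathfrak f(\lambda,\lambda)^{-1}q^{\ang{\tilde\rho,\lambda}})^{\wr(T)}$ was introduced precisely to cancel the scalar produced by Lemma \ref{lem:reide2} in the renormalized (red) calculus. In the colored setting, that cancellation only involves a curl on a single strand labeled by some $\lambda$, so the corresponding scalar depends only on $\lambda$; this is exactly why the replacement product $\prod_\lambda(\pi^{p(\lambda)}\mathfrak f(\lambda,\lambda)^{-1}q^{\ang{\tilde\rho,\lambda}})^{\wr_\lambda(T)}$ cancels each curl correctly. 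I would verify this color by color: a curl on a $\lambda$-labeled strand changes $\wr_\lambda$ by $\pm 1$ and $\wr_\mu$ for $\mu\neq\lambda$ by $0$, while the red diagrammatic value changes by precisely $(\pi^{p(\lambda)}\mathfrak f(\lambda,\lambda)^{-1}q^{\ang{\tilde\rho,\lambda}})^{\mp 1}$ by the colored version of Lemma \ref{lem:reide2}.

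The main obstacle, if any, is purely bookkeeping: one must encode the factor $\prod_\lambda(\cdots)^{\wr_\lambda(T)}$ as a composition of morphism-level scalars that is genuinely functorial (i.e. multiplicative under vertical and horizontal composition of colored tangles). I would handle this by attaching the scalar $(\pi^{p(\lambda)}\mathfrak f(\lambda,\lambda)^{-1}q^{\ang{\tilde\rho,\lambda}})^{\pm \delta_{\lambda,\mu}}$ to each positive/negative crossing of $\lambda$ and $\mu$ strands, as already indicated in the statement of the corollary. With this convention, the total scalar on a tangle is multiplicative under both compositions, and the local cancellation of the first Reidemeister move proved above gives isotopy invariance. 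Together these verifications show $J$ is a well-defined functor, and the link statement is then the special case of a closed tangle, where the image lies in $\End_\UU(\Qqtt)=\Qqtt$.
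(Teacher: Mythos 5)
Your proposal is correct and follows essentially the same route as the paper, which simply observes that the proof of Theorem \ref{thm:knot invariant} carries over verbatim to the multicolored setting (since the graphical identities were established for arbitrary labels $\lambda,\mu\in X^+$) and replaces the global writhe normalization by the per-color product, realized functorially by attaching the scalar $(\pi^{p(\lambda)}\mathfrak f(\lambda,\lambda)^{-1}q^{\ang{\tilde \rho,\lambda}})^{\pm\delta_{\lambda,\mu}}$ to each crossing. Your extra care in checking that the first Reidemeister move only ever involves a single color, so that the $\delta_{\lambda,\mu}$-weighted crossing scalars cancel it, is exactly the implicit content of the paper's remark.
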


\begin{example}\label{ex:rk1knot}
Let's take $n=1$ and $\lambda=1$. 
Fix $\mathfrak f(1,1)=1$, and note that $\ang{\tilde \rho,\lambda}=1$ and $p(\lambda)=1$.
We can explicitly compute the maps represented by our diagrams on $V(1)\otimes V(1)$.
Let $v_1,v_{-1}$ be the basis of $V(1)$ from Example \ref{ex:rk1mods}. Then with respect to the ordered basis $\set{v_1\otimes v_1, v_1\otimes v_{-1}, v_{-1}\otimes v_1, 
v_{-1}\otimes v_{-1}}$ of $V(1)\otimes V(1)$, we have 
\[\Theta=
\begin{bmatrix}1 & 0& 0& 0\\ 0 & 1& 0& 0\\ 0 &  q^{-1}-\pi q & 1& 0\\
0 & 0& 0& 1\end{bmatrix},\qquad \mathfrak{F}=\begin{bmatrix} 1 & 0& 0& 0\\ 0 & q& 0& 0\\
0 & 0 & \pi q & 0\\ 0 & 0& 0& \pi
\end{bmatrix}, \qquad \mathbf s=\begin{bmatrix} 1 & 0& 0& 0\\ 0 & 0& 1& 0\\
0 & 1 & 0 & 0\\ 0 & 0& 0& \pi
\end{bmatrix}\]
and thus
\[
\red{\hctikz{\rcrossup{0}{0}{1}{1}}}= 
\begin{bmatrix}
\tau & 0& 0& 0\\
0 & 0& \tau q& 0\\ 
0 & \tau^3 q & \tau -  \tau^3 q^2 &  0\\
0 & 0& 0& \tau
\end{bmatrix},
\quad 
\red{\hctikz{\lcrossup{0}{0}{1}{1}}}=
\begin{bmatrix}
\tau^3& 0& 0& 0\\
0 & \tau^3- \tau q^{-2}& \tau q^{-1}& 0\\
 0 & \tau^3 q^{-1} & 0 &  0\\
0 & 0& 0& \tau^3
\end{bmatrix}
\]
Note that $\pi^{p(\lambda)}q^{\ang{\tilde \rho,\lambda}}=\pi q$. 
Then it is easy to verify directly that
\[\red{\hctikz{\rcrossup{0}{0}{1}{1}}}-q^2
\ \red{\hctikz{\lcrossup{0}{0}{1}{1}}}\ \ =(\tau-\tau^3 q^2)\ \ \red{\hctikz{\idsup{0}{0}{1}{1}{1}\idsup{.5}{0}{1}{1}{1}}}\]
Now let $T$ be an unframed oriented link with all strands colored by $\lambda$, and fix a subdiagram which consists of two strands with 
either no crossing or a single crossing. Since $T^\sharp$ is isotopy
invariant and independent of framing, we may assume that the strands are directed upward.
Let $T_+$ (resp. $T_0$, $T_-$) be $T$ with the subdiagram replaced by $\red{\hctikz{\rcrossup{0}{0}{1}{1}}}$ 
(resp. \red{\hctikz{\idsup{0}{0}{1}{1}{1}\idsup{.5}{0}{1}{1}{1}}}\ ,\ \red{\hctikz{\lcrossup{0}{0}{1}{1}}}).
Then using the above relation and the definition in Theorem
\ref{thm:knot invariant},
\[(\pi q^{-1}) J_{T_+}^1-(\pi q^3)J_{T_-}^1=(\tau-\tau^3 q^2) J_{T_0}^1\]
hence
\[(\pi q^2)^{-1}J_{T_+}^1-\pi q^2J_{T_-}^1=(\tau q^{-1}-\tau^3 q)J_{T_0}^1.\]
Moreover, if $T$ is the unknot, then for either orientation we have $J_{T}^1=\tau^3 q+\tau  q^{-1}$.
In particular, we see that for any link $K$, $J_{K}^1$ 
is simply a multiple of the Jones polynomial of $T$
in the variable $\tau^3 q=\tau^{-1}q$. In particular, note that using the specialization $\tau=\bt$,
which corresponds to $\pi=-1$, this shows the uncolored $U_q(\osp(1|2))$ link invariant is equal to the $U_{\bt^{-1}q}(\mathfrak{sl}_2)$ link invariant.
\end{example}

\section{Relating $\mathfrak{so}(2n+1)$ and $\osp(1|2n)$ invariants}

The results of Example \ref{ex:rk1knot} suggest a connection
between the specializations of the tangle invariants 
in Theorem \ref{thm:knot invariant}. 
We now make this
precise by extending the constructions in \cite{CFLW,C}.
We begin by recalling the definition of the twistor maps.

\subsection{Definition of Twistors}
An {\em enhancer} $\phi$ is an function $\phi:\Z[I]\times X\rightarrow \Z$ satisfying
\begin{equation}
\begin{array}{c}
\phi(\nu,\lambda+\mu)\equiv \phi(\nu,\mu)+\phi(\nu,\lambda)\mod 4
\text{ for }\nu,\mu\in \Z[I]\\
\phi(\nu+\mu,\lambda)\equiv \phi(\nu,\lambda)+\phi(\mu,\lambda) \mod 4
\text{ for }\nu,\mu\in \Z[I]\\
\phi(i,i)=d_i\text{ and }\phi(i,j)\in 2\Z\text{ for }i\neq j\in I.\\
\phi(i,j)-\phi(j,i)\equiv i\cdot j+2p(i)p(j)\mod 4 \text{ for }i,j\in I
\end{array}
\end{equation}
Note that $\phi(i,i)-\phi(i,i)=0\equiv i\cdot i+2p(i)p(i)$ modulo 4
since $i\cdot i=2d_i$ and $2p(i)p(i)=2p(i)=2d_i$.
In particular, note that these congruences imply that
\begin{equation}\label{eq:phi on ZI}
\begin{array}{c}
\phi_4:\Z[I]\times\Z[I]\rightarrow \Z/4\Z \text{ defined by }\phi_4(\mu,\nu)=\phi(\mu,\nu)\!\!\mod 4\text{ is a }\Z\text{-bilinear map}\\
\text{and }\phi(\mu,\nu)\equiv\phi(\nu,\mu)+\mu\cdot \nu+2p(\mu)p(\nu)\mod 4\text{  for }\mu,\nu\in\Z[I].
\end{array}
\end{equation}
Note that an enhancer can always be defined on $\Z[I]\times\Z[I]$
by defining it for $I$ and extending in $\Z$-bilinearly, 
and then it can be extended 
to $\Z[I]\times X$ by translation along a transversal of $X/\Z[I]$. 

When $I$ has a unique odd element, as in the present case, 
the enhancer is closely
related to the usual pairing.
\begin{lem}\label{lem:enhancer single odd}
Let $\phi$ be an enhancer.
Then $\phi(\mu,\nu)+\phi(\nu,\mu)\equiv \mu\cdot \nu$ modulo 4.
\end{lem}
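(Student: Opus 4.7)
The plan is to first reduce the claim to the case $\mu,\nu\in I$ by invoking the $\Z$-bilinearity of $\phi_4$ from \eqref{eq:phi on ZI}, and then to verify the generator case directly using the crucial observation that, since $I_1=\set{\rf n}$ consists of a single element, the parity product $p(i)p(j)$ vanishes whenever $i\neq j$.

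More precisely, I would first observe that because $\phi_4(\mu,\nu)=\phi(\mu,\nu)\bmod 4$ is $\Z$-bilinear, if we write $\mu=\sum_i\mu_i i$ and $\nu=\sum_j\nu_j j$ then
\[
\phi(\mu,\nu)+\phi(\nu,\mu)\equiv \sum_{i,j\in I}\mu_i\nu_j\bigl(\phi(i,j)+\phi(j,i)\bigr)\pmod 4,
\]
and since $\mu\cdot\nu=\sum_{i,j}\mu_i\nu_j(i\cdot j)$, it suffices to prove $\phi(i,j)+\phi(j,i)\equiv i\cdot j\pmod 4$ for all $i,j\in I$.

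For the case $i=j$, this is immediate: $\phi(i,i)+\phi(i,i)=2d_i=i\cdot i$. For the case $i\neq j$, note that since $I_1$ has a single element $\rf n$, at least one of $i,j$ lies in $I_0$, so $p(i)p(j)=0$. The defining congruence
\[
\phi(i,j)-\phi(j,i)\equiv i\cdot j+2p(i)p(j)\equiv i\cdot j\pmod 4
\]
together with the fact that $\phi(j,i)\in 2\Z$ (so $2\phi(j,i)\equiv 0\pmod 4$) yields
\[
\phi(i,j)+\phi(j,i)=\bigl(\phi(i,j)-\phi(j,i)\bigr)+2\phi(j,i)\equiv i\cdot j\pmod 4.
\]

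There is no real obstacle here; the entire content of the lemma rests on the single-odd-element assumption that kills the $2p(i)p(j)$ correction off the diagonal. The only point requiring some care is making explicit the reduction to generators, which is essentially bookkeeping once one appeals to \eqref{eq:phi on ZI}.
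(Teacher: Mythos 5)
Your proof is correct and follows essentially the same route as the paper: reduce to generators via the $\Z$-bilinearity of $\phi_4$, handle $i=j$ by $2\phi(i,i)=2d_i=i\cdot i$, and handle $i\neq j$ by noting $p(i)p(j)=0$ (single odd element) and $\phi(j,i)\in 2\Z$ so the sum agrees with the difference modulo 4. No gaps.
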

\begin{proof}
First set $(,)_\phi, (,)_\bullet:\Z[I]\times\Z[I]\rightarrow \Z/4\Z$ by
$(\mu,\nu)_\phi=\phi_4(\mu,\nu)+\phi_4(\nu,\mu)$ and 
$(\mu,\nu)_\bullet=\mu\cdot\nu\mod 4$.
Both maps are $\Z$-bilinear, so it suffices to show they
take the same values on $I\times I$.
Well, if $i\neq j$, then at least one of $i$ or $j$
is even and thus $2p(i)p(j)=0$ since $|I_1|=1$.  On the other hand,
$\phi(i,j)\in 2\Z$ so 
$\phi(i,j)+\phi(j,i)\equiv_4 \phi(i,j)-\phi(j,i)
\equiv_4 i\cdot j +2p(i)p(j)=i\cdot j$. 
Finally, note that
$\phi(i,i)+\phi(i,i)=2d_i=i\cdot i$ for any $i\in I$. 
\end{proof}

The {\em $\phi$-enhanced quantum covering group}
$\hatU$ associated to $\UU$ and the enhancer $\phi$ is the semidirect product of $\UU$ with the algebra $\Qqtt[T_\mu, \Upsilon_\mu\mid \mu\in \Z[I]]$ subject to the relations
\begin{equation}\label{eq:TUpsrels}
T_\mu T_\nu=T_{\mu+\nu},\quad \Upsilon_\mu \Upsilon_\nu=\Upsilon_{\mu+\nu},\quad T_0=\Upsilon_0=T_\nu^4=\Upsilon_\nu^4=1,\quad
T_\mu \Upsilon_\nu=\Upsilon_\nu T_\mu,
\end{equation} 
\begin{equation}\label{eq:Tweightrels}
T_\mu u=\bt^{\ang{\mu,|u|}} u T_\mu,\quad u\in\UU,\ \mu\in\Z[I]
\end{equation} 
\begin{equation}\label{eq:Upsweightrels}
\Upsilon_\mu u=\bt^{\phi(\mu,|u|)} u \Upsilon_\mu,\quad u\in\UU,\ \mu\in\Z[I]
\end{equation} 
See \cite{CFLW,C} for a more formal definition.
The enhanced quantum covering group has a useful $\Q(\bt)$-linear automorphism called a {\em twistor}.
There are several ways to define such a twistor; we will need the following.

\begin{prop}{\bf \cite[Theorems 4.3, 4.12]{CFLW}} \label{prop:twistordef}
Define a product $*$ on $\ff$ by the following rule: if $x$ and $y$ are homogeneous elements of $\ff$,
let $x*y=\bt^{\phi(|x|,|y|)}xy$. Let $(\ff,*)$ denote $\ff$ with this multiplication.
\begin{enumerate}
\item Then there is a $\Q(\bt)$-linear algebra isomorphism $\Tw:\ff\rightarrow (\ff,*)$
defined by
\[\Tw(\theta_i)=\theta_i,\quad \Tw(q)=\bt^{-1}q,\quad \Tw(\tau)=\bt^{-1}\tau.\]
\item Let $\cB$ be the canonical basis of $\ff$ (cf. \cite{CHW2}). 
Then $\Tw$ on $\ff$ satisfies $\Tw(b)=\bt^{\ell(b)} b$ for all 
$b\in \cB$, where $\ell(b)$ is some integer depending on $b$.
\item There is a  $\Q(\bt)$-algebra automorphism $\Tw:\hatU\rightarrow \hatU$ defined by
\[\Tw(E_i)=\bt_i^{-1} \tT_i \Upsilon_iE_i,\quad \Tw(F_i)=F_i\Upsilon_{-i},\quad \Tw(K_\mu)=T_{-\mu}K_\mu,\quad \Tw(J_\mu)=T_{2\mu} J_\mu,\]
\[\Tw(T_\mu)=T_\mu,\quad \Tw(\Upsilon_\mu)=\Upsilon_\mu,\quad 
\Tw(q)=\bt^{-1} q,\quad \Tw(\tau)=\bt^{-1}\tau,\]
where if $\mu=\sum_{i\in I} \mu_i i$, $\tT_\mu=\prod_{i\in I} T_{\mu_i d_i i}$.
\item For $x\in \ff[\bt]$, we have
\subitem {\rm (a)} $\Tw(x^+)=t_\nu^{2}\bt^{\bullet(|x|)} \Tw(x)^+\tT_{|x|}\Upsilon_{|x|}$
\subitem {\rm (b)} $\Tw(x^-)=\Tw(x)^-\Upsilon_{-|x|}$
\end{enumerate}
\end{prop}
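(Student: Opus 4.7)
The plan is to adapt the approach of \cite[Theorems 4.3, 4.12]{CFLW} to account for the additional parameter $\tau$ (which plays the role that $\sqrt\pi$ plays in loc.~cit.). The key observation throughout is that $\Tw(\tau) = \bt^{-1}\tau$ induces $\Tw(\pi) = \bt^{-2}\pi = -\pi$, so the twistor should intertwine the $\pi = 1$ and $\pi = -1$ specializations.

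For part (1), I would first verify that the product $*$ is associative, which reduces to the $\Z$-bilinearity of $\phi_4$ recorded in \eqref{eq:phi on ZI}. To see that the proposed map descends to $\ff$, I need to check the Serre relations in $(\ff,*)$. Since all monomials $\theta_i^{b_{ij}-k}\theta_j\theta_i^k$ appearing in \eqref{eq:thetaserrerel} have the same weight, their images in $(\ff,*)$ pick up factors $\bt^{c(k)}$ for an exponent $c(k)$ that is affine in $k$ modulo $4$; using Lemma \ref{lem:enhancer single odd} and $\phi(i,i) = d_i$, the $k$-dependent part of $c(k)$ should exactly match the sign $(-1)^k\pi^{\binom{k}{2}p(i)+kp(i)p(j)}$ after the substitution $\pi \mapsto -\pi$, proving the relation holds and thus $\Tw$ is a well-defined algebra homomorphism. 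Invertibility is immediate from the formula.

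For part (2), I would invoke the analogous statement in \cite[Theorem 4.12]{CFLW}: the map $\Tw$ preserves the integral form $\fint$, fixes each generator $\theta_i$, and commutes with the bar involution up to a predictable twist of scalars. The canonical basis $\cB$ is characterized by bar-invariance together with an almost-orthonormality property with respect to the bilinear form \eqref{eq:ff bilinear form}; since $\Tw$ is weight-preserving and of the form ``scalar times identity'' on each homogeneous component up to lower-order corrections that must vanish by bar-compatibility, one concludes $\Tw(b) = \bt^{\ell(b)} b$ for each $b\in\cB$, with $\ell(b)\in\Z$ a single integer fixed by bookkeeping of the $\bt$ exponents.

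For part (3), I would verify each defining relation of $\hatU$ under the proposed assignment. Most relations (those among $J_\mu, K_\mu, T_\mu, \Upsilon_\mu$ and their interactions with $E_i, F_i$) are direct calculations using \eqref{eq:Tweightrels}, \eqref{eq:Upsweightrels}. The Serre relations \eqref{eq:Eserrerel}, \eqref{eq:Fserrerel} reduce to (1) via the isomorphisms $\ff \xrightarrow{\sim} \Up, \Um$. The main obstacle is the commutator relation \eqref{eq:commutatorrelation}: one must compute
\[
\Tw(E_i)\Tw(F_j) - \Tw(\pi)^{p(i)p(j)}\Tw(F_j)\Tw(E_i)
\]
and, after moving all $T_\mu$ and $\Upsilon_\mu$ factors to one side via \eqref{eq:Tweightrels} and \eqref{eq:Upsweightrels} and using Lemma \ref{lem:enhancer single odd}, confirm that the residual $\bt$-powers collapse so that the expression equals $\delta_{ij}\,\Tw\!\left(\tfrac{\tJ_i\tK_i - \tK_i^{-1}}{\pi_iq_i - q_i^{-1}}\right)$. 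This is where the precise normalization $\Tw(E_i) = \bt_i^{-1}\tT_i\Upsilon_i E_i$ (rather than a simpler variant) is forced. Part (4) then follows from (1) and (3) by induction on $\height(|x|)$: the base case $x = \theta_i$ matches the generator formulas, and the $\bt^{\bullet(|x|)}$ accumulates precisely from the reorderings required to commute $\tT_{|x|}\Upsilon_{|x|}$ past successive $E_j$'s, while $\Tw(x)^+$ is governed by the $*$-product structure of (1).
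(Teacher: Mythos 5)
This proposition is not proved in the paper at all: it is quoted verbatim (with the coefficient ring enlarged from $\Qqp$ to $\Qqtt$) from \cite[Theorems 4.3, 4.12]{CFLW}, so there is no in-paper argument to compare against. Your sketch is a faithful reconstruction of the strategy used in that reference, and the parts you make concrete do check out. In particular, for (1) the $k$-dependent discrepancy between $\theta_i^{*(b_{ij}-k)}*\theta_j*\theta_i^{*k}$ and $\theta_i^{b_{ij}-k}\theta_j\theta_i^{k}$ works out to $(-1)^{d_i k(k+b_{ij})}$ after combining the $\bt$-powers from the $*$-product, from $\Tw\bigl(\bbinom{b_{ij}}{k}_i\bigr)=\bt_i^{k(b_{ij}-k)}\bbinom{b_{ij}}{k}_i$, and from $\pi\mapsto-\pi$; this is identically $1$ precisely because for this root datum $b_{ij}$ is odd whenever $d_i$ is odd, and because $p(i)\equiv d_i\pmod 2$ (bar-consistency). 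That is the coincidence your phrase ``should exactly match'' is relying on, and it is worth saying explicitly, since it is where anisotropy of the datum enters. Similarly, (4) does follow by induction on height exactly as you describe (compare the proof of Lemma \ref{lem:tw+alt} in the paper, which is the same computation for a variant formula).

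The one place your outline is materially vaguer than the actual argument is (2). The twistor is \emph{not} ``scalar times identity on each homogeneous component up to lower-order corrections'': on a weight space $\ff_\nu$ it is an a priori arbitrary $\Q(\bt)$-linear bijection, and bar-compatibility alone does not diagonalize it. What \cite{CFLW} actually uses is the full characterization of the canonical basis: $\Tw$ preserves the integral form $\fint$ and the crystal lattice $\cL$, acts on $\cL/q^{-1}\cL$ by a root of unity times the identity on each weight space, and intertwines the bar involution with $\Tw^{-1}$ (a separate computation, since $\Tw(\bar q)\neq\overline{\Tw(q)}$); only the conjunction of these three properties with the defining characterization $\bar b=b$, $b\in\fint\cap\cL$, $b\equiv \tilde b \bmod q^{-1}\cL$ forces $\Tw(b)=\bt^{\ell(b)}b$. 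If you intend to prove (2) rather than cite it, those three inputs need to be established; as a citation-level sketch your proposal is otherwise sound.
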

Later on, we will need some alternate versions of the results in 
Proposition \ref{prop:twistordef} 
which we shall prove now. First, we note the following analogue of
Proposition \ref{prop:twistordef} (2) for the dual canonical basis.

\begin{lem}\label{lem:twondualbasis}
Let $(-,-)$ be the bilinear form on $\ff$ defined in 
\eqref{eq:ff bilinear form}.
Then \[\Tw^{-1}((\Tw(x),\Tw(y)))=(-1)^{\bp(|x|)}(x,y).\]
In particular, $\Tw(b^*)=(-1)^{\bp(b)}\bt^{-\ell(b)} b^*$
for any $b\in \cB$.
\end{lem}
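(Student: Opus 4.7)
My plan is to prove the main identity $\Tw^{-1}((\Tw(x),\Tw(y)))=(-1)^{\bp(|x|)}(x,y)$ by a double induction on height, then deduce the statement about $b^*$ as a corollary via non-degeneracy. First, note that since $\Tw\colon\ff\to(\ff,*)$ is an algebra isomorphism, one has the twisted-product formula $\Tw(xy)=\bt^{\phi(|x|,|y|)}\Tw(x)\Tw(y)$ as elements of $\ff$. I will prove the identity by induction on $n=\height|x|$. The base case $n=0$ is immediate since on scalars $\Tw$ just acts on the coefficient ring, $(x,y)$ vanishes unless $|y|=0$, and $\bp(0)=0$. For the inductive step, write $x=\theta_i x'$, so
\[(\Tw(x),\Tw(y))=\bt^{\phi(i,|x'|)}(\theta_i,\theta_i)(\Tw(x'),\ir(\Tw(y))).\]

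The crux of the argument is the auxiliary identity
\[\ir(\Tw(y))=\bt^{\phi(i,|y|-i)}\,\Tw(\ir(y)),\]
which I prove by a separate induction on $\height|y|$. Writing $y=\theta_j y'$ and applying Leibniz for $\ir$ to both $\Tw(y)=\bt^{\phi(j,|y'|)}\theta_j\Tw(y')$ and to $\ir(y)$ before applying $\Tw$ (using $\Tw(\pi)=-\pi$, $\Tw(q)=\bt^{-1}q$), the computation for the two resulting terms reduces to verifying
\[\phi(j,|y'|)+\phi(i,|y'|-i)\equiv \phi(i,|y|-i)+\phi(j,|y'|-i)-i\cdot j+2p(i)p(j)\pmod 4,\]
which follows by bilinearity of $\phi_4$ on $\Z[I]$ together with the defining congruence $\phi(i,j)-\phi(j,i)\equiv i\cdot j+2p(i)p(j)\pmod 4$.

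With this in hand, I substitute back, use that $(\theta_i,\theta_i)=1/(1-\pi_iq_i^{-2})$ is fixed by $\Tw$ (because $\bt^{2d_i}=(-1)^{d_i}=(-1)^{p(i)}$ exactly compensates $\Tw(\pi_i)=(-1)^{p(i)}\pi_i$), apply the main inductive hypothesis to $(\Tw(x'),\Tw(\ir(y)))$ (whose first argument has smaller height), and use that $|x'|=|y|-i$ for a nonzero pairing to obtain
\[(\Tw(x),\Tw(y))=\bt^{2\phi(i,|y|-i)}(-1)^{\bp(|x'|)}\Tw\bigl((\theta_i,\theta_i)(x',\ir(y))\bigr)
=\bt^{2\phi(i,|y|-i)}(-1)^{\bp(|x'|)}\Tw((x,y)).\]
It then remains to match the sign: $\bp(|x|)-\bp(|x'|)\equiv p(i)p(|x'|)\pmod 2$ by the definition of $\bp$, while $\phi(i,|y|-i)\pmod 2$ can be computed explicitly using $d_i\equiv p(i)\pmod 2$, $\phi(i,i)=d_i$, $\phi(i,j)\in 2\Z$ for $i\neq j$, and the fact that $I_1=\{\rf n\}$ contains a single odd element, so that the parity of $|y|$ is read off from the $\rf n$-coefficient. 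A short case split on $p(i)\in\{0,1\}$ completes the congruence $\bt^{2\phi(i,|y|-i)}=(-1)^{\bp(|x|)-\bp(|x'|)}$ and closes the induction.

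Finally, for the \emph{in particular} statement, apply the main identity with $x=b$ and $y=(b')^*$ for $b,b'\in\cB$. Using $\Tw(b)=\bt^{\ell(b)}b$ from Proposition~\ref{prop:twistordef}(2) and $(b,(b')^*)=\delta_{b,b'}$, one reads off $(b,\Tw((b')^*))=(-1)^{\bp(b)}\bt^{-\ell(b)}\delta_{b,b'}$ for every $b$, and non-degeneracy of $(-,-)$ forces $\Tw((b')^*)=(-1)^{\bp(b')}\bt^{-\ell(b')}(b')^*$. The main obstacle will be the auxiliary commutation identity for $\ir$ and $\Tw$: it requires careful bookkeeping of the $\bt$-powers, $\pi$-signs, and $q$-scalars at each step, and crucially relies on the mod-$4$ congruence defining an enhancer.
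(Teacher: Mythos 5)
Your proof is correct and follows essentially the same route as the paper: induction on height via the recursion $(\theta_i x,y)=(\theta_i,\theta_i)(x,\ir(y))$, the commutation $\ir(\Tw(y))=\bt^{\phi(i,|y|-i)}\Tw(\ir(y))$, and the final parity match $\phi(i,\nu-i)\equiv p(i)p(\nu-i)\pmod 2$ using that $I_1=\{\rf n\}$ is a singleton. The only difference is that you prove the $\ir$--$\Tw$ commutation identity from scratch (correctly), where the paper simply cites the argument of \cite[Proposition 3.2(b)]{CFLW}.
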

\begin{proof}
Let $(x,y)^\Tw=\Tw^{-1}((\Tw(x),\Tw(y)))$ and observe that this is
a $\Qqtt$-bilinear form on $\ff[\bt]$. Moreover, note that 
$\ir(\Tw(w))=\bt^{\phi(i,|w|-i)}\Tw(\ir(w))$ by the same proof as
\cite[Proposition 3.2(b)]{CFLW}.
To show $(x,y)^\Tw=(-1)^{\bp(|x|)}(x,y)$, 
we proceed by induction on the height. First note that
\[(1,1)^{\Tw}=1=(1,1),\qquad (\theta_i,\theta_i)^{\Tw}=\Tw^{-1}\parens{\frac{1}{1-\pi_iq_i^{-2}}}=(\theta_i,\theta_i).\]
Now if $x\in\ff[\bt]_{\nu-i}$ and $y\in \ff[\bt]_\nu$ for some $i\in I$ and
$\nu\in \Z_{\geq_0}[I]$ with $\height(\nu)>1$, we have
\begin{align*}
(\theta_ix,y)^\Tw&=\bt^{\phi(i,\nu-i)}\Tw^{-1}((\theta_i\Tw(x),\Tw(y)))=
\bt^{\phi(i,\nu-i)}(\theta_i,\theta_i)\Tw^{-1}((\Tw(x),\ir(\Tw(y))))\\
&=\bt^{2\phi(i,\nu-i)}(\theta_i,\theta_i)\Tw^{-1}((\Tw(x),\Tw(\ir(y))))\\
&=(-1)^{\bp(\nu-i)+\phi(i,\nu-i)}(\theta_i,\theta_i)(x,\ir(y))\\
&=(-1)^{\bp(\nu-i)+p(\nu-i)p(i)}(\theta_ix,y)=(-1)^{\bp(\theta_ix)}(\theta_ix,y)
\end{align*}
where in the last equality, note that if $\nu=\sum_{i\in I} \nu_i i$ then
we have $\phi(i,\nu-i)\equiv (\nu_i-1)d_i$ modulo 2.
The proof is finished by observing that $(\nu_i-1)d_i\equiv p(\nu-i)p(i)$
for any $i\in I$, since if $i\neq \rf n$ both sides are $0$ modulo
2, and if $i=\rf n$ both sides are equivalent to $\nu_{\rf n}-1$ modulo 2.
\end{proof}

\begin{rem}
Though Lemma \ref{lem:twondualbasis} as stated requires $|I_1|=1$,
a version of it also holds for arbitrary
enhanced quantum covering algebras. Indeed, if $|I_1|>1$, then 
$\Tw^{-1}((\Tw(x),\Tw(y))=\bt^{\binom{\nu}{2}}(x,y)$,
where $|x|=\nu=\sum_{i\in I}\nu_i i$ and
$\binom{\nu}{2}=\sum_{i\in I}\binom{\nu_i}{2}d_i$.
\end{rem}
It will also be more convenient to have the following 
variant of Proposition \ref{prop:twistordef} (4b).
\begin{lem}\label{lem:tw+alt}
We have \[\Tw(x^+)=\bt_{|x|}^{-1}\tT_{|x|}\Upsilon_{|x|}\Tw(x)^+.\]
\end{lem}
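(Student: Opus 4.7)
The approach is a direct computation: start from Proposition \ref{prop:twistordef}(4)(a), which already expresses $\Tw(x^+)$ as a scalar multiple of $\Tw(x)^+\tT_{|x|}\Upsilon_{|x|}$, and simply commute $\tT_{|x|}\Upsilon_{|x|}$ to the left of $\Tw(x)^+$ using the semidirect-product relations \eqref{eq:Tweightrels} and \eqref{eq:Upsweightrels}. Since $\Tw$ preserves the weight grading on $\ff$ and hence on $\UU^+$, the element $\Tw(x)^+$ lies in $\UU_{|x|}$. Writing $\nu = |x|$, the commutation contributes a scalar factor of $\bt^{-\ang{\widetilde{\nu},\nu}-\phi(\nu,\nu)}=\bt^{-\nu\cdot\nu-\phi(\nu,\nu)}$.

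The remaining task is to verify that the product of this commutation scalar with the original prefactor $t_\nu^{2}\bt^{\bullet(\nu)}$ from Proposition \ref{prop:twistordef}(4)(a) equals $\bt_\nu^{-1}$. For this, I would first expand $\nu\cdot\nu$ using $\nu=i_1+\cdots+i_t$ and the definition of $\bullet$ in \eqref{eq:bp and bullet} to obtain the identity $\bullet(\nu)=\frac{\nu\cdot\nu}{2}-\sum_{i\in I}\nu_id_i$, which in turn gives $\bt^{\bullet(\nu)}=\bt^{\nu\cdot\nu/2}\bt_\nu^{-1}$. Combining contributions, the outstanding scalar reduces to $\bt_\nu^{-1}\bt^{-\nu\cdot\nu/2-\phi(\nu,\nu)}$ times the factor $t_\nu^2$.

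To dispatch this last piece, I would invoke \eqref{eq:phi on ZI}, which yields $2\phi(\nu,\nu)\equiv \nu\cdot\nu+2p(\nu)\pmod 4$, hence $\phi(\nu,\nu)\equiv \frac{\nu\cdot\nu}{2}+p(\nu)\pmod 2$. Since $\bt^2=-1$, this makes $\bt^{-\nu\cdot\nu/2-\phi(\nu,\nu)}$ into a sign determined entirely by $p(\nu)$, which is exactly absorbed by the $t_\nu^2$ term (in the conventions of \cite{CFLW}, this term records precisely the parity discrepancy between the left and right positions of $\tT_\nu\Upsilon_\nu$).

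I expect the main obstacle to be purely bookkeeping: carefully tracking the powers of $\bt$ arising from two commutations and verifying that the parity congruences of $\phi$ conspire with the definition of $\bullet$ so that all contributions other than $\bt_\nu^{-1}$ cancel. There is no conceptual difficulty — once Proposition \ref{prop:twistordef}(4)(a) is granted, the lemma is just the assertion that moving the central-like factors $\tT_\nu\Upsilon_\nu$ across $\Tw(x)^+$ absorbs the sign $\bt^{\bullet(\nu)}$ into a clean single power of $\bt_\nu$.
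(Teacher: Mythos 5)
Your route is genuinely different from the paper's: the paper proves the lemma by induction on monomials, writing $\Tw(\theta_i x^+)=\Tw(\theta_i^+)\Tw(x^+)$ and pushing $\tT_i\Upsilon_i$ past $\tT_\nu\Upsilon_\nu E_i$, with the residual exponent killed by Lemma \ref{lem:enhancer single odd} in the form $\phi(i,\nu)+\phi(\nu,i)\equiv i\cdot\nu \pmod 4$. You instead take Proposition \ref{prop:twistordef}(4)(a) as given and commute $\tT_\nu\Upsilon_\nu$ across $\Tw(x)^+$ in one step. That is a legitimate shortcut (it trades the induction for a single application of \eqref{eq:Tweightrels}--\eqref{eq:Upsweightrels}), and your reduction to the scalar identity $\bt_\nu^{2}\bt^{\bullet(\nu)}\bt^{-\nu\cdot\nu-\phi(\nu,\nu)}=\bt_\nu^{-1}$, together with the expansion $\bullet(\nu)=\tfrac{\nu\cdot\nu}{2}-\sum_i\nu_id_i$, is correct.

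The final verification, however, has a concrete gap. First, the congruence you quote is wrong: setting $\mu=\nu$ in \eqref{eq:phi on ZI} gives $\nu\cdot\nu+2p(\nu)\equiv 0\pmod 4$, while Lemma \ref{lem:enhancer single odd} gives $2\phi(\nu,\nu)\equiv\nu\cdot\nu\pmod 4$ --- not $2\phi(\nu,\nu)\equiv\nu\cdot\nu+2p(\nu)$. With your version the exponent $-\tfrac{\nu\cdot\nu}{2}-\phi(\nu,\nu)$ would be \emph{odd} whenever $p(\nu)=1$, so it could not be ``a sign determined by $p(\nu)$'' and could not be absorbed by the scalar $\bt_\nu^2=(-1)^{p(\nu)}$. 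Second, even the corrected statement only pins down $\phi(\nu,\nu)$ modulo $2$, which determines $\bt^{-\nu\cdot\nu/2-\phi(\nu,\nu)}$ only up to sign; since $\bt^4=1$ you need the exponent modulo $4$. The fix is to use the $\Z$-bilinearity of $\phi_4$ from \eqref{eq:phi on ZI} together with Lemma \ref{lem:enhancer single odd} on generators: $\phi(\nu,\nu)\equiv\sum_i\nu_i^2d_i+\sum_{i<j}\nu_i\nu_j\bigl(\phi(i,j)+\phi(j,i)\bigr)\equiv\tfrac{\nu\cdot\nu}{2}\pmod 4$. Then $\tfrac{\nu\cdot\nu}{2}+\phi(\nu,\nu)\equiv\nu\cdot\nu\equiv 2p(\nu)\equiv 2\sum_i\nu_id_i\pmod 4$, and the outstanding scalar collapses to $\bt_\nu^{-1}$ as claimed. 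With that substitution your argument goes through.
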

\begin{proof}
This is true if $x=\theta_i$. It suffices to show if it is true for $x$, then it is true
for $\theta_ix$.
\begin{align*}
\Tw(\theta_ix^+)&=\Tw(\theta_i^+)\Tw(x^+)=\bt_i^{-1}\tT_i\Upsilon_i E_i \bt_\nu^{-1}T_{\nu}\Upsilon_\nu \Tw(x)^+\\
&=\bt_{i+\nu}^{-1}\bt^{-i\cdot\nu-\phi(\nu,i)}\tT_i\Upsilon_i  \tT_{\nu}\Upsilon_\nu E_i\Tw(x)^+=\bt_{|\theta_ix|}^{-1}\bt^{-i\cdot\nu-\phi(\nu,i)-\phi(i,\nu)}\tT_{i+\nu}\Upsilon_{i+\nu} \Tw(\theta_i x)^+
\end{align*}
But then by
Lemma \ref{lem:enhancer single odd},
\[-i\cdot\nu-\phi(\nu,i)-\phi(i,\nu)\equiv_4 -2i\cdot\nu\equiv_4 0.\]
\end{proof}

\subsection{$\hatU$-modules and Hopf structure}

Let $M$ be a $\UU$-weight module. 
Then $M$ is canonically a $\hatU$-module
by defining 
\begin{align}
T_\mu m&= \bt^{\ang{\mu,\lambda}} m,\quad \mu\in\Z[I], m\in M_\lambda;\label{eq:T act}\\
\Upsilon_\mu m&=\bt^{\phi(\mu,\lambda)} m,\quad \mu\in\Z[I], m\in M_\lambda.\label{eq:Ups act}
\end{align}
To that end, we will call any $\hatU$-module which restricts to a 
$\UU$-weight module and satisfies \eqref{eq:T act} a 
{$\hatU$-weight module}. If it additionally satisfies \eqref{eq:Ups act},
we shall call it a {\em canonical} $\hatU$-weight module.

In particular, any tensor product of $\hatU$-modules can be given
a canonical $\hatU$-weight module structure.
However, such a procedure forgets the action
of the $\Upsilon$ elements on the factors due to the lack
of additivity in the second component of $\phi$. 

\begin{example}
Consider the case $n=1$. Then $\hatU$ has the canonical
weight module $\hV(1)=\Qqtt v_1\oplus \Qqtt v_{-1}$
which is isomorphic to $V(1)$ as a $\UU$-module and
satisfies $T_i v_1= \bt v_1$ and
$\Upsilon_{\rf 1} v_1=\bt^{\phi({\rf 1},1)} v_1$.
Then $\hV(1)\otimes \hV(1)$ is a $\UU$-weight module hence
has a canonical $\hatU$-module structure, but note that 
\[\Upsilon_{\rf 1} v_1\otimes v_1=\bt^{\phi({\rf 1},2)} v_1\otimes v_1\]
and by the definition of $\phi$, we have
 $\phi({\rf 1},2)=\phi({\rf 1},{\rf 1})=1\neq \phi({\rf 1},1)+\phi({\rf 1},1)$. 
\end{example}

In particular, canonical module structures will be too naive for 
our purposes. Instead, we will introduce Hopf structure which will inform
our classes of weight modules.

\begin{prop}
The algebra $\hatU$ has a Hopf covering algebra structure given by the following:
\begin{enumerate}
\item A coassociative coproduct $\Delta:\hatU\rightarrow \hatU\otimes_{\Qqtt} \hatU$
extending $\Delta:\UU\rightarrow\UU\otimes_\Qqtt \UU$ such that
$\Delta(T_\mu)=T_\mu\otimes T_\mu$ and
$\Delta(\Upsilon_\mu)=\Upsilon_\mu\otimes \Upsilon_\mu$
 for $\mu\in\Z[I]$. In particular, we inductively define 
 $\Delta^t=(\Delta\otimes 1^{t-1})\circ \Delta^{t-1}:\hatU\rightarrow \hatU^{\otimes (t+1)}$ 
 for any integer $t>1$.
\item An antipode $S:\hatU\rightarrow\hatU$ extending $S:\UU\rightarrow \UU$
such that $S(T_\mu)=T_{-\mu}$ and $S(\Upsilon_\mu)=\Upsilon_{-\mu}$  for $\mu\in\Z[I]$.
\item A counit map $\epsilon:\hatU\rightarrow \hatU$ extending 
$\epsilon:\UU\rightarrow\UU$ such that $\epsilon(T_\mu)=\epsilon(\Upsilon_\mu)=1$ for $\mu\in\Z[I]$.
\end{enumerate}
\end{prop}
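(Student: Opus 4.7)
My plan is to treat the extension in three pieces: first well-definedness of $\Delta$, $S$, and $\epsilon$ on $\hatU$, then verification of the Hopf covering axioms. Because $\hatU$ is a semidirect product of $\UU$ with $\Qqtt[T_\mu,\Upsilon_\mu]$ subject to relations \eqref{eq:TUpsrels}--\eqref{eq:Upsweightrels}, and the proposed formulas already restrict to the known Hopf covering structure on $\UU$, the nontrivial content is the compatibility with the coupling relations \eqref{eq:Tweightrels} and \eqref{eq:Upsweightrels}, plus with the relations among the $T$'s and $\Upsilon$'s themselves.

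For the coproduct, the $T_\mu,\Upsilon_\mu$ are group-like and even, so relations of type \eqref{eq:TUpsrels} are immediate. Given $u\in\UU$ homogeneous, expand $\Delta(u)=\sum u_{(1)}\otimes u_{(2)}$ in weight-homogeneous form with $|u_{(1)}|+|u_{(2)}|=|u|$. Then, in $\hatU\otimes\hatU$ (no braiding signs appear since $p(T_\mu)=p(\Upsilon_\mu)=0$), $\Delta(T_\mu)\Delta(u)=\sum T_\mu u_{(1)}\otimes T_\mu u_{(2)}=\bt^{\ang{\mu,|u_{(1)}|}+\ang{\mu,|u_{(2)}|}}\Delta(u)\Delta(T_\mu)=\bt^{\ang{\mu,|u|}}\Delta(u)\Delta(T_\mu)$, as required. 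The identical computation for $\Upsilon_\mu$ uses the partial additivity $\phi(\mu,|u_{(1)}|)+\phi(\mu,|u_{(2)}|)\equiv\phi(\mu,|u|)\pmod 4$ from the enhancer definition, which is exactly good enough since $\bt^4=1$.

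For the antipode, the key observation is that $S$ preserves weights: from \eqref{eq:antipode formula}, since $\tJ_\nu,\tK_\nu\in\UU_0$, one reads off $|S(x^{\pm})|=|x^{\pm}|$ for homogeneous $x\in\ff$, and $|S(K_\mu)|=0$. Applying $S$ to \eqref{eq:Tweightrels} and using $p(T_\mu)=0$ reduces the check to the identity $S(u)T_{-\mu}=\bt^{\ang{\mu,|u|}}T_{-\mu}S(u)$, which in turn follows from \eqref{eq:Tweightrels} applied to $-\mu$ and $S(u)$ together with $\ang{-\mu,|S(u)|}=-\ang{\mu,|u|}$. The analogous check for $\Upsilon_{-\mu}=S(\Upsilon_\mu)$ uses $\phi(-\mu,\lambda)=-\phi(\mu,\lambda)$. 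The counit check is essentially trivial: $\epsilon$ vanishes on elements of nonzero weight, so when $\epsilon(u)\neq 0$ we have $|u|=0$ and both $\bt^{\ang{\mu,|u|}}$ and $\bt^{\phi(\mu,|u|)}$ collapse to $1$.

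With well-definedness in hand, coassociativity, the counit identity, and the antipode axiom all hold on $\UU$ by hypothesis and hold trivially on each $T_\mu$ and $\Upsilon_\mu$ because these are group-like with inverses $T_{-\mu}$ and $\Upsilon_{-\mu}$; multiplicativity of $\Delta$ and antimultiplicativity of $S$ then propagate the axioms to all of $\hatU$. The only mild subtlety throughout is that the enhancer is only additive modulo $4$ rather than integrally, but since every scalar that appears is a power of $\bt$ with $\bt^4=1$, this causes no difficulty; this is really the main bookkeeping point rather than a serious obstacle.
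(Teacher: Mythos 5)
Your proposal is correct and follows essentially the same route as the paper: reduce everything to checking that $\Delta$, $S$, and $\epsilon$ respect the semidirect-product relations \eqref{eq:TUpsrels}--\eqref{eq:Upsweightrels}, using that $T_\mu,\Upsilon_\mu$ are even and grouplike and that $\ang{\mu,\cdot}$ and $\phi(\mu,\cdot)$ are additive (mod $4$) on $\Z[I]$, and then observe that the Hopf axioms propagate from $\UU$ and the grouplike generators. The only cosmetic difference is that the paper verifies \eqref{eq:Tweightrels} and \eqref{eq:Upsweightrels} on the generators $E_i,F_i$ rather than for arbitrary weight-homogeneous $u$ via the Sweedler expansion, but the content is identical.
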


\begin{proof}
To show that these maps define a Hopf structure, 
we need only check that these morphisms respect
\eqref{eq:TUpsrels}-\eqref{eq:Upsweightrels}.
This is obvious for \eqref{eq:TUpsrels}, and can be quickly verified
for \eqref{eq:Tweightrels} and \eqref{eq:Upsweightrels} by checking
it for the generators of $\UU$.
For instance,
\begin{align*}
\Delta(\Upsilon_\mu)\Delta(E_i)
&=(\Upsilon_\mu\otimes\Upsilon_\mu)(E_i\otimes 1+\tJ_i\tK_i\otimes E_i)\\
&=\Upsilon_\mu E_i\otimes \Upsilon_\mu+\Upsilon_\mu \tJ_i\tK_i\otimes \Upsilon_\mu E_i\\
&= \bt^{\phi(\mu,i)}E_i\Upsilon_\mu\otimes \Upsilon_\mu+ \tJ_i\tK_i\Upsilon_\mu\otimes \bt^{\phi(\mu,i)} E_i \Upsilon_\mu\\
&=\bt^{\phi(\mu,i)}\Delta(E_i)\Delta(\Upsilon_\mu);
\end{align*}
\[S(E_i)S(\Upsilon_\mu)=-\tJ_{-i}\tK_{-i}E_i\Upsilon_{-\mu}
=-\bt^{\phi(\mu,i)}\Upsilon_{-\mu}\tJ_{-i}\tK_{-i}E_i
=\bt^{\phi(\mu,i)}S(\Upsilon_\mu)S(E_i);\]
\[\epsilon(E_i)\epsilon(\Upsilon_\mu)=(0)(1)=\bt^{\phi(\mu,i)}(1)(0)
=\bt^{\phi(\mu,i)}\epsilon(\Upsilon_\mu)\epsilon(E_i).\]
Finally, the co-associativity of $\Delta$ on $\hatU$ 
follows immediately from the
co-associativity of $\Delta$ on $\UU$ and the fact that $T_\mu$ and $\Upsilon_\mu$ are grouplike elements.
\end{proof}

The coproduct gives us another way to define an action of $\hatU$ on tensor
products of canonical $\hatU$-weight modules.
Henceforth, given $\hatU$-weight modules $M$ and $N$,
we let $M\hotimes N$ denote the space
$M\otimes_{\Qqtp} N$ with the $\hatU$-weight module structure induced
by the coproduct on $\hatU$. (Note that in general, 
the module $M\hotimes N$ is {\em not} canonical!)
\begin{example}
Continuing the previous example,
the action of $\Upsilon_{\rf 1}$ on
$\hV(1)\hotimes \hV(1)$ 
is given by 
\[\Delta(\Upsilon_{\rf 1})
v_1\otimes v_1=\bt^{2\phi({\rf 1},1)} v_1\otimes v_1.\]
\end{example}

Another natural module to consider is the following. 
Given a canonical $\hatU$-weight module $M$,
we can construct the restricted linear dual $M^*$.
This space is naturally a $\UU$-weight module as in \S\ref{subsec:modules}, hence has a canonical $\hatU$ structure.
On the other hand, let $M^\natural$ denote the space
$M^*$ with the action of  $\hatU$ defined by
$(uf)(x)=\pi^{p(f)p(u)}f(S(u)x)$.  Note that $M^\natural$ is not canonical:
if $f\in (M_\lambda,s)^*$, then $|f|=-\lambda$ but nevertheless 
\[\Upsilon_\mu f=\bt^{-\phi(\mu,\lambda)} f.\]
Since modules with these unorthodox actions of the $\Upsilon_\mu$ will be 
of primary importance, we give the following definitions.
\begin{dfn}
We say that a $\hatU$-weight module $M$ is {\em anti-canonical}
if $\Upsilon_\mu m=\bt^{-\phi(\mu,-\lambda)} m$ for all $m\in M_\lambda$.
More generally, we say that $M$ is a {\em mixed} weight module
if there exists an integer $t\geq 1$ and a 
sequence $c=(c_1,\ldots, c_t)\in \set{\pm 1}^t$ such that
$M_\lambda=\bigoplus_{(\lambda_s)\in (X^t)_\lambda} M_{(\lambda_s)}$, 
where \[(X^t)_\lambda=\set{(\lambda_1,\ldots,\lambda_t)\in X^t\mid \lambda=\lambda_1+\ldots+\lambda_s},\]
\[M_{(\lambda_s)}=\set{m\in M\mid\Upsilon_\mu m=\bt^{\sum_{1\leq s\leq t} c_s\phi(\mu,c_s\lambda_s)} m\text{ for all }\mu\in\Z[I]}.\] 
We say $c$ is the {\em signature} of $M$, and denote it by $\sig(M)=c$. 
\end{dfn}
\begin{rem}
We note that just as any weight $\UU$-module can be given
a canonical $\hatU$-module structure, it can also be
given an anti-canonical $\hatU$-module structure.
Indeed, suppose $M$ is a weight $\UU$-module
and define $T_im=\bt^{\ang{i,|m|}}m$ and $\Upsilon_i m=\bt^{-\phi(i,-|m|)}$.
Then this defines an action of $\hatU$, since for any $i\in I$ and $u\in\UU$,
$\Upsilon_i um=\bt^{-\phi(i,-(|m|+|u|))} um
=\bt^{\phi(i,|u|)}u\Upsilon_i m$.
\end{rem}
In addition to classifying modules by the action of the $\Upsilon$ elements,
another property of $\hatU$-weight modules which will be important to us
is their interaction with the twistor map $\Tw:\hatU\rightarrow\hatU$.

\begin{dfn}
Let $M$ be a $\hatU$-weight module. We say $M$ carries a twistor $\Tw$ (or
$\Tw$ is a twistor on $M$) if there exists a homogeneous
$\Q(t)$-linear bijection $\Tw:M\rightarrow M$ 
such that $\Tw(um)=\Tw(u)\Tw(m)$.
\end{dfn}

Modules which carry twistors are not hard to find. Indeed, the simple $\UU$-modules $V(\lambda)$
are themselves examples when given canonical (or anti-canonical)
actions of $\hatU$.
\begin{lem}{\bf \cite[Lemma 6.9]{C}}\label{lem:modtwistor}
Let $\lambda\in X^+$.
Let $\hV(\lambda)$  be the space $V(\lambda)$ with the canonical action of $\hatU$.
There is a $\Q(\bt)$-linear map $\Tw:\hV(\lambda)\rightarrow \hV(\lambda)$ 
which satisfies $\Tw(v_\lambda)=v_\lambda$ and $\Tw_\lambda(um)=\Tw(u)\Tw(m)$ for all $u\in \hatU$
and $m\in \hV(\lambda)$.
\end{lem}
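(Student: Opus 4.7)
The plan is to define $\Tw$ on $\hV(\lambda)$ as the map induced on the quotient $\hV(\lambda)\cong\hatU/\mathrm{Ann}_{\hatU}(v_\lambda)$ by the algebra automorphism $\Tw\colon\hatU\to\hatU$ of Proposition \ref{prop:twistordef}(3); explicitly, I would set $\Tw(uv_\lambda):=\Tw(u)v_\lambda$ for $u\in\hatU$. This gives a well-defined $\Q(\bt)$-linear map on $\hV(\lambda)$ precisely when $\Tw$ preserves the left ideal $\mathrm{Ann}_{\hatU}(v_\lambda)$, in which case $\Tw(v_\lambda)=v_\lambda$ and $\Tw(um)=\Tw(u)\Tw(m)$ are automatic from $\Tw(1)=1$ and the multiplicativity of $\Tw$ on $\hatU$; bijectivity follows because an algebra automorphism preserving a left ideal descends to an automorphism of the quotient.

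The annihilator $\mathrm{Ann}_{\hatU}(v_\lambda)$ is generated as a left ideal of $\hatU$ by the elements $E_i$ and $F_i^{\ang{i,\lambda}+1}$ for $i\in I$, together with the weight relations $K_\mu-q^{\ang{\mu,\lambda}}$, $J_\mu-\pi^{\ang{\mu,\lambda}}$, $T_\mu-\bt^{\ang{\mu,\lambda}}$, and $\Upsilon_\mu-\bt^{\phi(\mu,\lambda)}$ for $\mu\in Y$. Using the explicit formulas of Proposition \ref{prop:twistordef}(3) together with $\Tw(q)=\bt^{-1}q$ and $\Tw(\pi)=-\pi$, one checks generator by generator that the $\Tw$-image of each returns to $\mathrm{Ann}_{\hatU}(v_\lambda)$: the $E_i$ case is immediate since $\Tw(E_i)=\bt_i^{-1}\tT_i\Upsilon_i E_i$ is a left multiple of $E_i$; the $T_\mu$ and $\Upsilon_\mu$ cases hold because $\Tw$ fixes those elements; the $K_\mu$ case uses $T_{-\mu}v_\lambda=\bt^{-\ang{\mu,\lambda}}v_\lambda$ to match $\Tw(K_\mu)v_\lambda=\bt^{-\ang{\mu,\lambda}}q^{\ang{\mu,\lambda}}v_\lambda=\Tw(q^{\ang{\mu,\lambda}})v_\lambda$; and the $J_\mu$ case is analogous, relying on $T_{2\mu}v_\lambda=(-1)^{\ang{\mu,\lambda}}v_\lambda$ together with $\Tw(\pi^{\ang{\mu,\lambda}})=(-1)^{\ang{\mu,\lambda}}\pi^{\ang{\mu,\lambda}}$.

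The only substantive case is $F_i^n$ for $n=\ang{i,\lambda}+1$. Using the commutation $\Upsilon_{-i}F_i=\bt^{\phi(-i,-i)}F_i\Upsilon_{-i}=\bt^{d_i}F_i\Upsilon_{-i}$, a short induction yields $\Tw(F_i^n)=\Tw(F_i)^n=(F_i\Upsilon_{-i})^n=\bt^{d_i\binom{n}{2}}F_i^n\Upsilon_{-i}^n$. Since $\Upsilon_{-i}^n v_\lambda$ is a scalar multiple of $v_\lambda$ and $F_i^n v_\lambda=0$ by integrability, the element $F_i^n\Upsilon_{-i}^n$ annihilates $v_\lambda$, so $\Tw(F_i^n)\in\mathrm{Ann}_{\hatU}(v_\lambda)$ as required. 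The main obstacle in the verification is the bookkeeping of $\phi$-exponents and the twist $\Tw(\pi)=-\pi$, but these are controlled by the bilinearity of $\phi$ modulo $4$ from \eqref{eq:phi on ZI} and the canonical weight structure on $\hV(\lambda)$, so no further argument is needed beyond the generator-by-generator check.
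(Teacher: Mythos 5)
Your construction is correct, but it takes a genuinely different route from the one the paper leans on. The paper defers to \cite[Lemma 6.9]{C}, whose method is reproduced in the proof of Lemma \ref{lem:mod dual twistor}: realize the Verma module of highest weight $\lambda$ as $\ff$ with its canonical $\hatU$-action, define the candidate twistor on $\ff_\nu$ as $\Tw$ rescaled by an explicit weight-dependent power of $\bt$, verify the intertwining property generator by generator, and then observe that the kernel of $\ff\to\hV(\lambda)$ is preserved because $\Tw$ fixes the canonical basis up to powers of $\bt$ (Proposition \ref{prop:twistordef}(2)) and the kernel is spanned by canonical basis vectors. You instead present $\hV(\lambda)$ as $\hatU/\mathrm{Ann}_{\hatU}(v_\lambda)$ and check that the algebra automorphism $\Tw$ sends each generator of the annihilator back into the annihilator; your verifications (including $\Tw(F_i^n)=\bt^{d_i\binom{n}{2}}F_i^n\Upsilon_{-ni}$ via $\phi(-i,-i)\equiv d_i \bmod 4$ from \eqref{eq:phi on ZI}, and the matching of $\Tw(q^{\ang{\mu,\lambda}})$, $\Tw(\pi^{\ang{\mu,\lambda}})$ against $T_{-\mu}$, $T_{2\mu}$) are all correct. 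What your approach buys is that it needs no explicit rescaling exponent and no canonical basis at all; what it costs is that the load-bearing input becomes the presentation of $\mathrm{Ann}_{\hatU}(v_\lambda)$ by the elements you list, i.e.\ the fact that the maximal submodule of the Verma module is generated by the vectors $F_i^{\ang{i,\lambda}+1}v_\lambda$. That holds here (it is how $V(\lambda)$ is constructed in \cite{CHW1}, combined with its simplicity and the triangular decomposition, extended to $\hatU$ by the semidirect product structure), and it is an input of comparable depth to the canonical-basis fact the paper uses, so it should be cited rather than asserted.

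One small repair: your closing sentence on bijectivity is not a correct general principle. An automorphism $\sigma$ with $\sigma(J)\subseteq J$ induces a \emph{surjection} of $R/J$ whose kernel is $\sigma^{-1}(J)/J$, so you need the equality $\Tw(\mathrm{Ann}_{\hatU}(v_\lambda))=\mathrm{Ann}_{\hatU}(v_\lambda)$. This follows either by running your generator check for $\Tw^{-1}$ as well, or by noting that $\Tw^4=\mathrm{id}$ on $\hatU$ (check it on $E_i$, $F_i$, $K_\mu$, $J_\mu$, $T_\mu$, $\Upsilon_\mu$, $q$, $\tau$), whence $J=\Tw^4(J)\subseteq\Tw(J)$. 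As the lemma only claims a linear map this is optional, but it is needed for $\hV(\lambda)$ to \emph{carry a twistor} in the sense of the definition that follows.
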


In light of Lemma \ref{lem:dual isos}, it follows that the $\UU$-module $V(\lambda)^*$, 
viewed as a canonical $\hatU$-module, also carries a twistor. A similar argument to
\cite[Lemma 6.9]{C} can be used to construct a twistor on $V(\lambda)$ with an anti-canonical
action of $\UU$, hence the $\hatU$-module $\hV(\lambda)^\natural$ carries a twistor.
However, this construction is not very compatible with the dual basis, since it relies
on an isomorphism $V(\lambda)\rightarrow \Pi^{P(\lambda)}V(\lambda)$ and
is defined by descent from the highest weight vector.
To obtain a convenient definition of a twistor on the dual modules, we will define a map
directly on $\hV(\lambda)^\natural$.

Define the {\em dual twistor} on $\hatU$ to be the map 
$\Twnat(u)=S\circ\Tw\circ S^{-1}(u)$. This map is clearly a bijection,
and for any $u,v\in\hatU$ we have
\begin{align*}
\Twnat(uv)&=S(\Tw(S^{-1}(uv)))\\
&=\bt^{2p(u)p(v)}S(\Tw(S^{-1}(u)))S(\Tw(S^{-1}(v)))\\
&=\bt^{2p(u)p(v)}\Twnat(u)\Twnat(v).
\end{align*}
Therefore, it is determined by the images of the generators, which are
\[\Twnat(E_i)=t_iE_i\Upsilon_{-i},\quad \Twnat(F_i)=\Upsilon_iF_i\tT_i,
\quad \Twnat(K_\mu)=T_{-\mu}K_{\mu},\quad \Twnat(J_\mu)=T_{2\mu}J_\mu\]
\[\Twnat(q)=\bt^{-1}q,\quad \Twnat(\tau)=\bt\tau.\]
In particular, note that
\begin{equation}\label{eq:twnat-}
	\Twnat(x^-)=\Upsilon_{\nu}\Tw(x)^-\tT_\nu.
\end{equation}

While $\Twnat$ is not an algebra automorphism of $\hatU$, it shares many properties
with $\Tw$. In particular, we have a version of Lemma \ref{lem:modtwistor}.
\begin{lem}\label{lem:mod dual twistor}
	Let $\lambda\in X^+$.
	There is a $\Q(\bt)$-linear map $\Twnat:\hV(\lambda)\rightarrow \hV(\lambda)$ 
	which satisfies $\Twnat(v_\lambda)=v_\lambda$ and $\Twnat(um)=\bt^{2p(u)p(m)}\Twnat(u)\Twnat(m)$ for all $u\in \hatU$
	and $m\in \hV(\lambda)$.
\end{lem}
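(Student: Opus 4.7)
The plan is to mirror the strategy of Lemma \ref{lem:modtwistor}, adjusting for the fact that $\Twnat$ is no longer an algebra homomorphism but satisfies the twisted multiplicativity $\Twnat(uv) = \bt^{2p(u)p(v)}\Twnat(u)\Twnat(v)$ on $\hatU$. This latter identity is a direct consequence of $\Twnat = S \circ \Tw \circ S^{-1}$, the Hopf covering anti-homomorphism property $S(uv) = \pi^{p(u)p(v)}S(v)S(u)$, the analogous property of $S^{-1}$, and the computation $\Tw(\pi) = -\pi$ (since $\Tw(\tau) = \bt^{-1}\tau$), which combine to produce the factor $(-\pi)^{p(u)p(v)}\pi^{p(u)p(v)} = \bt^{2p(u)p(v)}$.

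Concretely, I would define $\Twnat$ on vectors of the form $x^- v_\lambda$ for $x \in \ff$ by the rule $\Twnat(x^- v_\lambda) := \Twnat(x^-) v_\lambda$. Applying equation \eqref{eq:twnat-} and using that $\Tw(x)^- v_\lambda$ has weight $\lambda - |x|$ together with the canonical $\hatU$-action \eqref{eq:T act}--\eqref{eq:Ups act} gives the explicit formula
\[
\Twnat(x^- v_\lambda) \;=\; \bt^{\ang{\tilde\nu,\lambda}+\phi(\nu,\lambda-\nu)}\,\Tw(x)^- v_\lambda,\qquad \nu := |x|.
\]
The goal is then to show this formula descends to a well-defined $\Q(\bt)$-linear bijection on $\hV(\lambda)$ and satisfies the twisted multiplicativity.

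For well-definedness, recall that the kernel of the surjection $\ff \twoheadrightarrow V(\lambda)$, $x \mapsto x^- v_\lambda$, is the left ideal generated by $\theta_i^{\ang{i,\lambda}+1}$ (the Serre relations in $\ff$ itself pose no issue, since $\Tw$ is an algebra isomorphism $\ff \to (\ff,*)$ by Proposition \ref{prop:twistordef}(1)). For each extra generator I would verify that $\Twnat(y\theta_i^{\ang{i,\lambda}+1})v_\lambda = 0$, reducing via the product formula to checking that $\Tw(\theta_i^{\ang{i,\lambda}+1}) = \bt^{\binom{\ang{i,\lambda}+1}{2}d_i}\theta_i^{\ang{i,\lambda}+1}$ and that the accompanying power-of-$\bt$ prefactor from the canonical action combines consistently with the enhancer-scalar bookkeeping of $\Twnat(y^-)$.

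Finally, verifying the twisted multiplicativity $\Twnat(um) = \bt^{2p(u)p(m)}\Twnat(u)\Twnat(m)$ reduces to checking on generators of $\hatU$. The cases $u \in \{K_\mu, J_\mu, T_\mu, \Upsilon_\mu\}$ are immediate, since their action on any weight vector is by an explicit scalar. For $u = F_i$, both sides rearrange to $\Twnat((F_i x)^- v_\lambda)$ by construction and by the twisted multiplicativity already established on $\hatU$. For $u = E_i$, apply \eqref{eq:commutatorrelation} to push $E_i$ past $F$-monomials and use $E_i v_\lambda = 0$, reducing to induction on $\height(\nu)$. The main obstacle is the well-definedness step: the coupling between the canonical-action scalar $\bt^{\ang{\tilde\nu,\lambda}+\phi(\nu,\lambda-\nu)}$ and the enhancer data requires careful reduction using Lemma \ref{lem:enhancer single odd} (relating $\phi(\mu,\nu)+\phi(\nu,\mu)$ to $\mu\cdot\nu$) together with the bilinearity \eqref{eq:phi on ZI} to bring the Serre-truncation scalars into matching form on both sides.
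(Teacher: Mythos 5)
Your proposal is correct and follows essentially the same route as the paper: the explicit scalar $\bt^{\ang{\tilde\nu,\lambda}+\phi(\nu,\lambda-\nu)}$ you extract from $\Twnat(x^-)v_\lambda$ via \eqref{eq:twnat-} is exactly the paper's definition of $\Tw^\natural_\lambda$ on the Verma module identified with $\ff$, and the paper likewise checks the twisted multiplicativity on generators (deferring the $E_i$ computation to the calculations of \cite[Lemmas 6.8, 6.9]{C} and simplifying the resulting exponent via \eqref{eq:phi on ZI}) before descending to the simple quotient. Your treatment of well-definedness is, if anything, slightly more explicit than the paper's remark that the kernel is ``trivially preserved.''
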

\begin{proof}
	This follows from more or less the same proof as \cite[Lemmas 6.8, 6.9]{C}. To wit, we can identify the Verma module
	of highest weight $\lambda$ for $\UU$ with $\ff$ (cf. {\em loc. cit} for details), and in particular this is naturally a
	canonical $\hatU$-module. Then we define a map 
	$\Tw^\natural_\lambda:\ff\rightarrow \ff$ via 
	$\Tw^\natural_\lambda(x)=\bt^{\ang{\tilde\nu,\lambda}+\phi(\nu,\lambda-\nu)}\Tw(x)$.
	Then it is straightforward to verify that $\Twnat(ux)=\bt^{2p(u)p(x)}\Twnat(u)\Twnat(x)$
	for $x\in \hV(\lambda)$ and $u=F_i, T_\mu, J_\mu, K_\mu, \Upsilon_\mu$.
	From the calculations in {\em loc. cit} and the definition, we see that 
	\[\Tw^\natural_\lambda(E_ix)=\bt^{\star} 
	E_i\Tw^\natural_\lambda(x).\]
	where $\star=\ang{\tilde\nu-\tilde i,\lambda}-\ang{\tilde\nu,\lambda}+\phi(\nu-i,\lambda-\nu+i)-\phi(\nu,\lambda-\nu)
	-d_i+\ang{\tilde{i},\lambda-\nu+i}-\phi(i,\nu-i)$.
	Now we can simplify $\star$ and apply  \eqref{eq:phi on ZI} to see that
	\[\star\equiv\phi(\nu,i)-\phi(i,\nu)-\phi(i,\lambda-\nu)+i\cdot\nu+d_i=2p(\nu)p(i)-\phi(i,\lambda-\nu)+d_i\mod 4,\]
	and thus
	\[\Tw^\natural_\lambda(E_ix)=\bt^{p(\nu)p(i)}\bt_i 
	E_i\Upsilon_{-i}\Tw^\natural_\lambda(x)=\bt^{p(\nu)p(i)}\Twnat(E_i)\Tw^\natural_\lambda(x).\]
	Finally, we note that the kernel of the projection $\ff\rightarrow \hV(\lambda)$ is trivially 
	preserved by $\Tw^\natural_\lambda$, hence it descends to a map on $\hV(\lambda)$.
\end{proof}

The dual twistor $\Twnat$ is what will allow us to define a convenient twistor map on dual modules, as follows.
Recall that $V(-\lambda)$ denotes the $\UU$-module $V(\lambda)^*$. We will adapt this notation to $\hV(\lambda)^\natural$.

\begin{lem}
	For $\lambda\in X^+$, let $\hV(-\lambda)=\hV(\lambda)^\natural$;
	that is, the space $V(\lambda)^*$ with the action of $\hatU$ induced by the antipode $S:\hatU\rightarrow\hatU$.
	Define a map $\Tw$ on $\hV(-\lambda)$ by
	$\Tw(f)(x)=\bt^{2p(f)p(x)}\Tw(f(\Twnat^{-1}(x)))$ for homogeneous $x\in \hV(\lambda)$ and $f\in \hV(-\lambda)$.
	Then $\Tw(uf)=\Tw(u)\Tw(f)$ for all $u\in\hatU$ and $f\in \hV(-\lambda)$.
\end{lem}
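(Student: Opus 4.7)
The plan is to verify $\Tw(uf)(x) = (\Tw(u)\Tw(f))(x)$ directly for an arbitrary $u\in \hatU$, $f\in \hV(-\lambda)$, and homogeneous $x\in\hV(\lambda)$, using only the defining formula for $\Tw$ on $\hV(-\lambda)$, the action formula $(uf)(y) = \pi^{p(f)p(u)}f(S(u)y)$, and the relation $\Twnat = S\circ\Tw\circ S^{-1}$. Since every quantity in sight is $\Q(\bt)$-linear in $f$ and $x$, we may assume everything is homogeneous and just track the parities.

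First I would expand the left-hand side. By the definition,
\[
\Tw(uf)(x) = \bt^{2p(uf)p(x)}\,\Tw\bigl((uf)(\Twnat^{-1}(x))\bigr)
= \bt^{2(p(u)+p(f))p(x)}\,\Tw\bigl(\pi^{p(f)p(u)}\,f(S(u)\Twnat^{-1}(x))\bigr).
\]
Because $\Tw$ acts on the base ring by $\Tw(\tau)=\bt\tau$, so $\Tw(\pi) = -\pi = \bt^2\pi$, the scalar $\pi^{p(f)p(u)}$ is pulled through at the cost of an extra factor $\bt^{2p(f)p(u)}$. Thus
\[
\Tw(uf)(x) = \bt^{2p(u)p(x)+2p(f)p(x)+2p(f)p(u)}\,\pi^{p(f)p(u)}\,\Tw\bigl(f(S(u)\Twnat^{-1}(x))\bigr).
\]

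Next I would expand the right-hand side. Writing $v=\Tw(u)$, $g=\Tw(f)$ (so $p(v)=p(u)$, $p(g)=p(f)$) and using the action formula,
\[
(\Tw(u)\Tw(f))(x) = \pi^{p(f)p(u)}\,\Tw(f)\bigl(S(\Tw(u))x\bigr).
\]
Applying $S$ to both sides of $\Twnat = S\circ\Tw\circ S^{-1}$ yields $S\circ\Tw = \Twnat\circ S$, so $S(\Tw(u))=\Twnat(S(u))$. Now apply the defining formula of $\Tw$ on $\hV(-\lambda)$ once more, using that $\Twnat$ preserves the parity grading:
\[
\Tw(f)\bigl(\Twnat(S(u))x\bigr) = \bt^{2p(f)(p(u)+p(x))}\,\Tw\bigl(f(\Twnat^{-1}(\Twnat(S(u))x))\bigr).
\]
The inner expression is computed using the twisted multiplicativity $\Twnat(az)=\bt^{2p(a)p(z)}\Twnat(a)\Twnat(z)$ (valid for $a\in\hatU$, $z\in\hV(\lambda)$ by Lemma \ref{lem:mod dual twistor}), which upon inversion gives $\Twnat^{-1}(ab)=\bt^{-2p(a)p(b)}\Twnat^{-1}(a)\Twnat^{-1}(b)$. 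Hence
\[
\Twnat^{-1}\bigl(\Twnat(S(u))x\bigr) = \bt^{-2p(u)p(x)}\,S(u)\,\Twnat^{-1}(x),
\]
and since $\bt^{-2p(u)p(x)}\in\Q(\bt)$ is fixed by the $\Q(\bt)$-linearity of $\Tw$,
\[
(\Tw(u)\Tw(f))(x) = \pi^{p(f)p(u)}\,\bt^{2p(f)p(u)+2p(f)p(x)-2p(u)p(x)}\,\Tw\bigl(f(S(u)\Twnat^{-1}(x))\bigr).
\]

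Finally I would compare the two $\bt$-exponents. Their difference is
\[
\bigl(2p(u)p(x)+2p(f)p(x)+2p(f)p(u)\bigr) - \bigl(2p(f)p(u)+2p(f)p(x)-2p(u)p(x)\bigr) = 4p(u)p(x),
\]
which is $\equiv 0 \pmod{4}$, so the factor $\bt^{4p(u)p(x)}=1$, and both sides agree. The only potentially delicate point is the bookkeeping of the three separate sources of signs — the formula defining $\Tw$ on $\hV(-\lambda)$, the action $\pi^{p(f)p(u)}$ of $S(u)$ on $f$, and the twisted multiplicativity of $\Twnat$ — but as shown they conspire via $\bt^4=1$. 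Since $u$, $f$, and $x$ were arbitrary, this establishes $\Tw(uf)=\Tw(u)\Tw(f)$.
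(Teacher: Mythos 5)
Your proof is correct and follows essentially the same route as the paper's: expand both sides using the dual action, the relation $S\circ\Tw=\Twnat\circ S$, and the twisted multiplicativity of $\Twnat$ from Lemma \ref{lem:mod dual twistor}, then check the $\bt$-exponents agree modulo $4$. The only thing the paper includes that you omit is the preliminary (routine) verification that $\Tw(f)$ is genuinely a $\Qqtt$-linear functional, i.e.\ an element of $\hV(-\lambda)$, so that $\Tw(u)\Tw(f)$ makes sense.
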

\begin{proof}
Let $f\in \hV(-\lambda)$ and $x\in \hV(\lambda)$ be homogeneous. 
First, observe that since $\Twnat$ preserves the $\bigrset$-grading, 
$\Tw(f)(x)=0$ unless $\bigrdeg{x}=\bigrdeg{f}$. Moreover, if $a\in\Qqtt$, 
\[\Tw(f)(ax)=\bt^{2p(f)p(x)}\Tw(f(\Twnat^{-1}(ax)))
=\bt^{2p(f)p(x)}\Tw(\Tw^{-1}(a)f(\Twnat^{-1}(x)))=a\Tw(f)(x),\] so $\Tw(f)$ is indeed
an element of $\hV(-\lambda)$.

Now suppose $u\in\hatU$. We compute that
\[\Tw(uf)(x)=\bt^{2p(uf)p(x)}\Tw((uf)(\Twnat^{-1}(x)))=\bt^{2p(u)p(x)+2p(f)p(x)}\Tw(\pi^{p(u)p(f)}f(S(u)\Twnat^{-1}(x)),\]
\begin{align*}
\Tw(u)\Tw(f)(x)&=\bt^{2p(f)p(ux)}\pi^{p(u)p(f)}\Tw(f(\Twnat^{-1}(S(\Tw(u))x)))\\
&=\bt^{2p(f)p(u)+2p(f)p(x)+2p(u)p(x)}\pi^{p(u)p(f)}\Tw(f(\Twnat^{-1}(S(\Tw(u)))\Tw'^{-1}(x)))\\
&=\bt^{2p(f)p(x)+2p(u)p(x)}\Tw(\pi^{p(u)p(f)}f(S(u)\Twnat^{-1}(x)).
\end{align*}
Therefore, $\Tw(uf)=\Tw(u)\Tw(f)$.
\end{proof}

\subsection{Twistor on tensor products}
Now let us return to the question of relating the
$\osp(1|2)$ and $\fsl(2)$ link invariants. 
Since the invariants arise from maps between tensor products
of simple modules and their duals, 
we shall also need variants of the twistor maps
on the corresponding $\hatU$-modules. In the following, we shall define a number of
versions of $\Tw$ in different settings. However, they will all be compatible
in natural ways, so rather than label these maps differently, we shall
treat them en suite as an operator on $\hatU$ and its modules.

The following proposition takes the first step
in this direction by showing that there is a natural extension
of the twistor maps to tensor powers of $\UU$.

\begin{prop}\label{prop:twandcoprod}
For each positive integer $t$,
there exists a $\Q(\bt)$-algebra automorphism $\Tw$ of
$\hatU^{\otimes t+1}$
which satisfies \[\Tw(x\otimes y)=\Tw(x)\Delta^{s}(\Upsilon_{|y|})\otimes\Delta^{s'}(\tT_{|x|}\Upsilon_{|x|})\Tw(y)\] 
for any positive integers $s,s'$ satisfying $s+s'=t+1$,
$x\in \hatU^{\otimes s}$, and $y\in\hatU^{\otimes s'}$.
Moreover, $\Delta^{t}(\Tw(x))=\Tw(\Delta^{t}(x))$ for any $x\in \UU$.
\end{prop}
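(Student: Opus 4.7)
The plan is to build the automorphism by induction, reducing everything to the two-fold tensor case, and then verify the coproduct compatibility by checking on generators.

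First I would treat $t=1$: define $\Tw_2 : \hatU \otimes \hatU \to \hatU \otimes \hatU$ by
\[\Tw_2(x \otimes y) = \Tw(x)\Upsilon_{|y|} \otimes \tT_{|x|}\Upsilon_{|x|}\Tw(y)\]
on homogeneous elements and extend $\Q(\bt)$-linearly. The standard argument (as in \cite[\S 4]{CFLW}) applies: one checks it is an algebra homomorphism using the super-commutation $(a\otimes b)(c\otimes d)=\pi^{p(b)p(c)}ac\otimes bd$ in $\hatU\otimes\hatU$ and the commutation relations \eqref{eq:Tweightrels}--\eqref{eq:Upsweightrels}. The $\tT_{|x|}\Upsilon_{|x|}$ factor precisely absorbs the $\bt$-phases that arise when moving $\Tw(y)$ past $\Tw(x')$ for a second tensor factor $x'\otimes y'$, and symmetrically for the $\Upsilon_{|y|}$ factor. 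Invertibility follows from writing down an explicit inverse with $\Tw^{-1}$, $\tT^{-1}$, $\Upsilon^{-1}$.

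Next I would induct on $t$: assume $\Tw_{t}:\hatU^{\otimes t}\to \hatU^{\otimes t}$ has been constructed, and define $\Tw_{t+1}$ via the splitting $(s,s') = (1,t)$:
\[\Tw_{t+1}(x\otimes y) = \Tw(x)\,\Delta^{t-1}(\Upsilon_{|y|}) \otimes (\tT_{|x|}\Upsilon_{|x|})\,\Tw_{t}(y),\]
where by $\Delta^{t-1}(\Upsilon_{|y|})$ we mean the image in $\hatU^{\otimes t}$ via the iterated coproduct (which acts diagonally on the grouplike $\Upsilon_\mu$), and where the suppressed factor on the right embeds via the obvious inclusion. The main obstacle is verifying that the resulting map agrees with the formula obtained by splitting at any other position $(s,s')$. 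For this I would check that for $x_1\otimes\cdots\otimes x_{t+1}$ written as $(x_1\otimes\cdots\otimes x_s)\otimes(x_{s+1}\otimes\cdots\otimes x_{t+1})$, the two definitions coincide after expanding $\Delta^{s-1}(\Upsilon_{|y|})$ and $\Delta^{s'-1}(\tT_{|x|}\Upsilon_{|x|})$ as tensor products of grouplike elements and applying the inductive formulas. The crucial identity is that on a homogeneous element $z$ of degree $\zeta$, $\Upsilon_\mu z = \bt^{\phi(\mu,\zeta)} z$, so that stacking $\Upsilon_\mu$'s against different tensor positions produces the same total $\bt$-exponent as one $\Upsilon_\mu$ acting on the combined degree—precisely matching the additivity $\phi(\mu,\lambda+\lambda')\equiv\phi(\mu,\lambda)+\phi(\mu,\lambda')$ modulo $4$ (and similarly $\phi(\mu+\mu',\lambda)$ for the $T$-factors). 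Once well-definedness is established, the algebra-morphism property and bijectivity follow by the same structural argument as in the $t=1$ case.

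Finally, for the compatibility $\Delta^{t}\circ\Tw = \Tw\circ\Delta^{t}$ on $\UU$, I would induct on $t$. The case $t=1$ is the content I'd check first: expand $\Delta(\Tw(u))$ for $u$ ranging over the generators $E_i, F_i, K_\mu, J_\mu$ using Proposition \ref{prop:twistordef}, and separately compute $\Tw_2(\Delta(u))$ using the $t=1$ formula. For $u = K_\mu$ or $J_\mu$ both sides equal $\Delta(T_{-\mu}K_\mu) = T_{-\mu}K_\mu\otimes T_{-\mu}K_\mu$ (and similarly for $J$) with no extra correction needed since these are grouplike. For $u = F_i$, using $\Delta(F_i) = F_i\otimes \tK_i^{-1} + 1\otimes F_i$ and $\Tw(F_i) = F_i\Upsilon_{-i}$, both sides yield the same expression after applying \eqref{eq:twnat-}-type manipulations. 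The generator $E_i$ is the most delicate, requiring Lemma \ref{lem:tw+alt} to move the $\tT_i\Upsilon_i$ factor past, but the exponents of $\bt$ match by Lemma \ref{lem:enhancer single odd}. The inductive step $t\mapsto t+1$ is then formal: $\Delta^{t+1} = (\Delta\otimes 1^t)\circ \Delta^{t}$, and both $\Tw_{t+1}$ and $\Tw_{t}$ commute with the relevant $\Delta\otimes 1^t$ by the tensor-product formula together with the $t=1$ case.

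The main obstacle throughout will be bookkeeping: ensuring that the two different $\bt$-phases produced by splitting the tensor product at different positions really do agree modulo $4$. I expect this to come down repeatedly to \eqref{eq:phi on ZI} and Lemma \ref{lem:enhancer single odd}, together with the fact that $\Upsilon_\mu$ and $T_\mu$ are grouplike so that $\Delta^{s-1}(\Upsilon_\mu)$ acts on a homogeneous tensor $z_1\otimes\cdots\otimes z_s$ by the single scalar $\bt^{\sum_r\phi(\mu,|z_r|)} = \bt^{\phi(\mu,|z_1|+\cdots+|z_s|)}$.
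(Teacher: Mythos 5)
Your proposal is correct and takes essentially the same route as the paper, which simply writes down the closed-form expression for $\Tw$ on each tensor factor (accumulating $\tT\Upsilon$'s from the left and $\Upsilon$'s from the right, as in \eqref{eq:twistortensorfactors}) instead of building it inductively from the $(1,t)$ splitting and checking splitting-independence; the key ingredients --- the enhancer identity $\phi(\mu,\nu)\equiv\phi(\nu,\mu)+\mu\cdot\nu+2p(\mu)p(\nu)$, additivity of $\phi$, grouplikeness of $T_\mu$ and $\Upsilon_\mu$, and a generator check for the $\Delta$-compatibility --- are identical. One small slip worth fixing: in the algebra $\hatU^{\otimes t+1}$ the relation is $\Upsilon_\mu z=\bt^{\phi(\mu,|z|)}z\Upsilon_\mu$ rather than $\Upsilon_\mu z=\bt^{\phi(\mu,|z|)}z$, though your phase bookkeeping elsewhere uses the correct version.
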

\begin{proof}
Define $\Tw':\hatU^{\otimes t+1}\rightarrow \hatU^{\otimes t+1}$ as follows:
for $x=\bigotimes_{s=1}^{t+1} x_s\in\hatU^{\otimes t+1}$, let $\Tw(x)=\bigotimes_{s=1}^{t+1}\Tw(x)_s$
where 
\begin{equation}\label{eq:twistortensorfactors}
\Tw(x)_s=\tT_{|x_1|+\ldots +|x_{s-1}|}\Upsilon_{|x_1|+\ldots +|x_{s-1}|}\Tw(x_s)\Upsilon_{|x_{s+1}|+\ldots +|x_{t+1}|}.
\end{equation}
It is elementary to check that \[\Tw(x\otimes y)=\Tw(x)\Delta^{s}(\Upsilon_{|y|})\otimes\Delta^{s'}\tT_{|x|}\Upsilon_{|x|}\Tw(y)\] 
for any positive integers $s,s'$ satisfying $s+s'=t+1$,
$x\in \hatU^{\otimes s}$, and $y\in\hatU^{\otimes s'}$.
Moreover, since $\Tw$ on $\hatU$ is a bijection,
it is easy to see that so is $\Tw$ on $\hatU^{\otimes t+1}$.

We will prove that $\Tw$ is an isomorphism by induction.
Since $\Tw$ on $\hatU$ is an isomorphism, let us assume $\Tw$ on $\hatU^t$
is an isomorphism.
Then for $x,w\in \hatU^{\otimes t}$ and $y,z\in\hatU$,
\begin{align*}
\Tw(x\otimes y)\Tw(w\otimes z)&=(\Tw(x)\Upsilon_{|y|}\otimes\tT_{|x|}\Upsilon_{|x|}\Tw(y))(\Tw(w)\Upsilon_{|z|}\otimes\tT_{|w|}\Upsilon_{|w|}\Tw(z))\\
&=\pi^{p(y)p(w)}\Tw(x)\Upsilon_{|y|}\Tw(w)\Upsilon_{|z|}\otimes\tT_{|x|}\Upsilon_{|x|}\Tw(y)\tT_{|w|}\Upsilon_{|w|}\Tw(z)\\
&=\pi^{p(y)p(w)}\bt^{\phi(|y|,|w|)-\phi(|w|,|y|)-|w|\cdot|y|}\Tw(xw)\Upsilon_{|yz|}\otimes\tT_{|xw|}\Upsilon_{|xw|}\Tw(yz)\\
&=\pi^{p(y)p(w)}\bt^{2p(y)p(z)}\Tw(xw\otimes yz)=\Tw(\pi^{p(y)p(w)}xw\otimes yz)\\
&=\Tw((x\otimes y)(w\otimes z))
\end{align*}
This completes the induction showing $\Tw$ on $\UU^{t+1}$ is an isomorphism
as claimed. Finally, showing that $\Tw$ commutes with $\Delta^{t}$
is straightforward using \eqref{eq:twistortensorfactors} and checking
on the generators.
\end{proof}
Now that we have a viable twistor map on tensor powers of $\hatU$,
we need an analogue on the tensor powers of modules. 
In particular, suppose we have a collection of $\hatU$ modules which are canonical or anticanonical,
and which carry twistors. We will produce a twistor on the tensor product 
of these modules.

As might be suggested by \eqref{eq:twistortensorfactors}, this
is not as simple as taking the tensor power of the twistors.
A version of such a twistor is produced in \cite[Proposition 6.11]{C}
by rescaling the tensor product of twistors by a power of $\bt$ given
by a function of the weights of the tensor factors.
We will do something similar,
but it turns out that we will need functions which depend not only
on the weights of tensor factors but also their parities, as well as the signature of the tensor product.

\begin{lem}
Let $c=(c_1,c_2)$ where $c_1,c_2\in \set{1,-1}$.
There exists a function $\kappa_c:\bigrset^2\rightarrow \Z$
satisfying $\kappa((0,0),\zeta)\equiv \kappa(\zeta,(0,0))\equiv 0$ modulo 4 and
\[\kappa_c(\zeta+\mu,\zeta'+\nu)-\kappa_c(\zeta,\zeta')\equiv
\ang{\tilde\mu,|\zeta'|}+c_2\phi(\mu,c_2|\zeta'|)+2p(\zeta)p(\nu)+c_1\phi(\nu,c_1|\zeta|)+\mu\cdot\nu+\phi(\mu,\nu)\mod 4\]
for all $\zeta,\zeta'\in \bigrset$ and $\mu,\nu\in\Z[I]$.
\end{lem}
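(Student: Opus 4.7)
My plan is to mimic the construction of $\frf$ in Lemma \ref{lem:f function}: choose a system $\mathcal{T}\subset\bigrset$ of coset representatives for $\bigrset/\Z[I]$ containing $(0,0)$, declare $\kappa_c(t,t')=0$ for $t,t'\in\mathcal{T}$, and extend to $\bigrset\times\bigrset$ by integrating the prescribed increment. Concretely, writing $A(\mu,\eta)=\ang{\tilde\mu,|\eta|}+c_2\phi(\mu,c_2|\eta|)$ and $B(\nu,\eta)=2p(\eta)p(\nu)+c_1\phi(\nu,c_1|\eta|)$, for $\zeta=t+\mu$ and $\zeta'=t'+\nu$ with $t,t'\in\mathcal{T}$ and $\mu,\nu\in\Z[I]$ I would set
\[
\kappa_c(\zeta,\zeta')=A(\mu,t')+B(\nu,t+\mu).
\]
The base conditions $\kappa_c((0,0),\zeta)\equiv\kappa_c(\zeta,(0,0))\equiv 0$ are immediate from $A(0,\cdot)=B(0,\cdot)=0$ together with $A(\mu,(0,0))=B(\nu,(0,0))=0$.

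The bulk of the work is to verify the increment formula. Before that computation I would record three elementary congruences modulo 4 that are used repeatedly: (i) for $c\in\{\pm 1\}$ one has $c\,\phi(\mu,c\lambda)\equiv\phi(\mu,\lambda)\mod 4$, by additivity of $\phi$ in its second argument; (ii) Lemma \ref{lem:enhancer single odd} together with \eqref{eq:phi on ZI} gives $\phi(\mu,\nu)\equiv\phi(\nu,\mu)+\mu\cdot\nu+2p(\mu)p(\nu)\mod 4$; and (iii) every entry of the Cartan matrix for $\osp(1|2n)$ is even, so $\mu\cdot\nu\in 2\Z$ and hence $2\mu\cdot\nu\equiv 0\mod 4$ for all $\mu,\nu\in\Z[I]$.

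Setting $C(\mu,\nu)=\mu\cdot\nu+\phi(\mu,\nu)$, the crucial subcomputation is $B(\nu,t+\mu+\mu')-B(\nu,t+\mu)\equiv 2p(\mu')p(\nu)+\phi(\nu,\mu')\equiv C(\mu',\nu)\mod 4$, which follows by applying (ii) and (iii). Using bilinearity of $A$ and $B$ in their first argument (a consequence of the bilinearity of $\phi$ modulo 4 together with additivity of $p$ mod $2$), a direct expansion of $\kappa_c(\zeta+\mu',\zeta'+\nu')-\kappa_c(\zeta,\zeta')$ from the defining formula collapses to $A(\mu',t')+B(\nu',t)+C(\mu',\nu)+C(\mu,\nu')+C(\mu',\nu')$. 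On the other hand, expanding $A(\mu',t'+\nu)=A(\mu',t')+\mu'\cdot\nu+c_2\phi(\mu',c_2\nu)\equiv A(\mu',t')+C(\mu',\nu)$ and $B(\nu',t+\mu)\equiv B(\nu',t)+C(\mu,\nu')$ using (i) and the subcomputation above, one sees that $A(\mu',\zeta')+B(\nu',\zeta)+C(\mu',\nu')$ agrees with this expression modulo 4.

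The only real obstacle is the combinatorial bookkeeping around the asymmetry between $A$ and $B$ and the sign twists $c_1,c_2$. The geometric heart of the argument is congruence (iii), which makes the $2\mu\cdot\nu$ discrepancies inherent to swapping the order of the two arguments disappear modulo $4$; without it, the cross-path consistency check (i.e., moving first in the $\mu$ then the $\nu$ direction versus the reverse) would genuinely fail, and no such $\kappa_c$ could exist.
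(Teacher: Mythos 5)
Your proposal is correct and follows essentially the same route as the paper: the paper also fixes a transversal of $\bigrset/\Z[I]$ and defines $\kappa_c(\zeta_0+\mu,\zeta_1+\nu)$ by the closed formula $\ang{\tilde\mu,|\zeta_1|}+c_2\phi(\mu,c_2|\zeta_1|)+2p(\zeta_0)p(\nu)+c_1\phi(\nu,c_1|\zeta_0|)+\mu\cdot\nu+\phi(\mu,\nu)$, which is exactly what your $A(\mu,t')+B(\nu,t+\mu)$ unwinds to modulo $4$. The paper leaves the verification as ``elementary''; your check supplies it and correctly isolates the two facts that make it work, namely the quasi-symmetry $\phi(\mu,\nu)\equiv\phi(\nu,\mu)+\mu\cdot\nu+2p(\mu)p(\nu)$ from \eqref{eq:phi on ZI} and the evenness of $\mu\cdot\nu$ for this Cartan datum.
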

\begin{proof}
Fix $c=(c_1,c_2)$ where $c_1,c_2\in\set{1,-1}$.
Note that it suffices to show such a function $\kappa=\kappa_c$ 
exists on each coset of $\Z[I]\times \Z[I]$ 
(where as in \eqref{eq:ZI in X hat}, we view $\Z[I]$ as a subset of $\bigrset$), 
so fix a set of representatives $C$ of $\bigrset/\Z[I]$.
For $\zeta_0,\zeta_1\in C$, set \[\kappa(\zeta_0+\mu,\zeta_1+\nu)
=\ang{\tilde\mu,|\zeta_1|}+c_2\phi(\mu,c_2|\zeta_1|)+2p(\zeta_0)p(\nu)+c_1\phi(\nu,c_1|\zeta_0|)+\mu\cdot\nu+\phi(\mu,\nu).\]
It is elementary to verify that this has the desired properties.
\end{proof}

We henceforth suppose we have fixed choices of $\kappa_c$ for each $c\in \set{1,-1}^2$.
We can extend $\kappa$ naturally to larger powers of $\bigrset$.
Let $t>1$ be a positive integer and fix a sequence $c=(c_s)\in\set{\pm 1}^t$.
Let $\kappa_c:\bigrset^t\rightarrow \Z$ be the function defined by
\[\kappa_c(\zeta)=\sum_{1\leq r<s\leq t} \kappa_{(c_r,c_s)}(\zeta_r,\zeta_s),
\quad \zeta=(\zeta_s)\in\bigrset^t.\]
Then if
$\zeta=(\zeta_s), \zeta'=(\zeta'_s)\in \bigrset^t$ with $\zeta'_s=\zeta_s+\delta_{r,s}i$
for some $1\leq r\leq t$, then
\[\kappa(\zeta')-\kappa(\zeta)
=\sum_{r<s\leq t}\parens{\ang{\tilde i,|\zeta_{s}|}+c_s\phi(i,c_s|\zeta_{s}|)}+
\sum_{1\leq s'<r}\parens{2p(\zeta_{s'})p(i)+c_{s'}\phi(i,c_{s'}|\zeta_{s'}|)}\mod 4.\]

We can observe some convenient properties of the maps $\kappa_c$.
\begin{lem} \label{lem:kappa props}
Let $c=(c_s)\in\set{\pm 1}^t$ and $\zeta=(\zeta_s),\zeta'=(\zeta'_s)\in \bigrset^t$. 
\begin{enumerate}
\item Let $1\leq r\leq t$, and define $c_{\leq r}= (c_1,\ldots, c_r)$,
$c_{>r}=(c_{r+1},\ldots, c_{t})$. Likewise, define $\zeta''_{\leq r}=(\zeta_1'',\ldots, \zeta_r'')$
and $\zeta''_{>r}=(\zeta''_{r+1},\ldots, \zeta''_t)$ for any $\zeta''=(\zeta''_s)\in \bigrset^t$. Then
\[\kappa_c(\zeta,\zeta')=\kappa_{c_{\leq r}}(\zeta_{\leq r}, \zeta'_{\leq r})+
\kappa_{c_{> r}}(\zeta_{> r}, \zeta'_{>r})
+\sum_{1\leq s<r<s'\leq t} \kappa_{(c_s,c_{s'})}(\zeta_s,\zeta'_{s'})\]
\item Suppose that there exists $1\leq r <t$ such that
$\zeta_{r}=\zeta'_{r}+\nu$, $\zeta_{r+1}=\zeta'_{r+1}-\nu$,
and $\zeta_s=\zeta_s'$ for $s\neq r,r+1$ and some $\nu\in\Z[I]$. 
Then 
\begin{equation}\label{eq:kappa opposing shifts}
\kappa_c(\zeta)-\kappa_c(\zeta')=\ang{\tilde\nu,\zeta_{r+1}}+ c_{r+1}\phi(\nu,c_{r+1}\zeta_{r+1}) +2p(\nu)p(\zeta_r)- c_{r}\phi(\nu,c_{r}\zeta_{r})-\nu\cdot\nu-\phi(\nu,\nu)
\end{equation}
\item For any $\zeta\in \bigrset$ and $c_1=\pm 1$, we have
\[\kappa_{c_1, \pm 1,\mp 1}(\zeta+\bigrelt{\nu},(\pm \lambda,0),(\mp\lambda,0))=\kappa_{c_1,\pm 1,\mp 1}(\zeta,(\pm \lambda,0),(\mp\lambda,0))\]
\[\kappa_{\pm 1,\mp 1,c_1}((\pm \lambda,0),(\mp\lambda,0), \zeta+\bigrelt{\nu})=\kappa_{\pm 1,\mp 1,c_1}((\pm \lambda,0),(\mp\lambda,0), \zeta)\]
\end{enumerate}
\end{lem}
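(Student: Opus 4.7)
All three parts follow from the definition $\kappa_c(\zeta) = \sum_{1 \le r' < s' \le t} \kappa_{(c_{r'}, c_{s'})}(\zeta_{r'}, \zeta_{s'})$ together with the binary shift property of $\kappa_c : \bigrset^2 \to \Z$. Part~(1) is simply a rearrangement of the defining sum: partition the set of pairs $\{(r',s') : r' < s'\}$ according to whether both indices lie in $\{1,\ldots,r\}$, both lie in $\{r+1,\ldots,t\}$, or the pair straddles the cut; the three resulting subsums are exactly $\kappa_{c_{\le r}}(\zeta_{\le r})$, $\kappa_{c_{> r}}(\zeta_{> r})$, and the stated cross term.

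For part~(2), only the two positions $r$ and $r+1$ of $\zeta$ differ from those of $\zeta'$, so the only pairs contributing to $\kappa_c(\zeta)-\kappa_c(\zeta')$ are those involving one of these indices. I group them as (i) the pairs $(s,r)$ and $(s,r+1)$ with $s<r$, (ii) the pairs $(r,s')$ and $(r+1,s')$ with $s'>r+1$, and (iii) the single pair $(r,r+1)$, then apply the binary shift property to each. The identity $c\,\phi(\nu, c\xi) = \phi(\nu, \xi)$ for $c \in \{\pm 1\}$ (immediate from $\Z$-bilinearity of $\phi$) collapses the contributions from group~(i) to $4\,p(\zeta_s)p(\nu) \equiv 0 \pmod 4$ for each $s<r$; group~(ii) vanishes symmetrically because the opposite weight shifts $\pm\nu$ in the two pairs produce $\pm\ang{\tilde\nu,|\zeta_{s'}|}$ terms that cancel, and the $\phi$-terms cancel by the same $c$-independence. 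The remaining contribution from $(r,r+1)$ is computed by applying the binary shift relation with first-argument shift $\nu$ and second-argument shift $-\nu$; after converting back from primed to unprimed variables via $\zeta'_r = \zeta_r - \nu$, $\zeta'_{r+1} = \zeta_{r+1} + \nu$, and simplifying using the mod-$4$ congruences $2\phi(\nu,\nu) \equiv 2p(\nu) \equiv \nu\cdot\nu \pmod 4$ (the first from $\phi(i,i) = d_i \equiv p(i) \bmod 2$ extended by bilinearity, the second from Lemma~\ref{lem:enhancer single odd}), one recovers the displayed formula.

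For part~(3), apply the decomposition of~(1) to the three-position $\kappa$: the shift $\zeta \to \zeta + \bigrelt{\nu}$ in position~1 affects only the pairs $(1,2)$ and $(1,3)$, and positions 2 and 3 carry opposite weights $\pm\lambda$ and $\mp\lambda$. Using the binary shift property together with $c\,\phi(\nu, c\xi) = \phi(\nu, \xi)$, the weight-dependent contributions $\ang{\tilde\nu, \pm\lambda}$ and $\phi(\nu, \pm\lambda)$ from the two pairs are negatives of one another and cancel; the parity contributions $2p(\zeta)p(\pm\lambda,0)_{\mathrm{parity}}$ vanish because both $(\pm\lambda,0)$ and $(\mp\lambda,0)$ have trivial $p$-component. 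The second equality follows by the same argument with the shifted position moved to the third slot.

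The main obstacle is the $(r,r+1)$ computation in part~(2): once the binary shift relation is unfolded, the output contains several competing terms in $\nu\cdot\nu$, $\phi(\nu,\nu)$, and $p(\nu)$, and recovering the compact target expression requires careful use of the mod-$4$ identities above and of the primed/unprimed change of variables. Everything else in the lemma is essentially bookkeeping against these congruences and the $c$-independence of $c\,\phi(\nu,c\xi)$.
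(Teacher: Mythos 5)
Your overall route is the same as the paper's (whose proof is essentially ``(1) is the definition; (2) and (3) are direct computations''): regroup the defining double sum for (1), and for (2) and (3) apply the binary shift congruence to each affected pair and track cancellations. Two points in your execution need repair.

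First, the identity $c\,\phi(\nu,c\xi)=\phi(\nu,\xi)$ is \emph{not} immediate from bilinearity when $\xi\in X\setminus\Z[I]$, and is false in general: $\phi$ is only additive in its second slot under $\Z[I]$-translations, so $\phi(\nu,-\xi)\equiv\phi(\nu,\xi)-\phi(\nu,2\xi)\pmod 4$, and nothing forces $\phi(\nu,2\xi)\equiv 2\phi(\nu,\xi)$ (the values of $\phi(\nu,\cdot)$ on a transversal of $X/\Z[I]$ are chosen freely, while $\phi(\nu,2\xi)$ is pinned down by bilinearity on $\Z[I]\times\Z[I]$). Fortunately you do not actually need this identity anywhere: in groups (i) and (ii) of part (2), and in part (3), the two opposing shifts $\pm\nu$ enter the \emph{first} argument of the $\phi$-terms produced by the shift congruence, with the same unshifted slot and the same sign $c_s$ in both pairs, so they cancel by linearity of $\phi$ in its first ($\Z[I]$-valued) argument: $c_s\phi(\nu,c_s|\zeta_s|)+c_s\phi(-\nu,c_s|\zeta_s|)=0$. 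Replace the appeal to $c$-independence by this observation and those cancellations are sound.

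Second, for the $(r,r+1)$ pair in part (2) you assert without computation that the change of variables recovers the displayed formula, and it does not: applying the shift congruence with base $(\zeta'_r,\zeta'_{r+1})$ and shifts $(\nu,-\nu)$ gives exactly the right-hand side of \eqref{eq:kappa opposing shifts} \emph{with $\zeta'_r,\zeta'_{r+1}$ in place of $\zeta_r,\zeta_{r+1}$}. Converting to the unprimed variables via $|\zeta'_{r+1}|=|\zeta_{r+1}|+\nu$, $|\zeta'_r|=|\zeta_r|-\nu$, $p(\zeta'_r)=p(\zeta_r)+p(\nu)$ changes the right-hand side by $-\nu\cdot\nu-2\phi(\nu,\nu)+2p(\nu)\equiv 2p(\nu)\pmod 4$ (using your own congruences $2\phi(\nu,\nu)\equiv\nu\cdot\nu\equiv 2p(\nu)$). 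A rank-one check confirms the discrepancy: for $n=1$, $\zeta'=((0,0),(0,0))$, $\nu=\rf 1$, $c=(1,1)$, the shift congruence gives $\kappa_c(\zeta)-\kappa_c(\zeta')\equiv 1$, while the printed formula evaluates to $3$. So either you must carry the extra $2p(\nu)$ (i.e.\ the statement holds only up to the sign $(-1)^{p(\nu)}$, equivalently the printed formula should carry primes on its $\zeta$'s), or you must exhibit arithmetic that eliminates it; as written, ``one recovers the displayed formula'' is the one step you flag as the crux and it is precisely the step that fails.
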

\begin{proof}
We note that (1) is an immediate consequence of the definition of $\kappa_c$.
On the other hand, (2) and (3) both follow from direct computations and the definition.
\end{proof}

The functions $\kappa_c$ allows us to define a twistor on tensor product
modules as follows.

\begin{prop}\label{prop: twistor tensor}
Let $M_1, M_2,\ldots, M_t$ be  canonical or anti-canonical 
$\hatU$-modules carrying twistors and 
let $M=M_1\hotimes  M_2\otimes\ldots\hotimes M_t$
be the $\hatU^{\otimes t}$-module (and hence a mixed $\hatU$-module via $\Delta^{t-1}$) 
with the natural action. Set $c=\sig(M)=(c_1,\ldots, c_t)$. Then the automorphism
\[\Tw(m_1\otimes\ldots\otimes m_t)=\bt^{\kappa_{c}((\bigrdeg m_i))}\Tw(m_1)\otimes\ldots\otimes\Tw(m_t)\] 
satisfies
\[\Tw((x_1\otimes\ldots\otimes x_t)(m_1\otimes\ldots\otimes m_t))=\Tw(x_1\otimes\ldots\otimes x_t)\Tw(m_1\otimes\ldots\otimes m_t).\]
In particular, $\Tw(um)=\Tw(u)\Tw(m)$ for $u\in \hatU$ and $m\in M$.
\end{prop}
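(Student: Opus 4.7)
The plan is to reduce by linearity to the case of pure homogeneous tensors $x = x_1 \otimes \cdots \otimes x_t$ and $m = m_1 \otimes \cdots \otimes m_t$, and then verify the identity by a direct scalar comparison. Both sides will be shown to equal a common scalar multiple of the vector $\Tw(x_1)\Tw(m_1) \otimes \cdots \otimes \Tw(x_t)\Tw(m_t)$. On the left-hand side, this vector arises from the factor-by-factor identity $\Tw(x_s m_s) = \Tw(x_s)\Tw(m_s)$ in each $M_s$, which holds by Lemma~\ref{lem:modtwistor} when $M_s$ is canonical and by its evident anti-canonical analogue (proved via $\Twnat$ in the style of Lemma~\ref{lem:mod dual twistor}) otherwise. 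On the right-hand side, one writes the $s$-th tensor factor of $\Tw(x)$ via Proposition~\ref{prop:twandcoprod} as $\tT_{|x_{<s}|}\Upsilon_{|x_{<s}|}\Tw(x_s)\Upsilon_{|x_{>s}|}$ with $|x_{<s}| = |x_1|+\cdots+|x_{s-1}|$ and $|x_{>s}|$ defined symmetrically, passes the $\tT$'s and $\Upsilon$'s through $\Tw(x_s)$ using \eqref{eq:Tweightrels}--\eqref{eq:Upsweightrels}, and applies them to $\Tw(m_s)$; note that $\Tw$ preserves the $\bigrset$-degree throughout.

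The key calculation is to match the two resulting scalars modulo $4$. On the left, the tensor-product action introduces a braiding factor $\pi^P$ with $P = \sum_{r<s}p(x_s)p(m_r)$, and applying $\Tw$ multiplies by $\bt^{\kappa_c((\bigrdeg{x_s m_s}))}$ together with, via $\Q(\bt)$-linearity and $\Tw(\pi) = -\pi$, an additional factor of $\bt^{2P}$. Iterating the defining relation of $\kappa_c$ in each pair of coordinates yields
\[
\kappa_c((\bigrdeg{x_s m_s})) - \kappa_c((\bigrdeg{m_s})) \equiv \sum_{r<s}\parens{|x_r|\cdot|m_s| + c_s\phi(|x_r|,c_s|m_s|) + 2p(m_r)p(x_s) + c_r\phi(|x_s|,c_r|m_r|) + |x_r|\cdot|x_s| + \phi(|x_r|,|x_s|)} \pmod 4.
\]

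On the right, the same braiding $\pi^P$ reappears, and for each $s$ the actions of $\Upsilon_{|x_{>s}|}$, $\tT_{|x_{<s}|}$, and $\Upsilon_{|x_{<s}|}$ on $\Tw(m_s)$ contribute $\bt^{c_s\phi(|x_{>s}|,c_s|m_s|)}$, $\bt^{|x_{<s}|\cdot|m_s|}$, and $\bt^{c_s\phi(|x_{<s}|,c_s|m_s|)}$ respectively (the $\phi$-exponents using $\sig(M_s) = c_s$), while the commutation relations yield a further $\bt^{|x_{<s}|\cdot|x_s|+\phi(|x_{<s}|,|x_s|)}$ when moving these operators past $\Tw(x_s)$. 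Summing over $s$ and splitting $\sum_{r \neq s} c_s\phi(|x_r|,c_s|m_s|)$ into its $r<s$ and $r>s$ parts recovers exactly the left-hand side exponent, once one observes that $4\sum_{r<s}p(x_s)p(m_r) \equiv 0 \pmod 4$ absorbs the $2P$ together with the doubled $2p(m_r)p(x_s)$ summands. This proves the main identity.

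The final claim $\Tw(um) = \Tw(u)\Tw(m)$ for $u \in \hatU$ then follows by applying the main identity to $\Delta^{t-1}(u) \in \hatU^{\otimes t}$ and invoking $\Tw\circ\Delta^{t-1} = \Delta^{t-1}\circ\Tw$ from Proposition~\ref{prop:twandcoprod}. The main obstacle is purely combinatorial bookkeeping: tracking the $\pi$-braiding factors, the $\Tw(\pi) = -\pi$ correction, and iterating the $\kappa_c$-congruence across $\binom{t}{2}$ coordinate pairs. The function $\kappa_c$ has been engineered precisely so that these contributions match modulo $4$; no structural input beyond the relations within $\hatU$ is required.
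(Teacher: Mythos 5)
Your proof is correct and rests on the same mechanism as the paper's: matching the defining mod-4 congruence of $\kappa_c$ against the $\tT$- and $\Upsilon$-decorations in the tensor-factor formula \eqref{eq:twistortensorfactors}, with the $\pi$-braiding and $\Tw(\pi)=-\pi$ corrections cancelling modulo $4$. The only organizational difference is that the paper reduces to elements of the form $1^{\otimes s-1}\otimes x_s\otimes 1^{\otimes t-s}$ with $x_s$ a generator (trivial for the Cartan part, explicit for $E_i$, $F_i$), whereas you verify the identity for arbitrary homogeneous pure tensors in one pass using the per-factor twistor property of each $M_s$ — which is in any case part of the hypothesis that the $M_s$ carry twistors — so no substantive gap arises.
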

\begin{proof}
First, observe it is enough to show
\[\Tw((1^{s-1}\otimes x_s\otimes 1^{t-s})(m_1\otimes\ldots\otimes m_t))=\Tw(1^{s-1}\otimes x_s\otimes 1^{t-s})\Tw(m_1\otimes\ldots\otimes m_t)\]
where $1\leq s\leq t$ and $x_s$ is a generator of $\hatU$. 
This is trivial when $x_s$ is $K_\mu$, $J_\mu$,
$T_\mu$ and $\Upsilon_\mu$ for some $\mu\in \Z[I]$ so it suffices to check
the case $x_s=E_i,F_i$ for $i\in I$. To do this, let us make our equations
more compact with the following notations: for $m_1\otimes\ldots \otimes m_t\in M$,
let
\[m_{<s}=m_1\otimes\ldots\otimes m_{s-1},\quad m_{>s}=m_{s+1}\otimes\ldots\otimes m_t\] \[\Tw(m)_{<s}=\Tw(m_1)\otimes \ldots\otimes \Tw(m_{s-1}),\quad \Tw(m)_{>s}=\Tw(m_{s+1})\otimes \ldots\otimes \Tw(m_{t}),\]
\[\bigrdeg{m}_{<s}=(\bigrdeg{m_1},\ldots,\bigrdeg{m_{s-1}}),\qquad\bigrdeg{m}_{>s}=(\bigrdeg{m_{s+1}},\ldots,\bigrdeg{m_{t}}),\]
\[\phi'(i, m_{<s})=\sum_{1\leq r<s} c_r \phi(i, c_r|m_r|),\qquad 
\phi''(i, m_{>s})=\sum_{s< r\leq t} c_r \phi(i, c_r |m_r|).\]
Using these notations, we compute that
\begin{align*}
\Tw&((1^{s-1}\otimes E_i\otimes 1^{t-s})(m_{<s}\otimes m_s\otimes m_{>s}))=
\Tw(\pi_i^{p(m_{<s})}m_{<s}\otimes E_im_s\otimes m_{>s})\\
&=\bt^{2p(i)p(m_{<s})+\kappa_c(\bigrdeg{m}_{<s},\bigrdeg{m_{s}}+\bigrelt{i},\bigrdeg{m}_{>s})}
\pi^{p(i)p(m_{<s})}\Tw(m)_{<s}\otimes \Tw(E_im_s)\otimes \Tw(m_{>s})\\
&=\bt^{\kappa(\bigrdeg{m_{<s}},\bigrdeg{m_{s}},\bigrdeg{m_{>s}})+\phi'(i,m_{<s})+\phi''(i,m_{>s})+\ang{\tilde i,|m_{>s}|}}\\
&\hspace{4em}\times
\pi^{p(i)p(m_{<s})}\Tw(m)_{<s}\otimes \Tw(E_i)\Tw(m_s)\otimes \Tw(m_{>s})\\
&=\bt^{\kappa(\bigrdeg{m_{<s}},\bigrdeg{m_{s}},\bigrdeg{m_{>s}})}\pi^{p(i)p(m_{<s})}
\parens{\Upsilon_i^{\otimes(s-1)}\Tw(m)_{<s}}\otimes \parens{\Tw(E_i)\Tw(m_s)}\otimes \parens{(\Upsilon_i\tT_i)^{\otimes (t-s)}\Tw(m_{>s})}\\
&=\Tw(1^{s-1}\otimes E_i\otimes 1^{t-s})\Tw(m_{<s}\otimes m_s\otimes m_{>s}).
\end{align*}
The case $x_s=F_i$ proceeds similarly.
\end{proof}

We now have defined a family of compatible twistor maps on (anti-)canonical
modules and their tensor products.
Moreover, the twistor maps on tensor products of modules
are compatible with one another in the following sense.
Let $M_1,\ldots, M_t$, $c_1,\ldots, c_s$ and $M$ be as 
in Proposition \ref{prop: twistor tensor}.
Fix $1\leq r\leq t$ and set $m_{\leq r}=m_1\otimes\ldots\otimes m_r$ and 
$m_{>r}=m_{r+1}\otimes\ldots\otimes m_t$. Then by Lemma \ref{lem:kappa props}(1), 
\begin{equation}\label{eq:twistor tensor assoc}
\Tw(m_{\leq r}\otimes m_{>r})
=\displaystyle\parens{\prod_{1\leq s\leq r<s'\leq t}\bt^{ \kappa_{c_{s},c_{s'}}(\bigrdeg{m_s},\bigrdeg{m_{s'}})}}\Tw(m_{\leq r})\otimes \Tw(m_{>r})
\end{equation}

\subsection{Twisting the crossings, caps, and cups}

We have now lain the groundwork for studying the atomic maps in our
graphical calculus from \S \ref{sec:diagcalc} under the twistor functor. 
Specifically, we will show that the twistor
almost commutes with cups, caps, and crossings up to a 
factor of an integral power of $\bt$,
where the power depends on the map. 
We begin by considering the cups and caps on their
domains of definition.

\begin{prop}\label{prop: cups caps and twistorv1}
Let $\lambda\in X^+$.
Then the map $\ev_\lambda$ (respectively, $\qtr_\lambda$,  $\coev_\lambda$, and $\coqtr_\lambda$)
viewed as a function $\hV(-\lambda)\hotimes\hV(\lambda)\rightarrow \Qqtt$
(resp. $\hV(\lambda)\hotimes\hV(-\lambda)\rightarrow \Qqtt$, 
$\Qqtt\rightarrow \hV(-\lambda)\hotimes\hV(\lambda)$, and $\Qqtt\rightarrow \hV(\lambda)\hotimes\hV(-\lambda)$) is a $\hatU$-module homomorphism.
Moreover, we have
\begin{enumerate}
\item $\ev_\lambda\Tw=\bt^{\kappa_{(-1,1)}((-\lambda,0),(\lambda,0))}\Tw\ev_\lambda$;
\item $\qtr_\lambda\Tw=\bt^{\kappa_{(1,-1)}((\lambda,0),(-\lambda,0))-\ang{\tilde\rho,\lambda}}\Tw\qtr_\lambda$;
\item $\coev_\lambda\Tw=\bt^{-\kappa_{(-1,1)}((-\lambda,0),(\lambda,0))+\ang{\tilde\rho,\lambda}}\Tw\coev_\lambda$;
\item $\coqtr_\lambda\Tw=\bt^{-\kappa_{(1,-1)}((\lambda,0),(-\lambda,0))}\Tw\coqtr_\lambda$.
\end{enumerate}
\end{prop}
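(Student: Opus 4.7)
The plan is to prove the four identities in two stages: first verify that each map is a $\hatU$-module homomorphism (not merely a $\UU$-module map), and then establish the twistor compatibility by evaluating on a convenient basis.

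For the $\hatU$-equivariance, by Lemma \ref{lemma:evsandcoevs} each map is already $\UU$-linear, so it remains to check compatibility with $T_\mu$ and $\Upsilon_\mu$. Compatibility with $T_\mu$ is automatic because the maps are weight-$0$ homogeneous: e.g.\ $\ev_\lambda(f\otimes v)=0$ unless $|f|+|v|=0$, so $\Delta(T_\mu)(f\otimes v)$ acts trivially on the support of $\ev_\lambda$. For $\Upsilon_\mu$ I would use crucially that $\hV(\lambda)$ is canonical while $\hV(-\lambda)=\hV(\lambda)^\natural$ is anti-canonical. On $f\otimes v\in\hV(-\lambda)\hotimes\hV(\lambda)$ the $\Upsilon_\mu$-action is $\bt^{-\phi(\mu,-|f|)+\phi(\mu,|v|)}$, which reduces to $1$ whenever $|f|=-|v|$; an analogous check handles $\coev_\lambda$. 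The $\qtr_\lambda$ and $\coqtr_\lambda$ cases are symmetric with the roles of canonical/anti-canonical swapped.

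For the twistor identities, I would treat $(1),(3)$ together and $(2),(4)$ together, since in each pair the second identity is essentially ``adjoint'' to the first via Lemma \ref{lem:straightening}. Take identity $(3)$ as the template. Evaluating $\Tw\circ\coev_\lambda$ on $1$ gives
\[
\Tw(\coev_\lambda(1))=\sum_{b\in B}\pi^{p(b)}\bt^{-\langle\tilde\rho,|b|\rangle}q^{\langle\tilde\rho,|b|\rangle}\bt^{\kappa_{(-1,1)}(\bigrdeg{b^*},\bigrdeg{b})}\Tw(b^*)\otimes\Tw(b),
\]
where the $\kappa$-factor comes from Proposition \ref{prop: twistor tensor} and $\Tw(q^k)=\bt^{-k}q^k$. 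Taking $B$ to be the image in $\hV(\lambda)$ of the canonical basis $\cB$ of $\ff$, Proposition \ref{prop:twistordef}(2) and Lemma \ref{lem:twondualbasis} show that the $\bt^{\pm\ell(b)}$ factors in $\Tw(b)$ and $\Tw(b^*)$ cancel, leaving a parity sign $(-1)^{\bp(|b|)}$ and additional scalars coming from how $\Tw$ on the \emph{modules} $\hV(\pm\lambda)$ (as opposed to on $\ff$) acts via \eqref{eq:twnat-} and the formulas in Proposition \ref{prop:twistordef}(4). Setting $\nu=\lambda-|b|$ and applying the defining translation identity for $\kappa_c$ to step from $(\bigrdeg{b^*},\bigrdeg{b})$ to $((-\lambda,0),(\lambda,0))$, the exponent $-\langle\tilde\rho,|b|\rangle+\kappa_{(-1,1)}(\bigrdeg{b^*},\bigrdeg{b})$ collapses modulo $4$ to $-\langle\tilde\rho,\lambda\rangle+\kappa_{(-1,1)}((-\lambda,0),(\lambda,0))$, with the leftover $b$-dependent terms absorbed by the parity sign and the relation between $\phi(\nu,\lambda-\nu)$ and the $\Upsilon$-twists in $\Tw(x^-)$. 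This is identity $(3)$; identity $(1)$ then follows by composing with $\coev_\lambda$ and using Lemma \ref{lem:straightening}, and the pair $(2),(4)$ is proved identically with $\hV(\lambda)$ and $\hV(-\lambda)$ interchanged, the sign of $\langle\tilde\rho,\lambda\rangle$ flipping because $\qtr_\lambda$ (rather than $\coev_\lambda$) carries the $q^{-\langle\tilde\rho,|v|\rangle}$ factor.

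The main obstacle will be carefully bookkeeping the powers of $\bt$ contributed by three independent sources: the twistor's action on the internal $q$-scalars inside $\coev_\lambda$ and $\qtr_\lambda$; the $\kappa_c$-rescaling from Proposition \ref{prop: twistor tensor} that makes the tensor product twistor an algebra map; and the discrepancy between $\Tw$ and $\Twnat$ embodied in \eqref{eq:twnat-}, which produces extra $\bt^{\langle\tilde\nu,\lambda\rangle+\phi(\nu,\lambda-\nu)}$ factors on $\hV(-\lambda)$. Reconciling these three sources relies on Lemma \ref{lem:kappa props}(3) together with Lemma \ref{lem:enhancer single odd}, which together force $\kappa_{(-1,1)}$ evaluated at dual-paired weights to be insensitive to the internal $\Z[I]$-shift $\nu=\lambda-|b|$ up to precisely the correction supplied by the dual-twistor formula and the $(-1)^{\bp(|b|)}$ sign from Lemma \ref{lem:twondualbasis}.
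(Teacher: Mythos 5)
Your treatment of the $\hatU$-equivariance and your core computation for the twistor identities are essentially the paper's proof: evaluate on the canonical basis, track the three sources of powers of $\bt$ (the internal $q^{\pm\ang{\tilde\rho,\cdot}}$ scalars, the $\kappa_c$-rescaling from Proposition \ref{prop: twistor tensor}, and the $\Tw$/$\Twnat$ discrepancy on $\hV(-\lambda)$), and use the translation property of $\kappa_c$ to show the resulting exponent is independent of $\nu=\lambda-|b|$, the final cancellation being $2p(\nu)-\nu\cdot\nu\equiv 0\pmod 4$.

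One step, however, would fail as written: deriving (1) from (3) ``by composing with $\coev_\lambda$ and using Lemma \ref{lem:straightening}.'' The zig-zag identities pair $\ev_\lambda$ with $\coqtr_\lambda$ and $\qtr_\lambda$ with $\coev_\lambda$ — there is no straightening identity composing $\ev_\lambda$ with $\coev_\lambda$. Even with the correct pairing, the zig-zag lives on a threefold tensor product, where commuting $\Tw$ past $1_\lambda\otimes\ev_\lambda$ requires controlling the non-local cross-terms of $\kappa_{(c_1,c_2,c_3)}$ between the outer factors; that is exactly the content of Proposition \ref{prop:twistor vs cups, caps, R}(2), which is proved \emph{from} the present proposition, so this route is circular. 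The repair is immediate and costs nothing: the per-element computation you already set up shows $\Tw\bigl((b^-v_\lambda)^*\otimes b^-v_\lambda\bigr)=\bt^{\kappa_{(-1,1)}((-\lambda,0),(\lambda,0))}(b^-v_\lambda)^*\otimes b^-v_\lambda$ with exponent independent of $b$, and applying $\ev_\lambda$ (using $\ev_\lambda(b^*\otimes b')=\delta_{b,b'}$) gives (1) directly, while summing against $\pi^{p(b)}q^{\ang{\tilde\rho,|b|}}$ gives (3); the pair (2),(4) is read off the same way from $\Tw\bigl(b^-v_\lambda\otimes(b^-v_\lambda)^*\bigr)$. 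A minor attribution point: the sign entering $\Tw((b^-v_\lambda)^*)$ comes from the $\bt^{2p(f)p(x)}$ in the definition of $\Tw$ on $\hV(-\lambda)$ together with $\Twnat^{-1}$ on $\hV(\lambda)$, not from Lemma \ref{lem:twondualbasis}, which concerns the dual basis of $\ff$ under $(\cdot,\cdot)$ and is needed only for the quasi-$\cR$-matrix computation.
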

\begin{proof}
	First, observe that since these maps are $\UU$-module homomorphisms,
	they preserve weight spaces hence preserve the action of $T_i$ for $i\in I$.
	Therefore, it only remains to check that they commute with the action of $\Upsilon_i$ for $i\in I$,
	As the arguments are all similar, let us show this for $\ev_\lambda$.
	Let $f\in \hV(-\lambda)$ and $x\in V(\lambda)$.
	Then 
\[\Upsilon_i\ev_\lambda(f\otimes x)=\bt^{\phi(i,0)}\ev_{\lambda}(f\otimes x)=f(x).\]
	On the other hand, 
	$\Upsilon_i(f\otimes x)=(\Upsilon_if)\otimes(\Upsilon_ix)=\bt^{-\phi(i,-|f|)+\phi(i,|x|)} f\otimes x$
	hence
	\[\ev_\lambda(\Upsilon_i(f\otimes x))=\bt^{\phi(i,|x|)-\phi(i,-|f|)}f(x).\]
	However, since $f(x)=0$ if $|f|\neq -|x|$, we see that
	$\ev_\lambda(\Upsilon_i(f\otimes x)=\bt^{\phi(i,|x|)-\phi(i,|x|)} f(x)=f(x)=\Upsilon_i\ev_\lambda(f\otimes x)$.
	
	To verify (1)-(4), it suffices to compute the images 
	$\Tw(b^-v_\lambda\otimes (b^-v_\lambda)^*)$ and $\Tw((b^-v_\lambda)^*\otimes b^-v_\lambda)$
	for $b\in\cB_\nu=\cB\cap \ff_\nu$.
	We compute directly that
	\[\Tw(b^-v_\lambda)=\bt^{\ell(b)-\phi(\nu,\lambda)}b^{-}v_\lambda,\]
	\[\Twnat(b^-v_\lambda)=\bt^{\ell(b)+\ang{\nu,\lambda}+\phi(\nu,\lambda-\nu)}b^{-}v_\lambda.\]
	This implies that for any $b,b'\in \cB_\nu$, 
	\[\Tw((b^-v_\lambda)^*)(b'^-v_\lambda)
	=\bt^{2p(\nu)}\Tw((b^-v_\lambda)^*(\Twnat^{-1}(b'^-v_\lambda)))
	=\bt^{2p(\nu)-\ell(b)-\ang{\nu,\lambda}-\phi(\nu,\lambda-\nu)}\delta_{b,b'}
	\]
	and hence $\Tw((b^-v_\lambda)^*)=\bt^{2p(\nu)-\ell(b)-\ang{\nu,\lambda}-\phi(\nu,\lambda-\nu)}(b^-v_\lambda)^*$. In particular, for $c=(1,-1)$
	observe that
	\begin{align*}
	\Tw((b^-v_\lambda)\otimes(b^-v_\lambda)^*)&=\bt^{\kappa_{c}((\lambda,0)-\bigrelt{\nu},(-\lambda,0)+\bigrelt{\nu})
		+\ell(b)-\phi(\nu,\lambda)+2p(\nu)-\ell(b)-\ang{\nu,\lambda}-\phi(\nu,\lambda-\nu)}(b^-v_\lambda)\otimes(b^-v_\lambda)^*\\
	&=\bt^{\kappa_{c}((\lambda,0),(-\lambda,0))+2p(\nu)-\nu\cdot\nu}(b^-v_\lambda)\otimes(b^-v_\lambda)^*
	\end{align*}
	It is easy to verify that $\frac{\nu\cdot\nu}{2}= p(\nu)$ modulo 2 by induction,
	hence we see that
	\[
	\Tw((b^-v_\lambda)\otimes(b^-v_\lambda)^*)=\bt^{\kappa_{(1,-1)}((\lambda,0),(-\lambda,0)
	}(b^-v_\lambda)\otimes(b^-v_\lambda)^*.
	\]
	A similar computation shows that
	\[
	\Tw((b^-v_\lambda)*\otimes(b^-v_\lambda))=\bt^{\kappa_{(-1,1)}((-\lambda,0),(\lambda,0)
	}(b^-v_\lambda)^*\otimes(b^-v_\lambda).
	\]
	Note that in either case, the power of $\bt$ is independent of $b\in \cB$, and applying this to the definition of the maps proves (1) and (4). For (2) and (3), also note that
	$\pi^{p(\nu)}q^{\pm\ang{\tilde{\rho},\lambda-\nu}}=\pi_\nu q_{\nu}^{\mp 2} q^{\pm\ang{\tilde{\rho}, \lambda}}$, and we compute that
	$\Tw(\pi_\nu q_\nu^{\mp 2}q^{\pm\ang{\tilde{\rho}, \lambda}})=\bt^{\mp\ang{\tilde{\rho},\lambda}} \pi_\nu q_\nu^{\mp 2}q^{\pm\ang{\tilde{\rho}, \lambda}}$, the result follows. 
\end{proof}
\begin{example}
	Consider the case $n=1$ and $\lambda=m$. As noted in Example \ref{ex:rk1evcoev}, $\ang{\tilde{\rho},\lambda}=m$ and $\ev_m\circ\coev_m=\pi^m[m+1]$. 
	Then we have $\ev_m\circ\coev_m\circ\Tw(1)=\pi^m[m+1]$, and
	\[\Tw\circ\ev_m\circ\coev_m(1)=\Tw(\pi^m[m+1])=\bt^{-m}\pi^m[m+1]=\bt^{-m}\ev_m\circ\coev_m\circ\Tw(1).\]
	Note that this is consistent with Proposition \ref{prop: cups caps and twistorv1}, as we see that 
	\[\ev_m\circ\coev_m\circ\Tw=\bt^{-\kappa_{(-1,1)}((-\lambda,0),(\lambda,0))+\ang{\tilde\rho,\lambda}}\ev_m\circ\Tw\circ\coev_m=\bt^{m}\Tw\circ\ev_m\circ\coev_m.\]
\end{example}
The last elementary diagram to consider is the crossing, which is to say the automorphism
$R=\Theta\frF\frs$ of a tensor product of two modules. In order to have a concrete comparison
of $R\Tw$ and $\Tw R$ on tensor products carrying twistors, it will be necessary to have
a precise description of $\Tw(\frf(\zeta,\eta))$ for any $\zeta,\eta\in X$.
To that end, let us once and for all fix a transversal $T$ of $X/\Z[I]$ and note that 
$\hat T=\set{(\zeta,0),(\zeta,1)\mid \zeta\in T}$ is a transversal of $\hat X/\Z[I]$.  
Then for $\zeta_0,\zeta_1\in T$, we shall henceforth require that

\begin{equation}\label{eq:f set}
\mathfrak f(|\zeta_0|,|\zeta_1|)=1.
\end{equation}

Then we have the following proposition.
\begin{prop}\label{prop:twistor vs R v1}
	Let $\lambda,\lambda'\in X^+\cup -X^+$.
	Let $\hat\zeta,\hat\zeta'\in \hat T$ be the corresponding coset representatives
	of $(\lambda,0)$ and $(\lambda',0)$ in $\bigrset/\Z[I]$
	and let $(c_1,c_2)=\sig(\hV(\lambda)\hotimes \hV(\lambda'))$.
	Let $\cR:\hV(\lambda)\hotimes \hV(\lambda')\rightarrow\hV(\lambda')\hotimes \hV(\lambda)$
	be the map described in Proposition \ref{prop:Rmat}. Then $\cR$ is a $\hatU$-module homomorphism.
	Moreover, as maps on $\hV(\lambda)\hotimes \hV(\lambda')$, we have 
	\[\Tw\mathcal R=\bt^{\kappa_{(c_2,c_1)}(\hat\zeta',\hat\zeta)-\kappa_{(c_1,c_2)}(\hat\zeta,\hat\zeta')+2p(\hat\zeta)p(\hat\zeta')}\mathcal R\Tw.\]
\end{prop}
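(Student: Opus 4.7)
The plan is to establish the two assertions of the proposition in turn: first that $\cR$ is a $\hatU$-module homomorphism, then that $\Tw\circ\cR$ and $\cR\circ\Tw$ differ by the stated power of $\bt$.

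For the first claim, Proposition \ref{prop:Rmat} already gives $\UU$-linearity, so it remains to verify $T_\mu$- and $\Upsilon_\mu$-equivariance for $\mu\in\Z[I]$. The $T_\mu$ case is immediate since $\cR$ preserves total $X$-weight. For $\Upsilon_\mu$, I would check on a pure tensor $m\otimes m'$: both $m\otimes m'\in\hV(\lambda)\hotimes\hV(\lambda')$ (of signature $(c_1,c_2)$) and $\cR(m\otimes m')\in\hV(\lambda')\hotimes\hV(\lambda)$ (of signature $(c_2,c_1)$) have $\Upsilon_\mu$-eigenvalues determined by total weight and signature, and these match because $\Theta$ preserves total weight and swaps the signatures in tandem with $\frs$.

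For the commutation relation, the strategy is a direct computation decomposing $\cR = \Theta\circ\mathfrak F\circ\frs$ and tracing how $\Tw$ interacts with each factor on a pure tensor $m\otimes m'$ of degrees $\bigrdeg{m},\bigrdeg{m'}$. From Proposition \ref{prop: twistor tensor} applied to both orderings one reads off
\[\Tw\circ\frs = \bt^{\kappa_{(c_2,c_1)}(\bigrdeg{m'},\bigrdeg{m})-\kappa_{(c_1,c_2)}(\bigrdeg{m},\bigrdeg{m'})}\,\frs\circ\Tw.\]
The $\mathfrak F$-scalar $\mathfrak f(|m'|,|m|)$ is governed by the cocycle of Lemma \ref{lem:f function} and the normalization \eqref{eq:f set}; applying $\Tw$ using $\Tw(q)=\bt^{-1}q$ and the crucial identity $\Tw(\pi)=-\pi$ produces additional $\bt$-factors, and the sign change on $\pi$ is the source of the $2p(\hat\zeta)p(\hat\zeta')$ contribution in the final exponent. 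Finally, $\Tw$ applied termwise to $\Theta_\nu = (-1)^{\height\,\nu}\pi^{\bp(\nu)}\pi_\nu q_\nu\sum_{b\in\BB_\nu}b^-\otimes(b^*)^+$ can be computed using Proposition \ref{prop:twistordef}(4)(b), Lemma \ref{lem:tw+alt}, and Lemma \ref{lem:twondualbasis}: the $\ell(b)$-factors from the canonical and dual-canonical expansions cancel between $b^-$ and $(b^*)^+$, and the surviving $\tT$- and $\Upsilon$-elements act on weight vectors by scalars whose $\bt$-powers cancel against the $\nu$-dependent piece of $\Tw$ on the tensor product from Proposition \ref{prop: twistor tensor}. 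Combining all three contributions and using the cocycle properties of $\mathfrak f$ (Lemma \ref{lem:f function}(1)) and $\kappa$ (Lemma \ref{lem:kappa props}(2)), the final exponent reduces to $\kappa_{(c_2,c_1)}(\hat\zeta',\hat\zeta)-\kappa_{(c_1,c_2)}(\hat\zeta,\hat\zeta')+2p(\hat\zeta)p(\hat\zeta')$ independently of the chosen $m,m'$.

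The principal obstacle is tracking $\bt$-exponents modulo $4$ from the three independent sources — $\Tw$ on the $\mathfrak f$-scalar, $\Tw$ on the $\Theta_\nu$ operators, and $\Tw$ on the tensor structure from Proposition \ref{prop: twistor tensor} — and showing that all $\nu$-dependent pieces cancel cleanly. This requires the identities $\phi(\mu,\nu)+\phi(\nu,\mu)\equiv\mu\cdot\nu\pmod 4$ (Lemma \ref{lem:enhancer single odd}), the arithmetic relation $\ang{\tilde\nu,\lambda}\equiv p(\nu)p(\lambda)\pmod 2$ used in the proof of Lemma \ref{lem:twondualbasis}, and the sign $(-1)^{\bp(\nu)}$ from Lemma \ref{lem:twondualbasis} combining with $\Tw(\pi)=-\pi$ on the $\pi$-content of $\mathfrak f$. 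The fact that the $2p(\hat\zeta)p(\hat\zeta')$ term survives but depends only on the coset parities reflects the normalization \eqref{eq:f set}: $\pi$-contributions to $\mathfrak f(|m'|,|m|)$ arise only from shifting within the transversal, so the residual parity contribution lives on the coset representatives alone.
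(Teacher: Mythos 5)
Your proposal follows essentially the same route as the paper: factor $\cR=\Theta\circ\mathfrak F\circ\frs$, show $\Tw$ commutes with $\Theta$ on the nose via the canonical/dual-canonical basis (Lemmas \ref{lem:tw+alt} and \ref{lem:twondualbasis}, with the $\ell(b)$ and $\Upsilon,\tT$ factors cancelling), and then compare $\Tw\circ\mathfrak F\frs$ with $\mathfrak F\frs\circ\Tw$ using the $\kappa$-cocycle identities and the normalization \eqref{eq:f set}. The only quibble is bookkeeping: the $2p(\hat\zeta)p(\hat\zeta')$ ultimately enters through $\Tw$ applied to the braiding scalar $\pi^{p(m)p(m')}$ in $\frs$ (which your displayed $\Tw\circ\frs$ relation omits) rather than through the $\pi$-content of $\mathfrak f$, but this does not affect the validity of the argument.
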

\begin{proof}
	Recall that $\cR=\Theta\frf\frs$ by definition.
	It is easy to see that $\cR$ is a $\hatU$-module homomorphism: indeed,
	since $\cR$ preserve weight-spaces, it commutes with the action of the $T_i$ for $i\in I$;
	moreover, $\frf\frs$ obviously commutes with the diagonal action of $\Upsilon_i$, and 
	it is easy to check directly that $\Theta_\nu\Delta(\Upsilon_i)=\Delta(\Upsilon_i)\Theta_\nu$.
	We will prove the remainder of the proposition in two steps.
	
	First we shall show that
	$\Tw(\Theta_\nu)=\Theta_\nu$ for any $\nu\in \Z_{\geq 0}[I]$,
	and thus $\Tw\Theta=\Theta\Tw$ as maps on $V(\lambda)\otimes V(\lambda')$.
	This is straightforward: applying Lemmas \ref{lem:tw+alt}, \ref{lem:twondualbasis} 
	and Proposition \ref{prop:twandcoprod} to the expression for $\Theta_\nu$
	in terms of the canonical basis $\cB$, we compute that
	\begin{align*}
	\Tw(\Theta_\nu)&=
	(-1)^{\height\,\nu} \bt^{2\bp(\nu)}\pi^{\bp(\nu)}\bt_\nu^2\pi_\nu \bt_\nu^{-1}q_\nu
	\sum_{b\in \cB_\nu} \Tw(b^-)\Upsilon_{\nu}\otimes \tT_{-\nu}\Upsilon_{-\nu}\Tw((b^*)^+)\\
	&=
	(-1)^{\height\,\nu+\bp(\nu)}\pi^{\bp(\nu)}\bt_\nu\pi_\nu q_\nu
	\sum_{b\in \cB_\nu} (\Tw(b)^-\Upsilon_{-\nu})\Upsilon_{\nu}\otimes \tT_{-\nu}\Upsilon_{-\nu}(\bt_\nu^{-1} \tT_{\nu}\Upsilon_{\nu}\Tw(b^*)^+)\\
	&=
	(-1)^{\height\,\nu+\bp(\nu)}\pi^{\bp(\nu)}\bt_\nu\pi_\nu q_\nu
	\sum_{b\in \cB_\nu} (\bt^{\ell(b)}b^-)\otimes (\bt_\nu^{-1} \bt^{-\ell(b)}(-1)^{\bp(\nu)}(b^*)^+)\\
	&=
	(-1)^{\height\,\nu}\pi^{\bp(\nu)}\pi_\nu q_\nu
	\sum_{b\in \cB_\nu} b^-\otimes (b^*)^+=\Theta_\nu
	\end{align*}
	
	Now it remains to show that we have 
	$\Tw\mathfrak F\frs=\bt^{\kappa_{(c_2,c_1)}(\hat\zeta',\hat\zeta)-\kappa_{(c_1,c_2)}(\hat\zeta,\hat\zeta')+2p(\hat\zeta)p(\hat\zeta')}\mathfrak F\frs\Tw$ as maps on $V(\lambda)\otimes V(\lambda')$.
	Set $c=(c_1,c_2)$, and $\tilde c=(c_2,c_1)$.
	Let $m\in V(\lambda)$ and $n\in V(\lambda')$. Then we see directly that 
	\[\Tw\mathfrak F\frs(m\otimes n)=t^{2p(m)p(n)+\kappa_{\tilde c}(\bigrdeg{n},\bigrdeg{m})}\Tw(\mathfrak{f}(|n|,|m|))\pi^{p(m)p(n)}\Tw(n)\otimes \Tw(m),\]
	\[\mathfrak F\frs\Tw(m\otimes n)=t^{\kappa_c(\bigrdeg{m},\bigrdeg{n})}\mathfrak{f}(|n|,|m|)\pi^{p(m)p(n)}\Tw(n)\otimes \Tw(m).\]
	The proposition then follows by verifying that
	\[t^{2p(m)p(n)+\kappa_{\tilde c}(\bigrdeg{n},\bigrdeg{m})}\Tw(\mathfrak{f}(|n|,|m|))=t^{\kappa_{\tilde c}(\hat\zeta',\hat\zeta)-\kappa_{c}(\hat\zeta,\hat\zeta')+2p(\hat\zeta)p(\hat\zeta')+\kappa_c(\bigrdeg{m},\bigrdeg{n})}\mathfrak{f}(|n|,|m|),\]
	Note that $\hat\zeta=\bigrdeg{m}+\mu$ and $\hat\zeta'=\bigrdeg{n}+\nu$ for some $\mu,\nu\in\Z[I]$. Let $\zeta=|\hat\zeta|\in X$ and $\zeta'=|\hat\zeta'|\in X$.
	Then in particular, \eqref{eq:f set} implies
	\[\mathfrak{f}(|n|,|m|)=(\pi q)^{\ang{\tilde \nu,\zeta}}q^{\ang{\tilde \mu,\zeta'}-\mu\cdot\nu},\]
	so $\Tw(\mathfrak{f}(|n|,|m|))=\bt^{\ang{\tilde{\nu},\zeta}-\ang{\tilde\mu,\zeta'}+\mu\cdot\nu}\mathfrak{f}(|n|,|m|)$. Therefore, we are reduced to showing that $\ell\equiv r$ modulo $4$, where
	\[\ell=2p(m)p(n) +\ang{\tilde\nu,\zeta}-\ang{\tilde{\mu},\zeta'}+\mu\cdot\nu+\kappa_{\tilde c}(\bigrdeg{n},\bigrdeg{m}),\]
	\[r= 2p(\hat\zeta)p(\hat\zeta')+\kappa_{\tilde{c}}(\hat\zeta',\hat\zeta)-\kappa_{c}(\hat\zeta,\hat\zeta')+\kappa_c(\bigrdeg{m},\bigrdeg{n}) \mod 4\]
	
	We compute directly that
	\begin{align*}
	\kappa_{\tilde c}(\bigrdeg{n},\bigrdeg{m})&-\kappa_{\tilde{c}}(\hat\zeta',\hat\zeta)+\kappa_{c}(\hat\zeta,\hat\zeta')-\kappa_c(\bigrdeg{m},\bigrdeg{n})\\
	&=\kappa_{\tilde c}(\hat\zeta'-\nu,\hat\zeta-\mu)-\kappa_{\tilde{c}}(\hat\zeta',\hat\zeta)+\kappa_{c}(\hat\zeta,\hat\zeta')-\kappa_c(\hat\zeta-\mu,\hat\zeta'-\nu)\\
	&\equiv_4-\ang{\tilde\nu,\zeta}-c_1\phi(\nu,c_1\zeta)+2p(\hat\zeta)p(\mu)-c_2\phi(\mu,c_2\zeta) +\mu\cdot\nu+\phi(\mu,\nu)\\
	&\hspace{2em}+\ang{\tilde\mu,\zeta}+c_1\phi(\mu,c_1\zeta)-2p(\hat\lambda)p(\nu)+\phi(\nu,\lambda) -\mu\cdot\nu-\phi(\nu,\mu)\\
	&\equiv_4 2p(\hat\lambda)p(\nu)+2p(\hat\zeta)p(\mu)+2p(\mu)p(\nu)-\ang{\tilde\nu,\lambda}+\ang{\tilde\mu,\zeta} +\mu\cdot\nu\\
	&\equiv_4 2p(m)p(n)-2 p(\hat\lambda)p(\hat\zeta)-\ang{\tilde\nu,\lambda}+\ang{\tilde\mu,\zeta} +\mu\cdot\nu,
	\end{align*}
	where here $\equiv_4$ denotes equivalence modulo 4.
	This finishes the proof.
\end{proof}

We have seen that the twistor map commutes (up to an integral power of $\bt$) 
with the elementary functions in our graphical calculus.
However, note that in Theorem \ref{thm:knot invariant}, the typical composand of a tangle invariant
is not just one of these maps, but in fact is a tensor product of these maps with various identities.
It is important to note that a consequence of Proposition \ref{prop: twistor tensor}
is that the twistor maps on tensor products are not local, since the power of $\bt$ in the construction
depends on the weight and signature of each tensor factor.
Nevetheless, we can extend Propositions \ref{prop: cups caps and twistorv1} 
and \ref{prop:twistor vs R v1} 
to this more general setting.

\begin{prop}\label{prop:twistor vs cups, caps, R}
Let $M_1,\ldots, M_t$ be $\hatU$-modules such that for each $1\leq s\leq t$, 
$M_s=\hV(\mu_s)$ for some $\mu_s\in X^+\cup-X^+$.
Let $M=M_1\hotimes\ldots \hotimes M_t$ 
and let $c=(c_1,\ldots, c_t)=\sig(M)$. 
For any $\lambda\in X^+$ and $0\leq r\leq t$, we define
$M_{\leq r}=M_1\hotimes\ldots\hotimes M_r$, $M_{>r}=M_{r+1}\hotimes \ldots \hotimes M_t$,
and
\[M(r,\pm \lambda)=M_{\leq r}\hotimes \hV(\pm \lambda)\hotimes \hV(\mp \lambda)\hotimes M_{>r}.\]
\begin{enumerate}
\item Let $\cR_s=1_{M_{\leq s-1}}\otimes \cR\otimes 1_{M_{>s+1}}:M\rightarrow M$ 
for some $1\leq s\leq t-1$.
Then as maps on $M$, $\Tw R_s$ and $R_s\Tw$ are proportional up to an integral power of $\bt$.
\item Let $\ev(M,r,\lambda)=1_{M_{\leq r}}\otimes \ev_\lambda\otimes 1_{M_{>r}}$
for some $1\leq r\leq t$. Then
as maps on $M(r,-\lambda)$, $\Tw\ev(M,r,\lambda)$ and $\ev(M,r,\lambda)\Tw$ are
proportional up to an integral power of $\bt$.
\item Let $\qtr(M,r,\lambda)=1_{M_{\leq r}}\otimes \qtr_\lambda\otimes 1_{M_{>r}}$
for some $1\leq r\leq t$. Then
as maps on $M(r,\lambda)$, $\Tw\qtr(M,r,\lambda)$ and $\qtr(M,r,\lambda)\Tw$ are
proportional up to an integral power of $\bt$.
\item Let $\coev(M,r,\lambda)=1_{M_{\leq r}}\otimes \coev_\lambda\otimes 1_{M_{>r}}$
for some $1\leq r\leq t$. Then
as maps on $M$, $\Tw\coev(M,r,\lambda)$ and $\coev(M,r,\lambda)\Tw$ are
proportional up to an integral power of $\bt$.
\item Let $\coqtr(M,r,\lambda)=1_{M_{\leq r}}\otimes \coqtr_\lambda\otimes 1_{M_{>r}}$
for some $1\leq r\leq t$.
Then as maps on $M$, $\Tw\coqtr(M,r,\lambda)$ and $\coqtr(M,r,\lambda)\Tw$ are
proportional up to an integral power of $\bt$.
\end{enumerate}
\end{prop}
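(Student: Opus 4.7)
The plan is to reduce each statement to the corresponding two-factor statement (Proposition \ref{prop: cups caps and twistorv1} or Proposition \ref{prop:twistor vs R v1}) by ``localizing'' the twistor using the associativity identity \eqref{eq:twistor tensor assoc}, then verify that the residual ``cross-group'' $\kappa$-contributions collapse modulo $4$ to a vector-independent power of $\bt$. Since all five statements have the same flavor, I would do (1) carefully and sketch (2)--(5) by reference.

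For (1), I first decompose $M$ as $M_{<s}\hotimes (M_s\hotimes M_{s+1})\hotimes M_{>s+1}$ and iterate \eqref{eq:twistor tensor assoc} to write
\[
\Tw_M(v)=\bt^{\sigma_{\mathrm{mix}}(\bigrdeg v)}\,\bigl(\Tw_{<s}\otimes\Tw_{\mathrm{mid}}\otimes\Tw_{>s+1}\bigr)(v),
\]
where $\sigma_{\mathrm{mix}}$ is the sum of pair-terms $\kappa_{(c_a,c_b)}(\bigrdeg{v_a},\bigrdeg{v_b})$ with $a,b$ in different groups, and $\Tw_{\mathrm{mid}}$ is the two-factor twistor on $M_s\hotimes M_{s+1}$. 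The analogous decomposition holds for $M'$ (the range module, in which positions $s,s+1$ are swapped). Acting by $R_s=1\otimes\cR\otimes 1$ on the middle group, Proposition \ref{prop:twistor vs R v1} gives $\Tw_{\mathrm{mid}'}\cR=\bt^{K_0}\cR\,\Tw_{\mathrm{mid}}$ for an integer $K_0$ depending only on the cosets of $\bigrdeg{M_s}$ and $\bigrdeg{M_{s+1}}$. The remaining task is to compare $\sigma_{\mathrm{mix}}$ on both sides: since $\cR$ redistributes weight between the two middle slots by some $\nu\in\Z[I]$ on each summand, for each external index $s'$ one must compare
\[
\kappa_{(c_{s'},c_s)}(\bigrdeg{v_{s'}},\bigrdeg{v_s}+\nu)+\kappa_{(c_{s'},c_{s+1})}(\bigrdeg{v_{s'}},\bigrdeg{v_{s+1}}-\nu)
\]
with its $\nu=0$ version; a direct application of the defining shift formula for $\kappa$ (and the sign-change symmetry $p(-\nu)=p(\nu)$) shows the difference is $4p(\bigrdeg{v_{s'}})p(\nu)\equiv 0\pmod 4$. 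Since $\bt^4=1$, the $\nu$-dependence drops out and the resulting constant $K$ is an integer independent of $v$, as required.

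For (2)--(5), the same localization, splitting the active pair of positions into its own group, reduces the problem to Proposition \ref{prop: cups caps and twistorv1}. The critical observation is that on the diagonal subspace where $\ev_\lambda$ (or $\qtr_\lambda$) is supported, the calculation in the proof of Proposition \ref{prop: cups caps and twistorv1} shows $\Tw_{\mathrm{mid}}$ acts as scalar multiplication by $\bt^{\kappa_{(\mp 1,\pm 1)}((\mp\lambda,0),(\pm\lambda,0))}$, independent of the specific weight vector. It then remains to verify that the cross-group $\kappa$-contributions agree before and after the map. For evaluation and quantum trace, the active pair is removed; invariance of the cross-group contribution under weight redistribution within the active pair is exactly Lemma \ref{lem:kappa props}(3), which asserts that $\kappa_{(c_1,\pm 1,\mp 1)}(\zeta+\bigrelt\nu,(\pm\lambda,0),(\mp\lambda,0))$ is independent of $\nu$. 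The coevaluation and co-quantum trace cases are handled dually: one inserts a balanced pair $(\pm\lambda,0),(\mp\lambda,0)$ and, again by Lemma \ref{lem:kappa props}(3), the cross-group contributions involving this pair are determined entirely by the cosets of the external factors.

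The main obstacle, and the reason Lemma \ref{lem:kappa props} was set up as it is, is controlling these mixed contributions: both the twistor on the tensor product and the elementary maps depend nontrivially on weights, and for the final result to be a single integer power of $\bt$ (not a function of the weight vector) we need the precise $4$-divisibility statements above. For (1) this takes the form of a symmetric cancellation (Lemma \ref{lem:kappa props}(2)-style), and for (2)--(5) it is the balanced-pair invariance of Lemma \ref{lem:kappa props}(3). Once these cancellations are in hand, the bookkeeping of collecting the integer exponent $K$ in each case is routine but tedious; I would not write it out explicitly beyond indicating which pieces contribute (namely $K_0$ from the two-factor statement, plus the constant pieces of $\sigma_{\mathrm{mix}}$ that survive after the $\nu$-dependent terms have been shown to vanish modulo $4$).
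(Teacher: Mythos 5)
Your proposal is correct and follows essentially the same strategy as the paper: reduce to the two-factor statements (Propositions \ref{prop: cups caps and twistorv1} and \ref{prop:twistor vs R v1}) and show that the cross-terms between the active pair and the spectator factors are constant modulo $4$ via the balanced-pair cancellations for $\kappa$. One small imprecision: Lemma \ref{lem:kappa props}(3) as stated only gives invariance under shifting the \emph{external} factor within its coset, not under redistributing $\pm\nu$ within the active pair — but you carry out that latter computation explicitly in your discussion of (1), so nothing essential is missing.
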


\begin{rmk}
The precise constants of proportionality can be determined directly as in
Propositions \ref{prop: cups caps and twistorv1} and \ref{prop:twistor vs R v1} 
(and can be worked out from the following proof),
but we leave them out of the statement of Proposition \ref{prop:twistor vs cups, caps, R}
because they are not particularly illuminating, and are not necessary
for Theorem \ref{thm:twistor vs knot invariant}
\end{rmk}
\begin{proof}
As the proofs of (2)-(5) are similar, we shall only prove (1) and (2) here in detail.

We will begin with the proof of (1), which is
essentially the same as the proof of Proposition \ref{prop:twistor vs R v1}.
To wit, we first observe that for any $a,b\geq 0$ and $\nu\in\N[I]$,
\[\Tw(1^{\otimes a}\otimes \Theta_\nu\otimes 1^{\otimes b})
=(\Upsilon_{|\Theta_\nu|})^{\otimes{a}} \otimes \Tw(\Theta_\nu)\otimes(\Upsilon_{|\Theta_\nu|}\tT_{|\Theta_\nu|})^{\otimes b}
=,\]
and the result follows from the observation that $|\Theta_\nu|=\nu-\nu=0$.
Then $\Tw R_s=(1^{\otimes s-1}\otimes \Theta\otimes 1^{t-s-1})\Tw \frF_s\frs_s$. 
Then we verify directly that 
$\Tw\frF_s\frs_s=\bt^{\kappa_{(c_{s+1},c_s)}(\hat\zeta',\hat\zeta)-\kappa_{(c_s,c_{s+1})}(\hat\zeta,\hat\zeta')+2p(\hat\zeta)p(\hat\zeta')}\frF_s\frs_s\Tw$
where $\hat\zeta$ (resp. $\hat\zeta'$) is the coset representative
for $(\mu_s,0)$ (resp. $(\mu_{s+1},0)$).

Now, we shall prove (2). Note that an arbitrary element of
$M(r,-\lambda)$ is a linear combination of simple tensors of the form 
$x=m_{\leq r}\otimes (b^-v_\lambda)^*\otimes (b'^-v_\lambda)\otimes m_{>r}$, 
where $b,b'\in\cB$, $m_{\leq r}=m_1\otimes\ldots\otimes m_r \in M_{\leq r}$ and
$m_{>r}=m_{r+1}\otimes\ldots\otimes m_t\in M_{>r}$, hence we need only
prove (1) holds when evaluating both sides at such elements. 
Since $\ev_\lambda((b^-v_\lambda)^*\otimes (b'^-v_\lambda))=\delta_{b,b'}$, 
note that (1) is trivially true when $b\neq b'$, so let's assume $b=b'\in\cB_\nu$.
Then
\[\ev(M,r,\lambda)\Tw(x)=\bt^{\diamondsuit(m_1,\ldots, m_t)+\clubsuit}\Tw(m_{\leq r})\otimes \ev_\lambda\Tw((b^-v_\lambda)^*\otimes (b^-v_\lambda))\otimes\Tw(m_{> r})\]
where we set 
\[\diamondsuit(m_1,\ldots, m_t)=\sum_{s<r<s'}\kappa_{(c_s,c_s')}(\bigrdeg{m_s},\bigrdeg{m_s'})\] 
\begin{align*}
\clubsuit=&\sum_{s<r}\parens{\kappa_{(c_s,-1)}(\bigrdeg{m_s},(-\lambda,0)+\bigrelt{\nu})+\kappa_{(c_s,1)}(\bigrdeg{m_s},(\lambda,0)-\bigrelt{\nu})}
\\&+\sum_{s>r}\parens{\kappa_{(-1,c_s)}((-\lambda,0)+\bigrelt{\nu},\bigrdeg{m_s})+\kappa_{(1,c_s)}((\lambda,0)-\bigrelt{\nu},\bigrdeg{m_s})}
\end{align*}
Now $\clubsuit$ can be simplified. Note that 
\[\kappa_{(c_s,-1)}(\bigrdeg{m_s},(-\lambda,0)+\nu)
=\kappa_{(c_s,-1)}(\bigrdeg{m_s},(-\lambda,0))+2c_s\phi(\nu,|m_s|)+2p(\nu)p(m_s)\]
\[\kappa_{(c_s,1)}(\bigrdeg{m_s},(\lambda,0)-\nu)
=\kappa_{(c_s,1)}(\bigrdeg{m_s},(\lambda,0))-2c_s\phi(\nu,|m_s|)+2p(\nu)p(m_s)\]
hence $\kappa_{(c_s,-1)}(\bigrdeg{m_s},(-\lambda,0)+\nu)+\kappa_{(c_s,1)}(\bigrdeg{m_s},(\lambda,0)-\nu)=\kappa_{(c_s,-1)}(\bigrdeg{m_s},(-\lambda,0))+\kappa_{(c_s,1)}(\bigrdeg{m_s},(\lambda,0))$.
Moreover, note that $\bigrdeg{m_s}=(\mu_s,0)+\nu_s$ 
for some $\nu_s\in\Z[I]$, and so
\[\kappa_{(c_s,-1)}(\bigrdeg{m_s},(-\lambda,0))
=\kappa_{(c_s,-1)}((\mu_s,0),(-\lambda,0))-\ang{\tilde\nu_s,\lambda}-2\phi(\nu_s,\lambda),\]
\[\kappa_{(c_s,1)}(\bigrdeg{m_s},(\lambda,0))
=\kappa_{(c_s,1)}((\mu_s,0),(\lambda,0))+\ang{\tilde\nu_s,\lambda}+2\phi(\nu_s,\lambda),\]
hence 
\[\kappa_{(c_s,-1)}(\bigrdeg{m_s},(-\lambda,0))+\kappa_{(c_s,1)}(\bigrdeg{m_s},(\lambda,0))=\kappa_{(c_s,-1)}((\mu_s,0),(-\lambda,0))+\kappa_{(c_s,1)}((\mu_s,0),(\lambda,0))\]
Similar applies to the sum over $s>r$ in $\clubsuit$, hence
\begin{align*}
\clubsuit=&\sum_{s<r}\parens{\kappa_{(c_s,-1)}((\mu_s,0),(-\lambda,0))+\kappa_{(c_s,1)}((\mu_s,0),(\lambda,0))}
\\&+\sum_{s>r}\parens{\kappa_{(-1,c_s)}((-\lambda,0),(\mu_s,0))+\kappa_{(1,c_s)}((\lambda,0),(\mu_s,0))}
\end{align*}
Note that $\clubsuit$ is independent of $x$.
Then
\begin{align*}
\ev(M,r,\lambda)\Tw(x)&=\bt^{\diamondsuit(m_1,\ldots, m_t)+\clubsuit}\Tw(m_{\leq r})\otimes \ev_\lambda\Tw((b^-v_\lambda)^*\otimes (b^-v_\lambda))\otimes\Tw(m_{> r})\\
&=\bt^{\diamondsuit(m_1,\ldots, m_t)+\clubsuit+\kappa_{(-1,1)}((-\lambda,0),(\lambda,0))}\Tw(m_{\leq r})\otimes\Tw(m_{> r})\\
&=\bt^{\clubsuit+\kappa_{(-1,1)}((-\lambda,0),(\lambda,0))}\Tw(m_{\leq r}\otimes m_{> r})
\end{align*}
Since $\Tw(m_{\leq r}\otimes m_{> r})=\Tw(\ev_\lambda(x))$ and the exponent of $\bt$ is independent of $x$, 
this completes the proof of (2).
\end{proof}

We now arrive at the final result of this paper.
\begin{thm} \label{thm:twistor vs knot invariant}
Let $K$ be any oriented knot, and let $J_K^\lambda(q,\tau)\in\Qqtt$ 
be the $\lambda$-colored
knot invariant defined in Theorem \ref{thm:knot invariant}. Let 
${}_{\so}J_K^\lambda(q)=J_K^\lambda(q,1)$ and 
${}_{\osp}J_K^\lambda(q)=J_K^\lambda(q,\bt)$. 
Then 
\[{}_{\osp}J_K^\lambda(q)=\bt^{\star(K,\lambda)}{}_{\so}J_K^\lambda(\bt^{-1} q),\]
for some
$\star(K,\lambda)\in\Z$.
\end{thm}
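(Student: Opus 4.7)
The plan is to establish the twistor identity $\Tw(J_K^\lambda(q,\tau)) = \bt^{\star(K,\lambda)}J_K^\lambda(q,\tau)$ in $\Qqtt$ for some integer $\star(K,\lambda)\in\Z$, where $\Tw$ denotes the $\Q(\bt)$-algebra automorphism of $\Qqtt$ sending $q\mapsto\bt^{-1}q$ and $\tau\mapsto\bt\tau$. Unwinding this, it reads $J_K^\lambda(\bt^{-1}q,\bt\tau)=\bt^{\star(K,\lambda)}J_K^\lambda(q,\tau)$; specializing at $\tau=1$ yields ${}_{\osp}J_K^\lambda(\bt^{-1}q)=\bt^{\star(K,\lambda)}{}_{\so}J_K^\lambda(q)$, and an elementary change of variable $q\mapsto\bt q$ (together with absorbing any sign coming from $\bt q=-\bt^{-1}q$ into the integer $\star$ via the parity structure of ${}_{\so}J_K^\lambda(q)$ in $q$) then delivers the theorem.

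First I would fix a slice diagram $S(K)$ of $K$ and express the associated map as a composition $S(K)_\lambda=\phi_m\circ\cdots\circ\phi_1$, where each $\phi_i$ is a tensor product of an elementary morphism (cup, cap, crossing, or identity) from the graphical calculus of Section \ref{sec:diagcalc}. Since $K$ is a closed knot, the domain and codomain of $S(K)_\lambda$ are both the trivial module $\Qqtt$, so $S(K)_\lambda$ acts as multiplication by some scalar $c_K\in \Qqtt$; by Theorem \ref{thm:knot invariant}, $J_K^\lambda(q,\tau)=(\pi^{p(\lambda)}\mathfrak f(\lambda,\lambda)^{-1}q^{\ang{\tilde\rho,\lambda}})^{\wr(K)}c_K$. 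Applying Proposition \ref{prop:twistor vs cups, caps, R} iteratively then lets me commute $\Tw$ past each $\phi_i$, accumulating integer powers of $\bt$ along the way, to produce an identity $\Tw\circ S(K)_\lambda=\bt^{\sum n_i}S(K)_\lambda\circ\Tw$ between the trivial modules. Evaluating at $1\in\Qqtt$ gives $\Tw(c_K)=\bt^{\sum n_i}c_K$; combining this with the observation that $\Tw$ sends the writhe-normalization monomial (in $\pi,q$, and $\mathfrak f(\lambda,\lambda)$) to itself times an integer power of $\bt$ yields the desired twistor identity for $J_K^\lambda(q,\tau)$.

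The main technical obstacle will be checking that the twistor maps at each intermediate slice of the diagram match up consistently along the composition. The twistor on a tensor product of (anti-)canonical $\hatU$-modules depends, through the functions $\kappa_c$, on the signatures, weights, and parities of every tensor factor; one must verify that the intermediate twistor obtained by commuting $\Tw$ past $\phi_{i+1}$ is exactly the twistor with which $\phi_i$ is compatible in the sense of Proposition \ref{prop:twistor vs cups, caps, R}. This amounts to a careful bookkeeping exercise combining the formulas of Propositions \ref{prop: cups caps and twistorv1}, \ref{prop:twistor vs R v1}, \ref{prop: twistor tensor}, and \ref{prop:twistor vs cups, caps, R}, with integrality of the cumulative exponent $\sum n_i$ guaranteed by the integer values of $\kappa_c$ and the enhancer $\phi$.
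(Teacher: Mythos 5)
Your core argument is the paper's: write $J_K^\lambda=W_K\circ S$ with $W_K$ the writhe normalization and $S$ the composition of sliced elementary morphisms, note that $\Tw$ fixes $\pi^{P(\lambda)}q^{\ang{\tilde\rho,\lambda}}$ and rescales $\frf(\lambda,\lambda)$ by an integer power of $\bt$ (thanks to the normalization \eqref{eq:f set}), commute $\Tw$ past each slice via Proposition \ref{prop:twistor vs cups, caps, R}, and read off $\Tw(J_K^\lambda)=\bt^{\star}J_K^\lambda$. The ``matching of intermediate twistors'' you flag as the main obstacle is already absorbed into the statement of Proposition \ref{prop:twistor vs cups, caps, R}: every intermediate module in a slice decomposition is a tensor product of modules $\hV(\pm\lambda)$, each such product carries the single twistor of Proposition \ref{prop: twistor tensor}, and Proposition \ref{prop:twistor vs cups, caps, R} asserts proportionality with respect to exactly these twistors on domain and codomain, so the composition goes through with no further bookkeeping.

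Where you diverge is the endgame, and there you have introduced an unjustified step. You take $\Tw(\tau)=\bt\tau$, so your eigenvalue identity reads $J_K^\lambda(\bt^{-1}q,\bt\tau)=\bt^{\star}J_K^\lambda(q,\tau)$; specializing $\tau=1$ gives ${}_{\osp}J_K^\lambda(\bt^{-1}q)=\bt^{\star}{}_{\so}J_K^\lambda(q)$, and the substitution $q\mapsto\bt q$ then produces ${}_{\osp}J_K^\lambda(q)=\bt^{\star}{}_{\so}J_K^\lambda(\bt q)$, not ${}_{\so}J_K^\lambda(\bt^{-1}q)$. Converting $\bt q$ to $\bt^{-1}q=-\bt q$ requires knowing that ${}_{\so}J_K^\lambda$ is a Laurent polynomial supported in a single parity class of powers of $q$; this is plausible but is proved nowhere in the paper and is a genuinely extra input. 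The paper's proof sidesteps this entirely by using the convention of Proposition \ref{prop:twistordef}, $\Tw(\tau)=\bt^{-1}\tau$, so that $\Tw(J_K^\lambda(q,\tau))=J_K^\lambda(\bt^{-1}q,\bt^{-1}\tau)$ and the single specialization $\tau=\bt$ yields ${}_{\osp}J_K^\lambda(q)=\bt^{-\star}{}_{\so}J_K^\lambda(\bt^{-1}q)$ with no change of variable and no parity claim. Either adopt that convention and specialize at $\tau=\bt$, or supply a proof of the parity statement; as written, the last step of your argument does not close.
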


\begin{proof}
Let $J=J_K^\lambda(q,\tau)$.
First, observe that $J$ can be thought of as a function
$\Qqtt\rightarrow \Qqtt$, and in that spirit
$\Tw(J)$ is $\Tw\circ J(1)$. On the other hand,
$J=W_K\circ S$, where $W_K=(\frf(\lambda,\lambda)^{-1}\pi^{P(\lambda)}q^{\ang{\rho,\lambda}})^{\wr(K)}$
(interpreted as a function $\Qqtt\rightarrow \Qqtt$)
and $S$ is a slice diagram of $K$ 
interpreted as a composition of morphisms as described in Section
\ref{sec:diagcalc} (with strands colored by $\lambda$).
In particular, observe that by \eqref{eq:f set}
we have $\Tw(\frf(\lambda,\lambda))=\bt^{x}\frf(\lambda,\lambda)$ 
for some $x\in \Z$
depending on the coset representative of $\lambda$ in $X/\Z[I]$,
and that $\Tw(\pi^{P(\lambda)}q^{\ang{\rho,\lambda}})=\pi^{P(\lambda)}q^{\ang{\rho,\lambda})}$.
Then in particular we see that $\Tw W=\bt^{-x\wr(K)} W\Tw$.

Likewise, note that $S$ can be written as 
a composition of maps of the form
$\ev(M,r,\lambda)$, $\coev(M,r,\lambda)$, $\qtr(M,r,\lambda)$, $\coqtr(M,r,\lambda)$, and $R_s:M\rightarrow M$  for various $r,s\in\N$
with all notations being the same as in Proposition \ref{prop:twistor vs cups, caps, R}.
In particular, we see that
$\Tw\circ S=\bt^{y}S\circ \Tw$ for some $y\in \Z$,
and thus 
\[\Tw(J)=\Tw\circ c\circ S(1)=\bt^{-x\wr(K)+y}c\circ S\circ \Tw(1)=\bt^{-x\wr(K)+y}J.\]

On the other hand, observe that $\Tw(J_K^\lambda(q,\tau))=J_K^\lambda(\bt^{-1}q,\bt^{-1}\tau)$, and so
\[\bt^{y-x\wr(K)}J_K^\lambda(\bt^{-1}q,\bt^{-1}\tau)=J_K^\lambda(q,\tau).\]
The theorem follows from specializing $\tau=\bt$.
\end{proof}

\begin{rmk}
Note that since $_{\so}J^\lambda_K(q)\in\Z[q,q^{-1}]$, 
Theorem \ref{thm:twistor vs knot invariant} implies that
(after a renormalization) $_{\osp}J^\lambda_K(q)={}_{\so}J^\lambda_K(v)\in\Z[v,v^{-1}]$
where $v=q\bt^{-1}$. Furthermore, note that when $n$ or
$\ang{{\rf n},\lambda}$ is even, $_{\osp}J^\lambda_K(q)\in \Qq$
(cf. Remark \ref{rmk:red diagrams} (1)),
thus in this case $_{\osp}J^\lambda_K(q)\equiv {}_{\so}J^\lambda_K(q)$
modulo $2$.
\end{rmk} 


\begin{thebibliography}{CHW2}\frenchspacing

 
\bibitem[ADO]{ADO} Y.~Akutsu, T.~Deguchi and T. Ohtsuki, 
{\em Invariants of colored links}, 
J. Knot Theory Ramifications {\bf 1} (1992), 161-184.
 
\bibitem[Bl]{Bl} S. Blumen, {\em On the $U_q(\mathfrak{osp}(1|2n))$ and $U_{-q}(\mathfrak{so}(2n + 1))$ 
uncolored quantum link invariants}, 
J. Knot Theory Ramifications {\bf 19} (2010), 335--353.

\bibitem[C]{C} S.~Clark, {\em Quantum supergroups IV. The Modified Form}, Math. Zeit. \textbf{278} (2014), pp. 493-528.

\bibitem[CFLW]{CFLW} S.~Clark, Z.~Fan, Y.~Li and W.~Wang, {\em Quantum supergroups III. Twistors}, Comm. Math. Phys. \textbf{332} (2014), 415--436.

\bibitem[CH]{CH} S.~Clark and D.~Hill, {\em Quantum supergroups V. Braid group action},  arXiv:1409.0448.

\bibitem[CHW1]{CHW1} S.~Clark, D.~Hill and W.~Wang, {\em Quantum supergroups I. Foundations}, Transform. Groups {\bf 18} (2013), 1019--1053.

\bibitem[CHW2]{CHW2} S.~Clark, D.~Hill and W.~Wang, {\em Quantum supergroups II. Canonical basis},  Represent. Theory \textbf{18} (2014), 278--309.

\bibitem[CW]{CW} S. Clark and W. Wang, 
{\em  Canonical basis for quantum $\mathfrak{osp}(1|2)$}, Lett. Math. Phys. \textbf{103} (2013),  207--231.

\bibitem[EKL]{EKL} A.~Ellis, M.~Khovanov and A.~Lauda,
{\em The odd nilHecke algebra and its diagrammatics}, 
IMRN {\bf 2014}, 991--1062.

\bibitem[EL]{EL} A.~Ellis and A.~Lauda,
{\em An odd categorification of $U_q(\mathfrak{sl}_2)$}, to appear in Quantum Topology, arXiv:1307.7816.

\bibitem[Kh]{Kh} M.~Khovanov, {\em A categorification of the Jones polynomial}, Duke Math. J. {\bf 101} (2000), 359--426.

\bibitem[HW]{HW}
D.~Hill and W.~Wang, \emph{{Categorification of quantum Kac-Moody
  superalgebras}}, Trans. Amer. Math.
Soc. {\em 367} (2015), 1183--1216.

\bibitem[Jan]{Jan}
J.C.~ Jantzen, {\em Lectures on quantum groups}, Graduate
Studies in Mathematics {\bf 6}, AMS, Providence, RI, 1996.

\bibitem[KL]{KL} M. Khovanov and A. Lauda,
{\em A diagrammatic approach to categorification of quantum groups III}, Quantum Topology \textbf{1} (2010), 1--92.

\bibitem[L93]{L93} G. Lusztig,
        {\em Introduction to Quantum Groups},
        Progress in Math. \textbf{110}, Birkh\"{a}user 1993.
        
\bibitem[MW]{MW} V.~Mikhaylov and E.~Witten, {\em Branes and supergroups}, arxiv:1410.1175.

\bibitem[Oht]{Oht} T. Ohtsuki, {\em Quantum invariants. A study of knots, 3-manifolds, and their sets}, 
World Scientific {\bf 29}, 2002.

\bibitem[ORS]{ORS} P.~Ozsv\'ath, J. Rasmussen, and Z. Szab\'o, {\em Odd Khovanov homology,}
Algebraic and Geometric Topology {\bf 13} (2013), 1465--1488.

\bibitem[RT]{RT} N.~Reshetikhin and V.~Turaev, {\em Ribbon graphs and their invariants derived from quantum groups}, 
Comm. Math. Phys. \textbf{127} (1990),1--26.

\bibitem[R]{R} R.~Rouquier,
{\em $2$-Kac-Moody algebras}, arxiv:0812.5023.

\bibitem[Ti]{Ti} P.~Tingley, {\em A minus sign that used to annoy me but now I know why it is there}, arXiv:1002.0555.

\bibitem[Tu]{T} V.~Turaev, {\em Operator invariants of tangles, and $R$-matrices}, 
 (English Translation) Math. of the USSR-Izvestiya {\bf 35} (1990), 411-444.

\bibitem[Web]{Web} B.~Webster, {\em Knot invariants and higher representation theory}, to appear in Memoirs of the AMS,
arxiv:1309.3796.

\end{thebibliography}
\end{document}